\newtheorem{thm}{Theorem}[section]
\newtheorem{prop}[thm]{Proposition}
\newtheorem{conj}[thm]{Conjecture}
\newtheorem{cor}[thm]{Corollary}
\newtheorem{lem}[thm]{Lemma}
\newtheorem{questionconjecture}[thm]{Question/Conjecture}
\newtheorem{problem}[thm]{Problem}
\theoremstyle{definition}
\numberwithin{equation}{section}
\newtheorem{rem}[thm]{Remark}
\newtheorem{ex}[thm]{Example}
\newtheorem{defn}[thm]{Definition}
\def\bbP{\mathbb{P}}
\def\bbQ{\mathbb{Q}}
\def\bbT{\mathbb{T}}
\def\bbZ{\mathbb{Z}}
\def\bfu{\mathbf{u}}
\def\bfv{\mathbf{v}}
\def\bfx{\mathbf{x}}
\def\bfy{\mathbf{y}}
\def\bfz{\mathbf{z}}
\def\bfZ{\mathbf{Z}}
\def\calF{\mathcal{F}}
\newcommand{\overunder}[2]{
\!\begin{array}{c}
\scriptstyle{#1}\\[-.1in]
-\!\!\!-\!\!\!-\\[-.1in]
\scriptstyle{#2}
\end{array}
\!
}
\begin{document}
\bibliographystyle{amsalpha}

\title[Synchronicity phenomenon
in cluster patterns
]
{
Synchronicity phenomenon \\
in cluster patterns
}

\address{\noindent Graduate School of Mathematics, Nagoya University, 
Chikusa-ku, Nagoya,
464-8604, Japan}
\email{nakanisi@math.nagoya-u.ac.jp}
\author{Tomoki Nakanishi}


\date{}
\maketitle
\begin{abstract}
It has been known that several objects  such as  cluster variables, coefficients, seeds, and $Y$-seeds  in different cluster patterns with common exchange matrices  share the same periodicity under mutations. We call it synchronicity phenomenon in cluster patterns.
 In this expository note we explain the mechanism of   synchronicity  based on several fundamental results on cluster algebra theory such as  separation formulas,  sign-coherence, Laurent positivity, duality, and detropicalization obtained by several authors.
 We also show that all synchronicity properties studied in this paper are naturally extended to cluster patterns of generalized cluster algebras, up to the Laurent positivity conjecture.

\end{abstract}


\section{Introduction: Synchronicity problem}
\label{sec:introduction}
Consider a pair $(\bfx,B)$,
where
$\bfx=(x_1,\dots, x_n)$ is an $n$-tuples of commuting formal variables,
and $B=(b_{ij})_{i,j=1}^n$ is a skew-symmetrizable integer matrix.
For any $k\in \{1,\dots, n\}$,
we obtain a new pair $(\bfx',B')$ by  the following transformation:
   \begin{align}
\label{eq:xmut0}
x'_i
&=
\begin{cases}
\displaystyle
x_k^{-1}\left( \prod_{j=1}^n x_j^{[-b_{jk}]_+}
\right)
( 1+\hat{y}_k)
& i=k,
\\
x_i
&i\neq k,
\end{cases}
\\
\label{eq:bmut0}
b'_{ij}&=
\begin{cases}
-b_{ij}
&
\text{$i=k$ or $j=k$,}
\\
b_{ij}+
b_{ik} [b_{kj}]_+
+
[-b_{ik}]_+b_{kj}
&
i,j\neq k,
\end{cases}
\end{align}
where
 \begin{align}
 [a]_+:=\max(a,0),
 \\
\label{eq:yhat0}
\hat{y}_i
:=\prod_{j=1}^n x_j^{b_{ji}}.
\end{align}
This is called the {\em mutation of a seed without coefficients at direction $k$}
\cite{Fomin02}.

Similarly, 
consider a pair $(\bfy,B)$,
where
$\bfy=(y_1,\dots, y_n)$ is another $n$-tuples of commuting formal variables,
and $B=(b_{ij})_{i,j=1}^n$ is a skew-symmetrizable integer matrix.
For any $k\in \{1,\dots, n\}$,
we obtain a new pair $(\bfy',B')$, where $B'$ is given by \eqref{eq:bmut0}
while $\bfy'$ is given by
\begin{align}
\label{eq:ymut0}
y'_i
&=
\begin{cases}
\displaystyle
y_k^{-1}
& i=k,
\\
y_i y_k^{[b_{ki}]_+} (1+ y_k)^{-b_{ki}}
&i\neq k.
\end{cases}
\end{align}
This is called the {\em mutation of a $Y$-seed with coefficients
in a universal semifield at direction $k$} \cite{Fomin02,Fomin07}.

Now, starting with a common skew-symmetrizable matrix $B$,
let us  apply the {\em same sequence of mutations\/} at given directions
$k_1$, \dots, $k_L$ on $(\bfx, B)$ and $(\bfy,B)$ as follows:
\begin{align}
&(\bfx,B) \mathop{\rightarrow}^{k_1}
(\bfx',B') \mathop{\rightarrow}^{k_2}
\cdots
 \mathop{\rightarrow}^{k_L}
 (\tilde{\bfx}, \tilde{B}),
\\
&(\bfy,B) \mathop{\rightarrow}^{k_1}
(\bfy',B') \mathop{\rightarrow}^{k_2}
\cdots
 \mathop{\rightarrow}^{k_L}
 (\tilde{\bfy}, \tilde{B}).
\end{align}
Then, though it is rare, it may happen that  $\tilde{\bfx}=\bfx$ (resp., $\tilde{\bfy}=\bfy$).
Let us call such a sequence of mutations as
 a {\em period of $x$-variables (resp. $y$-variables)}, respectively.
 
The following question is frequently asked:
 
 \begin{questionconjecture}[e.g., {\cite[Footnote 3 in Section 1.2]{Fock03}}]
 \label{question:xy1}
  Do the periodicities of $x$- and $y$-variables always
 coincide? In other words, does the following
 ``synchronicity"  of $x$- and $y$-variables
 hold?
  \begin{align}
  \label{eq:xy1}
  \tilde{\bfx}=\bfx 
  \quad
  \Longleftrightarrow
  \quad
  \tilde{\bfy}=\bfy.
 \end{align}
 \end{questionconjecture}
 
 This question, or the conjecture that the synchronicity
  \eqref{eq:xy1} 
holds, naturally arose from the observation
 that it indeed holds in
  several examples, such as the  rank 2 cluster algebras
  \cite{Fomin02},
  the finite type cluster algebras
  \cite{Fomin03a,Fomin03b},
 the  $T$-systems and the $Y$-systems of several types \cite{Inoue10c},
 etc.
 
 Two transformations \eqref{eq:xmut0} and \eqref{eq:ymut0}
 certainly  look closely related.
To be more specific, it has been well known  
(e.g.,  \cite[Proposition 3.9]{Fomin07})
that 
under the mutation \eqref{eq:xmut0}
 the
 variables in  \eqref{eq:yhat0} ({\em $\hat{y}$-variables})
 also mutate   just like $y$-variables, namely,
 \begin{align}
\label{eq:yhatmut0}
\hat{y}'_i
&=
\begin{cases}
\displaystyle
\hat{y}_k^{-1}
& i=k,
\\
\hat{y}_i \hat{y}_k^{[b_{ki}]_+} (1+ \hat{y}_k)^{-b_{ki}}
&i\neq k.
\end{cases}
\end{align}
If $B$ is nondegenerate, the initial $\hat{y}$-variables are algebraically independent;
therefore, the implication 
$  \tilde{\bfx}=\bfx 
  \
  \Longrightarrow
  \
  \tilde{\bfy}=\bfy$ holds
 by \eqref{eq:yhatmut0}.
Furthermore, if $\det B = \pm 1$,
one can invert the relation \eqref{eq:yhat0},
and the opposite implication also holds.
Therefore, the synchronicity \eqref{eq:xy1} holds if $\det B = \pm 1$.
(We may prove the opposite implication for a more general nondegenerate  $B$
by properly extending the field where $y$-variables live, but we do not touch this issue here.)

In many important and interesting cases,
 the matrix $B$ is degenerate.
Then,
in general, we do not yet know  how to prove the synchronicity \eqref{eq:xy1} by directly working on
   \eqref{eq:xmut0} and \eqref{eq:ymut0}.
    However, thanks to the recent development of cluster algebra theory,
  one can  prove it as a consequence of a collection of  known fundamental results in cluster algebra theory by several authors. In other words,  the cluster algebra structure (to be more precisely, the {\em cluster pattern structure}) behind it is essential  in our derivation.
 
Our proof of \eqref{eq:xy1} based on the known results is relatively elementary,
 in the sense that one can work in the framework of \cite{Fomin07}
 without relying on some other machinery.
 Therefore,
 we believe that   the fact  \eqref{eq:xy1}  and its proof   have been already known 
 for some time among  experts including the author.
In fact, a proof of the implication  from  left to right in \eqref{eq:xy1}
 was already presented in  \cite[Proposition 6.1]{Cao18}.
  However, the fact \eqref{eq:xy1}  seems not to be well known,
  and it is still a frequently asked question
  by colleagues to the author.  So in this note we present a proof of \eqref{eq:xy1},  together with  other  related synchronicity properties in cluster patterns
  scattering around in the literature as theorems and conjectures.
We note that our  proof  of the implication  from  left to right in \eqref{eq:xy1}
is different from  the one in  \cite{Cao18}.

The paper  consists of two parts:
Part \ref{part:CA} (Sections \ref{sec:cluster}--\ref{sec:open}) is for the results on cluster patterns in (ordinary) cluster algebras,
and Part \ref{part:GCA}  (Sections \ref{sec:cluster2}--\ref{sec:synchronicity2}) is for the results on cluster patterns in {\em generalized cluster algebras}
introduced by \cite{Chekhov11}.
In Sections \ref{sec:cluster}--\ref{sec:further} we give a concise review of fundamental properties on
cluster patterns that we need.
Section \ref{sec:synchronicity} is the main section of Part \ref{part:CA},
  where we derive and present several   synchronicity
 properties based on the results in the previous sections.
Overall we pay special attention on  presenting how these results are logically related.
See the introduction of Section \ref{sec:further} for more about this point.
  We conclude Part \ref{part:CA} by addressing an important open problem in Section \ref{sec:open}.
In Part \ref{part:GCA}  (Sections \ref{sec:cluster2}--\ref{sec:synchronicity2})
 we  show that all synchronicity properties 
 studied in Part \ref{part:CA} are naturally extended
 to cluster patterns of generalized cluster algebras,
  up to the Laurent positivity conjecture.
  Moreover,
we show that any  cluster pattern of GCA synchronizes with its ``companion cluster patterns" of CA.

Part \ref{part:CA}  is regarded as an extended note of the talks given at the workshops ``Cluster Algebras: Twenty Years On" at CIRM, Luminy, 2018, March and ``School on Cluster Algebras"
at ICTS, Bangalore, 2018, December.

We thank Sergey Fomin for
encouragement to write up this note
and also for
useful comments and suggestions on the manuscript.
This work was partially supported by JSPS KAKENHI Grant Number 16H03922.

\newpage
\part[I]{Synchronicity in cluster patterns of  cluster algebras}
\label{part:CA}

\section{Cluster patterns}
\label{sec:cluster}

Let us recall some basic notions in cluster algebra theory,
mostly following \cite{Fomin07}.
Consult \cite{Fomin07} for further details.
Throughout the paper we fix a positive integer $n$.

A square integer matrix $B=(b_{ij})_{i,j=1}^n$
is said to be {\em skew-symmetrizable\/}
if there is a diagonal  matrix $D=\mathrm{diag}(d_1,\dots,d_n)$
with positive integer  diagonal entries $d_1,
\dots, d_n$ such that $DB$ is skew-symmetric,
i.e.,
$
d_i b_{ij}= - d_jb_{ji}
$
holds. Such a matrix $D$ is called a {\em (left) skew-symmetrizer\/} of $B$.

 A  {\em semifield} $\mathbb{P}$  is 
a multiplicative abelian group 
equipped with an ``addition" $\oplus$ which is commutative, associative,
and distributive with respect to the multiplication.
Then, the group ring $\bbZ\bbP$ of $\mathbb{P}$
is a domain.
Let $\bbQ\bbP$ be the fraction field of $\bbZ\bbP$.

The following three examples of semifields are especially important in cluster algebra theory.

\begin{ex}
\label{ex:sf1}
(1). ({\em Universal semifield $\mathbb{Q}_{\mathrm{sf}}(\bfu)$
of $\bfu$}.)
Let $\bfu=(u_1,\dots, u_m)$ be an $m$-tuple of 
commuting formal variables.
We say that a 
rational function $f(\bfu)\in \bbQ(\bfu)$ in $\bfu$
has a {\em subtraction-free expression\/}
if it is expressed as
$f(\bfu)=p(\bfu)/q(\bfu)$, where $p(\bfu)$ and $q(\bfu)$
are nonzero polynomials in $u$ whose coefficients
are {\em positive\/} integers.
Let   $\mathbb{Q}_{\mathrm{sf}}(\bfu)$
be the set of all rational functions in $\bfu$
which have subtraction-free expressions.
Then, $\mathbb{Q}_{\mathrm{sf}}(\bfu)$
is a semifield by the usual 
multiplication and addition in $\bbQ(\bfu)$.
\par
(2). ({\em Tropical semifield  $\mathrm{Trop}(\bfu)$ of $\bfu$}.)
Let $\bfu=(u_1,\dots, u_m)$ be an $m$-tuple of 
commuting formal variables.
Let $\mathrm{Trop}(\bfu)$ be the set of all
Laurent monomials of $\bfu$ with coefficient 1,
which is a multiplicative abelian group by the usual
multiplication.
We define the addition $\oplus$ by
\begin{align}
\label{eq:ts1}
\prod_{i=1}^m u_i^{a_i} \oplus
\prod_{i=1}^m u_i^{b_i}
:=
\prod_{i=1}^m u_i^{\min(a_i,b_i)} .
\end{align}
Then,  $\mathrm{Trop}(\bfu)$ becomes a semifield.
The addition $\oplus$ is called the {\em tropical sum}.

\par
(3). ({\em Trivial semifield\/} $\mathbf{1}$.)
Let $\mathbf{1}=\{1\}$ be the trivial multiplicative group.
We define the addition by
$1\oplus 1 =1$.
Then, $\mathbf{1}$ becomes a semifield.
\end{ex}
 
\begin{defn}[Seeds]
\label{defn:seed1}
Let $\bbP$ be any semifield,
and let
$\calF$  be a field which is isomorphic to
the rational function field of $n$-variables
with coefficients in the field $\bbQ\bbP$.
A {\em (labeled) seed with coefficients in $\bbP$\/}
is a triplet
$\Sigma=(\bfx,\bfy,B)$,
where 
$\bfx=(x_1,\dots,x_n)$ 
is an
$n$-tuple of any algebraically independent   elements
(called  {\em cluster variables})
in $\calF$,
$\bfy=(y_1,\dots,y_n)$ is an
$n$-tuple of any  elements
 (called  {\em coefficients})
in $\bbP$,
and $B=(b_{ij})_{i,j=1}^n$
is a skew-symmetrizable integer matrix
 (called an {\em exchange matrix}).
\end{defn}

In this note let us  call $x_i$'s and $y_i$'s in the above as
{\em $x$-variables} and {\em $y$-variables}, respectively,
 in view of their equal roles in the periodicity phenomenon.
They are also called {\em cluster $\mathcal{A}$- and $\mathcal{X}$-coordinates}
in other fundamental references on cluster algebras by Fock and Goncharov \cite{Fock03,Fock07}.

\begin{defn}[Seed mutations]
\label{defn:mutation1}
For any seed $\Sigma=(\bfx,\bfy,B)$
and any $k\in \{1,\dots,n\}$,
we define a new seed 
$\Sigma'=(\bfx',\bfy',B')$
by the following formulas:
\begingroup
\allowdisplaybreaks
\begin{align}
\label{eq:xmut1}
x'_i
&=
\begin{cases}
\displaystyle
x_k^{-1}\left( \prod_{j=1}^n x_j^{[-b_{jk}]_+}
\right)
\frac{ 1+\hat{y}_k}{ 1\oplus y_k}
& i=k,
\\
x_i
&i\neq k,
\end{cases}
\\
\label{eq:ymut1}
y'_i
&=
\begin{cases}
\displaystyle
y_k^{-1}
& i=k,
\\
y_i y_k^{[b_{ki}]_+} (1\oplus y_k)^{-b_{ki}}
&i\neq k,
\end{cases}
\\
\label{eq:bmut1}
b'_{ij}&=
\begin{cases}
-b_{ij}
&
\text{$i=k$ or $j=k$,}
\\
b_{ij}+
b_{ik} [b_{kj}]_+
+
[-b_{ik}]_+b_{kj}
&
i,j\neq k,
\end{cases}
\end{align}
where $\hat{y}_k$ in \eqref{eq:xmut1}
is defined by
 \begin{align}
\label{eq:yhat1}
\hat{y}_i
:=y_i \prod_{j=1}^n x_j^{b_{ji}}
\in \mathcal{F}.
\end{align}
\endgroup
 The seed $\Sigma'$ is called the {\em mutation of $\Sigma$ at direction $k$},
and denoted by $\mu_k(\Sigma)$. 
Note that a skew-symmetrizer $D$ of $B$ is also a skew-symmetrizer of $B'$.
\end{defn}

The mutations are involutive, i.e., $\mu_k (\Sigma')=\Sigma$ in the above.

Let $\bbT_n$ be the $n$-regular  tree graph
where the edges are labeled by $1$,\dots, $n$
such that the $n$ edges attached to each vertex 
have different labels.
By abusing the notation,
the set of vertices of $\bbT_n$ is also denoted by 
 $\bbT_n$.

\begin{defn}[Cluster patterns]
\label{defn:clusterpattern1}
A family 
of seeds 
$\mathbf{\Sigma}=\{ \Sigma_t
=(\bfx_t,\bfy_t,B_t)
\mid t\in \bbT_n 
\}$ with coefficients in a semifield $\bbP$  indexed by $\bbT_n$
is called a {\em cluster pattern with coefficients in $\bbP$\/} if,
for any vertices $t,t'\in \bbT_n$ connected by an edge
labeled by $k$,
the equality $\Sigma_{t'}=\mu_k( \Sigma_t)$ holds.
\end{defn}

Following \cite{Fomin07}, for  a seed $\Sigma_t=(\bfx_t,\bfy_t,B_t)$ at $t$
in a cluster pattern,
we use the notation
\begin{align}
\label{eq:xybt1}
\bfx_t=(x_{1;t},\dots,x_{n;t}),
\quad
\bfy_t=(y_{1;t},\dots,y_{n;t}),
\quad
B_t=(b_{ij}^t)_{i,j=1}^n.
\end{align}

In parallel to Example \ref{ex:sf1} we introduce three families of cluster patterns.

\begin{ex}
\label{ex:cp1}
(1). ({\em Cluster pattern with universal coefficients})
This is a cluster pattern $\mathbf{\Sigma}=\{ \Sigma_t
=(\bfx_t,\bfy_t,B_t)
\mid t\in \bbT_n 
\}$  with coefficients in $\mathbb{P}$
where there is some $t_0\in \mathbb{T}_n$
such that, for the universal semifield  $\mathbb{Q}_{\mathrm{sf}}(\bfu)$
of a given $n$-tuple of variables $\bfu=(u_1,\dots, u_n)$,
the semifield homomorphism
$\varphi_{t_0}:\mathbb{Q}_{\mathrm{sf}}(\bfu)\rightarrow \mathbb{P}$ defined by
$u_i \mapsto y_{i;t_0}$ is injective.
In this case one can safely set $\mathbb{P}=\mathbb{Q}_{\mathrm{sf}}(\bfy_{t_0})$
from the beginning.
Moreover, one can replace it with $\mathbb{P}=\mathbb{Q}_{\mathrm{sf}}(\bfy_{t})$
for any other  $t\in \mathbb{T}_n$ if necessary,
 thanks to the involution property of the mutations.
 Sometimes we  call $\bfy_t$'s the {\em universal $y$-variables.}

We remark that no special name is given for such a cluster pattern in \cite{Fomin07},
and the terminology ``universal coefficients" is used for a different concept
\cite[Definition 12.3]{Fomin07} therein.
\par
(2). 
({\em Cluster pattern with principal coefficients at $t_0$.})
This is a cluster pattern $\tilde{\mathbf{\Sigma}}[{t}_0]=\{ \tilde{\Sigma}_t
=(\tilde{\bfx}_t,\tilde{\bfy}_t,B_t)
\mid t\in \bbT_n 
\}$  with coefficients in $\mathbb{P}$
where there is some ${t}_0\in \mathbb{T}_n$
such that, for the tropical semifield  $\mathrm{Trop}(\bfu)$
of a given $n$-tuple of variables $\bfu=(u_1,\dots, u_n)$,
there is an injective semifield homomorphism
$\varphi_{{t}_0}:\mathrm{Trop}(\bfu)\rightarrow \mathbb{P}$ such that
$u_i \mapsto \tilde{y}_{i;{t}_0}$.
In this case one can safely set $\mathbb{P}=\mathrm{Trop}(\tilde{\bfy}_{{t}_0})$
from the beginning.
In contrast to the previous case
one cannot replace it with $\mathrm{Trop}(\tilde{\bfy}_{t})$ with other 
$t\in \mathbb{T}_n$, in general, because 
the mutation \eqref{eq:ymut1} is {\em  incompatible} with the tropical sums
of $\mathrm{Trop}(\tilde{\bfy}_{t})$ and $\mathrm{Trop}(\tilde{\bfy}_{t'})$.
Sometimes we  call $\tilde{\bfy}_t$'s the {\em tropical $y$-variables} with the tropicalization point ${t}_0$.

\par
(3).
({\em Cluster pattern without  coefficients.})
This is a cluster pattern $\underline{\mathbf{\Sigma}}=\{ \underline{\Sigma}_t
=(\underline{\bfx}_t,\underline{\bfy}_t,B_t)
\mid t\in \bbT_n 
\}$ with coefficients in
a trivial semifield $\mathbf{1}$.
Since all $y$-variables are $1$,
one can safely omit them from  seeds; namely,
we set $\underline{\Sigma}_t
=(\underline{\bfx}_t,B_t)$.
\end{ex}

\section{Separation formulas}

We recall the most fundamental  theorem
by  \cite{Fomin07}
 on
the structure of seeds in cluster patterns.

Let  $\mathbf{\Sigma}=\{ \Sigma_t
=(\bfx_t,\bfy_t,B_t)
\mid t\in \bbT_n 
\}$ be a cluster pattern
with coefficients in any semifield $\bbP$.
We arbitrary choose a distinguished
point (the {\em initial point\/}) $t_0$ in $\bbT_n$.
The seed $(\bfx_{t_0},\bfy_{t_0},B_{t_0})$ at $t_0$ is called
the {\em initial seed} of $\mathbf{\Sigma}$.
We use the following simplified notation for them:
\begin{align}
\label{eq:initial2}
\bfx_{t_0}=\bfx=(x_{1},\dots,x_{n}),
\quad
\bfy_{t_0}=\bfy=(y_{1},\dots,y_{n}),
\quad
B_{t_0}=B=(b_{ij})_{i,j=1}^n.
\end{align}

Let us extract the family of the exchange matrices 
$\mathbf{B}=\{ B_t
\mid t\in \bbT_n 
\}$
from the cluster pattern $\mathbf{\Sigma}$,
and  call it a {\em $B$-pattern}.
Following \cite{Fomin07},
we introduce
a family of  quadruplets $\mathbf{\Gamma}(\mathbf{B},t_0)
=
\{ \Gamma_t=(\mathbf{F}_{t}(\bfu), G_t, C_t, B_t)
\mid t\in \mathbb{T}_n \}$
uniquely determined 
from $\mathbf{B}$ and  any initial point $t_0$
as follows.
For each $t\in \mathbb{T}_n$,
$\Gamma_t=(\mathbf{F}_{t}(\bfu), G_t, C_t, B_t)$
is a quadruplet (let us call it an {\em $FGC$-seed}),
where
\begin{itemize}
\item
 $\mathbf{F}_{t}(\bfu)=(F_{1;t}(\bfu),\dots, F_{n;t}(\bfu))$ is an $n$-tuple
of  rational functions (called  {\em $F$-polynomials}) in
$n$-tuple of variables $\bfu=(u_1,\dots,u_n)$
with coefficients in $\bbQ$
having a subtraction-free expression,
i.e., $F_{i;t}(\bfu)\in \bbQ_{\mathrm{sf}}(\bfu)$,
\item
and $G_t$ and $C_t$ are $n\times n$ integer matrices
(called a {\em $G$-matrix} and a {\em $C$-matrix}).
\end{itemize}
At the initial point $t_0$, they are given by
\begin{align}
\label{eq:initial1}
{F}_{i;t_0}(\bfu)=1 \quad (i=1,\dots,n),
\quad
G_{t_0}=C_{t_0}=I,
\end{align}
where $I$ is the identity matrix.
For $t,t'\in \mathbb{T}_n$
which  are connected by an edge labeled by $k$,
$\Gamma_t$ and $\Gamma_{t'}$ are related by the following {\em mutation
at $k$}:
\begingroup
\allowdisplaybreaks
\begin{align}
\label{eq:Fmut1}
 F_{i;t'}(\bfu)&=
 \begin{cases}
\frac
 {
  \displaystyle
 M_{k;t}(\bfu)
 }
{ \displaystyle
 F_{k;t}(\bfu)
 }
   &
 i= k
 \\
 F_{i;t}(\bfu)  &
 i \neq k,
 \end{cases}
 \\
 \label{eq:gmut1}
 g_{ij}^{t'}&=
 \begin{cases}
 \displaystyle
 -g_{ik}^t
 + \sum_{\ell=1}^n g_{i\ell}^t [-b_{\ell k}^t]_+
 -  \sum_{\ell=1}^nb_{i\ell}  [-c_{\ell k}^t]_+ 
 &
 j= k
 \\
 g_{ij}^t  &
 j \neq k,
 \end{cases}
 \\
 \label{eq:cmut1}
 c_{ij}^{t'}&=
 \begin{cases}
 -c_{ik}^t
 &
 j= k,
 \\
 c_{ij}^t + c_{ik}^t [b_{kj}^t]_+
 + [-c_{ik}^t]_+ b_{kj}^t
 &
 j \neq k,
 \end{cases}
\end{align}
\endgroup
where
\begin{align}
\begin{split}
\label{eq:M1}
 M_{k;t}(\bfu)
 &=
    \prod_{j=1}^{n}
  u_j^{[c_{jk}^t]_+}
  F_{j;t}(\bfu)^{[b_{jk}^t]_+}
+
    \prod_{j=1}^{n}
  u_j^{[-c_{jk}^t]_+}
  F_{j;t}(\bfu)^{[-b_{jk}^t]_+}
  \\
  &=
  \left(
      \prod_{j=1}^{n}
  u_j^{[-c_{jk}^t]_+}
  F_{j;t}(\bfu)^{[-b_{jk}^t]_+}
  \right)
  \left(
  1+
    \prod_{j=1}^{n}
    u_j^{c_{jk}^t}
  F_{j;t}(\bfu)^{b_{jk}^t}
  \right)
  .
  \end{split}
  \end{align}
Note that the transformation \eqref{eq:gmut1} involves the matrix
$B=B_{t_0}$ which does not depend on $t$.
Again, these mutations are involutive.
We call $\mathbf{\Gamma}(\mathbf{B},t_0)$ the {\em $FGC$-pattern
of $\mathbf{B}$ with the initial point $t_0$}.

As the name suggests,
a seemingly rational function $F_{i;t}(\bfu)\in \bbQ_{\mathrm{sf}}(\bfu)$
is indeed a polynomial in $\bfu$. 
(This fact follows from the celebrated {\em Laurent 
phenomenon\/} \cite{Fomin02,Fomin03a}.)

\begin{thm}[{\cite[Proposition 3.6]{Fomin07}}]
\label{thm:Fpoly1}
For any $i=1,\dots,n$ and $t\in \bbT_n$,
the function $F_{i;t}(\bfu)$ is a polynomial in $\bfu$
with coefficients in $\bbZ$.
\end{thm}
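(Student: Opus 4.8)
The plan is to realize $F_{i;t}(\bfu)$ as a specialization of a genuine cluster variable and then invoke the Laurent phenomenon in the form valid for cluster algebras of geometric type, in which the coefficient variables are never inverted. Work with the cluster pattern $\tilde{\mathbf{\Sigma}}[t_0]=\{\tilde{\Sigma}_t=(\tilde{\bfx}_t,\tilde{\bfy}_t,B_t)\}$ with principal coefficients at $t_0$, whose coefficient semifield is $\mathrm{Trop}(\bfu)$; then $\bbQ\bbP=\bbQ(u_1,\dots,u_n)$ and each $\tilde x_{i;t}$ lies in $\calF\cong\bbQ(x_1,\dots,x_n,u_1,\dots,u_n)$. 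I claim that $F_{i;t}(\bfu)$ is the image of $\tilde x_{i;t}$ under the substitution $x_1=\dots=x_n=1$; polynomiality in $\bfu$ then drops out of the Laurent phenomenon.

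Two inductive bookkeeping identities carry the argument, both proved by induction on the distance from $t_0$ in $\bbT_n$. First, $\tilde y_{k;t}=\prod_{\ell=1}^{n}u_\ell^{c_{\ell k}^{t}}$ in $\mathrm{Trop}(\bfu)$: at $t_0$ this holds since $C_{t_0}=I$, and the inductive step matches the tropical $y$-mutation \eqref{eq:ymut1} — note $1\oplus\prod_\ell u_\ell^{c_{\ell k}^{t}}=\prod_\ell u_\ell^{-[-c_{\ell k}^{t}]_+}$ in $\mathrm{Trop}(\bfu)$ — against the $C$-matrix mutation \eqref{eq:cmut1}. Second, $\tilde x_{i;t}|_{\bfx=1}=F_{i;t}(\bfu)$ as an identity in $\bbQ(\bfu)$ (meaningful once we know from the Laurent phenomenon that $\tilde x_{i;t}\in\bbZ[\bfu^{\pm1}][\bfx^{\pm1}]$, and using that each $F_{j;t}(\bfu)$ is a nonzero subtraction-free rational function). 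At $t_0$ both sides are $1$, and for adjacent vertices $t,t'$ joined by an edge labeled $k$ the $x$-mutation \eqref{eq:xmut1} gives
\[
\tilde x_{k;t'}=\tilde x_{k;t}^{-1}\Bigl(\prod_{j}\tilde x_{j;t}^{[-b_{jk}^{t}]_+}\Bigr)\frac{1+\hat y_{k;t}}{1\oplus\tilde y_{k;t}},\qquad \hat y_{k;t}=\tilde y_{k;t}\prod_{j}\tilde x_{j;t}^{b_{jk}^{t}}.
\]
Setting $x_j=1$, plugging in the first identity so that $\hat y_{k;t}|_{\bfx=1}=\bigl(\prod_\ell u_\ell^{c_{\ell k}^{t}}\bigr)\prod_{j}F_{j;t}(\bfu)^{b_{jk}^{t}}$ and $1\oplus\tilde y_{k;t}=\prod_\ell u_\ell^{-[-c_{\ell k}^{t}]_+}$, and simplifying with $a+[-a]_+=[a]_+$, the right-hand side becomes precisely $F_{k;t}(\bfu)^{-1}M_{k;t}(\bfu)$ with $M_{k;t}$ written in the product form of \eqref{eq:M1}; by \eqref{eq:Fmut1} this equals $F_{k;t'}(\bfu)$, while for $i\neq k$ nothing changes. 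This proves the claim.

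It then remains to improve the conclusion from a rational function in $\bfu$ to an honest polynomial in $\bfu$. The cluster pattern with principal coefficients is of geometric type with extended exchange matrix $\left(\begin{smallmatrix}B\\ I\end{smallmatrix}\right)$, and in such a cluster algebra the coefficient variables $u_1,\dots,u_n$ occur in every exchange relation only with the nonnegative exponents $[\pm b_{\ell k}]_+$, hence are never inverted along the mutation sequences underlying the Laurent phenomenon \cite{Fomin02,Fomin03a}. Therefore $\tilde x_{i;t}\in\bbZ[u_1,\dots,u_n][x_1^{\pm1},\dots,x_n^{\pm1}]$ with honest polynomial dependence on $\bfu$, and applying the specialization homomorphism $x_j\mapsto1$ on this ring, together with the identity just established, gives $F_{i;t}(\bfu)=\tilde x_{i;t}|_{\bfx=1}\in\bbZ[u_1,\dots,u_n]$, as claimed.

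I expect this last upgrade to be the main obstacle. The Laurent phenomenon over $\bbZ\bbP=\bbZ[\bfu^{\pm1}]$ by itself only gives $F_{i;t}(\bfu)\in\bbZ[\bfu^{\pm1}]$, a Laurent polynomial, and a subtraction-free Laurent polynomial need not be a polynomial (for instance $u_2/u_1$); what saves us is the geometric-type refinement, which keeps the frozen coefficient variables out of every denominator throughout the caterpillar-lemma induction. This is why it is essential to route through principal (geometric-type) coefficients rather than, say, universal coefficients. The remaining ingredients — the two bookkeeping inductions — are routine, if slightly tedious, $[\,\cdot\,]_+$-manipulations.
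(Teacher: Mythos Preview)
Your argument is correct and is essentially the standard proof from \cite{Fomin07}, which is precisely what the paper cites for this result. The paper does not supply its own proof of this theorem: it simply remarks that polynomiality ``follows from the celebrated Laurent phenomenon \cite{Fomin02,Fomin03a}'' and attributes the statement to \cite[Proposition~3.6]{Fomin07}. Your write-up fills in the details that the paper omits, and is in fact slightly more careful than the original, since the paper defines $F_{i;t}(\bfu)$ via the recursions \eqref{eq:Fmut1}--\eqref{eq:M1} rather than as the specialization $\tilde x_{i;t}\vert_{\bfx=1}$; your inductive bookkeeping identity reconciles the two definitions (cf.\ the paper's own Remark after Theorem~\ref{thm:sep1}, which flags exactly this point and leaves it to the reader).
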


We emphasize that, in our formulation, the initial point $t_0$ for a $FGC$-pattern
can be chosen independently to the tropicalization point $t_0$ of
any cluster pattern with principal coefficients in Example \ref{ex:cp1} (2)
that we will consider in this paper.
However, when they coincide,
the $C$-matrices   are naturally  identified as 
the (exponents of) tropical $y$-variables as follows:

\begin{prop}[{\cite[Remark 3.2]{Fomin07}}]
\label{prop:trop1}
Let $\tilde{\mathbf{\Sigma}}[t_0]=\{ \tilde{\Sigma}_t
=(\tilde{\bfx}_t,\tilde{\bfy}_t,B_t)
\mid t\in \bbT_n 
\}$
be a cluster pattern with principal coefficients at  any point $t_0$,
and 
let
$\mathbf{\Gamma}(\mathbf{B},t_0)
=
\{ \Gamma_t=(\mathbf{F}_{t}(\bfu), G_t, C_t, B_t)
\mid t\in \mathbb{T}_n \}$
be the $FGC$-pattern  of $\mathbf{B}$ with
 the initial point $t_0$.
 Then, the following formula holds for any $t\in \mathbb{T}_n$:
\begin{align}
\label{eq:pr1}
\tilde{y}_{i;t}&=\prod_{j=1}^n \tilde{y}_j ^{c^t_{ji}}.
\end{align}
\end{prop}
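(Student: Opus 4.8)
The plan is to prove the formula \eqref{eq:pr1} by induction on the distance from the tropicalization point $t_0$ in $\bbT_n$, showing that both sides obey the same mutation rule. At the base point $t=t_0$, the cluster pattern with principal coefficients has $\tilde{y}_{i;t_0}=u_i=\tilde{y}_i$ by definition, while $C_{t_0}=I$ by \eqref{eq:initial1}, so $\prod_j \tilde{y}_j^{c^{t_0}_{ji}}=\tilde{y}_i$, and the formula holds. For the inductive step, suppose \eqref{eq:pr1} holds at $t$, and let $t'$ be adjacent to $t$ along an edge labeled $k$. I must check that the tropical $y$-variables $\tilde{y}_{i;t'}$, obtained from $\tilde{y}_{i;t}$ via \eqref{eq:ymut1}, agree with $\prod_j \tilde{y}_j^{c^{t'}_{ji}}$, where $C_{t'}$ is obtained from $C_t$ via \eqref{eq:cmut1}.

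The key computation is to carry out the mutation \eqref{eq:ymut1} \emph{inside the tropical semifield} $\mathrm{Trop}(\bfu)$, where addition is governed by \eqref{eq:ts1}. First, for $i=k$: \eqref{eq:ymut1} gives $\tilde{y}_{k;t'}=\tilde{y}_{k;t}^{-1}=\prod_j \tilde{y}_j^{-c^t_{jk}}$, and \eqref{eq:cmut1} gives $c^{t'}_{jk}=-c^t_{jk}$, so the two sides match. For $i\neq k$: \eqref{eq:ymut1} gives $\tilde{y}_{i;t'}=\tilde{y}_{i;t}\,\tilde{y}_{k;t}^{[b^t_{ki}]_+}(1\oplus \tilde{y}_{k;t})^{-b^t_{ki}}$. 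Using the inductive hypothesis and writing $\tilde{y}_{k;t}=\prod_j u_j^{c^t_{jk}}$, the tropical sum evaluates as $1\oplus \tilde{y}_{k;t}=\prod_j u_j^{\min(0,\,c^t_{jk})}=\prod_j u_j^{-[-c^t_{jk}]_+}$, using $\min(0,a)=-[-a]_+$. Substituting everything and collecting exponents of $u_j$, the exponent of $u_j$ in $\tilde{y}_{i;t'}$ becomes $c^t_{ji}+c^t_{jk}[b^t_{ki}]_+ + [-c^t_{jk}]_+ b^t_{ki}$, which is exactly $c^{t'}_{ji}$ from \eqref{eq:cmut1} (with the roles of the column index matching, $j\neq k$ in that formula corresponds here to the second subscript $i\neq k$). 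Hence \eqref{eq:pr1} holds at $t'$, completing the induction.

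The main obstacle, such as it is, is purely bookkeeping: one must be careful that \eqref{eq:cmut1} is stated with $j$ ranging over column indices and the identity $\min(0,a)=-[-a]_+$ must be applied correctly when tropicalizing $(1\oplus \tilde{y}_{k;t})^{-b^t_{ki}}$, paying attention to the sign of $b^t_{ki}$ (the formula $(1\oplus\tilde{y}_{k;t})^{-b^t_{ki}}=\prod_j u_j^{b^t_{ki}[-c^t_{jk}]_+}$ holds regardless of the sign of $b^t_{ki}$ because tropical exponentiation is just ordinary scalar multiplication of exponent vectors). There is no deep content here beyond matching two explicit recursions; the substance of the statement is really that the mutation rule \eqref{eq:cmut1} for $C$-matrices was \emph{designed} to be the tropicalization of \eqref{eq:ymut1}, so the induction essentially unwinds the definitions. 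One may also simply cite \cite[Remark 3.2 and the discussion around (3.7)]{Fomin07} for this identification, but the inductive argument above is self-contained within the framework already set up.
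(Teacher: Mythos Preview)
Your proof is correct and is precisely the approach the paper has in mind: the paper does not spell out a proof of this proposition (it merely cites \cite[Remark~3.2]{Fomin07}), but for the GCA analogues (Propositions~\ref{prop:gtrop1} and~\ref{prop:gtrop2}) it writes ``This is proved by comparing the mutations \eqref{eq:gymut1} and \eqref{eq:gcmut1},'' which is exactly the inductive matching of the tropicalized $y$-mutation with the $C$-matrix mutation that you carry out in detail.
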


Now we present formulas expressing
 arbitrary $x$- and $y$-variables
$\bfx_t$, $\bfy_t$
in terms of the initial $x$- and $y$-variables
$\bfx$, $\bfy$
together with
$G$- and $C$-matrices and $F$-polynomials
with any initial point $t_0$.
This is one of the most fundamental 
properties of cluster patterns,
which makes the cluster pattern (or cluster algebra) structure so useful
in various applications.

\begin{thm}[{Separation Formulas \cite[Proposition 3.13. Corollary 6.13]{Fomin07}}]
\label{thm:sep1}
For  a cluster pattern
 $\mathbf{\Sigma}=\{ \Sigma_t
=(\bfx_t,\bfy_t,B_t)
\mid t\in \bbT_n 
\}$
with coefficients in any semifield $\bbP$,
the following formulas hold
for any $t\in \mathbb{T}_n$:
\begin{align}
\label{eq:sep1}
x_{i;t}&=
\left(
\prod_{j=1}^n
x_j^{g_{ji}^t}
\right)
\frac{F_{i;t}(\hat{\bfy})}{F_{i;t}\vert_{\bbP}(\bfy)},
\quad
\hat{y}_i=y_i \prod_{j=1}^n x_j^{b_{ji}}
\in \mathcal{F},
\\
\label{eq:sep2}
y_{i;t}&=
\left(
\prod_{j=1}^n
y_j^{c_{ji}^t}
\right)
\prod_{j=1}^n
F_{j;t}\vert_{\bbP}(\bfy)^{b_{ji}^t},
\end{align}
where $F_{i;t}\vert_{\bbP}(\bfy)$ is the evaluation
of $F_{i,t}(\bfu)$ in $\mathbb{P}$ at $\bfy$.
\end{thm}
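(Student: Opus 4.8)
The plan is to prove \eqref{eq:sep2} first, then deduce the analogous formula for the $\hat y$-variables, and finally prove \eqref{eq:sep1}; each of the three is established by induction on the distance $d(t_0,t)$ from $t$ to the initial point $t_0$ in $\bbT_n$. The base case $t=t_0$ is immediate: by \eqref{eq:initial1} we have $G_{t_0}=C_{t_0}=I$ and $F_{i;t_0}=1$, so the right-hand sides of \eqref{eq:sep1} and \eqref{eq:sep2} collapse to $x_i$ and $y_i$. For the inductive step fix an edge of $\bbT_n$ joining $t$ and $t'$, labeled $k$, with $d(t_0,t')=d(t_0,t)+1$; assume the relevant formula holds at $t$ and verify it at $t'$ using the mutation rules \eqref{eq:xmut1}--\eqref{eq:bmut1} together with \eqref{eq:Fmut1}--\eqref{eq:M1}.

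For \eqref{eq:sep2}, write $Y_{i;t}$ for the right-hand side. When $i=k$ the verification is short: all the $c^t_{jk}$ and $b^t_{jk}$ change sign under \eqref{eq:cmut1}, \eqref{eq:bmut1}, and the new polynomial $F_{k;t'}$ occurs only to the power $b^{t'}_{kk}=0$, so $Y_{k;t'}=Y_{k;t}^{-1}=y_{k;t}^{-1}=y_{k;t'}$ by the inductive hypothesis and \eqref{eq:ymut1}. When $i\neq k$ one substitutes the mutation rules into $Y_{i;t'}$ and collects terms; the key identity is that $\prod_j y_j^{[-c^t_{jk}]_+}\prod_j F_{j;t}\vert_{\bbP}(\bfy)^{[-b^t_{jk}]_+}$ equals $F_{k;t}\vert_{\bbP}(\bfy)\,F_{k;t'}\vert_{\bbP}(\bfy)\,(1\oplus y_{k;t})^{-1}$, which one reads off from the second expression for $M_{k;t}$ in \eqref{eq:M1}, the relation $M_{k;t}=F_{k;t}F_{k;t'}$ coming from \eqref{eq:Fmut1}, and the inductive hypothesis for $y_{k;t}$. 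Feeding this in, the $F_{k;t}\vert_{\bbP}$- and $F_{k;t'}\vert_{\bbP}$-powers cancel and what survives is exactly $y_{i;t}\,y_{k;t}^{[b^t_{ki}]_+}(1\oplus y_{k;t})^{-b^t_{ki}}=y_{i;t'}$.

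Next I would record the $\hat y$-version. By \cite[Proposition 3.9]{Fomin07} the elements $\hat y_{i;t}$ of \eqref{eq:yhat1} transform under seed mutation exactly as the $y$-variables, but with $\oplus$ replaced by ordinary addition, so the identical induction, carried out inside $\calF$ with $\oplus$ read as $+$, gives $\hat y_{i;t}=\bigl(\prod_j\hat y_j^{\,c^t_{ji}}\bigr)\prod_j F_{j;t}(\hat{\bfy})^{b^t_{ji}}$ for every $t$; here the substitution $\bfu\mapsto\hat{\bfy}$ into the $F$-polynomials is harmless because each $F_{i;t}(\bfu)$ lies in $\bbQ_{\mathrm{sf}}(\bfu)$ (indeed is a polynomial, by Theorem \ref{thm:Fpoly1}) and the identities \eqref{eq:Fmut1}, \eqref{eq:M1} are identities in $\bbQ_{\mathrm{sf}}(\bfu)$.

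Finally, for \eqref{eq:sep1} the step $i\neq k$ is immediate, since $x_{i;t'}=x_{i;t}$ while the $i$-th column of $G_t$ and the polynomial $F_{i;t}$ are both unchanged by \eqref{eq:gmut1}, \eqref{eq:Fmut1}. The case $i=k$ is the heart of the matter. Inserting the inductive hypothesis for $x_{k;t}$ and for the $x_{j;t}$ into the exchange relation \eqref{eq:xmut1}, and using the formulas for $y_{k;t}$ from \eqref{eq:sep2} and for $\hat y_{k;t}$ obtained above to rewrite $1\oplus y_{k;t}$ and $1+\hat y_{k;t}$ through the second form of $M_{k;t}$, one finds that all the $F$-polynomial factors reorganize, via \eqref{eq:M1} and \eqref{eq:Fmut1}, into the single ratio $F_{k;t'}(\hat{\bfy})/F_{k;t'}\vert_{\bbP}(\bfy)$, while the Laurent-monomial part of $x_{k;t'}$ acquires on $x_j$ the exponent $-g^t_{jk}+\sum_\ell g^t_{j\ell}[-b^t_{\ell k}]_+$ together with a correction $\prod_j\bigl(\prod_\ell x_\ell^{b_{\ell j}}\bigr)^{-[-c^t_{jk}]_+}$ stemming from the mismatch between $\prod_j y_j^{[-c^t_{jk}]_+}$ and $\prod_j\hat y_j^{[-c^t_{jk}]_+}$. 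Since $\hat y_j=y_j\prod_\ell x_\ell^{b_{\ell j}}$ by \eqref{eq:yhat1}, this correction contributes precisely $-\sum_\ell b_{j\ell}[-c^t_{\ell k}]_+$ to the exponent of $x_j$, i.e. the remaining term of \eqref{eq:gmut1}; comparison with \eqref{eq:gmut1} then gives $x_{k;t'}=\bigl(\prod_j x_j^{g^{t'}_{jk}}\bigr)F_{k;t'}(\hat{\bfy})/F_{k;t'}\vert_{\bbP}(\bfy)$, completing the induction. I expect the only genuine obstacle to be this last bookkeeping: arranging that the $B=B_{t_0}$-dependent term in the $G$-matrix mutation \eqref{eq:gmut1} emerges from the very definition of the $\hat y$-variables, and that the $F$-polynomials collapse correctly into $M_{k;t}$ via \eqref{eq:M1} and \eqref{eq:Fmut1}; everything else is formal manipulation of the mutation formulas.
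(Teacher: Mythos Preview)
Your proof is correct and follows precisely the approach the paper itself suggests. The paper does not give a detailed proof of Theorem~\ref{thm:sep1}; it cites \cite{Fomin07} and then remarks that ``it is straightforward to prove Theorem~\ref{thm:sep1} only from \eqref{eq:initial1}--\eqref{eq:M1} by induction on $t$,'' leaving this as an exercise to the reader --- and you have carried out exactly that exercise.
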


\begin{rem}
Originally in \cite{Fomin07}, $FGC$-patterns are defined through cluster patterns
with principal coefficients, and the proof of
Theorem \ref{thm:sep1} also relied on it.
However, it is  straightforward to prove Theorem \ref{thm:sep1}
only from \eqref{eq:initial1}--\eqref{eq:M1} by induction on $t$
(since we already  know this result!).
Moreover, the original definition of   $FGC$-patterns in \cite{Fomin07}
can be easily recovered from Theorem \ref{thm:sep1}.
We leave it as an exercise to the readers.
\end{rem}

\begin{rem}
In \cite{Fomin07} only the first formula \eqref{eq:sep1} is called the separation formula,
meaning that it separates the additions of $\mathcal{F}$ and $\mathbb{P}$
in the numerator and the denominator therein.
In view of  the parallelism of $x$- and $y$-variables, we also call 
the second one \eqref{eq:sep2}  the separation formula.
Here the ``separation'' has an additional meaning  in view of 
Proposition \ref{prop:trop1} that
 the {\em tropical part} (involving $C$ and $G$ matrices) and {\em nontropical part}
 (involving $F$-polynomials) are separated therein. 
  \end{rem}
 
 Let $S_n$ be the symmetric group of degree $n$.
For  a seed $\Sigma_t=(\bfx_t,\bfy_t,B_t)$ at $t$
and a permutation $\sigma \in S_n$,
we define the left action of $\sigma$
as
$\sigma\Sigma_t=(\sigma \bfx_t,\sigma \bfy_t,\sigma B_t)$,
where
 \begin{align}
  \label{eq:s4}
 \sigma(x_{1;t},\dots,x_{n;t})&=
 (x_{\sigma^{-1}(1);t},\dots,x_{\sigma^{-1}(n);t}),
 \\
  \label{eq:s5}
 \sigma(y_{1;t},\dots,x_{n;t})&=
 (y_{\sigma^{-1}(1);t},\dots,y_{\sigma^{-1}(n);t}),
 \\
  \label{eq:s6}
  \sigma B_t &=B',\quad b'_{ij}=b^t_{\sigma^{-1}(i)\sigma^{-1}(j)}.
 \end{align}
  This action is compatible with mutations; namely, we have
 \begin{align}
     \label{eq:comp1}
   \mu_{\sigma(k)}(  \sigma \Sigma_t)&=\sigma \mu_k (\Sigma_t).
   \end{align}
 Similarly,
 for a 
 $FGC$-seed  $\Gamma_t=(\mathbf{F}_t(\bfu), G_t,C_t,B_t)$ at $t$
and a permutation $\sigma\in S_n$,
 we define the action of $\sigma$ as
$\sigma\Gamma_t=(\sigma \mathbf{F}_t(\bfu), \sigma G_t, \sigma C_t, \sigma B_t)$,
where
 \begin{align}
  \label{eq:s1}
 \sigma(F_{1;t}(\bfu), \dots, F_{n;t}(\bfu))&=(F_{\sigma^{-1}(1);t}(\bfu),\dots, 
 F_{\sigma^{-1}(n);t}(\bfu)),
 \\
 \label{eq:s2}
 \sigma G_t &=G',\quad g'_{ij}=g^t_{i\sigma^{-1}(j)},\\
 \label{eq:s3}
 \sigma C_t &=C',\quad c'_{ij}=c^t_{i\sigma^{-1}(j)}.
 \end{align}
Again, we have
   \begin{align}
      \label{eq:comp2}
   \mu_{\sigma(k)}(  \sigma \Gamma_t)&=\sigma \mu_k (\Gamma_t).
 \end{align}
(Caution: For the mutation in the left hand side 
of \eqref{eq:comp2} we keep $B$
in the last term of the first line of \eqref{eq:gmut1} as it is, 
not replacing it with $\sigma B$.
Otherwise, the equality \eqref{eq:comp2} fails.)
Observe that \eqref{eq:s1}--\eqref{eq:s3} and 
  \eqref{eq:s4}--\eqref{eq:s6} are compatible in view of the separation formulas
  \eqref{eq:sep1}--\eqref{eq:sep2}.
  
Let us introduce
 the permutation matrix $P_{\sigma}$ associated with $\sigma\in S_n$ as
\begin{align}
\label{eq:p1}
P_{\sigma}=(p_{ij}), \ p_{ij}=\delta_{i,\sigma^{-1}(j)}.
\end{align}
Then, \eqref{eq:s6}, \eqref{eq:s2}, and \eqref{eq:s3}
are written as
\begin{align}
\label{eq:p2}
\sigma C_t=C_t P_{\sigma},
\
\sigma G_t=G_t P_{\sigma},
\
\sigma B_t=P_{\sigma}^T B_t P_{\sigma},
\end{align}
where $M^T$ is the transposition of a matrix $M$.

\section{Further fundamental results}
\label{sec:further}

To extract the power of the separation formulas in our periodicity problem,
we need some additional properties on $G$- and $C$-matrices and $F$-polynomials.

It turns out that  all necessary results stem from  two fundamental and very deep results
in cluster algebra theory, namely, the {\em sign-coherence\/} and the {\em Laurent positivity},
 both of which were proved partially by several authors and 
 proved in full generality
  by the  recent seminal paper \cite{Gross14}.
Unfortunately, no elementary or direct proof is yet known in the framework of \cite{Fomin07},
and the known proofs (except for \cite{Lee15} notably) involve some construction or realization of cluster variables with relatively big machinery such as categorification
or scattering diagram method.

Our strategy is to accept this two fundamental results temporarily
as inputs from outside, and then work in the framework of \cite{Fomin07} in the rest,
hoping that we will have some elementary proofs in the framework of \cite{Fomin07} in some future.

Throughout this section we fix
the $FGC$-pattern
$\mathbf{\Gamma}(\mathbf{B},t_0)
=
\{ \Gamma_t=(\mathbf{F}_{t}(\bfu), 
\allowbreak G_t, C_t, B_t)
\mid t\in \mathbb{T}_n \}$
of any $B$-pattern $\mathbf{B}$ with any initial point $t_0$.
Also, any cluster patterns in this section  share the common 
$B$-pattern $\mathbf{B}$ with $\mathbf{\Gamma}(\mathbf{B},t_0)$.

\subsection{Sign-coherence}
\label{subsec:sign-coherence}
The first fundamental result  is the
following one.
\begin{thm}[{Sign-coherence, \cite[Corollary 5.5]{Gross14},
\cite[Theorem 1.7]{Derksen10}, \cite[Theorem 3.7 (1) and Remark 3.11]{Plamondon10b},
\cite[Theorem 8.8]{Nagao10}}]
\label{thm:sign}
Each $C$-matrix $C_t$ is column-sign coherent.
Namely, each column vector ($c$-vector) of $C_t$   is nonzero vector, and 
its components are either all nonnegative or all nonpositive.
\end{thm}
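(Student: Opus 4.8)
I do not expect to find an argument purely inside the seed-mutation formalism of \eqref{eq:cmut1} --- none is known, which is exactly why the theorem is quoted rather than reproved here --- so the plan is to reduce the statement to a genuine positivity input supplied by a categorical or combinatorial model of the $B$-pattern $\mathbf{B}$. The whole argument is organized around a homological (or wall-crossing) reading of the columns of $C_t$: one shows that each $c$-vector is $\pm$ an honestly nonnegative object.

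In the skew-symmetric case, where $B=B_{t_0}$ is the signed adjacency matrix of a quiver $Q$, I would proceed as follows. By Derksen--Weyman--Zelevinsky choose a generic potential $W$; quiver-with-potential mutation then lifts the matrix mutation \eqref{eq:bmut1}, producing a QP $(Q_t,W_t)$ with Jacobian algebra $J_t$ at each $t\in\bbT_n$. The claim, proved by induction along $\bbT_n$ with base case \eqref{eq:initial1}, is that the $k$-th column of $C_t$ equals $\varepsilon\,\underline{\dim}\,M$ for some nonzero $J_{t_0}$-module $M$ (equivalently, built from a reachable object of the associated cluster category $\mathcal{C}_{(Q,W)}$), where $\varepsilon\in\{+1,-1\}$ records whether the module or its shift occurs. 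The inductive step is to verify that the $c$-matrix mutation \eqref{eq:cmut1}, column by column, is matched by the representation-theoretic mutation of the modules $M$; this compatibility is precisely the content of \cite{Derksen10, Nagao10, Plamondon10b}. Since a nonzero module has a nonzero dimension vector with all entries $\geq 0$, every $c$-vector lies in $\pm(\mathbb{Z}_{\geq 0}^n\setminus\{0\})$, which is column-sign coherence.

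For the general skew-symmetrizable case I would pass to the cluster scattering diagram $\mathcal{D}_{\mathbf{B}}$ in $\mathbb{R}^n$ constructed in \cite{Gross14}: its cluster chambers are the images of the positive orthant under the mutation sequences, the $g$-vectors label their extremal rays, and each $c$-vector appears as the primitive normal vector of a wall separating two adjacent cluster chambers. Consistency of the scattering diagram together with the positivity of all its wall-crossing functions --- the deep output of \cite{Gross14} --- forces every such normal vector to lie in a single closed coordinate orthant, i.e.\ sign-coherence. (When $B$ admits a compatible unfolding to a skew-symmetric $\hat B$ one can instead deduce the statement from the skew-symmetric case, since a sum over orbits of vectors that are simultaneously all-nonnegative or all-nonpositive keeps that sign; but since the existence of such an unfolding is not known in general, the scattering-diagram route is the uniform one.)

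The main obstacle is this positivity input itself: in every approach one must know a priori that the exchange combinatorics is governed by genuinely nonnegative data --- dimension vectors of modules, or broken-line / theta-function coefficients --- and establishing that requires the full Calabi--Yau or scattering-diagram machinery, for which no shortcut through \eqref{eq:cmut1} is available. A secondary technical subtlety is the bookkeeping of the overall signs $\varepsilon$ along $\bbT_n$, i.e.\ showing that exactly one of ``module'' and ``shifted module'' occurs at each vertex: this is the point at which sign-coherence enters its own induction, so the induction must be arranged so that the sign statement and the nonnegativity statement are proved simultaneously --- typically together with the duality between $G$- and $C$-matrices --- in order to avoid circularity.
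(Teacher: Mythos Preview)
Your reading of the situation is correct: the paper does not give its own proof of this theorem. It is quoted as one of the two deep external inputs (see the introduction to Section~\ref{sec:further}, where the author explicitly says ``Our strategy is to accept this two fundamental results temporarily as inputs from outside''), and every subsequent argument in the paper treats sign-coherence as a black box. Your sketch of the categorification route for the skew-symmetric case and the scattering-diagram route for the general case is an accurate summary of what the cited references \cite{Derksen10,Plamondon10b,Nagao10,Gross14} actually do, and your remarks about the unavoidable positivity input and the sign bookkeeping are on target. There is nothing further to compare against here.
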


This was  conjectured by \cite[Conjecture 5.5]{Fomin07},
and proved for the skew-symmetric case by \cite{Derksen10,Plamondon10b,Nagao10}
and in general by \cite{Gross14}.

It was noticed by  \cite[Proposition 5.6]{Fomin07}
that
Theorem \ref{thm:sign} is 
 equivalent to the following simple statement 
 for $F$-polynomials.
 (The equivalence is easily proved from \eqref{eq:Fmut1} by  induction on $t$.)
 
 \begin{thm}
  \cite{Gross14,Derksen10, Plamondon10b, Nagao10}
 \label{thm:const1}
 The constant term of each $F$-polynomial $F_{i;t}(\bfu)$ is 1.
 \end{thm}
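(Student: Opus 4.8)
The plan is to prove the equivalence of Theorem~\ref{thm:const1} with the already-granted Theorem~\ref{thm:sign}, so that each yields the other. By Theorem~\ref{thm:Fpoly1} every $F_{i;t}(\bfu)$ is a genuine polynomial with integer coefficients, so its constant term is its value at $\bfu=\mathbf{0}$; write $\langle f\rangle:=f(\mathbf{0})$ for the constant term of a polynomial $f$, and note that $\langle\cdot\rangle$ is a ring homomorphism, hence $\langle fg\rangle=\langle f\rangle\langle g\rangle$ and $\langle f+g\rangle=\langle f\rangle+\langle g\rangle$. The whole argument rests on one elementary computation. Write the numerator $M_{k;t}(\bfu)$ of \eqref{eq:M1} as $M_{k;t}=A+B$ with $A,B$ its two summands, so $A=\prod_{j}u_j^{[c_{jk}^t]_+}F_{j;t}(\bfu)^{[b_{jk}^t]_+}$ and $B=\prod_{j}u_j^{[-c_{jk}^t]_+}F_{j;t}(\bfu)^{[-b_{jk}^t]_+}$. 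Assume $\langle F_{j;t}\rangle=1$ for all $j$. Then $\langle A\rangle=\langle\prod_{j}u_j^{[c_{jk}^t]_+}\rangle\cdot\prod_{j}\langle F_{j;t}\rangle^{[b_{jk}^t]_+}=\langle\prod_{j}u_j^{[c_{jk}^t]_+}\rangle$, which is $1$ if the monomial $\prod_{j}u_j^{[c_{jk}^t]_+}$ is trivial and $0$ otherwise; symmetrically $\langle B\rangle\in\{0,1\}$ is governed by $\prod_{j}u_j^{[-c_{jk}^t]_+}$. Since $\prod_{j}u_j^{[c_{jk}^t]_+}$ is trivial exactly when the $k$-th column of $C_t$ is nonpositive and $\prod_{j}u_j^{[-c_{jk}^t]_+}$ is trivial exactly when it is nonnegative, we obtain: $\langle M_{k;t}\rangle=\langle A\rangle+\langle B\rangle$ is $2$ when that column is zero, $1$ when it is a nonzero sign-coherent vector, and $0$ when it has both a strictly positive and a strictly negative entry.

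From this I would argue both implications. For Theorem~\ref{thm:sign}$\Rightarrow$Theorem~\ref{thm:const1}, induct on the distance of $t$ from $t_0$ in $\bbT_n$: at $t_0$ one has $F_{i;t_0}(\bfu)=1$, hence $\langle F_{i;t_0}\rangle=1$; if $t$ and $t'$ are joined by an edge labeled $k$ and $\langle F_{i;t}\rangle=1$ for all $i$, then $F_{i;t'}=F_{i;t}$ disposes of $i\neq k$, while for $i=k$ the mutation \eqref{eq:Fmut1} gives $F_{k;t}(\bfu)\,F_{k;t'}(\bfu)=M_{k;t}(\bfu)$, so $\langle F_{k;t'}\rangle=\langle M_{k;t}\rangle/\langle F_{k;t}\rangle=\langle M_{k;t}\rangle$, and since Theorem~\ref{thm:sign} makes the $k$-th column of $C_t$ a nonzero sign-coherent vector, the computation above gives $\langle M_{k;t}\rangle=1$. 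For the converse Theorem~\ref{thm:const1}$\Rightarrow$Theorem~\ref{thm:sign}, fix any $t\in\bbT_n$ and any $k$, let $t'$ be the $k$-neighbour of $t$, and compute $\langle M_{k;t}\rangle=\langle F_{k;t}F_{k;t'}\rangle=1\cdot1=1$; by the computation above this is possible only when the $k$-th column of $C_t$ is a nonzero sign-coherent vector, and since $t$ and $k$ are arbitrary, every $C_t$ is column-sign coherent.

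I do not anticipate a substantial obstacle here: this is a short bookkeeping argument, and its only genuine content is that, once the normalization $\langle F_{j;t}\rangle=1$ is in force, the combinatorial assertion ``the $k$-th column of $C_t$ is sign-coherent'' is \emph{literally} the arithmetic assertion $\langle M_{k;t}\rangle=1$, read off from the two-term shape of $M_{k;t}$ in \eqref{eq:M1}. The only external ingredients are Theorem~\ref{thm:Fpoly1} (so that ``constant term'' makes sense and is multiplicative) and whichever of the two deep statements is assumed; notably no positivity of the coefficients of the $F$-polynomials is needed, since additivity and multiplicativity of $\langle\cdot\rangle$ hold unconditionally on $\bbZ[\bfu]$. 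The only points requiring a moment's care are the base case and the case $i\neq k$ of the induction, where one must keep the normalization anchored at the fixed initial point $t_0$.
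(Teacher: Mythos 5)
Your argument is exactly the equivalence the paper invokes: it cites \cite[Proposition 5.6]{Fomin07} and remarks that the equivalence of Theorem~\ref{thm:const1} with the granted sign-coherence Theorem~\ref{thm:sign} ``is easily proved from \eqref{eq:Fmut1} by induction on $t$,'' which is precisely the induction you carry out, with the constant-term homomorphism applied to the two-term decomposition of $M_{k;t}$ in \eqref{eq:M1}. Your write-up is correct (including the observation that Theorem~\ref{thm:Fpoly1} is needed so that ``constant term'' is well defined and multiplicative, and that no positivity is required), so it matches the paper's intended proof in both substance and route.
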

In fact, Theorem \ref{thm:sign} was proved  in this form in \cite{Derksen10, Nagao10}.

Let $D$ be a common skew-symmetrizer of (the matrices in) $\mathbf{B}$.

We have the following consequence of Theorem
\ref{thm:sign}. (It was proved earlier than Theorem \ref{thm:sign}
under the assumption of Theorem
\ref{thm:sign}.)

\begin{thm}[{\cite[Eqs.~(1,11), (1.15), (2.7), (2.9)]{Nakanishi11a}}]
\label{thm:dual1}
The following equalities hold for any $t\in \mathbb{T}_n$:
\begin{align}
\label{eq:dual1}
\text{(Duality)}
\quad
D^{-1} G_t^T D C_t &= I,
\\
\label{eq:bc1}
DB_t  &= C_t^T DB C_t,
\\
\label{eq:det1}
|\det G_t|& =|\det C_t|=1.
\end{align}
\end{thm}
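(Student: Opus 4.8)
The plan is to prove the three equalities \eqref{eq:dual1}, \eqref{eq:bc1}, \eqref{eq:det1} simultaneously by induction on the distance from $t_0$ in $\mathbb{T}_n$, using the mutation rules \eqref{eq:gmut1} and \eqref{eq:cmut1} together with the sign-coherence of $C$-matrices (Theorem \ref{thm:sign}). The base case $t=t_0$ is immediate since $G_{t_0}=C_{t_0}=I$ and $B_{t_0}=B$, so all three identities reduce to trivialities. For the inductive step, suppose \eqref{eq:dual1}--\eqref{eq:det1} hold at $t$ and let $t'$ be adjacent to $t$ along an edge labeled $k$. The key observation is that the mutation formulas \eqref{eq:gmut1} and \eqref{eq:cmut1} can each be encoded as matrix factorizations: one writes $C_{t'} = C_t E_k^{\ve}$ and $G_{t'} = F_k^{\ve} G_t$ (up to the precise convention), where $E_k^{\ve}$ and $F_k^{\ve}$ are explicit integer matrices depending on a sign $\ve = \pm 1$ chosen according to the sign of the $k$-th $c$-vector of $C_t$ (this is exactly where sign-coherence enters, since the formulas \eqref{eq:cmut1}, \eqref{eq:gmut1} contain $[\pm c_{\ell k}^t]_+$ terms that collapse nicely only when the $c$-vector has a definite sign), and similarly the $B$-matrix mutation \eqref{eq:bmut1} factors as $B_{t'} = E_k^{\ve\,T} B_t E_k^{\ve}$ or its analogue.

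First I would fix notation for these elementary matrices and record the two facts that make the bookkeeping work: (i) $\det E_k^{\ve} = \det F_k^{\ve} = -1$, and (ii) a short computation shows $D^{-1}(F_k^{\ve})^T D \cdot E_k^{\ve} = I$, i.e.\ $E_k^{\ve}$ and $F_k^{\ve}$ are mutually "inverse" in the sense twisted by the symmetrizer $D$. Granting these, \eqref{eq:det1} at $t'$ follows immediately from $|\det G_{t'}| = |\det F_k^{\ve}|\,|\det G_t| = |\det G_t|$ and likewise for $C_{t'}$. For \eqref{eq:dual1}, I compute
\begin{align*}
D^{-1} G_{t'}^T D C_{t'}
= D^{-1}(F_k^{\ve} G_t)^T D (C_t E_k^{\ve})
= \bigl(D^{-1}(F_k^{\ve})^T D\bigr)\bigl(D^{-1} G_t^T D C_t\bigr) E_k^{\ve},
\end{align*}
wait --- one must be slightly careful inserting $DD^{-1}$ in the middle; the clean way is $D^{-1} G_{t'}^T D C_{t'} = D^{-1}G_t^T (F_k^{\ve})^T D C_t E_k^{\ve} = (D^{-1}G_t^T D)(D^{-1}(F_k^{\ve})^T D) C_t E_k^{\ve}$, and then using the inductive hypothesis $D^{-1}G_t^T D C_t = I$ rearranged appropriately together with fact (ii). The upshot is that \eqref{eq:dual1} propagates. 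For \eqref{eq:bc1}, one substitutes $B_{t'} = E_k^{\ve\,T} B_t E_k^{\ve}$ and $C_{t'} = C_t E_k^{\ve}$ into the right-hand side $C_{t'}^T D B C_{t'}$ and uses the inductive hypothesis $DB_t = C_t^T DB C_t$ together with the identity $D E_k^{\ve} = (E_k^{\ve})^{-T} D$-type relation that follows from $D$ being a skew-symmetrizer; the $E_k^{\ve}$ factors telescope to give $DB_{t'}$.

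The main obstacle I anticipate is getting the sign conventions and the precise shape of the elementary matrices $E_k^{\ve}, F_k^{\ve}$ exactly right, especially reconciling the fact that \eqref{eq:gmut1} references the \emph{fixed} initial matrix $B = B_{t_0}$ (not $B_t$) in its last summand, whereas \eqref{eq:cmut1} references $B_t$; this asymmetry is precisely what forces the duality \eqref{eq:dual1} rather than a naive $G_t^T C_t = I$, and one must track it carefully so that the telescoping in the inductive step actually closes. A secondary subtlety is that one genuinely needs sign-coherence (not just for $C_t$ but, via \eqref{eq:dual1}, it then also controls $G_t$) to know which sign $\ve$ to use, so the three statements really are proved as a package: the factorization $C_{t'} = C_t E_k^{\ve}$ with a \emph{single} $\ve$ is only valid because the $c$-vector is sign-coherent. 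Once the matrix identities (i), (ii), and the $D$-twisted relations for $E_k^{\ve}$ are established --- these are finite, direct checks on $2$-parameter families of matrices --- the induction is purely formal. I would present the factorization lemmas first, then run the single induction covering all three equalities at once.
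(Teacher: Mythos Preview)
The paper does not supply its own proof of this theorem; it simply cites \cite{Nakanishi11a} and remarks that the result is a consequence of sign-coherence (Theorem~\ref{thm:sign}). Your inductive strategy via elementary-matrix factorizations of the mutation rules is exactly the method of that reference, so in substance your proposal matches the intended argument.

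Two small points. First, both factorizations should be \emph{right} multiplications, $C_{t'}=C_t E_k^{\ve}$ and $G_{t'}=G_t F_k^{\ve}$, since \eqref{eq:gmut1} and \eqref{eq:cmut1} modify the $k$-th column; your $G_{t'}=F_k^{\ve}G_t$ is a slip (which you partly anticipate in your ``wait'' aside). Second, and more substantively, you are right that the asymmetry in \eqref{eq:gmut1}---the presence of the fixed initial $B$ and of the $c$-vectors in the $G$-mutation---is the crux. In \cite{Nakanishi11a} this is handled by first using sign-coherence together with the inductive hypotheses \eqref{eq:dual1} and \eqref{eq:bc1} to rewrite \eqref{eq:gmut1} in a form that involves only $G_t$ and $B_t$ (their ``second form'' of the $g$-vector recursion); only then does the factorization $G_{t'}=G_t F_k^{\ve}$ with $F_k^{\ve}$ built from $B_t$ alone become available, and the $D$-twisted inverse relation between $E_k^{\ve}$ and $F_k^{\ve}$ (your fact (ii)) then reduces to the skew-symmetrizability $d_i b_{ij}^t=-d_j b_{ji}^t$. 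So the three statements are indeed proved as a package, exactly as you say, but be aware that obtaining the clean $F_k^{\ve}$ for $G$ is not a standalone lemma preceding the induction---it is itself part of the inductive step.
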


 We say that a permutation $\sigma\in S_n$ is {\em compatible with $D=\mathrm{diag}(d_1,\dots,d_n)$}
 if 
 \begin{align}
 \label{eq:comd1}
 d_{\sigma(i)}=d_i 
 \quad (i=1,\dots n).
 \end{align}
With the permutation matrix $P_{\sigma}$ in \eqref{eq:p1},
the  condition \eqref{eq:comd1} is equivalently expressed as
\begin{align}
\label{eq:dp1}
DP_{\sigma}=P_{\sigma}D.
\end{align}

\begin{prop}
\label{prop:compat1}
The following properties hold.
\par
(1). If $C_{t_1} = \sigma C_{t_2}$ occurs for some $t_1,t_2\in \mathbb{T}_n$
and a permutation $\sigma\in S_n$,
then $\sigma$ is compatible with $D$.
\par
(2). If $G_{t_1} = \sigma G_{t_2}$ occurs for some $t_1,t_2\in \mathbb{T}_n$
and a permutation $\sigma\in S_n$,
then $\sigma$ is compatible with $D$.
\end{prop}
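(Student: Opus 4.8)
The plan is to translate the hypothesis---about the $C$-matrices in part (1), about the $G$-matrices in part (2)---into the single statement that the matrix $D^{-1}P_\sigma^T D$ has integer entries, and then read off the $D$-compatibility of $\sigma$ directly. The input I would use is the duality relation \eqref{eq:dual1} of Theorem~\ref{thm:dual1}, rewritten as $G_t^T D C_t = D$, i.e.
\[
G_t^{-1} = D^{-1} C_t^T D,
\qquad
C_t^{-1} = D^{-1} G_t^T D
\qquad (t\in \bbT_n).
\]
By \eqref{eq:det1} together with the integrality of all $G$- and $C$-matrices, each $G_t^{-1}$ and each $C_t^{-1}$ is again an integer matrix; this is the one point where \eqref{eq:det1} is needed.

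For part (1), assume $C_{t_1}=\sigma C_{t_2}=C_{t_2}P_\sigma$ (using \eqref{eq:p2}). Inverting the first identity above at $t_2$ gives $G_{t_2}=D^{-1}(C_{t_2}^{-1})^T D$, and then I would compute
\[
G_{t_1}^{-1}G_{t_2}
= \bigl(D^{-1}C_{t_1}^T D\bigr)\bigl(D^{-1}(C_{t_2}^{-1})^T D\bigr)
= D^{-1}\bigl(C_{t_2}^{-1}C_{t_1}\bigr)^{T} D
= D^{-1}P_\sigma^T D .
\]
The left-hand side is a product of integer matrices, so $D^{-1}P_\sigma^T D$ is an integer matrix. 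Part (2) is the mirror computation: if $G_{t_1}=\sigma G_{t_2}=G_{t_2}P_\sigma$, then likewise $C_{t_1}^{-1}C_{t_2}=D^{-1}P_\sigma^T D$, again an integer matrix. So in both cases it remains only to deduce $D$-compatibility from integrality of $N:=D^{-1}P_\sigma^T D$.

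This last step is elementary. A direct computation gives $N_{ij}=(d_j/d_i)\,\delta_{j,\sigma^{-1}(i)}$, so the unique nonzero entry of the $i$-th row of $N$ is $N_{i,\sigma^{-1}(i)}=d_{\sigma^{-1}(i)}/d_i$. Its being an integer for every $i$ says $d_i\mid d_{\sigma^{-1}(i)}$, equivalently $d_{\sigma(i)}\mid d_i$ for all $i$; since $d_k/d_{\sigma(k)}$ is then a positive integer for every $k$ and the product of these numbers around any cycle of $\sigma$ equals $1$, each such ratio equals $1$, i.e.\ $d_{\sigma(i)}=d_i$ for all $i$. This is exactly \eqref{eq:comd1}, equivalently $DP_\sigma=P_\sigma D$ as in \eqref{eq:dp1}. (Alternatively, $N^{-1}=D^{-1}P_\sigma D$ is likewise an integer matrix, which directly supplies the reverse divisibilities $d_i\mid d_{\sigma(i)}$.)

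I do not anticipate a genuine obstacle here: the whole content is the observation that the duality relation \eqref{eq:dual1} is ``integral up to conjugation by $D$'', so a permutation intertwining two $C$-matrices (or two $G$-matrices) is forced to be integral after conjugation by $D$, and this pins down the $d_i$ along the orbits of $\sigma$. The only thing requiring a little care is invoking \eqref{eq:det1} to guarantee that $G_{t_1}^{-1}$ (resp.\ $C_{t_1}^{-1}$) has integer entries before multiplying.
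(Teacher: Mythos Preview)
Your proposal is correct and follows essentially the same approach as the paper: use the duality relation \eqref{eq:dual1} to express a product of integer $G$-matrices (or their inverses) as $D$ conjugated against $P_\sigma$, and then read off the divisibilities among the $d_i$. Indeed your matrix $N=D^{-1}P_\sigma^T D$ is precisely the transpose of the paper's $DP_\sigma D^{-1}=G_{t_2}^T(G_{t_1}^T)^{-1}$, so the two computations are literally the same up to transposition; your cycle-product argument for forcing $d_{\sigma(i)}=d_i$ is just a slightly more explicit version of the paper's ``$d_{\sigma(i)}\leq d_i$ for all $i$ implies equality''.
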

\begin{proof}
(1).
By \eqref{eq:dual1}, we have $DC_{t_1}D^{-1}=(G^T_{t_1})^{-1}$ and 
$DC_{t_2}^{-1}D^{-1}=G^T_{t_2}$.
By assumption, $C_{t_2}^{-1}C_{t_1}=P_{\sigma}$.
Therefore, $DP_{\sigma}D^{-1}=G^T_{t_2}(G^T_{t_1})^{-1}$.
By \eqref{eq:det1}, the righthand side is an integer matrix.
Therefore, $d_i  d_{\sigma(i)}^{-1}$ is an integer
for any $i=1,\dots,n$. 
In particular, $d_{\sigma(i)}\leq d_i$ for any $i$.
This is possible only  if $d_{\sigma(i)}=d_i$ for any $i$.
Therefore, $\sigma$ is compatible with $D$.
The proof of (2) is similar.
\end{proof}
We have the following corollary of Theorem \ref{thm:dual1}
and Proposition \ref{prop:compat1}.

\begin{cor} 
\label{cor:sigma1}
The following statements hold for $t_1,t_2\in \mathbb{T}_n$
and  a  permutation $\sigma\in S_n$,
where $P_{\sigma}$ is the permutation matrix in \eqref{eq:p1}.
\begin{gather}
\label{eq:dual3}
C_{t_1} = P_{\sigma}  \quad \Longleftrightarrow\quad  G_{t_1} =P_{\sigma}
\quad \Longrightarrow
\quad
 B_{t_1} = \sigma B,
\\
\label{eq:dual2}
C_{t_1} = \sigma C_{t_2}\quad \Longleftrightarrow\quad  G_{t_1} = \sigma G_{t_2}
 \quad\Longrightarrow  \quad B_{t_1} = \sigma B_{t_2}.
 \end{gather}
\end{cor}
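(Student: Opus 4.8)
The plan is to reduce everything to the duality identities of Theorem~\ref{thm:dual1} together with the compatibility statement of Proposition~\ref{prop:compat1}. Since $G_{t_0}=C_{t_0}=I$ and $\sigma I=P_\sigma$ by the matrix form \eqref{eq:p2} of the $\sigma$-action applied at $t=t_0$ (recalling also $B_{t_0}=B$), the assertion \eqref{eq:dual3} is precisely the special case $t_2=t_0$ of \eqref{eq:dual2}. So it suffices to prove \eqref{eq:dual2}.

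First I would rewrite the duality \eqref{eq:dual1} as $G_t=D^{-1}(C_t^{T})^{-1}D$, using that $D$ is diagonal (hence symmetric) and that $C_t$ is invertible over $\mathbb{Q}$ since $|\det C_t|=1$ by \eqref{eq:det1}; dually $C_t=D^{-1}(G_t^{T})^{-1}D$. Now suppose $C_{t_1}=\sigma C_{t_2}=C_{t_2}P_\sigma$. By Proposition~\ref{prop:compat1}(1) the permutation $\sigma$ is compatible with $D$, i.e.\ $DP_\sigma=P_\sigma D$, and hence also $DP_\sigma^{T}=P_\sigma^{T}D$. Transposing and then inverting $C_{t_1}=C_{t_2}P_\sigma$, and using $P_\sigma^{-1}=P_\sigma^{T}$, gives $(C_{t_1}^{T})^{-1}=(C_{t_2}^{T})^{-1}P_\sigma$. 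Substituting this into the rewritten duality and commuting $P_\sigma$ past $D$ yields
\[
G_{t_1}=D^{-1}(C_{t_2}^{T})^{-1}P_\sigma D=D^{-1}(C_{t_2}^{T})^{-1}DP_\sigma=G_{t_2}P_\sigma=\sigma G_{t_2}.
\]
The reverse implication is the mirror image: from $G_{t_1}=\sigma G_{t_2}$ one invokes Proposition~\ref{prop:compat1}(2) for the compatibility of $\sigma$ with $D$ and runs the identical computation using $C_t=D^{-1}(G_t^{T})^{-1}D$. This establishes the equivalences in both \eqref{eq:dual2} and \eqref{eq:dual3}.

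For the remaining implication $B_{t_1}=\sigma B_{t_2}$, I would feed $C_{t_1}=C_{t_2}P_\sigma$ into the identity \eqref{eq:bc1}, obtaining $DB_{t_1}=C_{t_1}^{T}DBC_{t_1}=P_\sigma^{T}\bigl(C_{t_2}^{T}DBC_{t_2}\bigr)P_\sigma=P_\sigma^{T}(DB_{t_2})P_\sigma$; using $P_\sigma^{T}D=DP_\sigma^{T}$ and cancelling the invertible $D$ on the left gives $B_{t_1}=P_\sigma^{T}B_{t_2}P_\sigma$, which is $\sigma B_{t_2}$ by \eqref{eq:p2}. All of this is routine linear algebra, and I do not expect any real obstacle. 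The only genuinely essential input is Proposition~\ref{prop:compat1}: it is what licenses moving $P_\sigma$ past $D$, and without it the argument would collapse. In that sense the hard part lies upstream — in Theorem~\ref{thm:sign} (sign-coherence) and the duality relations of Theorem~\ref{thm:dual1} on which Proposition~\ref{prop:compat1} rests.
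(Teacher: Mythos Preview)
Your proof is correct and follows essentially the same route as the paper's: the paper's proof simply cites \eqref{eq:p2}, \eqref{eq:dual1}, \eqref{eq:bc1}, and \eqref{eq:dp1} together with Proposition~\ref{prop:compat1}, and you have spelled out the linear-algebra computation that these ingredients encode. Your reduction of \eqref{eq:dual3} to the case $t_2=t_0$ of \eqref{eq:dual2} is a nice organizational touch not made explicit in the paper.
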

\begin{proof}
They follow from
\eqref{eq:p2},
\eqref{eq:dual1},
\eqref{eq:bc1},
and  \eqref{eq:dp1},
thanks to the compatibility of $\sigma$ with $D$
in Proposition \ref{prop:compat1}.
\end{proof}

\subsection{Laurent positivity}
\label{subsec:Laurent}

The second fundamental result  is the
following one.
\begin{thm}[{Laurent positivity, \cite[Corollary 0.3]{Gross14},
\cite[Theorem 1.1]{Musiker09},
\cite[Corollary 3.39]{Kimura12},
\cite[Theorem 1.1]{Lee15},
\cite[Theorem 2.4]{Davison16}}]
\label{thm:positivity}
Each $F$-polynomial $F_{i;t}(\bfu)$ has
only positive coefficients.
\end{thm}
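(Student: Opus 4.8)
This is the celebrated \emph{positivity conjecture} for cluster algebras, and, as noted above, I do not expect to prove it inside the elementary framework of \cite{Fomin07}; the plan is to \emph{reduce} it to the Laurent positivity of cluster variables in a cluster pattern with principal coefficients, which is the form in which the statement has been established, and then to quote one of those proofs.

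For the reduction, first fix a cluster pattern $\tilde{\mathbf{\Sigma}}[t_0]$ with principal coefficients at $t_0$, so that $\mathbb{P}=\mathrm{Trop}(y_1,\dots,y_n)$ and $\hat{y}_j=y_j\prod_{k=1}^n x_k^{b_{kj}}\in\mathcal{F}$. An easy induction on $t$ from \eqref{eq:Fmut1}--\eqref{eq:M1}, using the identity $\min([a]_+,[-a]_+)=0$, shows that $F_{i;t}\vert_{\mathbb{P}}(\bfy)=1$ in $\mathrm{Trop}(\bfy)$ for every $i$ and every $t$; hence the separation formula \eqref{eq:sep1} collapses to
\begin{align}
\tilde{x}_{i;t}=\Bigl(\prod_{j=1}^n x_j^{g_{ji}^t}\Bigr)F_{i;t}(\hat{\bfy}).
\end{align}
Since $\prod_{j}x_j^{g_{ji}^t}$ is a single Laurent monomial, the Laurent polynomial $\tilde{x}_{i;t}$ in $x_1,\dots,x_n,y_1,\dots,y_n$ has nonnegative coefficients if and only if $F_{i;t}(\hat{\bfy})$ does. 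Specializing all $x_j\mapsto 1$ sends each $\hat{y}_j$ to $y_j$, so nonnegativity of the coefficients of $F_{i;t}(\hat{\bfy})$ forces the same for $F_{i;t}(\bfy)$, i.e.\ for $F_{i;t}(\bfu)$; conversely, if $F_{i;t}(\bfu)$ has nonnegative coefficients then so does $F_{i;t}(\hat{\bfy})$, being a substitution of the positive Laurent monomials $\hat{y}_j$. Thus Theorem \ref{thm:positivity} is equivalent to the assertion that every cluster variable in a cluster pattern with principal coefficients is a Laurent polynomial with nonnegative coefficients in the initial $x$- and $y$-variables.

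It then remains to invoke this last assertion, which holds in full generality by the scattering-diagram and broken-line construction of \cite{Gross14}, and, in various special cases and by quite different methods, by perfect matchings of snake graphs in the surface case \cite{Musiker09}, monoidal categorification \cite{Kimura12}, greedy bases in rank $2$ \cite{Lee15}, and cohomological Donaldson--Thomas theory \cite{Davison16}. The main obstacle — and the reason the theorem is imported rather than proved here — is precisely that every one of these arguments leaves the combinatorial world of \cite{Fomin07}: each constructs an external model of the cluster variables, representation-theoretic, geometric, or combinatorial, in which a manifestly nonnegative quantity (a dimension, a count of combinatorial objects, an Euler characteristic) computes the Laurent coefficients. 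Producing such a positive model, not the elementary reduction above, is where the difficulty lies, and I have no elementary substitute for it.
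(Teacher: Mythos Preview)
The paper does not prove this theorem at all; it explicitly imports it as an external input (see the introduction to Section~\ref{sec:further}) and merely records the attribution. Your proposal agrees with this in spirit, and your reduction from positivity of $F$-polynomials to Laurent positivity of the cluster variables $\tilde{x}_{i;t}$ in a principal-coefficient pattern is correct and standard; it is essentially the direction the paper does not bother to spell out. So there is no discrepancy in approach---you simply add a short (and sound) bridge that the paper omits.

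One small factual slip: the reference \cite{Lee15} (Lee--Schiffler) is not a rank-$2$ greedy-basis argument; it proves positivity for \emph{all} skew-symmetric cluster algebras by an elementary combinatorial construction of compatible subsets. In fact the paper singles this reference out (``except for \cite{Lee15} notably'') precisely because it stays closer to the framework of \cite{Fomin07} than the categorical or scattering-diagram proofs. You may want to correct that description.
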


This was conjectured by \cite[Section 3]{Fomin07},
and proved for surface type by
\cite{Musiker09},
for acyclic case by \cite{Kimura12},
for  the skew-symmetric case by \cite{Lee15,Davison16},
and in general  by \cite{Gross14}.

Let us give a simple and useful criterion of the triviality of an $F$-polynomial,
which quickly follows
from Theorems \ref{thm:const1} and \ref{thm:positivity}.

\begin{lem}
\label{lem:Fcri1}
Let $\underline{\mathbf{\Sigma}}=\{ \underline{\Sigma}_t
=(\underline{\bfx}_t,B_t)
\mid t\in \bbT_n 
\}$
be a cluster pattern without coefficients,
and let $\underline{\hat{y}}_{i;t}$ be the one \eqref{eq:yhat1} for $\underline{\mathbf{\Sigma}}$, namely,
\begin{align}
\label{eq:yhat3}
\underline{\hat{y}}_{i;t}=\prod_{j=1}^n\underline{x}_{j:t}^{b^t_{ji}}.
\end{align}
Then, under the specialization $\underline{x}_{i:t_1}=1$ ($i=1,\dots,n$) at any point $t_1$,
we have the following inequality for any  $t,t'\in \mathbb{T}_n$ and $i=1,\dots,n$:
\begin{align}
F_{i;t}(\underline{\hat{\mathbf{y}}}_{t'})\vert_{\underline{x}_{1:t_1}=\dots
=\underline{x}_{n:t_1}=1}\geq 1.
\end{align}
Moreover, the equality holds if and only if 
$F_{i;t}({\mathbf{u}})=1$.
\end{lem}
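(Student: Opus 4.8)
The plan is to evaluate the separation formula \eqref{eq:sep1} for a suitable auxiliary cluster pattern and then read off the claimed inequality from Laurent positivity (Theorem \ref{thm:positivity}) together with the constant-term normalization (Theorem \ref{thm:const1}). Concretely, I would first observe that, because $F_{i;t}(\bfu)$ is a polynomial in $\bfu$ with nonnegative integer coefficients and constant term $1$, any substitution $\bfu \mapsto \bfv$ by a tuple $\bfv$ of elements each $\geq 1$ in an ordered ring yields $F_{i;t}(\bfv) \geq 1$, with equality iff $F_{i;t}(\bfu) = 1$ identically (since then every nonconstant monomial, which contributes a strictly positive amount, must be absent). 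So the whole problem reduces to showing that each $\underline{\hat{y}}_{j;t'}$, after the specialization $\underline{x}_{1;t_1} = \dots = \underline{x}_{n;t_1} = 1$, is a positive rational number that is $\geq 1$ — in fact I will show it equals $1$.

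The key step is therefore to compute $\underline{\hat{y}}_{j;t'}\vert_{\underline{x}_{\bullet;t_1}=1}$. Here I would use the separation formula \eqref{eq:sep1} applied to the coefficient-free pattern $\underline{\mathbf{\Sigma}}$ with initial point $t_1$: since the coefficients live in the trivial semifield $\mathbf{1}$, all $y_j = 1$ and all $F_{i;t}\vert_{\bbP}(\bfy) = 1$, so \eqref{eq:sep1} gives $\underline{x}_{i;t'} = \left(\prod_{j} \underline{x}_{j;t_1}^{g_{ji}^{t'}}\right) F_{i;t'}(\underline{\hat{\bfy}}_{t_1})$, where $\underline{\hat{y}}_{i;t_1} = \prod_j \underline{x}_{j;t_1}^{b_{ji}}$. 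Under the specialization $\underline{x}_{\bullet;t_1} = 1$ we get $\underline{\hat{y}}_{i;t_1}\vert = 1$ for all $i$, hence $F_{i;t'}(\underline{\hat{\bfy}}_{t_1})\vert = F_{i;t'}(1,\dots,1) = 1$ (again constant term $1$... no: here I need the full value, but in fact $F_{i;t'}$ evaluated at all-ones is some positive integer $\geq 1$; however that is not what I want). The cleaner route: the specialization $\underline{x}_{\bullet;t_1} = 1$ is exactly the evaluation map that sends the initial cluster to $1$, and since $\underline{\hat y}_{j;t'}$ lives in the $y$-variable theory which, for the coefficient-free pattern, mutates according to \eqref{eq:yhatmut0} starting from $\underline{\hat y}_{i;t_1} = \prod_k \underline{x}_{k;t_1}^{b_{ki}}$, the specialization sends each initial $\underline{\hat y}_{i;t_1}$ to $1$, and \eqref{eq:yhatmut0} preserves the value $1$ (if $\hat y_k = 1$ then $\hat y_k^{-1} = 1$ and $\hat y_i \cdot 1 \cdot (1+1)^{-b_{ki}}$ — wait, that gives $2^{-b_{ki}}$, not $1$).

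That last observation shows the naive claim "$\underline{\hat y}_{j;t'} \mapsto 1$" is false, so the argument must instead go through the Laurent-monomial structure. The right statement is: under $\underline{x}_{\bullet;t_1} = 1$, each $\underline{\hat y}_{j;t'}$ specializes to the \emph{ratio} of two evaluated $F$-polynomials times a Laurent monomial in the initial $\underline{x}$, and on closer inspection the correct clean fact — which is what Lemma~\ref{lem:Fcri1} is really leveraging — is that $\underline{\hat y}_{j;t'}\vert_{\underline x_{\bullet;t_1}=1}$ is a \emph{positive rational number}, so that plugging it into the nonnegative-coefficient polynomial $F_{i;t}$ gives a value $\geq$ its constant term $= 1$. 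For the equality clause, I would argue: if $F_{i;t}(\underline{\hat{\bfy}}_{t'})\vert = 1$ while $F_{i;t}$ has a nonconstant monomial, then since every $\underline{\hat y}_{j;t'}\vert$ is strictly positive that monomial contributes a strictly positive quantity, forcing the value to exceed $1$ — contradiction; conversely if $F_{i;t}(\bfu)=1$ the value is trivially $1$. \textbf{The main obstacle} I anticipate is pinning down the precise sense in which $\underline{\hat y}_{j;t'}$, which a priori is a rational function in the initial $\underline x$, remains well-defined and \emph{positive} (no zero/pole) after the specialization $\underline x_{\bullet;t_1}=1$; this is where Laurent positivity enters a second time, guaranteeing that each $\underline x_{j;t'}$ is a Laurent polynomial with positive coefficients in the initial cluster, so that $\underline{\hat y}_{j;t'}$ is a ratio of such and specializes to a genuine positive real number. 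Once positivity of the specialized $\hat y$-values is secured, the rest is the elementary monomial-positivity argument sketched above.
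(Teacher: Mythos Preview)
Your proposal, after the detours, lands on the correct argument and is essentially the paper's approach. The key point is exactly what you identify at the end: one only needs that each $\underline{\hat{y}}_{j;t'}$ specializes to a \emph{positive} real number (not $\geq 1$, not $=1$), after which the fact that $F_{i;t}(\bfu)$ has nonnegative integer coefficients (Theorem~\ref{thm:positivity}) and constant term $1$ (Theorem~\ref{thm:const1}) gives the inequality and the equality clause by the elementary argument you sketch.

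The one place where the paper is tidier than your final version: you invoke Laurent positivity a second time to argue that each $\underline{x}_{j;t'}$ is a positive Laurent polynomial in the initial cluster and hence specializes to a positive number. The paper instead observes directly from the mutation rule \eqref{eq:xmut1} that positivity of all $\underline{x}_{i;t}$ propagates by induction along $\mathbb{T}_n$: if every $\underline{x}_{j;t}>0$ then $\underline{\hat y}_{k;t}>0$ and hence $\underline{x}'_k = \underline{x}_k^{-1}(\prod_j \underline{x}_j^{[-b_{jk}]_+})(1+\underline{\hat y}_{k;t})>0$. Starting from $\underline{x}_{i;t_1}=1$ this gives $\underline{x}_{i;t}>0$ for all $t$, hence $\underline{\hat y}_{i;t}>0$ for all $t$, with no appeal to Laurent positivity for this step. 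Your route works, but the inductive argument is lighter and avoids the separation formula entirely.

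Your earlier attempts to show $\underline{\hat y}_{j;t'}\mapsto 1$ or $\geq 1$ are indeed wrong, as you correctly diagnose; you should simply discard those paragraphs rather than leave them in as a narrative of false starts.
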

\begin{proof}  By $\eqref{eq:xmut1}$, we have $\underline{x}_{i;t}>0$
 for any $t$ and $i$
under the specialization.
Therefore, by \eqref{eq:yhat3}, we have $\underline{\hat{y}}_{i;t}>0$ 
  for any $t$ and $i$.
Then, the claim is an immediate consequence of Theorems \ref{thm:const1} and \ref{thm:positivity}.
\end{proof}

The following theorem is a  very important   consequence of  
Theorems \ref{thm:sign}, \ref{thm:const1}, and \ref{thm:positivity} together,
where only the case $\sigma=\mathrm{id}$ was treated in
 \cite{Cao17}.
 It means that the tropical part ($G$-matrix) uniquely determines
 the nontropical part ($F$-polynomials).

\begin{thm}[{Detropicalization, \cite[Lemma 2.4 \& Theorem 2.5]{Cao17}}]
\label{thm:C-F}
The following statements hold for $t_1,t_2\in \mathbb{T}_n$
and  a  permutation $\sigma\in S_n$,
where $P_{\sigma}$ is the permutation matrix in \eqref{eq:p1}:
\begin{align}
\label{eq:gf1}
G_{t_1} = P_{\sigma} \quad &\Longrightarrow\quad 
F_{i;t_1}(\bfu)=1 \quad (i=1,\dots,n),
\\
\label{eq:gf2}
G_{t_1} = \sigma G_{t_2}\quad &\Longrightarrow\quad 
\mathbf{F}_{t_1}(\bfu)
=\sigma \mathbf{F}_{t_2}(\bfu).
\end{align}
\end{thm}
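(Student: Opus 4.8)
The plan is to prove both implications by induction on the distance between $t_1$ and $t_2$ in $\bbT_n$, using the separation formulas together with Lemma \ref{lem:Fcri1}. I would first observe that \eqref{eq:gf1} is the special case $t_2 = t_0$ of \eqref{eq:gf2}, since $G_{t_0} = I = P_{\mathrm{id}}$ and $\mathbf{F}_{t_0}(\bfu) = (1,\dots,1)$; conversely, by Corollary \ref{cor:sigma1}, $G_{t_1} = \sigma G_{t_2}$ forces $B_{t_1} = \sigma B_{t_2}$, and I would like to reduce \eqref{eq:gf2} to a statement that looks like \eqref{eq:gf1} at a shifted initial point. Concretely, I would change the initial point of the $FGC$-pattern from $t_0$ to $t_2$: by the $C$-matrix and $G$-matrix mutation rules (read relative to $t_2$), the hypothesis $G_{t_1} = \sigma G_{t_2}$ together with the duality \eqref{eq:dual1} and \eqref{eq:det1} gives $C_{t_1} = \sigma C_{t_2}$, and after re-expressing $G$-matrices and $F$-polynomials with $t_2$ as the base point one is reduced to showing: if the $G$-matrix (relative to the new base) equals a permutation matrix $P_\sigma$, then the corresponding $F$-polynomials are all $1$. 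So the heart of the matter is \eqref{eq:gf1}.

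To prove \eqref{eq:gf1}, I would use the separation formula \eqref{eq:sep1} applied to a cluster pattern without coefficients $\underline{\mathbf{\Sigma}}$ sharing the $B$-pattern $\mathbf{B}$, with initial point $t_0$. Evaluating \eqref{eq:sep1} at $t = t_1$ and at an arbitrary reference point $t'$ (for the $\hat{\bfy}$-variables), and then specializing all initial $\underline{x}_{i;t_1} = 1$ — equivalently using the specialization at $t_1$ in Lemma \ref{lem:Fcri1} — I would get
\begin{align}
\underline{x}_{i;t_1}
= \left( \prod_{j=1}^n \underline{x}_j^{\,g_{ji}^{t_1}} \right)
F_{i;t_1}(\underline{\hat{\bfy}}_{t_0}).
\end{align}
Wait — the cleaner route is to take $t'=t_1$ and evaluate $\bfx_t$ relative to initial point $t_1$; but since the separation formula is stated only with a fixed initial point $t_0$, I would instead argue as follows. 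The hypothesis $G_{t_1} = P_\sigma$ means $g_{ji}^{t_1} = \delta_{j,\sigma^{-1}(i)}$, so the prefactor $\prod_j \underline{x}_j^{g_{ji}^{t_1}} = \underline{x}_{\sigma^{-1}(i)}$ is a single initial variable. On the other hand, $\underline{x}_{i;t_1}$ is, by the Laurent phenomenon and Laurent positivity of the cluster variable itself (or directly by induction), a Laurent polynomial in the initial $\underline{x}_j$ with positive coefficients; comparing this with the product $\underline{x}_{\sigma^{-1}(i)} \cdot F_{i;t_1}(\underline{\hat{\bfy}}_{t_0})$ and specializing $\underline{x}_j = 1$ (all of them), Lemma \ref{lem:Fcri1} gives $F_{i;t_1}(\underline{\hat{\bfy}}_{t_0})\vert_{\text{spec}} \ge 1$ with equality iff $F_{i;t_1}(\bfu) = 1$. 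The equality $\underline{x}_{i;t_1}\vert_{\text{spec}} = 1$ on the left (which holds by the very choice of specialization point $t_1$) then forces $F_{i;t_1}(\bfu) = 1$, which is \eqref{eq:gf1}. Finally, once $F_{i;t_1}(\bfu) = 1$ for all $i$, plugging back into \eqref{eq:Fmut1}–\eqref{eq:M1} and comparing with the known $F$-polynomials at $t_2$ propagates $\mathbf{F}_{t_1}(\bfu) = \sigma \mathbf{F}_{t_2}(\bfu)$ for \eqref{eq:gf2}; alternatively, \eqref{eq:gf2} follows from \eqref{eq:gf1} directly by the base-point-change argument sketched above.

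The step I expect to be the main obstacle is setting up the base-point change for \eqref{eq:gf2} cleanly: one must verify that the $FGC$-pattern with initial point $t_2$ has the same $F$-polynomials (up to the overall reindexing by the transition between $t_0$ and $t_2$) as the original one re-rooted, and that the hypothesis $G_{t_1} = \sigma G_{t_2}$ translates exactly into "$G$-matrix at $t_1$ relative to base $t_2$ equals $P_\sigma$". This requires knowing how $G$-matrices and $F$-polynomials transform under a change of initial point, which is not literally stated in the excerpt but follows from the duality \eqref{eq:dual1}, \eqref{eq:det1} and the mutation rules; I would either cite this transformation law or derive it as a short lemma. Everything else — the positivity input, Lemma \ref{lem:Fcri1}, and the separation formula — is already in hand, so the argument is essentially a bookkeeping exercise once the re-rooting is pinned down. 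In particular the case $\sigma = \mathrm{id}$ is exactly \cite[Theorem 2.5]{Cao17}, and the only genuinely new content is tracking the permutation $\sigma$ through, which is harmless because $\sigma$ is compatible with $D$ by Proposition \ref{prop:compat1}.
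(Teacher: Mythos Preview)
Your argument for \eqref{eq:gf1} has a genuine gap. With the separation formula relative to $t_0$ you correctly obtain
\[
\underline{x}_{i;t_1}=\underline{x}_{\sigma^{-1}(i);t_0}\,F_{i;t_1}(\underline{\hat\bfy}_{t_0}),
\]
but you then conflate two different specializations. If you set $\underline{x}_{j;t_0}=1$, Lemma~\ref{lem:Fcri1} gives $F_{i;t_1}(\underline{\hat\bfy}_{t_0})\ge 1$, but there is no reason the left side $\underline{x}_{i;t_1}$ should equal $1$; conversely, if you set $\underline{x}_{i;t_1}=1$, you do not control the prefactor $\underline{x}_{\sigma^{-1}(i);t_0}$. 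So the inequality does not close into an equality, and you cannot conclude $F_{i;t_1}=1$.

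The paper closes this gap with an extra step you skipped: it first shows that the $FGC$-pattern with initial point $t_1$ also has a permutation $G$-matrix at $t_0$, namely $G'_{t_0}=P_{\sigma^{-1}}$. This is not automatic from a base-point-change lemma; it is obtained via principal coefficients (Proposition~\ref{prop:trop1}): $G_{t_1}=P_\sigma\Rightarrow C_{t_1}=P_\sigma\Rightarrow \tilde\bfy_{t_1}=\sigma\tilde\bfy_{t_0}$, so the pattern is also principal at $t_1$, whence $C'_{t_0}=P_{\sigma^{-1}}$ and $G'_{t_0}=P_{\sigma^{-1}}$. Only then do the two separation formulas combine to the \emph{functional} identity $1=F_{i;t_1}(\underline{\hat\bfy}_{t_0})\,F'_{\sigma^{-1}(i);t_0}(\underline{\hat\bfy}_{t_1})$, and a single specialization plus Lemma~\ref{lem:Fcri1} applied to \emph{both} factors forces $F_{i;t_1}=1$. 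Your proposed ``transformation law for $G$-matrices under change of initial point'' is effectively this step, but it is not a formality; it is the content of the proof.

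For \eqref{eq:gf2}, your sketch ``once $F_{i;t_1}=1$ \dots'' misapplies \eqref{eq:gf1} (the hypothesis of \eqref{eq:gf2} does not give $G_{t_1}=P_\sigma$). The paper instead locates a point $t_3$ with $G_{t_3}=P_\sigma$ by running the $\sigma$-twisted mutation sequence backwards from $t_1$ (using \eqref{eq:comp2}), applies \eqref{eq:gf1} at $t_3$ to get $\Gamma_{t_3}=\sigma\Gamma_{t_0}$, and then propagates forward to obtain $\Gamma_{t_1}=\sigma\Gamma_{t_2}$. Your base-change idea can be made to work, but it again needs the principal-coefficient step above to translate $G_{t_1}=\sigma G_{t_2}$ into $G'_{t_1}=P_\sigma$ relative to base $t_2$; this is essentially what the paper does in the proof of Theorem~\ref{thm:basic1}, part (c)$\Rightarrow$(b).
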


For completeness, we  present a proof for general $\sigma$,
slightly (and carefully) modifying the
proof of   \cite{Cao17}.
\begin{proof}
Proof of \eqref{eq:gf1}.
We prove it by
following the proof of Lemma 2.4 of  \cite{Cao17}.
\par
{\em Step 1.}
Suppose that $G_{t_1} =  P_{\sigma}$ for some $t_1$.
Then, by Corollary \ref{cor:sigma1}, we 
have $C_{t_1}=P_{\sigma}$.
Let $\tilde{\mathbf{\Sigma}}[t_0]=\{ \tilde{\Sigma}_t
=(\tilde{\bfx}_t,\tilde{\bfy}_t,B_t)
\mid t\in \bbT_n 
\}$
be a cluster pattern with principal coefficients at $t_0$.
Then, by Proposition \ref{prop:trop1}, we have
\begin{align}
\label{eq:yy1}
\tilde{y}_{i;t_1} = \tilde{y}_{\sigma^{-1}(i);t_0}\quad (i=1,\dots,n).
\end{align}
This implies that
the cluster pattern $\tilde{\mathbf{\Sigma}}$ is also viewed as
the one with principal coefficients at $t_1$.
Let 
$\mathbf{\Gamma}'(\mathbf{B},t_1)
=
\{ \Gamma'_t=(\mathbf{F}'_{t}(\bfu), G'_t, C'_t, B_t)
\mid t\in \mathbb{T}_n \}$
be the $FGC$-pattern  of $\mathbf{B}$ with the initial point $t_1$.
Then, by reversing the relation \eqref{eq:yy1}
and again by Proposition \ref{prop:trop1}, we have
$C'_{t_0}=P_{\sigma^{-1}}$. Therefore, we have
$G'_{t_0}=P_{\sigma^{-1}}$  by Corollary \ref{cor:sigma1}.

\par
{\em Step 2.}
Next, 
let $\underline{\mathbf{\Sigma}}=\{ \underline{\Sigma}_t
=(\underline{\bfx}_t,B_t)
\mid t\in \bbT_n 
\}$
be a cluster pattern without coefficients.
By applying the separation
formula \eqref{eq:sep1} with both initial points $t_0$ and $t_1$,
we have, for any $i=1,\dots,n$,
\begin{align}
\label{eq:xx1}
\underline{x}_{i:t_1}&=\underline{x}_{\sigma^{-1}(i):t_0}
F_{i;t_1}(\underline{\hat{\bfy}}_{t_{0}}),
\quad
\underline{\hat{y}}_{i;t_0}=\prod_{j=1}^n \underline{x}_{j;t_{0}}^{b^{t_0}_{ji}},
\\
\label{eq:xx2}
\underline{x}_{i:t_0}&=\underline{x}_{\sigma(i):t_1}
F'_{i;t_0}(\underline{\hat{\bfy}}_{t_1}),
\quad
\underline{\hat{y}}_{i;t_1}=\prod_{j=1}^n \underline{x}_{j;t_{1}}^{b^{t_1}_{ji}},
\end{align}
where we used $G_{t_1} =  P_{\sigma}$ and $G'_{t_0}=P_{\sigma^{-1}}$.
Then, by replacing $i$ in \eqref{eq:xx2} with $\sigma^{-1}(i)$
and multiplying it with \eqref{eq:xx1},
we obtain
\begin{align}
\label{eq:ff1}
1= F_{i;t_1}(\underline{\hat{\bfy}}_{t_{0}})
F'_{\sigma^{-1}(i);t_0}(\underline{\hat{\bfy}}_{t_1}).
\end{align}
It is important that the left hand side is $1$.
Now we do the  specialization $\underline{x}_{1;t_0}=\dots, =\underline{x}_{n;t_0}=1$
in \eqref{eq:ff1}.
Then, by Lemma \ref{lem:Fcri1},
we conclude that
 $F_{i;t_1}(\bfu)=F'_{\sigma^{-1}(i);t_0}(\bfu)=1$,
 which is the desired result.
 (Simplification of the proof using Lemma \ref{lem:Fcri1} is due to ourselves.)
 \par
Proof of \eqref{eq:gf2}.
Again, we prove it by
following the proof of Theorem  2.5 of  \cite{Cao17}.
Suppose that $G_{t_1}=\sigma G_{t_2}$ for some $t_1$ and $t_2$.
Then, by Corollary \ref{cor:sigma1}, we
have $C_{t_1}=\sigma C_{t_2}$ and $B_{t_1}=\sigma B_{t_2}$.
Suppose that  $t_2$ is connected to the initial point $t_0$ 
in $\mathbb{T}_n$
as
\begin{align}
t _0
\overunder{k_1}{} 
\cdots
\overunder{k_p}{} 
t_2.
\end{align}
Let $t_3\in \mathbb{T}_n$ be the one such that
\begin{align}
\label{eq:p3}
t_3
\overunder{\sigma(k_1)}{} 
\cdots
\overunder{\sigma(k_p)}{} 
t_1.
\end{align}
Then, by \eqref{eq:comp2} and ignoring $F$-polynomials therein, we have 
the following commutative diagram:
\begin{align}
\begin{matrix}
(G_{t_0},C_{t_0}, B_{t_0})=(I,I,B)
&
\displaystyle
\mathop{\rightarrow}^{k_1}
&
\cdots
&
\displaystyle
\mathop{\rightarrow}^{k_p}
&
(G_{t_2},C_{t_2}, B_{t_2})
\\
\sigma \downarrow
&&&&
\sigma \downarrow
\\
(G_{t_3},C_{t_3}, B_{t_3})\,
&
\displaystyle
\mathop{\rightarrow}^{\sigma(k_1)}
&
\cdots
&
\displaystyle
\mathop{\rightarrow}^{\sigma(k_p)}
&
(G_{t_1},C_{t_1}, B_{t_1}).
\end{matrix}
\end{align}
It follows that
\begin{align}
(G_{t_3},C_{t_3}, B_{t_3})=
(P_{\sigma}, P_{\sigma}, \sigma B).
\end{align}
Then, by \eqref{eq:gf1}, we have 
$F_{i,t_3}(\bfu)=1$ for any $i=1,\dots,n$.
Therefore, we have
\begin{align}
\Gamma_{t_3}=\sigma \Gamma_{t_0}.
\end{align}
Then, again by \eqref{eq:comp2},
we have the commutative diagram
\begin{align}
\begin{matrix}
\Gamma_{t_{0}}&
\displaystyle
\mathop{\rightarrow}^{k_1}
&
\cdots
&
\displaystyle
\mathop{\rightarrow}^{k_p}
&
\Gamma_{t_2}\,
\\
\sigma \downarrow \quad\
&&&&
\sigma \downarrow \quad\
\\
\Gamma_{t_3}
&
\displaystyle
\mathop{\rightarrow}^{\sigma(k_1)}
&
\cdots
&
\displaystyle
\mathop{\rightarrow}^{\sigma(k_p)}
&
\Gamma_{t_1}.
\end{matrix}
\end{align}
Therefore, we obtain 
\begin{align}
\Gamma_{t_1} = \sigma \Gamma_{t_2},
\end{align}
which proves the claim \eqref{eq:gf2}.
\end{proof}

\section{Synchronicity phenomenon}
\label{sec:synchronicity}

In this section we present several synchronicity properties
in cluster patterns
based on the results in the previous sections.
Many of them have already appeared partially and/or
in special cases such as skew-symmetric case, $\sigma=1$ case,
finite type case, geometric coefficients case, surface type, etc., in various literature
with several methods.
The references below are not complete by any means.
Also, many of the results may be derived  in a different manner
 via the scattering diagram method in \cite{Gross14}.
 However,
we believe that our approach is still useful
in view of cluster algebra theory
as explained in the introduction of Section \ref{sec:further}.

\subsection{$\sigma$-periodicity}
Let us formulate the notion of periodicity
for cluster patterns, including  the partial periodicity 
up to permutations $\sigma\in S_n$.

Let  $\mathbf{\Sigma}=\{ \Sigma_t
=(\bfx_t,\bfy_t,B_t)
\mid t\in \bbT_n 
\}$ be a cluster pattern
with coefficients in any semifield $\bbP$.
Suppose that $t_1,t_2 \in\mathbb{T}_n$
are connected in $\mathbb{T}_n$ as
\begin{align}
\label{eq:seq1}
t _1
\overunder{k_1}{} 
\cdots
\overunder{k_p}{} 
t_2,
\end{align}
so that
\begin{align}
\label{eq:seq2}
\Sigma_{t_2}=\mu_{k_p} \cdots \mu_{k_1}(\Sigma_{t_1}).
\end{align}

\begin{defn}[$\sigma$-periodicity]
We call a sequence of  mutations of seeds \eqref{eq:seq2}
 a 
{\em $\sigma$-period\/} if
\begin{align}
\Sigma_{t_1} = \sigma \Sigma_{t_2}.
\end{align}
In this case
we also say that seeds are {\em $\sigma$-periodic} under the sequence of mutations
\eqref{eq:seq2}.
Similarly, 
 we say that $x$-variables (resp., $y$-variables) are {\em $\sigma$-periodic} under the sequence of mutations \eqref{eq:seq2} if $\bfx_{t_1}=\sigma \bfx_{t_2}$ (resp.,
 $\bfy_{t_1}=\sigma \bfy_{t_2}$) holds.
\end{defn}

\subsection{$xy/GC$ synchronicity}
\label{subsec:xyGC}

In this subsection we  present some basic synchronicity property in cluster patterns,
which we call the {\em $xy/GC$ synchronicity}.
Namely, both periodicities of $x$- and $y$-variables coincide with 
the common periodicity of their tropical counterparts, $G$- and $C$-matrices
at any initial point $t_0$,
where some condition is assumed for $y$-variables.
It has been proved partially and/or
 in special cases, for example,  in \cite{Fomin07,Plamondon10b, Nagao10,Inoue10a, 
Cerulli12,Iwaki14a, Gross14, Cao17,Cao18}.
We regard it as the most basic synchronicity property,
because all other synchronicity properties studied in this paper are
obtained from it.

Let us divide
the statement of the $xy/GC$ synchronicity  into two theorems.
The first half of the statement is as follows:

\begin{thm}[$xy/GC$ synchronicity]
\label{thm:basic1}
Let
${{\mathbf{\Sigma}}}=\{ {\Sigma}_t
=({\bfx}_t,
{\bfy}_t,B_t)
\mid t\in \bbT_n 
\}$
be a cluster pattern with  coefficients in 
any  semifield $\mathbb{P}$,
and let
$\mathbf{\Gamma}(\mathbf{B},t_0)=
\{ \Gamma_t=(\mathbf{F}_{t}(\bfu), G_t, C_t, B_t)
\mid t\in \mathbb{T}_n \}$
be the $FGC$-pattern of $\mathbf{B}$ at any initial point $t_0$.
Then, 
for $t_1,t_2\in \mathbb{T}_n$
and  a  permutation $\sigma\in S_n$,
 the following three conditions are equivalent:
\par
(a). $G_{t_1}=\sigma G_{t_2}$.
\par
(b). $C_{t_1}=\sigma C_{t_2}$.
\par
(c). $\bfx_{t_1}=\sigma \bfx_{t_2}$.
\par
\noindent
Moreover, one of Conditions (a)--(c) implies the following condition:
\par
(d). $\bfy_{t_1}=\sigma \bfy_{t_2}$.
\end{thm}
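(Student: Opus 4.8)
The plan is to establish the equivalences (a) $\Leftrightarrow$ (b) $\Leftrightarrow$ (c) in a cycle, and then derive (d) from (b) using the separation formulas. The implication (a) $\Leftrightarrow$ (b) is already available for free: it is precisely Corollary \ref{cor:sigma1}, equation \eqref{eq:dual2}. So the real work is in linking these tropical conditions to the $x$-variable condition (c), and then to (d).

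\textbf{From (a) to (c).} Assume $G_{t_1}=\sigma G_{t_2}$. First apply the detropicalization theorem, Theorem \ref{thm:C-F}, equation \eqref{eq:gf2}, to conclude $\mathbf{F}_{t_1}(\bfu)=\sigma\mathbf{F}_{t_2}(\bfu)$, i.e. $F_{i;t_1}(\bfu)=F_{\sigma^{-1}(i);t_2}(\bfu)$ for all $i$. By Corollary \ref{cor:sigma1} we also get $B_{t_1}=\sigma B_{t_2}$, hence $b^{t_1}_{ji}=b^{t_2}_{\sigma^{-1}(j)\sigma^{-1}(i)}$. Now feed all of this into the separation formula \eqref{eq:sep1}. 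Writing $x_{i;t_1}$ via \eqref{eq:sep1} at initial point $t_0$, the $G$-matrix exponents $g^{t_1}_{ji}$ equal $g^{t_2}_{j\sigma^{-1}(i)}$ by the hypothesis $G_{t_1}=\sigma G_{t_2}$ (using \eqref{eq:s2}), the $F$-polynomial $F_{i;t_1}$ equals $F_{\sigma^{-1}(i);t_2}$, and correspondingly $F_{i;t_1}\vert_{\bbP}(\bfy)=F_{\sigma^{-1}(i);t_2}\vert_{\bbP}(\bfy)$. Matching every factor in \eqref{eq:sep1} term by term yields $x_{i;t_1}=x_{\sigma^{-1}(i);t_2}$, which is exactly $\bfx_{t_1}=\sigma\bfx_{t_2}$. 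This is essentially the observation recorded right after \eqref{eq:comp2} that \eqref{eq:s1}--\eqref{eq:s3} and \eqref{eq:s4}--\eqref{eq:s6} are compatible via the separation formulas.

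\textbf{From (c) to (a).} This is the step I expect to be the main obstacle, since we must recover a statement about integer matrices from an equality of Laurent-type expressions in the field $\calF$. The strategy is to pass to a cluster pattern without coefficients $\underline{\mathbf{\Sigma}}$ sharing the $B$-pattern $\mathbf{B}$, for which (c) still holds (the $x$-variables in any cluster pattern with the same $B$-pattern are governed by the same $G$, $C$, $F$ data via \eqref{eq:sep1}, and in the coefficient-free case $F_{i;t}\vert_{\mathbf{1}}(\underline{\bfy})=1$). Then specialize $\underline{x}_{j;t_0}=1$ for all $j$, as in Lemma \ref{lem:Fcri1}. Under this specialization \eqref{eq:sep1} degenerates: the $F$-polynomial part becomes $F_{i;t_1}(\underline{\hat\bfy}_{t_0})\vert_{\underline{x}=1}$, a positive real number by Lemma \ref{lem:Fcri1}, while the monomial part $\prod_j \underline{x}_j^{g^{t_1}_{ji}}$ becomes $1$ as well — so this naive specialization loses the $G$-matrix information and is not enough by itself. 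Instead I would use a more refined evaluation: keep the $\bfx$ variables formal and compare the Laurent monomials. Concretely, take logarithmic derivatives, or better, exploit that the $G$-matrix is the leading exponent vector of $x_{i;t}$ as a Laurent polynomial in $\bfx$ (this is the standard $\mathbf{g}$-vector characterization, which follows from \eqref{eq:sep1} together with Theorem \ref{thm:const1}: the constant-term-$1$ property of $F_{i;t}$ forces the monomial $\prod_j x_j^{g^t_{ji}}$ to appear and to be the distinguished ``lowest" term). Hence $\bfx_{t_1}=\sigma\bfx_{t_2}$ forces the corresponding $\mathbf{g}$-vectors to match up to the permutation $\sigma$, i.e. $G_{t_1}=\sigma G_{t_2}$. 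This closes the cycle (a) $\Leftrightarrow$ (b) $\Leftrightarrow$ (c).

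\textbf{From (b) to (d).} Finally, assume (b), so $C_{t_1}=\sigma C_{t_2}$, and by the already-proven equivalences also $\mathbf{F}_{t_1}=\sigma\mathbf{F}_{t_2}$ and $B_{t_1}=\sigma B_{t_2}$. Apply the separation formula \eqref{eq:sep2} for $y$-variables at initial point $t_0$. The factor $\prod_j y_j^{c^{t_1}_{ji}}$ matches $\prod_j y_j^{c^{t_2}_{j\sigma^{-1}(i)}}$ by hypothesis; the factor $\prod_j F_{j;t_1}\vert_{\bbP}(\bfy)^{b^{t_1}_{ji}}$ matches $\prod_j F_{\sigma^{-1}(j);t_2}\vert_{\bbP}(\bfy)^{b^{t_2}_{\sigma^{-1}(j)\sigma^{-1}(i)}}$ after reindexing $j\mapsto\sigma^{-1}(j)$, using the matched $F$-polynomials and $B$-matrices. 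Multiplying, $y_{i;t_1}=y_{\sigma^{-1}(i);t_2}$, which is $\bfy_{t_1}=\sigma\bfy_{t_2}$, i.e. (d). Note that no extra hypothesis on $\bbP$ is needed for this direction — the asymmetry (why (d) does not in general imply the rest) is what gets addressed in the companion theorem, and will hinge on the degeneracy of $B$ and on detropicalization running only from $G$/$C$ to $F$, not backward from $y$-variables alone.
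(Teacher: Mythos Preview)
Your treatment of (a)$\Leftrightarrow$(b), of (a)$\Rightarrow$(c), and of (b)$\Rightarrow$(d) is correct and matches the paper's argument essentially verbatim: invoke Corollary~\ref{cor:sigma1}, then Theorem~\ref{thm:C-F}, then the separation formulas \eqref{eq:sep1}--\eqref{eq:sep2}.

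The gap is in (c)$\Rightarrow$(a). Your proposed extraction of the $g$-vector as the ``distinguished lowest term'' of the coefficient-free Laurent polynomial breaks down exactly when $B$ is degenerate---the case the paper singles out as the interesting one. From \eqref{eq:sep1} in $\underline{\mathbf{\Sigma}}$ one has
\[
\underline{x}_{i;t}=\Bigl(\prod_j\underline{x}_j^{g^t_{ji}}\Bigr)F_{i;t}(\underline{\hat\bfy}),
\qquad \underline{\hat y}_k=\prod_j\underline{x}_j^{b_{jk}},
\]
and the nonconstant monomials $\underline{\hat\bfy}^{\alpha}$ of $F_{i;t}$ contribute Laurent monomials with exponent $g^t_{\cdot i}+B\alpha$. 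When $B$ has a kernel, $B\alpha$ is confined to a proper subspace and can even vanish for $\alpha\neq 0$; there is then no linear functional on $\mathbb{Z}^n$ that singles out $\alpha=0$ as extremal, so the monomial $\prod_j\underline{x}_j^{g^t_{ji}}$ is not ``lowest'' in any intrinsic sense and cannot be read off from the Laurent expression alone. The $g$-vector-as-degree characterization you allude to is valid in the \emph{principal} coefficient pattern via the $\mathbb{Z}^n$-grading, but there is in general no semifield map $\mathbb{P}\to\mathrm{Trop}(\tilde\bfy_{t_0})$ that would let you transport (c) there. (A smaller issue: your justification for why (c) transfers to $\underline{\mathbf{\Sigma}}$ is circular as written; the paper does this via the trivialization homomorphism $\mathbb{P}\to\mathbf{1}$.)

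The paper circumvents the obstacle by shifting the initial point. After passing to $\underline{\mathbf{\Sigma}}$, it takes the $FGC$-pattern $\mathbf{\Gamma}(\mathbf{B},t_2)$ with initial point $t_2$, so that \eqref{eq:sep1} reads
\[
\underline{x}_{\sigma^{-1}(i);t_2}=\underline{x}_{i;t_1}
=\Bigl(\prod_j\underline{x}_{j;t_2}^{g'^{t_1}_{ji}}\Bigr)F'_{i;t_1}(\underline{\hat\bfy}_{t_2}).
\]
Specializing $\underline{x}_{j;t_2}=1$ now makes the left side equal to $1$ (not merely positive), so Lemma~\ref{lem:Fcri1} forces $F'_{i;t_1}(\bfu)=1$; feeding this back gives $G'_{t_1}=P_\sigma$, hence $C'_{t_1}=P_\sigma$ by \eqref{eq:dual3}. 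Finally, applying \eqref{eq:sep2} with initial point $t_2$ to the tropical $y$-variables of $\tilde{\mathbf{\Sigma}}[t_0]$ yields $\tilde\bfy_{t_1}=\sigma\tilde\bfy_{t_2}$, i.e.\ $C_{t_1}=\sigma C_{t_2}$ by Proposition~\ref{prop:trop1}. The change of base point from $t_0$ to $t_2$ is the missing idea: it is what turns the specialization you already tried into a sharp equality rather than a vacuous one.
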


\begin{proof} 
((a) $\Longleftrightarrow$ (b)).
This was already stated in
\eqref{eq:dual2}.
\par
 ((a) $\Longrightarrow$ (c), (d)).  Assume that 
 $G_{t_1}=\sigma G_{t_2}$.
 Then, by \eqref{eq:dual2} and  \eqref{eq:gf2}, we have $C_{t_1}=\sigma C_{t_2}$,
 $B_{t_1}=\sigma B_{t_2}$
 and  $F_{i;t_1}(u)=F_{\sigma^{-1}(i);t_2}(u)$.
Then, applying the separation formulas \eqref{eq:sep1} and \eqref{eq:sep2}
for $\bfx_{t_1}$  and $\bfy_{t_1}$, respectively, we obtain Conditions (c) and (d).
\par
((c) $\Longrightarrow$ (b)).
Assume that ${\bfx}_{t_1}=\sigma {\bfx}_{t_2}$.
Now let $\underline{\mathbf{\Sigma}}=\{ \underline{\Sigma}_t
=(\underline{\bfx}_t,B_t)
\mid t\in \bbT_n 
\}$ be a cluster pattern without coefficients.
First apply a semifield homomorphism (the trivialization map)
in the coefficients of $\bfx_{t}$
\begin{align}
\label{eq:semihom1}
\begin{matrix}
&{\mathbb{P}}
&\rightarrow &\mathbf{1} \ \\
 & {y}_{i;t}& \mapsto & 1,
\end{matrix}
\end{align}
then apply a homomorphism
 ${x}_{i;t} \mapsto  \underline{x}_{i;t}$  from the subfield of the ambient field
 $\mathcal{F}$ of $\mathbf{\Sigma}$ generated by
$\bfx_t$'s to 
the ambient field $\underline{\mathcal{F}}$ of $\underline{\mathbf{\Sigma}}$.
Then, we obtain  $\underline{\bfx}_{t_1}=\sigma \underline{\bfx}_{t_2}$.
\par
Let us take the $FGC$-pattern  $\mathbf{\Gamma}(\mathbf{B},t_2)=
\{ \Gamma'_t=(\mathbf{F'}_{t}(\bfu), G'_t, C'_t, B_t)
\mid t\in \mathbb{T}_n \}$
  {\em with the initial point $t_2$}.
  Applying the separation formula \eqref{eq:sep1}
for $\underline{\bf{x}}_{t_1}$ with the initial point $t_2$,
we have, for any $i=1,\dots,n$,
\begin{align}
\label{eq:xtilde1}
\underline{x}_{i;t_1}&=
\left(
\prod_{j=1}^n
\underline{x}_{j;t_2}^{g^{\prime t_1}_{ji}}
\right)
{F'_{i;t_1}(\underline{\hat{\bfy}}_{t_2})},
\quad
{\underline{\hat{y}}}_{i;t_2}= \prod_{j=1}^n \underline{x}_{j;t_2}^{b_{ji}^{t_2}}.
\end{align}
Since $\underline{\bfx}_{t_1}=\sigma \underline{\bfx}_{t_2}$,
we have
\begin{align}
\label{eq:xtilde2}
\underline{x}_{\sigma^{-1}(i);t_2}&=
\left(
\prod_{j=1}^n
\underline{x}_{j;t_2}^{g^{\prime t_1}_{ji}}
\right)
{F'_{i;t_1}(\underline{\hat{\bfy}}_{t_2})}.
\end{align}
Let us evaluate
the equality  \eqref{eq:xtilde2} under the specialization
 $\underline{x}_{1;t_2}=\cdots = \underline{x}_{n;t_2} = 1$.
Then, we see that $F'_{i;t_1}(\underline{\hat{\bfy}}_{t_2})$
is 1 under the specialization.
Therefore, by Lemma \ref{lem:Fcri1},
we have $F'_{i;t_1}(\bfu)=1$.
Then,
again by \eqref{eq:xtilde2}, we obtain
 $G'_{t_1}=
P_{\sigma}$.
 Therefore, by \eqref{eq:dual3}, we have $C'_{t_1}=P_{\sigma}$.
 \par
Finally, let 
$\tilde{\mathbf{\Sigma}}[{t}_0]=\{ \tilde{\Sigma}_t
=(\tilde{\bfx}_t,\tilde{\bfy}_t,B_t)
\mid t\in \bbT_n 
\}$
 be a cluster pattern
with principal coefficients at $t_0$.
We apply
 the separation formula \eqref{eq:sep2}
 for $\tilde{\bfy}_{t_1}$
{\em with the initial point $t_2$}.
Since $C'_{t_1}=P_{\sigma}$ and $F'_{i;t_1}(\bfu)=1$ as shown above,
we obtain $\tilde{\bfy}_{t_1}=\sigma \tilde{\bfy}_{t_2}$.
This means $C_{t_1}=\sigma C_{t_2}$ by \eqref{eq:pr1}.
\end{proof}

In contrast to $x$-variables, the implication (d) $\Longrightarrow$ (a), (b) in Theorem \ref{thm:basic1} does not holds
in general.
In other words, $y$-variables may admit a finer periodicity
than $x$-variables.
 For example, in the extreme case $\mathbb{P}=\mathbf{1}$,
we have $\bfy_t=(1,\dots,1)$ for any $t\in \mathbb{T}_n$. Then, $\bfy_t$ is periodic by any single mutation for any $B$-pattern $\mathbf{B}$.
To provide a sufficient condition that  (d) $\Longrightarrow$ (a), (b) holds,
we introduce some notion.
\begin{defn}
Let 
${{\mathbf{\Sigma}}}=\{ {\Sigma}_t
=({\bfx}_t,
{\bfy},B_t)
\mid t\in \bbT_n 
\}$
be a cluster pattern  with  coefficients in 
any  semifield $\mathbb{P}$,
and
$\tilde{\mathbf{\Sigma}}[t'_0]=\{ \tilde{\Sigma}_t
=(\tilde{\bfx}_t,\tilde{\bfy}_t,B_t)
\mid t\in \bbT_n 
\}$
 be a cluster pattern with principal
coefficients at  $t'_0$,
such that they share a common $B$-pattern $\mathbf{B}$.
Let $\langle \bfy_{t'_0}\rangle$ be the subsemifield 
in  $\mathbb{P}$
generated by $ \bfy_{t'_0}$.
Then, we say that {\em $\mathbf{\Sigma}$ covers 
$\tilde{\mathbf{\Sigma}}[t'_0]$} if there is a semifield homomorphism
such that
\begin{align}
\label{eq:semihom4}
\begin{matrix}
\varphi:&\langle \bfy_{t'_0}\rangle&\rightarrow 
&\mathrm{Trop}(\tilde{\bfy}_{t'_0})\\
&y_{i;t'_0}
&
\mapsto
&
\quad
\tilde{y}_{i;t'_0}
\quad .
\end{matrix}
\end{align}
(Here we use the symbol $t'_0$ so that it cannot be confused with
the initial point $t_0$ in Theorem \ref{thm:basic1}.)
\end{defn}

 Note that for such $\varphi$ in \eqref{eq:semihom4}, 
\begin{align}
\label{eq:semihom5}
\varphi: y_{i;t} \mapsto 
\tilde{y}_{i;t}
\end{align}
holds for any $t \in \mathbb{T}_n$ and $i=1,\dots,n$
by homomorphism property.

\begin{ex}
A cluster pattern with universal coefficients $\mathbf{\Sigma}$
covers a cluster pattern with principal coefficients 
$\tilde{\mathbf{\Sigma}}[t'_0]$ at any point  $t'_0$,
where $\varphi:\mathbb{Q}_{\mathrm{sf}}(\bfy_{t'_0})
\rightarrow \mathrm{Trop}(\tilde{\bfy}_{t'_0})$ is the tropicalization homomorphism.
\end{ex}

Now the second half of the statement of the $xy/GC$ synchronicity  is as follows:

\begin{thm}[$xy/GC$ synchronicity]

\label{thm:basic2}
Let
${{\mathbf{\Sigma}}}=\{ {\Sigma}_t
=({\bfx}_t,
{\bfy}_t,
\allowbreak
B_t)
\mid t\in \bbT_n 
\}$
be a cluster pattern  with  coefficients in 
 any  semifield $\mathbb{P}$
which covers a cluster pattern
$\tilde{\mathbf{\Sigma}}[t'_0]=\{ \tilde{\Sigma}_t
=(\tilde{\bfx}_t,\tilde{\bfy}_t,B_t)
\mid t\in \bbT_n 
\}$
with principal coefficients at some $t'_0$.
Then, Condition (d) implies Conditions (a)--(c) in Theorem
\ref{thm:basic1}.
\end{thm}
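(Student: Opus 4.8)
The plan is to reduce Condition (d) to Condition (b) by using the covering homomorphism $\varphi$ to transport the periodicity of $y$-variables in $\mathbf{\Sigma}$ to a periodicity of the tropical $y$-variables in $\tilde{\mathbf{\Sigma}}[t'_0]$, and then to identify the latter with the periodicity of $C$-matrices via Proposition \ref{prop:trop1}. First I would assume $\bfy_{t_1}=\sigma\bfy_{t_2}$, i.e. $y_{i;t_1}=y_{\sigma^{-1}(i);t_2}$ for all $i$. Applying $\varphi$, which by \eqref{eq:semihom5} sends $y_{i;t}\mapsto\tilde y_{i;t}$, I immediately get $\tilde y_{i;t_1}=\tilde y_{\sigma^{-1}(i);t_2}$, that is $\tilde\bfy_{t_1}=\sigma\tilde\bfy_{t_2}$. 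The key point is that this is an honest equality of Laurent monomials in $\mathrm{Trop}(\tilde\bfy_{t'_0})$.

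The second step is to convert this into the matrix statement. By Proposition \ref{prop:trop1}, applied with tropicalization point $t'_0$, we have $\tilde y_{i;t}=\prod_{j=1}^n \tilde y_j^{c^{t;t'_0}_{ji}}$ where I write $C^{t'_0}_t=(c^{t;t'_0}_{ij})$ for the $C$-matrix of the $FGC$-pattern $\mathbf{\Gamma}(\mathbf{B},t'_0)$ with initial point $t'_0$. Since the $\tilde y_j$ are the generators of a tropical semifield, they are ``algebraically independent'' in the sense that equality of such monomials forces equality of exponent vectors; hence $\tilde\bfy_{t_1}=\sigma\tilde\bfy_{t_2}$ gives $C^{t'_0}_{t_1}=\sigma C^{t'_0}_{t_2}$ (reading off the $\sigma$-permutation of columns from \eqref{eq:s3}/\eqref{eq:p2}).

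The final step is to pass from the $C$-matrices at initial point $t'_0$ to those at the initial point $t_0$ appearing in Theorem \ref{thm:basic1}. Here I would invoke Corollary \ref{cor:sigma1} (or directly \eqref{eq:dual2}): $C^{t'_0}_{t_1}=\sigma C^{t'_0}_{t_2}$ is equivalent to $G^{t'_0}_{t_1}=\sigma G^{t'_0}_{t_2}$, which is equivalent to the statement that the seed data at $t_1$ and $t_2$ agree up to $\sigma$ in a way independent of the choice of initial point. More cleanly: by Theorem \ref{thm:basic1} applied to the $FGC$-pattern with initial point $t'_0$ (whose equivalence (a)$\Leftrightarrow$(b) holds for \emph{any} initial point), $C^{t'_0}_{t_1}=\sigma C^{t'_0}_{t_2}$ implies the seeds of $\tilde{\mathbf{\Sigma}}[t'_0]$ satisfy $\tilde\Sigma_{t_1}=\sigma\tilde\Sigma_{t_2}$, hence in particular $\tilde\bfx_{t_1}=\sigma\tilde\bfx_{t_2}$; but $\tilde{\mathbf{\Sigma}}[t'_0]$ shares the same $B$-pattern, and $\tilde\bfx_{t_1}=\sigma\tilde\bfx_{t_2}$ is Condition (c) for that cluster pattern, which by the already-proved part of Theorem \ref{thm:basic1} (applied with initial point $t_0$) yields $G_{t_1}=\sigma G_{t_2}$ — Condition (a). Then (a)$\Rightarrow$(b),(c) is the first half of Theorem \ref{thm:basic1}.

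The main obstacle I anticipate is bookkeeping rather than mathematical depth: one must be careful that the covering homomorphism $\varphi$ respects the \emph{labeled} (not just unlabeled) structure, so that the permutation $\sigma$ transports correctly through $\varphi$ — this is exactly what \eqref{eq:semihom5} guarantees — and one must keep straight the two distinct initial/tropicalization points ($t_0$ from Theorem \ref{thm:basic1} versus $t'_0$ from the covering hypothesis), relying on the fact established earlier that the equivalence of the $G$- and $C$-matrix periodicities and the $x$-variable periodicity is independent of the chosen initial point. No new estimate or positivity input is needed beyond what Theorems \ref{thm:sign}--\ref{thm:C-F} already provide through Theorem \ref{thm:basic1}.
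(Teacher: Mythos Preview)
Your proof is correct and follows essentially the same approach as the paper: transport the periodicity of $\bfy$ through $\varphi$ to the tropical $y$-variables of $\tilde{\mathbf{\Sigma}}[t'_0]$, read off $C^{t'_0}_{t_1}=\sigma C^{t'_0}_{t_2}$ via Proposition~\ref{prop:trop1}, and then use Theorem~\ref{thm:basic1} to pass from initial point $t'_0$ to $t_0$. The only cosmetic difference is that you bridge the two initial points via condition~(c) for the $x$-variables of $\tilde{\mathbf{\Sigma}}[t'_0]$, whereas the paper bridges via condition~(d) for the $y$-variables of a principal-coefficient pattern $\tilde{\mathbf{\Sigma}}[t_0]$ at $t_0$ and then invokes Proposition~\ref{prop:trop1} once more; both are equally valid uses of the already-established Theorem~\ref{thm:basic1}.
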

\begin{proof}
We show (d) $\Longrightarrow$ (b) in Theorem \ref{thm:basic1}.
Assume that $\bfy_{t_1}=\sigma \bfy_{t_2}$.
By assumption, 
there is a semifield homomorphism $\varphi$ in 
\eqref{eq:semihom4}.
Then, 
by  \eqref{eq:semihom5}, we obtain $\tilde{\bfy}_{t_1}=\sigma \tilde{\bfy}_{t_2}$.
Let us consider the $FGC$-pattern  $\mathbf{\Gamma}(\mathbf{B},t'_0)=
\{ \Gamma'_t=(\mathbf{F'}_{t}(\bfu), G'_t, C'_t, B_t)
\mid t\in \mathbb{T}_n \}$
of $\mathbf{B}$ with the initial point $t'_0$.
Then, by \eqref{eq:pr1}, we have $C'_{t_1}=\sigma C'_{t_2}$.
Let 
$\tilde{\mathbf{\Sigma}}[{t}_0]=\{ \tilde{\Sigma}'_t
=(\tilde{\bfx}'_t,\tilde{\bfy}'_t,B_t)
\mid t\in \bbT_n 
\}$
 be a cluster pattern with principal
coefficients at  ${t}_0$.
Then, applying Theorem \ref{thm:basic1}
 for $\tilde{\mathbf{\Sigma}}[{t}_0]$ and $\mathbf{\Gamma}(\mathbf{B},t'_0)$,
 we obtain $\tilde{\bfy}'_{t_1}=\sigma \tilde{\bfy}'_{t_2}$.
Then, applying  \eqref{eq:pr1} for $\tilde{\bfy}'_{t}$, we obtain
$C_{t_1}=\sigma C_{t_2}$.
\end{proof}

We obtain the following property of the periodicities of $x$- and $y$-variables.

\begin{cor}
\label{cor:comd1}
Let $D$ be a common skew-symmetrizer of $\mathbf{B}$.
\par
(1). Let 
${{\mathbf{\Sigma}}}=\{ {\Sigma}_t
=({\bfx}_t,
{\bfy}_t,B_t)
\mid t\in \bbT_n 
\}$
be a cluster pattern  with  coefficients in 
any  semifield $\mathbb{P}$.
If $\bfx_{t_1}=\sigma \bfx_{t_2}$ occurs for some $t_1,t_2\in \mathbb{T}_n$
 and a permutation $\sigma\in S_n$,
 then $\sigma$ is compatible with $D$.
 \par
 (2).
 Let
${{\mathbf{\Sigma}}}=\{ {\Sigma}_t
=({\bfx}_t,
{\bfy}_t,
\allowbreak
B_t)
\mid t\in \bbT_n 
\}$
be a cluster pattern  with  coefficients in 
 any  semifield $\mathbb{P}$
which covers a cluster pattern
with principal coefficients at some $t'_0$.
If $\bfy_{t_1}=\sigma \bfy_{t_2}$ occurs for some $t_1,t_2\in \mathbb{T}_n$
 and a permutation $\sigma\in S_n$,
 then $\sigma$ is compatible with $D$.
\end{cor}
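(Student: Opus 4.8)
The plan is to reduce both statements directly to Proposition~\ref{prop:compat1}(1), which already records that any relation $C_{t_1}=\sigma C_{t_2}$ forces $\sigma$ to be compatible with $D$. Thus the whole task is to extract such a relation between $C$-matrices from the hypothesis on $x$- or $y$-variables, and the machinery of Section~\ref{subsec:xyGC} does exactly this.

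For part~(1): the hypothesis $\bfx_{t_1}=\sigma\bfx_{t_2}$ is literally Condition~(c) of Theorem~\ref{thm:basic1}. Applying the equivalence (c)$\Longleftrightarrow$(b) of that theorem to the $FGC$-pattern $\mathbf{\Gamma}(\mathbf{B},t_0)$ for any chosen initial point $t_0$ yields $C_{t_1}=\sigma C_{t_2}$, and Proposition~\ref{prop:compat1}(1) then gives that $\sigma$ is compatible with $D$. The conclusion is independent of the choice of $t_0$, as it must be.

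For part~(2): the hypothesis $\bfy_{t_1}=\sigma\bfy_{t_2}$ is Condition~(d). Since $\mathbf{\Sigma}$ is assumed to cover a cluster pattern with principal coefficients at some $t'_0$, Theorem~\ref{thm:basic2} upgrades~(d) to Conditions~(a)--(c); in particular $C_{t_1}=\sigma C_{t_2}$, and we finish again by Proposition~\ref{prop:compat1}(1).

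There is no real obstacle here, since everything needed has been assembled in the preceding two subsections; the only point worth care is that in part~(2) one must invoke Theorem~\ref{thm:basic2} rather than merely Theorem~\ref{thm:basic1}, so that the covering hypothesis (the existence of the semifield homomorphism $\varphi$ of \eqref{eq:semihom4}) is genuinely used. That hypothesis cannot be dropped: for $\mathbb{P}=\mathbf{1}$ one has $\bfy_t=(1,\dots,1)$ for all $t\in\mathbb{T}_n$, so $\bfy$ is $\sigma$-periodic under any single mutation and for an arbitrary $\sigma$, including permutations not compatible with $D$. I would include this remark to explain why the asymmetry between parts~(1) and~(2) is intrinsic.
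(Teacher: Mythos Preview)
Your proof is correct and follows exactly the same route as the paper: the paper's own proof is the single sentence ``This follows from Proposition~\ref{prop:compat1} and Theorems~\ref{thm:basic1} and~\ref{thm:basic2},'' and you have simply unpacked that reference. Your added remark on why the covering hypothesis in part~(2) cannot be dropped is accurate and mirrors the observation the paper makes just before Theorem~\ref{thm:basic2}.
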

\begin{proof}
This follows from Proposition \ref{prop:compat1}
and Theorems \ref{thm:basic1} and \ref{thm:basic2}.
\end{proof}

\subsection{More synchronicity results}
\label{subsec:consequences}
Let us present some consequences of Theorems \ref{thm:basic1} and
\ref{thm:basic2}.
Below ``periodicity" is used in the sense of $\sigma$-periodicity.

The following theorem states that  the periodicity of
$x$-variables is independent of the choice of $\mathbb{P}$,
$\bfx_t$'s, and $\bfy_t$'s.
It was proved by \cite{Cao18} using $d$-vectors.
Here we give an alternative proof via Theorem \ref{thm:basic1}.
\begin{thm}
[{\cite[Proposition 6.1]{Cao18}}]
\label{thm:x1}
Let
${{\mathbf{\Sigma}}}=\{ {\Sigma}_t
=({\bfx}_t,
{\bfy}_t,B_t)
\mid t\in \bbT_n 
\}$
be a cluster pattern with  coefficients in 
any  semifield $\mathbb{P}$.
Then, the periodicity of $x$-variables $\bfx_t$  depends only on 
the $B$-pattern $\mathbf{B}$ therein.
\end{thm}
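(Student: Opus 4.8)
The plan is to reduce the statement to the equivalence of Conditions (a)--(c) in Theorem~\ref{thm:basic1}, which only involve the $B$-pattern $\mathbf{B}$ and the $FGC$-pattern attached to it. Indeed, fix a $B$-pattern $\mathbf{B}$ and any two cluster patterns $\mathbf{\Sigma}$ and $\mathbf{\Sigma}'$ sharing this common $B$-pattern, with coefficients in semifields $\mathbb{P}$ and $\mathbb{P}'$ respectively; note that they may even live in different ambient fields $\mathcal{F}$ and $\mathcal{F}'$. Choose an arbitrary initial point $t_0 \in \mathbb{T}_n$ and form the $FGC$-pattern $\mathbf{\Gamma}(\mathbf{B},t_0) = \{\Gamma_t = (\mathbf{F}_t(\bfu),G_t,C_t,B_t)\}$; this is determined by $\mathbf{B}$ and $t_0$ alone, independently of either cluster pattern. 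Then for $t_1,t_2 \in \mathbb{T}_n$ and $\sigma \in S_n$, Theorem~\ref{thm:basic1} applied to $\mathbf{\Sigma}$ gives that $\bfx_{t_1} = \sigma \bfx_{t_2}$ is equivalent to $G_{t_1} = \sigma G_{t_2}$, and applied to $\mathbf{\Sigma}'$ gives that $\bfx'_{t_1} = \sigma \bfx'_{t_2}$ is equivalent to the same condition $G_{t_1} = \sigma G_{t_2}$. Hence $\bfx_{t_1} = \sigma \bfx_{t_2} \iff \bfx'_{t_1} = \sigma \bfx'_{t_2}$, which is precisely the assertion that the periodicity of $x$-variables depends only on $\mathbf{B}$.

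The only small subtlety to address is to make sure the notion of ``periodicity of $x$-variables'' is being compared correctly across the two patterns: a period of $x$-variables is specified by the combinatorial data of a pair of vertices $t_1,t_2$ (equivalently, a mutation sequence $k_1,\dots,k_p$ connecting them) together with a permutation $\sigma$, and this data is the same for both patterns since both are indexed by the same $\mathbb{T}_n$. So I would phrase the statement as: for every $t_1,t_2 \in \mathbb{T}_n$ and every $\sigma \in S_n$, whether $\bfx_{t_1} = \sigma \bfx_{t_2}$ holds is determined by $\mathbf{B}$, and then the argument above is the whole proof. One may also remark that, by Corollary~\ref{cor:sigma1}, such a $\sigma$-period of $x$-variables forces $B_{t_1} = \sigma B_{t_2}$, so in particular it is automatically a $\sigma$-period of the $B$-pattern as well.

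There is essentially no obstacle here: all the work has already been done in establishing the equivalence (a)$\iff$(c) in Theorem~\ref{thm:basic1}, whose proof in turn rests on sign-coherence, Laurent positivity, and the detropicalization Theorem~\ref{thm:C-F}. The present theorem is a clean corollary obtained by reading that equivalence in both directions for two different coefficient choices. The mildest point requiring a word of care is simply that the $FGC$-pattern serving as the common reference must be taken with one and the same initial point $t_0$ for both applications of Theorem~\ref{thm:basic1} --- but since Theorem~\ref{thm:basic1} holds for \emph{any} initial point $t_0$, and the $FGC$-pattern depends only on $\mathbf{B}$ and $t_0$, this is immediate.
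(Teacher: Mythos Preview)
Your proposal is correct and follows essentially the same approach as the paper: both reduce the periodicity of $x$-variables to the periodicity of the $G$-matrices (or equivalently $C$-matrices) via Theorem~\ref{thm:basic1}, and then note that the $FGC$-pattern depends only on $\mathbf{B}$ (and the choice of $t_0$). The paper's proof is a single sentence invoking condition~(b) rather than~(a), but this is immaterial since (a)$\iff$(b) is already part of Theorem~\ref{thm:basic1}; your added remarks about comparing two patterns explicitly and fixing a common $t_0$ are correct elaborations but not substantive departures.
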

\begin{proof}
By Theorem \ref{thm:basic1}, the periodicity of $x$-variables for any 
cluster pattern coincides with the periodicity of $C$-matrices
(with any initial point),
which are uniquely determined from $\mathbf{B}$.
\end{proof}

Similarly, by  Theorem 
\ref{thm:basic2}, we obtain the counterpart for $y$-variables.

\begin{thm}
\label{thm:y1}
Let
${{\mathbf{\Sigma}}}=\{ {\Sigma}_t
=({\bfx}_t,
{\bfy}_t,B_t)
\mid t\in \bbT_n 
\}$
be a cluster pattern with  coefficients in 
any  semifield $\mathbb{P}$
such that
 $\mathbf{\Sigma}$ covers 
a cluster pattern with principal coefficients at 
some $t'_0$.
Then, the periodicity of $y$-variables $\bfy_t$  depends only on 
the $B$-pattern $\mathbf{B}$ therein.
\end{thm}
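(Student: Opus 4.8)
The plan is to deduce Theorem~\ref{thm:y1} directly from the $xy/GC$ synchronicity established in Theorems~\ref{thm:basic1} and~\ref{thm:basic2}, exactly as Theorem~\ref{thm:x1} was deduced for $x$-variables. The point is that, under the covering hypothesis, the periodicity of $y$-variables is pinned to the periodicity of the $C$-matrices, and the $C$-matrices are manufactured purely out of the $B$-pattern $\mathbf{B}$ via the $FGC$-pattern construction \eqref{eq:initial1}--\eqref{eq:M1}, independently of $\mathbb{P}$, $\bfx_t$, or $\bfy_t$.

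Concretely, fix the $FGC$-pattern $\mathbf{\Gamma}(\mathbf{B},t_0)$ of $\mathbf{B}$ at any initial point $t_0$. For $t_1,t_2\in\mathbb{T}_n$ and $\sigma\in S_n$, I would argue: if $\bfy_{t_1}=\sigma\bfy_{t_2}$, then by Theorem~\ref{thm:basic2} (whose covering hypothesis is exactly what we assumed) we get $C_{t_1}=\sigma C_{t_2}$; conversely, if $C_{t_1}=\sigma C_{t_2}$, then by the implication (b)$\Rightarrow$(d) of Theorem~\ref{thm:basic1} (the last clause: any of (a)--(c) implies (d)) we get $\bfy_{t_1}=\sigma\bfy_{t_2}$. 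Hence the set of triples $(t_1,t_2,\sigma)$ realizing a $\sigma$-period of $y$-variables coincides with the set realizing a $\sigma$-period of $C$-matrices, and the latter set is a function of $\mathbf{B}$ alone. This is the whole argument; it is a two-line corollary once the machinery of Section~\ref{sec:synchronicity} is in place.

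The only genuine subtlety — and what I would flag as the main obstacle — is the logical direction of the covering hypothesis: Theorem~\ref{thm:basic2} requires $\mathbf{\Sigma}$ to \emph{cover} a cluster pattern with principal coefficients, and this is precisely the hypothesis placed on $\mathbf{\Sigma}$ in the statement of Theorem~\ref{thm:y1}, so there is nothing to check there. One should, however, make sure the initial point of the $FGC$-pattern and the tropicalization point $t'_0$ of the principal-coefficient cluster pattern are handled correctly; this is already done inside the proof of Theorem~\ref{thm:basic2} (via \eqref{eq:pr1} applied at both $t_0$ and $t'_0$, together with Theorem~\ref{thm:basic1} applied to the principal pattern at $t_0$ against $\mathbf{\Gamma}(\mathbf{B},t'_0)$), so I would simply invoke it as a black box rather than re-deriving anything.

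Thus the proof I would write is short:
\begin{proof}
By Theorem~\ref{thm:basic2}, the covering hypothesis guarantees that Condition~(d) of Theorem~\ref{thm:basic1} is equivalent to Conditions~(a)--(c) there; in particular $\bfy_{t_1}=\sigma\bfy_{t_2}$ holds if and only if $C_{t_1}=\sigma C_{t_2}$ for the $FGC$-pattern of $\mathbf{B}$ at any fixed initial point. Since the $C$-matrices are determined by $\mathbf{B}$ alone through \eqref{eq:initial1}--\eqref{eq:M1}, the periodicity of $y$-variables depends only on $\mathbf{B}$.
\end{proof}
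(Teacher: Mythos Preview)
Your proposal is correct and matches the paper's approach: the paper simply says ``Similarly, by Theorem~\ref{thm:basic2}, we obtain the counterpart for $y$-variables,'' which is exactly the argument you spell out, namely that under the covering hypothesis the $\sigma$-periodicity of $\bfy_t$ coincides with that of the $C$-matrices (via Theorems~\ref{thm:basic1} and~\ref{thm:basic2}), and the latter depends only on $\mathbf{B}$.
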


Combining Theorems \ref{thm:x1} and \ref{thm:y1} (and their proofs), we have the following
synchronicity of $x$- and $y$-variables.

\begin{thm}
\label{thm:xy1}
The  $x$-variables in Theorem \ref{thm:x1}
and the  $y$-variables in Theorem \ref{thm:y1}
with a common $B$-pattern $\mathbf{B}$
share the same periodicity.
\end{thm}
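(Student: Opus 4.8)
The plan is to observe that Theorem~\ref{thm:xy1} is essentially a bookkeeping corollary of the two preceding theorems together with the $xy/GC$ synchronicity packaged in Theorems~\ref{thm:basic1} and~\ref{thm:basic2}. The key point, already extracted in the proofs of Theorems~\ref{thm:x1} and~\ref{thm:y1}, is that both periodicities are expressed through the \emph{same} intrinsic datum attached to $\mathbf{B}$, namely the periodicity of the $C$-matrices (equivalently the $G$-matrices) of the $FGC$-pattern $\mathbf{\Gamma}(\mathbf{B},t_0)$ at any fixed initial point $t_0$.

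First I would fix a $B$-pattern $\mathbf{B}$, a point $t_0 \in \mathbb{T}_n$, and the associated $FGC$-pattern $\mathbf{\Gamma}(\mathbf{B},t_0)$. Let $\mathbf{\Sigma}$ be any cluster pattern with $B$-pattern $\mathbf{B}$ and coefficients in an arbitrary semifield, and let $\mathbf{\Sigma}'$ be any cluster pattern with the same $B$-pattern that covers a cluster pattern with principal coefficients at some $t'_0$; these are the $x$-variables of Theorem~\ref{thm:x1} and the $y$-variables of Theorem~\ref{thm:y1}, respectively. By Theorem~\ref{thm:basic1}, for any $t_1,t_2 \in \mathbb{T}_n$ and $\sigma \in S_n$, the condition $\bfx_{t_1} = \sigma \bfx_{t_2}$ holds if and only if $C_{t_1} = \sigma C_{t_2}$ in $\mathbf{\Gamma}(\mathbf{B},t_0)$. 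On the other side, combining Theorem~\ref{thm:basic1} (the implication (a)--(c) $\Rightarrow$ (d)) with Theorem~\ref{thm:basic2} (the implication (d) $\Rightarrow$ (a)--(c)) applied to $\mathbf{\Sigma}'$, the condition $\bfy'_{t_1} = \sigma \bfy'_{t_2}$ holds if and only if $C_{t_1} = \sigma C_{t_2}$ in the \emph{same} $FGC$-pattern $\mathbf{\Gamma}(\mathbf{B},t_0)$.

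Therefore both biconditionals reduce to the single condition $C_{t_1} = \sigma C_{t_2}$, and so
\begin{align*}
\bfx_{t_1} = \sigma \bfx_{t_2}
\quad \Longleftrightarrow \quad
C_{t_1} = \sigma C_{t_2}
\quad \Longleftrightarrow \quad
\bfy'_{t_1} = \sigma \bfy'_{t_2},
\end{align*}
which is precisely the claimed shared periodicity. I would remark that the parameter $\sigma$ is kept general throughout, so this is really a coincidence of $\sigma$-periodicities, not merely of genuine periodicities ($\sigma = \mathrm{id}$).

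There is no serious obstacle here: the content has already been absorbed into Theorems~\ref{thm:basic1} and~\ref{thm:basic2}, whose proofs in turn rest on sign-coherence, Laurent positivity, duality, and detropicalization. The only mild subtlety to be careful about is that Theorem~\ref{thm:basic2} requires the ``covers a cluster pattern with principal coefficients'' hypothesis on the $y$-side; this is exactly why Theorem~\ref{thm:y1} (and hence the $y$-variables in the statement of Theorem~\ref{thm:xy1}) carries that assumption, while the $x$-side needs no such restriction. One should also note that the $FGC$-pattern used to mediate the two sides can be taken with one common initial point $t_0$, which is legitimate since Theorems~\ref{thm:basic1} and~\ref{thm:basic2} allow the $FGC$-initial point to be chosen independently of the pattern and of any tropicalization point.
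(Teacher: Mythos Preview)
Your proposal is correct and matches the paper's approach: the paper simply states that Theorem~\ref{thm:xy1} follows by combining Theorems~\ref{thm:x1} and~\ref{thm:y1} (and their proofs), which is exactly your observation that both periodicities reduce to the single condition $C_{t_1}=\sigma C_{t_2}$ via Theorems~\ref{thm:basic1} and~\ref{thm:basic2}. Your treatment is in fact more explicit than the paper's one-line justification, and your remarks on the covering hypothesis and the freedom in choosing $t_0$ are accurate.
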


\begin{ex}
As a special case of Theorem \ref{thm:xy1},
take a cluster pattern $\underline{\mathbf{\Sigma}}=\{ \underline{\Sigma}_t
=(\underline{\bfx}_t,B_t)
\mid t\in \bbT_n 
\}$ without coefficients for Theorem \ref{thm:x1}
and a cluster pattern $\overline{\mathbf{\Sigma}}=\{ \overline{\Sigma}_t
=(\overline{\bfx}_t,\overline{\bfy}_t,B_t)
\mid t\in \bbT_n 
\}$ with universal coefficients for Theorem \ref{thm:y1}.
Then, we have
\begin{align}
\underline{\bfx}_{t_1}=\sigma \underline{\bfx}_{t_2}
\quad
\Longleftrightarrow
\quad
\overline{\bfy}_{t_1}=\sigma \overline{\bfy}_{t_2}.
\end{align}
This proves the synchronicity \eqref{eq:xy1} in Question \ref{question:xy1}
as the special case $\sigma=\mathrm{id}$.
\end{ex}

So far, we only consider the synchronicity  of $x$-variables
and $y$-variables. Let us extend the result for seeds and $Y$-seeds.

The following theorem tells that the periodicities of $x$-variables alone and seeds
including them coincide for any cluster pattern.
It was proved by \cite{Cao18} using $d$-vectors.
Here we give an alternative proof via Theorem \ref{thm:basic1}.
See also Conjecture
\ref{conj:f1} and Theorem \ref{thm:f1} for related results.

\begin{thm}[{\cite[Proposition 6.1]{Cao18}}]
\label{thm:seed1}
Let
${{\mathbf{\Sigma}}}=\{ {\Sigma}_t
=({\bfx}_t,
{\bfy}_t,B_t)
\mid t\in \bbT_n 
\}$
be a cluster pattern with  coefficients in 
any  semifield $\mathbb{P}$.
Then,   the $x$-variables $\bfx_t$
and the seeds $\Sigma_t$
share the same periodicity.
Furthermore, it depends only on 
the $B$-pattern $\mathbf{B}$ therein.
\end{thm}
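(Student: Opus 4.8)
The plan is to reduce the claim entirely to Theorem~\ref{thm:basic1}, in the same spirit as the proof of Theorem~\ref{thm:x1}. We must show two things: first, that $\bfx_{t_1}=\sigma \bfx_{t_2}$ if and only if $\Sigma_{t_1}=\sigma \Sigma_{t_2}$; second, that this common periodicity depends only on $\mathbf{B}$.

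For the first part, one implication is trivial: if $\Sigma_{t_1}=\sigma\Sigma_{t_2}$, then by definition of the $S_n$-action on seeds (see \eqref{eq:s4}) we get $\bfx_{t_1}=\sigma\bfx_{t_2}$ for free, and likewise $\bfy_{t_1}=\sigma\bfy_{t_2}$ and $B_{t_1}=\sigma B_{t_2}$. The substantive direction is the converse. So assume $\bfx_{t_1}=\sigma\bfx_{t_2}$. By the equivalence (c) $\Longleftrightarrow$ (a),(b) in Theorem~\ref{thm:basic1} (applied with the $FGC$-pattern at any convenient initial point, say $t_0$), this gives $G_{t_1}=\sigma G_{t_2}$ and $C_{t_1}=\sigma C_{t_2}$; moreover by \eqref{eq:dual2} we also get $B_{t_1}=\sigma B_{t_2}$, and by \eqref{eq:gf2} of Theorem~\ref{thm:C-F} we get $\mathbf{F}_{t_1}(\bfu)=\sigma\mathbf{F}_{t_2}(\bfu)$. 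Now I feed all of this into the separation formulas: \eqref{eq:sep1} expresses $x_{i;t}$ in terms of $\bfx$, the $G$-matrix entries $g^t_{ji}$, and $F_{i;t}$ evaluated at $\hat{\bfy}$ and at $\bfy$, while \eqref{eq:sep2} expresses $y_{i;t}$ in terms of $\bfy$, the $C$-matrix entries $c^t_{ji}$, the $B_t$-entries $b^t_{ji}$, and the $F_{j;t}\vert_{\bbP}(\bfy)$. Since $\bfx,\bfy,B$ are the common initial data and $(G,C,B,\mathbf{F})$ transform compatibly under $\sigma$ via $\sigma^{-1}$ on the index (this is exactly the compatibility noted after \eqref{eq:comp2}, namely that \eqref{eq:s1}--\eqref{eq:s3} and \eqref{eq:s4}--\eqref{eq:s6} agree through the separation formulas), we conclude $\bfy_{t_1}=\sigma\bfy_{t_2}$ as well. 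Together with $B_{t_1}=\sigma B_{t_2}$ already in hand, this yields $\Sigma_{t_1}=\sigma\Sigma_{t_2}$, completing the equivalence.

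For the second part --- dependence only on $\mathbf{B}$ --- I invoke Theorem~\ref{thm:x1} directly: the periodicity of $x$-variables coincides with the periodicity of the $C$-matrices (equivalently $G$-matrices) of $\mathbf{\Gamma}(\mathbf{B},t_0)$, which are uniquely determined from $\mathbf{B}$ and $t_0$, and in fact the $\sigma$-periodicity condition $C_{t_1}=\sigma C_{t_2}$ does not reference the choice of $t_0$ either (by \eqref{eq:dual2} applied at two different initial points, or more directly since $C_{t_1}=\sigma C_{t_2}$ for one initial point propagates). Hence the periodicity of seeds, being equal to that of $x$-variables by the first part, depends only on $\mathbf{B}$.

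The only place requiring genuine care is the converse implication $\bfx_{t_1}=\sigma\bfx_{t_2}\Longrightarrow \bfy_{t_1}=\sigma\bfy_{t_2}$, since a priori the periodicity of $x$-variables does not constrain $y$-variables (as the excerpt stresses, $y$-variables may be \emph{finer}-periodic, e.g.\ over $\mathbf{1}$). The resolution is precisely that periodicity of $x$-variables forces periodicity of the $G$-matrices (Theorem~\ref{thm:basic1}(c)$\Rightarrow$(a)), which by duality \eqref{eq:dual2} forces periodicity of the $C$-matrices, which through \eqref{eq:sep2} forces periodicity of $y$-variables; this is the genuinely nontrivial input, and it ultimately rests on sign-coherence and Laurent positivity via Theorem~\ref{thm:C-F}. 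I expect this to be the main (indeed the only) obstacle; everything else is bookkeeping with the separation formulas and the $\sigma$-action.
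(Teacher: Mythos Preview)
Your proof is correct and follows essentially the same route as the paper. The paper's proof is terser: it simply notes that the implication $\bfx_{t_1}=\sigma\bfx_{t_2}\Longrightarrow \bfy_{t_1}=\sigma\bfy_{t_2}$ is already contained in Theorem~\ref{thm:basic1} (specifically, (c)$\Rightarrow$(d)), so there is no need to re-derive it by hand via \eqref{eq:gf2} and the separation formulas---that argument is precisely the proof of (a)$\Rightarrow$(d) inside Theorem~\ref{thm:basic1}, which you are redoing. Citing Theorem~\ref{thm:basic1} and \eqref{eq:dual2} suffices.
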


\begin{proof}
It is enough to show the following claim:
\begin{align}
{\bfx}_{t_1}=\sigma {\bfx}_{t_2}
\quad
\Longrightarrow
\quad
{\bfy}_{t_1}=\sigma {\bfy}_{t_2},
B_{t_1}=\sigma B_{t_2}.
\end{align}
This is a consequence of  Theorem \ref{thm:basic1}
and \eqref{eq:dual2}.
\end{proof}

A pair $(\bfy_t,B_t)$ is called a $Y$-seed in \cite{Fomin07}.
We have a counterpart of Theorem \ref{thm:seed1}
for $Y$-seeds.

\begin{thm}
\label{thm:yseed1}
Let
${{\mathbf{\Sigma}}}=\{ {\Sigma}_t
=({\bfx}_t,
{\bfy}_t,B_t)
\mid t\in \bbT_n 
\}$
be a cluster pattern with  coefficients in 
any  semifield $\mathbb{P}$
such that $\mathbf{\Sigma}$ covers 
a cluster pattern   with principal coefficients at 
some $t'_0$.
Then, the $y$-variables $\bfy_t$ 
and the $Y$-seeds $(\bfy_t,B_t)$ 
share the same periodicity.
Furthermore, it depends only on 
the $B$-pattern $\mathbf{B}$ therein.
\end{thm}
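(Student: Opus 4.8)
The plan is to mirror exactly the structure of the proof of Theorem~\ref{thm:seed1}, but now invoking Theorem~\ref{thm:basic2} in place of (the relevant direction of) Theorem~\ref{thm:basic1}. Since a $Y$-seed $(\bfy_t,B_t)$ evidently determines $\bfy_t$, one implication (periodicity of $Y$-seeds implies periodicity of $y$-variables) is trivial, so the real content is the converse: under the covering hypothesis,
\begin{align*}
\bfy_{t_1}=\sigma\bfy_{t_2}
\quad\Longrightarrow\quad
B_{t_1}=\sigma B_{t_2}.
\end{align*}
First I would assume $\bfy_{t_1}=\sigma\bfy_{t_2}$, which is Condition~(d) of Theorem~\ref{thm:basic1}. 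Because $\mathbf{\Sigma}$ covers a cluster pattern with principal coefficients at some $t'_0$, Theorem~\ref{thm:basic2} applies and yields Conditions~(a)--(c); in particular $C_{t_1}=\sigma C_{t_2}$. Then \eqref{eq:dual2} gives $B_{t_1}=\sigma B_{t_2}$, completing the claim. This shows the $y$-variables and the $Y$-seeds share the same periodicity.

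For the ``furthermore" part, I would note that $C_{t_1}=\sigma C_{t_2}$ is a statement purely about the $FGC$-pattern $\mathbf{\Gamma}(\mathbf{B},t_0)$, which by construction depends only on $\mathbf{B}$ and the choice of (arbitrary) initial point $t_0$. By Theorem~\ref{thm:basic2}, for any cluster pattern covering a principal-coefficient pattern the condition $\bfy_{t_1}=\sigma\bfy_{t_2}$ is equivalent to $C_{t_1}=\sigma C_{t_2}$, hence independent of the choice of $\mathbb{P}$, of $\bfx_t$'s, and of $\bfy_t$'s; and by the previous paragraph the same equivalence holds for $Y$-seeds. Therefore the common periodicity of $y$-variables and of $Y$-seeds depends only on $\mathbf{B}$.

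I do not anticipate a genuine obstacle here: the statement is essentially a corollary of Theorem~\ref{thm:basic2} together with the duality \eqref{eq:dual2}, exactly as Theorem~\ref{thm:seed1} is a corollary of Theorem~\ref{thm:basic1}. The one point requiring a word of care is that Theorem~\ref{thm:basic2} only runs the implication (d)~$\Longrightarrow$~(a)--(c) under the covering hypothesis (without it, $y$-variables can have a strictly finer periodicity, as the $\mathbb{P}=\mathbf{1}$ example shows), so the covering hypothesis must be carried through the statement; this is already built into the hypotheses as stated. Thus the proof is short: trivial direction from the definition of $Y$-seed, converse from Theorem~\ref{thm:basic2} and \eqref{eq:dual2}, and the dependence-only-on-$\mathbf{B}$ assertion from the $\mathbf{B}$-intrinsic nature of $C$-matrices.
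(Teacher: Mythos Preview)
Your proposal is correct and follows essentially the same approach as the paper: reduce to the implication $\bfy_{t_1}=\sigma\bfy_{t_2}\Longrightarrow B_{t_1}=\sigma B_{t_2}$, and derive it from Theorem~\ref{thm:basic2} together with \eqref{eq:dual2}. The ``furthermore'' part you spell out via the $\mathbf{B}$-intrinsic nature of $C$-matrices is exactly the content already established in Theorem~\ref{thm:y1}.
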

\begin{proof}
It is enough to show the following claim:
\begin{align}
{\bfy}_{t_1}=\sigma {\bfy}_{t_2}
\quad
\Longrightarrow
\quad
B_{t_1}=\sigma B_{t_2}.
\end{align}
This is a consequence of  Theorem \ref{thm:basic2}
and \eqref{eq:dual2}.
\end{proof}

By combining  Theorems \ref{thm:xy1}, \ref{thm:seed1}, and \ref{thm:yseed1},
we also have the following synchronicity of
seeds and $Y$-seeds.

\begin{thm}
\label{thm:xyseed1}
Let
${{\mathbf{\Sigma}}}=\{ {\Sigma}_t
=({\bfx}_t,
{\bfy}_t,B_t)
\mid t\in \bbT_n 
\}$
be a cluster pattern with  coefficients in 
 any semifield $\mathbb{P}$
such that $\mathbf{\Sigma}$ covers 
a cluster pattern with principal coefficients at 
some $t'_0$.
Then, the seeds $\Sigma_t$ and the $Y$-seeds
$(\bfy_t,B_t)$ share the same periodicity.
Furthermore, it depends only on
the $B$-pattern $\mathbf{B}$ therein.
\end{thm}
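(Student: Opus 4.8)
The plan is to reduce the statement entirely to the already-established synchronicity results for $x$- and $y$-variables, using the fact that periodicity of a seed is equivalent to the simultaneous periodicity of its $x$-part, its $y$-part, and its exchange matrix. Concretely, for $t_1,t_2\in\mathbb{T}_n$ and $\sigma\in S_n$, the condition $\Sigma_{t_1}=\sigma\Sigma_{t_2}$ unpacks, by the definition of the $S_n$-action in \eqref{eq:s4}--\eqref{eq:s6}, into the three conditions $\bfx_{t_1}=\sigma\bfx_{t_2}$, $\bfy_{t_1}=\sigma\bfy_{t_2}$, and $B_{t_1}=\sigma B_{t_2}$; similarly $(\bfy_{t_1},B_{t_1})=\sigma(\bfy_{t_2},B_{t_2})$ unpacks into $\bfy_{t_1}=\sigma\bfy_{t_2}$ together with $B_{t_1}=\sigma B_{t_2}$. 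So it suffices to show that these two bundled conditions are in fact equivalent.

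First I would observe that the implication ``seed $\sigma$-periodic $\Longrightarrow$ $Y$-seed $\sigma$-periodic'' is trivial, since the $Y$-seed conditions are a subset of the seed conditions. For the reverse implication, I would start from $\bfy_{t_1}=\sigma\bfy_{t_2}$ and $B_{t_1}=\sigma B_{t_2}$. Since $\mathbf{\Sigma}$ covers a cluster pattern with principal coefficients at some $t'_0$, Theorem \ref{thm:basic2} applies: Condition (d) of Theorem \ref{thm:basic1} holds, hence so do Conditions (a)--(c), in particular $C_{t_1}=\sigma C_{t_2}$ and $\bfx_{t_1}=\sigma\bfx_{t_2}$. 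By the duality relation \eqref{eq:dual2} in Corollary \ref{cor:sigma1}, $C_{t_1}=\sigma C_{t_2}$ already forces $B_{t_1}=\sigma B_{t_2}$ (so this matrix condition is automatic and need not be assumed separately). Combining $\bfx_{t_1}=\sigma\bfx_{t_2}$, $\bfy_{t_1}=\sigma\bfy_{t_2}$, and $B_{t_1}=\sigma B_{t_2}$ via \eqref{eq:s4}--\eqref{eq:s6} yields $\Sigma_{t_1}=\sigma\Sigma_{t_2}$, i.e.\ the seed is $\sigma$-periodic. This establishes that the seeds $\Sigma_t$ and the $Y$-seeds $(\bfy_t,B_t)$ share the same periodicity.

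Finally, the ``depends only on $\mathbf{B}$'' clause follows because, as in the proofs of Theorems \ref{thm:x1}, \ref{thm:y1}, \ref{thm:seed1}, and \ref{thm:yseed1}, each of the equivalent periodicity conditions is equivalent to $C_{t_1}=\sigma C_{t_2}$ for the $FGC$-pattern $\mathbf{\Gamma}(\mathbf{B},t_0)$ with any fixed initial point $t_0$, and the $C$-matrices are determined by $\mathbf{B}$ alone. I do not expect a genuine obstacle here: the only subtlety is bookkeeping — making sure the ``covers a principal-coefficient pattern'' hypothesis is invoked exactly where Theorem \ref{thm:basic2} is needed (namely to get from the $y$-side back to the $C$-matrices), and otherwise letting Corollary \ref{cor:sigma1} and Theorems \ref{thm:basic1}--\ref{thm:basic2} do all the work. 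If anything requires care, it is confirming that the permutation actions on seeds and on $Y$-seeds are compatible in the sense that restricting a seed $\sigma$-period to its $(\bfy,B)$-part gives precisely a $Y$-seed $\sigma$-period, which is immediate from \eqref{eq:s4}--\eqref{eq:s6}.
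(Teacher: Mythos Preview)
Your proof is correct and follows essentially the same approach as the paper. The paper's own argument is simply ``by combining Theorems \ref{thm:xy1}, \ref{thm:seed1}, and \ref{thm:yseed1}'', which amounts to: seed periodicity $\Leftrightarrow$ $x$-periodicity (Theorem \ref{thm:seed1}), $Y$-seed periodicity $\Leftrightarrow$ $y$-periodicity (Theorem \ref{thm:yseed1}), and $x$-periodicity $\Leftrightarrow$ $y$-periodicity (Theorem \ref{thm:xy1}); you instead inline these steps and go directly to Theorems \ref{thm:basic1}--\ref{thm:basic2} and Corollary \ref{cor:sigma1}, which is exactly what those three intermediate theorems are built on.
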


\subsection{Conjectures by Fomin and Zelevinsky}

In \cite{Fomin03a, Fomin07} 
Fomin and Zelevinsky   proposed several important  conjectures on 
 periodicity  properties of cluster patterns.

Let us recall some definitions from \cite{Fomin03a, Fomin07}.
\begin{defn}
Let
${{\mathbf{\Sigma}}}=\{ {\Sigma}_t
=({\bfx}_t,
{\bfy}_t,B_t)
\mid t\in \bbT_n 
\}$
be a cluster pattern with  coefficients in 
any  semifield $\mathbb{P}$.
An {\em unlabeled cluster} of $\mathbf{\Sigma}$ 
 is a set
 $\{x_{1;t}, \dots, x_{n;t}\}$ 
 ($t\in \mathbb{T}_n$)
 of $x$-variables belonging to $\bfx_t$.
An {\em unlabeled seed} $[\Sigma_t]$ of $\mathbf{\Sigma}$
 ($t\in \mathbb{T}_n$)
 is an equivalence class of 
all labeled seeds 
obtained from $\Sigma_t$
by the action of permutations given by \eqref{eq:s4}--\eqref{eq:s6}.
The {\em exchange graph (of unlabeled seeds) of  $\mathbf{\Sigma}$}
is a quotient graph of $\mathbb{T}_n$ modulo
the equivalence relation $t\sim t'$ defined by the condition 
$[\Sigma_t]=[\Sigma_{t'}]$.
\end{defn}

The following conjecture was proposed by \cite{Fomin03a}.

\begin{conj}[{\cite[Section 1.5]{Fomin03a}}]
\label{conj:f1}
Let
${{\mathbf{\Sigma}}}=\{ {\Sigma}_t
=({\bfx}_t,
{\bfy}_t,B_t)
\mid t\in \bbT_n 
\}$
be a cluster pattern with  coefficients in 
any  semifield $\mathbb{P}$.
Then, the following properties hold:
\par
(a).
Each unlabeled cluster
 $\{x_{1;t}, \dots, x_{n;t}\}$  uniquely determines an unlabeled seed containing it.
 \par
(b). The exchange graph of $\mathbf{\Sigma}$
  depends only on 
the $B$-pattern $\mathbf{B}$ therein.
\par
(c). 
Two unlabeled clusters are adjacent in the exchange graph
if and only if they have exactly $n-1$ common $x$-variables.
\end{conj}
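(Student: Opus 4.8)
The plan is to deduce Conjecture \ref{conj:f1} from Theorems \ref{thm:basic1}, \ref{thm:seed1}, and \ref{thm:x1}, using the identification of seed periodicity with $C$-matrix periodicity. The central observation is that the equivalence relation $t\sim t'$ defining the exchange graph can be rephrased in terms of the $FGC$-pattern: by Theorem \ref{thm:seed1}, $[\Sigma_t]=[\Sigma_{t'}]$ holds if and only if there is a permutation $\sigma\in S_n$ with $\Sigma_{t'}=\sigma\Sigma_t$, which by Theorem \ref{thm:basic1} (equivalence of (c) with (a) and (b)) holds if and only if $C_{t'}=\sigma C_t$ (equivalently $G_{t'}=\sigma G_t$) for the $FGC$-pattern with any fixed initial point $t_0$. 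This immediately gives part (b): the relation $\sim$, hence the exchange graph, is determined by the $C$-matrices, which by construction depend only on $\mathbf{B}$ and the choice of $t_0$ (and different choices of $t_0$ give the same quotient graph, since seed periodicity itself does not reference $t_0$).

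For part (a), I would argue as follows. Suppose two points $t,t'$ yield the same unlabeled cluster, $\{x_{1;t},\dots,x_{n;t}\}=\{x_{1;t'},\dots,x_{n;t'}\}$. Then there is a permutation $\sigma$ with $x_{i;t'}=x_{\sigma^{-1}(i);t}$ for all $i$, i.e.\ $\bfx_{t'}=\sigma\bfx_t$ as labeled tuples (using that the $x_{i;t}$ are algebraically independent, so the matching is forced). By Theorem \ref{thm:basic1}, (c) $\Rightarrow$ (a), (b), and then $B_{t'}=\sigma B_t$ by \eqref{eq:dual2}; combined with (d), we get $\Sigma_{t'}=\sigma\Sigma_t$, so $[\Sigma_t]=[\Sigma_{t'}]$. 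Thus the unlabeled cluster determines the unlabeled seed.

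Part (c) is the substantive one and I expect it to be the main obstacle, since it is genuinely a statement about the combinatorics of clusters rather than a formal consequence of separation formulas. The ``only if'' direction is immediate: if two unlabeled clusters $\{x_{1;t},\dots,x_{n;t}\}$ and $\{x_{1;t'},\dots,x_{n;t'}\}$ are adjacent in the exchange graph, then after relabeling $t'$ is obtained from $t$ by a single mutation $\mu_k$, which by \eqref{eq:xmut1} changes exactly one $x$-variable; hence they share exactly $n-1$ of them (one checks the new variable is genuinely new using Laurent positivity or algebraic independence, so ``exactly'' is not weakened to ``at least''). The ``if'' direction is the hard part: given two unlabeled clusters sharing exactly $n-1$ $x$-variables, one must produce an edge of the exchange graph between them. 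I would try to reduce this to the $G$-matrix picture: if $\bfx_t$ and $\bfx_{t'}$ share $n-1$ entries, then after relabeling the $g$-vectors $g_{i;t}$ and $g_{i;t'}$ agree for $n-1$ indices $i$ (by Theorem \ref{thm:basic1}, since each $x$-variable determines its $g$-vector — this itself needs the detropicalization Theorem \ref{thm:C-F} or an injectivity statement for $g$-vectors), and then invoke a known structural result that two $G$-matrices of a cluster pattern differing in one column must be related by a single mutation. This last input — essentially that the $G$-fan / exchange graph of $G$-matrices has the expected local structure — is where I expect to need either a citation to the scattering-diagram results of \cite{Gross14} or a separate argument, and it is the genuine content that does not follow from the separation formulas alone. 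If such a black box is unavailable, the fallback is to prove (c) only in cases where the cluster pattern is of finite type or where $\mathbf{B}$ is nondegenerate, and otherwise leave (c) as a conjecture while establishing (a) and (b) unconditionally.
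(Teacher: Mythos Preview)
Your arguments for (a) and (b) are correct and essentially the same as the paper's: both are immediate from Theorem~\ref{thm:seed1} (periodicity of $x$-variables equals periodicity of seeds, and depends only on $\mathbf{B}$), which you route through Theorem~\ref{thm:basic1} and the $C$-matrix picture. That is fine.

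For (c), though, you are working much harder than necessary, and you correctly sense that your route does not close. The paper does not argue via $G$-matrices or the cluster fan at all: it simply observes that (c) is a formal consequence of (a), by \cite[Theorem~5]{Gekhtman07}. The point of that result is that once unlabeled clusters determine unlabeled seeds (which is exactly (a)), any two clusters sharing $n-1$ variables must come from seeds related by a single mutation --- the argument in \cite{Gekhtman07} is cluster-theoretic and does not need the scattering-diagram machinery of \cite{Gross14} or any injectivity statement for $g$-vectors beyond what (a) already provides. So the ``black box'' you were looking for is precisely (a) itself, together with this cited lemma; your proposed detour through the $G$-fan structure is unnecessary (and, as you note, would require its own nontrivial input).

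In short: keep your proofs of (a) and (b), and for (c) replace your argument by the one-line appeal to \cite[Theorem~5]{Gekhtman07}, which the paper flags just before Theorem~\ref{thm:f1}.
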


Claim (a) was proved in several cases in \cite{Fomin03a,Buan05c,Fomin08, Gekhtman07,Cerulli12}
and in full generality in \cite{Cao18}.
Claim (b) was also proved 
in several cases
 in \cite{Fomin03a,Fomin08, Gekhtman07, Cerulli12}
 and in full generality in \cite{Cao18}.
It was shown in \cite{Gekhtman07} 
that Claim (c) follows from Claim (a).

\begin{thm}
[{\cite[Proposition 6.1]{Cao18}}]
\label{thm:f1}
Conjecture \ref{conj:f1} is true.
\end{thm}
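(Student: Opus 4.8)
The plan is to derive all three claims of Conjecture \ref{conj:f1} from the synchronicity results already in hand, chiefly Theorems \ref{thm:basic1} and \ref{thm:seed1}, and to use the recorded reduction of Claim (c) to Claim (a) from \cite{Gekhtman07}; thus it suffices to prove (a) and (b).

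For Claim (a) I would start from an equality of unlabeled clusters, $\{x_{1;t_1},\dots,x_{n;t_1}\}=\{x_{1;t_2},\dots,x_{n;t_2}\}$. Since the $x$-variables inside a single seed are algebraically independent, hence pairwise distinct, this common set has exactly $n$ elements, so there is a unique $\sigma\in S_n$ with $\bfx_{t_1}=\sigma\bfx_{t_2}$ in the sense of \eqref{eq:s4}. Theorem \ref{thm:seed1} (equivalently, the implication from Condition (c) to Conditions (a), (b), (d) in Theorem \ref{thm:basic1}, combined with \eqref{eq:dual2}) then upgrades this to $\Sigma_{t_1}=\sigma\Sigma_{t_2}$, i.e. $[\Sigma_{t_1}]=[\Sigma_{t_2}]$. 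Hence the unlabeled cluster determines the unlabeled seed containing it, which is Claim (a), and then Claim (c) follows by \cite{Gekhtman07}.

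For Claim (b) I would recall that the exchange graph of $\mathbf{\Sigma}$ is the quotient of the fixed graph $\mathbb{T}_n$ by the relation $t_1\sim t_2\Leftrightarrow[\Sigma_{t_1}]=[\Sigma_{t_2}]$, so it is enough to show that $\sim$ depends only on $\mathbf{B}$. I would argue that $[\Sigma_{t_1}]=[\Sigma_{t_2}]$ holds if and only if $\bfx_{t_1}=\sigma\bfx_{t_2}$ for some $\sigma\in S_n$: one direction is immediate, and the other is exactly the upgrade used for Claim (a). Fixing any initial point $t_0$ and the $FGC$-pattern $\mathbf{\Gamma}(\mathbf{B},t_0)$, Theorem \ref{thm:basic1} identifies this last condition with the existence of $\sigma$ such that $C_{t_1}=\sigma C_{t_2}$; since the $C$-matrices are built from $\mathbf{B}$ and $t_0$ alone, $\sim$ is a function of $(\mathbf{B},t_0)$, and since the intrinsic description via $\bfx_{t_1}=\sigma\bfx_{t_2}$ makes no reference to $t_0$, the relation $\sim$ — and hence the exchange graph — depends only on $\mathbf{B}$.

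I expect the genuinely hard work to be entirely upstream: everything rests on Theorem \ref{thm:basic1}, whose proof invokes the deep inputs of sign-coherence, Laurent positivity, and detropicalization. Within the present argument the only delicate point is the initial-point independence needed for Claim (b), and I would handle it exactly as above, by phrasing the equivalence relation through the coordinate-level condition $\bfx_{t_1}=\sigma\bfx_{t_2}$ rather than through a $t_0$-dependent family of $C$-matrices.
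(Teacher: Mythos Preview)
Your proposal is correct and follows essentially the same approach as the paper. The paper's proof is extremely terse---it simply notes that Claim (a) is equivalent to the first half of Theorem \ref{thm:seed1}, that Claim (b) follows from the second half of Theorem \ref{thm:seed1}, and that Claim (c) follows from Claim (a) via \cite{Gekhtman07}---whereas you have spelled out the details of these equivalences (the passage from unlabeled clusters to a labeling permutation $\sigma$, and the reformulation of the equivalence relation $\sim$ in terms of $C$-matrices), but the logical structure is the same.
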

\begin{proof}
Claim (a) is equivalent to the first half of Theorem
\ref{thm:seed1}.
Claim (b) follows from the second half of Theorem
\ref{thm:seed1}.
Claim (c) follows from Claim (a) due to  \cite[Theorem 5]{Gekhtman07}
as already mentioned.
\end{proof}

\begin{rem}
Conjecture \ref{conj:f1} (b) was
enhanced by \cite[Conjeture 4.3]{Fomin07}  to
 includes  the statement that
 the {\em canonical connection} on the exchange graph (of unlabeled seeds)
 of $\mathbf{\Sigma}$
 depends only on $\mathbf{B}$.
This is also true by the following reason.
 The exchange graph of {\em labeled seeds} has
the canonical connection induced from the labeling of $\mathbb{T}_n$.
The canonical connection on the exchange graph of {\em unlabeled seed},
 is further induced from
the above one modulo $\sigma$-periodicity. (This is well defined by \eqref{eq:comp1}.)
Since the  $\sigma$-periodicity  depends only on $\mathbf{B}$,
the induced canonical connection also depends only on $\mathbf{B}$.
\end{rem}

There is another conjecture  proposed by \cite{Fomin07}.

\begin{conj}[{\cite[Conjetures 4.7 \& 4.8]{Fomin07}}]
\label{conj:f2}
Let 
$\tilde{\mathbf{\Sigma}}[{t}_0]=\{ \tilde{\Sigma}_t
=(\tilde{\bfx}_t,\tilde{\bfy}_t,
\allowbreak
B_t)
\mid t\in \bbT_n 
\}$
be  a cluster pattern with
principal coefficients 
at any ${t}_0$.
Then, the following properties hold:
\par
(a).
The seeds $\tilde{\Sigma}_t$ and the $Y$-seeds
$(\tilde{\bfy}_{t},B_{t})$ share the same periodicity.
\par
(b).
There are  finitely many distinct seeds $\Sigma_t$ if and only if
there are  finitely many  distinct $Y$-seeds $(\tilde{\bfy}_t,B_t)$.
\end{conj}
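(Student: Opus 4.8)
The plan is to read both claims off the $xy/GC$ synchronicity established in Section~\ref{sec:synchronicity}, exploiting that a cluster pattern with principal coefficients is the most favorable case of that machinery. First I would note that $\tilde{\mathbf{\Sigma}}[t_0]$ \emph{covers itself} in the sense of \eqref{eq:semihom4}, the required semifield homomorphism $\varphi\colon\langle\bfy_{t_0}\rangle\to\mathrm{Trop}(\tilde{\bfy}_{t_0})$ being the identity map of $\mathrm{Trop}(\tilde{\bfy}_{t_0})$. Hence Theorems~\ref{thm:basic1}, \ref{thm:basic2}, and \ref{thm:xyseed1} all apply with $\mathbf{\Sigma}=\tilde{\mathbf{\Sigma}}[t_0]$; in particular, for any $t_1,t_2\in\mathbb{T}_n$ and any $\sigma\in S_n$, the conditions $G_{t_1}=\sigma G_{t_2}$, $C_{t_1}=\sigma C_{t_2}$, $\tilde{\bfx}_{t_1}=\sigma\tilde{\bfx}_{t_2}$, and $\tilde{\bfy}_{t_1}=\sigma\tilde{\bfy}_{t_2}$ are all equivalent, and each of them forces $B_{t_1}=\sigma B_{t_2}$.

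Part~(a) is then immediate: it is precisely the conclusion of Theorem~\ref{thm:xyseed1} for $\tilde{\mathbf{\Sigma}}[t_0]$. Spelled out, $\tilde{\bfy}_{t_1}=\sigma\tilde{\bfy}_{t_2}$ forces $\tilde{\bfx}_{t_1}=\sigma\tilde{\bfx}_{t_2}$ (Theorem~\ref{thm:basic2}) and $B_{t_1}=\sigma B_{t_2}$ (via \eqref{eq:dual2}), hence $\tilde{\Sigma}_{t_1}=\sigma\tilde{\Sigma}_{t_2}$; the reverse implication is trivial.

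For part~(b), the main step is to use part~(a) to identify the two equivalence relations on $\mathbb{T}_n$ defined by $[\tilde{\Sigma}_{t_1}]=[\tilde{\Sigma}_{t_2}]$ and by $[(\tilde{\bfy}_{t_1},B_{t_1})]=[(\tilde{\bfy}_{t_2},B_{t_2})]$: they coincide because ``sharing the same periodicity'' means exactly that $\tilde{\Sigma}_{t_1}=\sigma\tilde{\Sigma}_{t_2}$ holds if and only if $(\tilde{\bfy}_{t_1},B_{t_1})=\sigma(\tilde{\bfy}_{t_2},B_{t_2})$ does. Consequently the quotient graphs of $\mathbb{T}_n$ by these relations agree, so the set of distinct unlabeled seeds of $\tilde{\mathbf{\Sigma}}[t_0]$ is in bijection with the set of distinct unlabeled $Y$-seeds, and one is finite precisely when the other is. Finally, for each of the two families, finiteness of the labeled objects is equivalent to finiteness of the unlabeled ones, since every unlabeled object has at most $n!$ labeled representatives; thus part~(b) holds in either reading of ``distinct seeds''.

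As for obstacles, I do not expect any at this stage: the entire content of the conjecture has been absorbed into the synchronicity theorems of Section~\ref{sec:synchronicity}, which themselves rest on the deep inputs of sign-coherence (Theorem~\ref{thm:sign}), Laurent positivity (Theorem~\ref{thm:positivity}), and detropicalization (Theorem~\ref{thm:C-F}). The only points deserving care are checking that the ``covering'' hypothesis of Theorems~\ref{thm:basic2} and~\ref{thm:xyseed1} is genuinely met --- which is immediate for principal coefficients --- and the elementary bookkeeping, used for part~(b), that passing between labeled and unlabeled families preserves finiteness.
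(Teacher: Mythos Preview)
Your proposal is correct and follows essentially the same approach as the paper: claim~(a) is obtained as the special case of Theorem~\ref{thm:xyseed1} for principal coefficients (after noting that $\tilde{\mathbf{\Sigma}}[t_0]$ covers itself), and claim~(b) is then deduced from~(a). Your treatment of~(b) is more explicit than the paper's, which simply cites~(a) and \cite{Fomin07}, but the underlying argument is the same.
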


\begin{thm}
Conjecture \ref{conj:f2} is true.
\end{thm}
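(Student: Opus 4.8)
The plan is to recognize that Conjecture \ref{conj:f2} is simply the specialization of the general synchronicity results already established to the case of principal coefficients, so the proof consists of matching each claim to the appropriate earlier theorem. First I would observe that a cluster pattern $\tilde{\mathbf{\Sigma}}[t_0]$ with principal coefficients at $t_0$ trivially covers itself: the identity map on $\mathrm{Trop}(\tilde{\bfy}_{t_0})$ serves as the required semifield homomorphism $\varphi$ in \eqref{eq:semihom4} with $t'_0 = t_0$. Hence $\tilde{\mathbf{\Sigma}}[t_0]$ satisfies the hypotheses of Theorems \ref{thm:basic2}, \ref{thm:yseed1}, and \ref{thm:xyseed1}.

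For Claim (a), I would invoke Theorem \ref{thm:xyseed1} (or equivalently combine Theorems \ref{thm:seed1} and \ref{thm:yseed1}): since $\tilde{\mathbf{\Sigma}}[t_0]$ covers a cluster pattern with principal coefficients (namely itself), the seeds $\tilde{\Sigma}_t$ and the $Y$-seeds $(\tilde{\bfy}_t, B_t)$ share the same periodicity. This is exactly Claim (a). For Claim (b), the ``finitely many distinct seeds'' condition is the statement that the exchange graph of unlabeled seeds is finite, and by Theorem \ref{thm:f1} (Conjecture \ref{conj:f1} (b)) this depends only on $\mathbf{B}$; likewise, by the $Y$-seed analogue in Theorem \ref{thm:yseed1}, the number of distinct unlabeled $Y$-seeds depends only on $\mathbf{B}$. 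Since both finiteness conditions are governed by the same underlying combinatorial data — the orbits of $\mathbb{T}_n$ under $\sigma$-periodicity of $C$-matrices, which by the $xy/GC$ synchronicity (Theorems \ref{thm:basic1} and \ref{thm:basic2}) controls both — the two finiteness statements are equivalent. Concretely, there are finitely many distinct seeds iff there are finitely many distinct $G$-matrices up to permutation iff there are finitely many distinct $C$-matrices up to permutation iff there are finitely many distinct $Y$-seeds.

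I do not anticipate a genuine obstacle here, since all the heavy lifting — sign-coherence, Laurent positivity, detropicalization, and the separation formulas — has already been absorbed into Theorems \ref{thm:basic1}, \ref{thm:basic2}, \ref{thm:seed1}, \ref{thm:yseed1}, and \ref{thm:xyseed1}. The only point requiring mild care is the logical bookkeeping in Claim (b): one should phrase finiteness of seeds (resp.\ $Y$-seeds) as finiteness of the set of unlabeled seeds (resp.\ $Y$-seeds), i.e.\ finiteness of the quotient of $\mathbb{T}_n$ by the equivalence relation from $\sigma$-periodicity, and then note that Theorems \ref{thm:seed1} and \ref{thm:yseed1} identify both equivalence relations with the single one coming from $\sigma$-periodicity of $C_t$ (with any fixed initial point). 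The mildly subtle aspect is simply to be explicit that ``finitely many distinct seeds $\Sigma_t$'' in \cite{Fomin07} refers to unlabeled seeds; once that is clarified, Claim (b) is immediate from the second halves of Theorems \ref{thm:seed1} and \ref{thm:yseed1}. Thus the proof is essentially a two-line citation: Claim (a) is Theorem \ref{thm:xyseed1} applied to $\tilde{\mathbf{\Sigma}}[t_0]$, and Claim (b) follows from the $B$-pattern dependence in Theorems \ref{thm:seed1} and \ref{thm:yseed1}.
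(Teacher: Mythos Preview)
Your proposal is correct and matches the paper's approach for Claim (a): the paper likewise says Claim (a) is a special case of Theorem \ref{thm:xyseed1}, and your observation that $\tilde{\mathbf{\Sigma}}[t_0]$ covers itself is exactly the verification needed to apply that theorem.

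For Claim (b), your argument is correct but more elaborate than necessary. The paper simply notes that (b) is a consequence of (a), as already observed in \cite{Fomin07}: once seeds and $Y$-seeds share the same $\sigma$-periodicity, the equivalence relations on $\mathbb{T}_n$ defined by $\Sigma_{t_1}=\Sigma_{t_2}$ and by $(\tilde{\bfy}_{t_1},B_{t_1})=(\tilde{\bfy}_{t_2},B_{t_2})$ coincide (take $\sigma=\mathrm{id}$), so one quotient is finite iff the other is. Your detour through $B$-pattern dependence, $G$-matrices, and $C$-matrices is valid and does land at the same conclusion, but you do not need Theorems \ref{thm:seed1} and \ref{thm:yseed1} separately here, nor any discussion of unlabeled versus labeled seeds --- Claim (a) alone already gives (b) in one line.
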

\begin{proof}
Claim (a) is a special case of Theorem \ref{thm:xyseed1}.
Claim (b) is a consequence of Claim (a) as already noted in \cite{Fomin07}.
\end{proof}

\subsection{Synchronicity under dualities}
\label{subsec:dualities}
As another application of 
Theorems \ref{thm:basic1} and \ref{thm:seed1},
let us show that the dualities of $B$-patterns
considered by Fock and Goncharov in
 \cite[Section 1]{Fock03}
preserve the periodicity of
 cluster patterns.

Let $\mathbf{B}=\{ B_t \mid t\in \mathbb{T}_n\}$ be a $B$-pattern with a common skew-symmetrizer $D$. Then, it is easy to see from \eqref{eq:bmut1} that the following three families of matrices are also $B$-patterns.

{\em (a). (Transposition duality)} $\mathbf{B}^T:=\{ B^T_t \mid t\in \mathbb{T}_n\}$.
A common skew-symmetrizer is given by $D'=\tilde{d} D^{-1}$, where $\tilde{d}$
 is the least common multiple of $d_1,\dots,d_n$.

{\em (b). (Chiral duality)} $-\mathbf{B}:=\{ -B_t \mid t\in \mathbb{T}_n\}$.
A common skew-symmetrizer is given by $D$.
 
 {\em (c). (Langlands duality)} $-\mathbf{B}^T:=\{ -B^T_t \mid t\in \mathbb{T}_n\}$.
A common skew-symmetrizer is given by $D'$ in (a).

 Note that the compatibility of 
  a permutation $\sigma\in S_n$ with $D$
  is equivalent to the compatibility with $D'$.

Let us start with the transposition duality.
The following theorem was suggested by Sergey Fomin to the author.
\begin{thm}
\label{thm:dual4}
Let
${{\mathbf{\Sigma}}}=\{ {\Sigma}_t
=({\bfx}_t,
{\bfy}_t,B_t)
\mid t\in \bbT_n 
\}$
and
${{\mathbf{\Sigma}}}'=\{ {\Sigma}'_t
=({\bfx}'_t,
{\bfy}'_t,B^T_t)
\mid t\in \bbT_n 
\}$
be  cluster patterns with coefficients
in any semifields $\mathbb{P}$ and $\mathbb{P}'$, respectively.
Then, the seeds in ${\mathbf{\Sigma}}$ and the seeds in
${\mathbf{\Sigma}}'$ share the same periodicity.
\end{thm}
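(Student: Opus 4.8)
The plan is to reduce the statement to the $xy/GC$ synchronicity (Theorem \ref{thm:basic1}) together with Theorem \ref{thm:seed1}, by relating the $FGC$-patterns of $\mathbf{B}$ and $\mathbf{B}^T$. First I would observe that, by Theorem \ref{thm:seed1}, the periodicity of seeds in $\mathbf{\Sigma}$ depends only on $\mathbf{B}$, and the periodicity of seeds in $\mathbf{\Sigma}'$ depends only on $\mathbf{B}^T$; moreover, by Theorem \ref{thm:basic1}, each of these periodicities coincides with the periodicity of the corresponding $C$-matrices (equivalently $G$-matrices) at any fixed initial point $t_0$. So it suffices to show: for the $FGC$-patterns $\mathbf{\Gamma}(\mathbf{B},t_0)$ and $\mathbf{\Gamma}(\mathbf{B}^T,t_0)$, and for any $t_1,t_2\in\mathbb{T}_n$ and $\sigma\in S_n$, one has $C_{t_1}=\sigma C_{t_2}$ for $\mathbf{B}$ if and only if $C_{t_1}=\sigma C_{t_2}$ for $\mathbf{B}^T$.

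The key step is the identification of the $C$-matrices of $\mathbf{B}^T$ in terms of those of $\mathbf{B}$. Let $D$ be a common skew-symmetrizer of $\mathbf{B}$ and set $D'=\tilde{d}\,D^{-1}$ as in the statement preceding the theorem, so $D'$ is a skew-symmetrizer of $\mathbf{B}^T$. Write $C_t$ and $G_t$ for the $C$- and $G$-matrices of $\mathbf{\Gamma}(\mathbf{B},t_0)$, and $C'_t$, $G'_t$ for those of $\mathbf{\Gamma}(\mathbf{B}^T,t_0)$. I would claim that
\begin{align}
\label{eq:transp-C}
C'_t = D^{-1} G_t^T D,
\qquad
G'_t = D^{-1} C_t^T D
\end{align}
(up to the overall scalar $\tilde d$, which cancels since $D' = \tilde d D^{-1}$ and only $D^{\pm 1}$ conjugation enters). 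The cleanest way to establish \eqref{eq:transp-C} is by induction on the distance from $t_0$ in $\mathbb{T}_n$: at $t_0$ both sides are $I$, and one checks that the mutation rules \eqref{eq:cmut1} and \eqref{eq:gmut1} for $\mathbf{B}^T$ are transformed into one another under $C\mapsto D^{-1}G^T D$ using the transposition $b^t_{ij}\leftrightarrow b^t_{ji}$ in $\mathbf{B}^T$ and the skew-symmetrizability relation $d_i b^{t_0}_{ij}=-d_j b^{t_0}_{ji}$. (Here the duality $D^{-1}G_t^T D C_t = I$ of Theorem \ref{thm:dual1} is exactly the mechanism that makes the $G$-mutation of $\mathbf{B}$ correspond to the $C$-mutation of $\mathbf{B}^T$.)

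Granting \eqref{eq:transp-C}, the equivalence is immediate: $C'_{t_1}=\sigma C'_{t_2}$ means $D^{-1}G_{t_1}^T D = D^{-1}G_{t_2}^T D\,P_\sigma$, i.e.\ $G_{t_1}^T = G_{t_2}^T D P_\sigma D^{-1}$; since $\sigma$ must be compatible with $D$ by Proposition \ref{prop:compat1} (applied on either side), $DP_\sigma D^{-1}=P_\sigma$, and taking transposes gives $G_{t_1}=P_\sigma^T G_{t_2}=G_{t_2}P_\sigma^T$... which, after transposing $P_\sigma$ appropriately and reindexing $t_1\leftrightarrow t_2$, is equivalent to $G_{t_1}=\sigma G_{t_2}$ for $\mathbf{B}$ up to replacing $\sigma$ by $\sigma^{-1}$. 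Since the seed periodicity condition $\Sigma_{t_1}=\sigma\Sigma_{t_2}$ in a cluster pattern is symmetric in the roles of $t_1$ and $t_2$ (mutations are involutive, so $\Sigma_{t_1}=\sigma\Sigma_{t_2}$ iff $\Sigma_{t_2}=\sigma^{-1}\Sigma_{t_1}$), this passage between $\sigma$ and $\sigma^{-1}$ is harmless, and we conclude that seeds in $\mathbf{\Sigma}$ and seeds in $\mathbf{\Sigma}'$ share the same periods.

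\textbf{Main obstacle.} I expect the only genuinely delicate point to be the bookkeeping in \eqref{eq:transp-C}: verifying that the $[b^t_{kj}]_+$ and $[-c^t_{ik}]_+$ terms in \eqref{eq:cmut1}--\eqref{eq:gmut1} match up correctly after transposing $\mathbf{B}$ and conjugating by $D$, and in particular confirming that the $B=B_{t_0}$ appearing in the last term of \eqref{eq:gmut1} is handled consistently (it becomes $B^T=B^T_{t_0}$ for the dual pattern, which is precisely what the $D^{\pm1}$-conjugation delivers via skew-symmetrizability). Everything else is then a formal consequence of Theorems \ref{thm:basic1}, \ref{thm:seed1}, \ref{thm:dual1}, and Proposition \ref{prop:compat1}.
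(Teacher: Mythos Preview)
Your overall strategy --- reduce to Theorems \ref{thm:basic1} and \ref{thm:seed1}, then compare $C$- and $G$-matrices for $\mathbf{B}$ and $\mathbf{B}^T$ --- is exactly right, and the final bookkeeping with $\sigma$ versus $\sigma^{-1}$ is fine. The problem is the key identity: your formula $C'_t = D^{-1}G_t^T D$ (with the \emph{same} initial point $t_0$ for both patterns) is false in general. Already for $B=\begin{pmatrix}0&1\\-1&0\end{pmatrix}$ (so $D=I$ and $B^T=-B$), after a single mutation $\mu_1$ one computes $G_{t_1}^T=\begin{pmatrix}-1&1\\0&1\end{pmatrix}$ while $C'_{t_1}=\begin{pmatrix}-1&0\\0&1\end{pmatrix}$. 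The induction you sketch cannot close: the $C$-mutation for $\mathbf{B}^T$ involves $[b^t_{jk}]_+$, while the $G$-mutation for $\mathbf{B}$ involves $[-b^t_{\ell k}]_+$, and no $D$-conjugation repairs this sign discrepancy. What you have in mind is essentially the Langlands-dual relation $G_t^T\,C''_t=I$ for $\mathbf{\Gamma}(-\mathbf{B}^T,t_0)$ (equivalently $C''_t=DC_tD^{-1}$), not the transposition dual; skew-symmetrizability gives $B^T=-DBD^{-1}$, so $\mathbf{B}^T$ differs from the Langlands dual by a chiral flip, and for that flip there is no clean same-base-point formula for $C$-matrices (only the $F$-matrix correction of \cite{Fujiwara18}).

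The paper's fix is to drop the insistence on a common initial point. It takes $\mathbf{\Gamma}(\mathbf{B},t_2)$ and $\mathbf{\Gamma}(\mathbf{B}^T,t_1)$ --- swapping the roles of the two vertices --- and invokes the Nakanishi--Zelevinsky duality $C_{t_1}^T=G'_{t_2}$ (the paper's \eqref{eq:dual4}). Then $\bfx_{t_1}=\sigma\bfx_{t_2}$ gives $C_{t_1}=P_\sigma$, hence $G'_{t_2}=P_{\sigma^{-1}}$, hence $\bfx'_{t_2}=\sigma^{-1}\bfx'_{t_1}$, and you are done. So your argument can be repaired simply by replacing your incorrect same-base-point identity with this swapped-base-point duality; everything else you wrote then goes through.
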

\begin{proof}
Thanks to Theorem \ref{thm:seed1}, 
it is enough to prove
\begin{align}
\label{eq:xperiod2}
{\bfx}_{t_1}=\sigma {\bfx}_{t_2}
\quad
\Longleftrightarrow
\quad
{\bfx}'_{t_1}=\sigma {\bfx}'_{t_2},
\end{align}
where $\sigma$ is compatible with $D$ and $D'$.
Let us consider the pair of $FGC$-patterns $\mathbf{\Gamma}(\mathbf{B},t_2)
=\{ \Gamma_t=(\mathbf{F}_{t}(\bfu), G_t, C_t, B_t)
\mid t\in \mathbb{T}_n \}$
and 
 $\mathbf{\Gamma}(\mathbf{B}^T,{t}_1)=
\{ \Gamma'_t=(\mathbf{F'}_{t}(\bfu), G'_t, C'_t, B^T_t)
\mid t\in \mathbb{T}_n \}$,
where $t_1$ and $t_2$ are the ones in
\eqref{eq:xperiod2}.
We recall  another duality of $G$- and $C$-matrices from \cite[Eq.~(1.13)]{Nakanishi11a}, namely,
\begin{align}
\label{eq:dual4}
C_{t_1}^T=G'_{t_2}.
\end{align}
(This is also a consequence of Theorem \ref{thm:sign}.)
Let us assume that ${\bfx}_{t_1}=\sigma {\bfx}_{t_2}$.
Then,
by Theorem \ref{thm:basic1}, we have
$C_{t_1}=P_{\sigma}$.
Therefore, by \eqref{eq:dual4}, we have $G'_{t_2}=P_{\sigma^{-1}}$.
Then, 
again by Theorem \ref{thm:basic1}, we
have ${\bfx}'_{t_2}=\sigma^{-1} {\bfx}'_{t_1}$,
that is, ${\bfx}'_{t_1}=\sigma {\bfx}'_{t_2}$.
The other direction also follows by symmetry.
\end{proof}

Next we consider the chiral duality.

\begin{thm}
\label{thm:dual5}
Let
${{\mathbf{\Sigma}}}=\{ {\Sigma}_t
=({\bfx}_t,
{\bfy}_t,B_t)
\mid t\in \bbT_n 
\}$
and
${{\mathbf{\Sigma}}}'=\{ {\Sigma}'_t
=({\bfx}'_t,
{\bfy}'_t,-B_t)
\mid t\in \bbT_n 
\}$
be  cluster patterns with coefficients
in any semifields $\mathbb{P}$ and $\mathbb{P}'$, respectively.
Then, the seeds in ${\mathbf{\Sigma}}$ and the seeds in
${\mathbf{\Sigma}}'$ share the same periodicity.
\end{thm}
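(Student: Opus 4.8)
The plan is to follow the strategy of Theorem~\ref{thm:dual4}: by Theorem~\ref{thm:seed1} it suffices to show that the $x$-variables of $\mathbf{\Sigma}$ and of $\mathbf{\Sigma}'$ share the same periodicity, and for this I would exploit a symmetry relating $\mathbf{B}$ and its chiral dual $-\mathbf{B}$. The key observation is that, for chiral duality, the relevant symmetry is already visible at the level of the exchange relation for $x$-variables in a cluster pattern \emph{without coefficients}, so --- in contrast to Theorem~\ref{thm:dual4} --- no detour through the tropical dualities of $G$- and $C$-matrices is needed.

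First I would rewrite the exchange relation \eqref{eq:xmut0} in the manifestly sign-balanced form
\[
x_k\,x'_k \;=\; \prod_{j=1}^n x_j^{[b_{jk}]_+}\;+\;\prod_{j=1}^n x_j^{[-b_{jk}]_+},
\]
obtained by expanding $x_k^{-1}\bigl(\prod_j x_j^{[-b_{jk}]_+}\bigr)(1+\hat y_k)$ with $\hat y_k=\prod_j x_j^{b_{jk}}$ and using $[-b_{jk}]_+ + b_{jk} = [b_{jk}]_+$. The right-hand side is invariant under $b_{jk}\mapsto-b_{jk}$, the two monomials merely swapping roles. Since $-\mathbf{B}$ is again a $B$-pattern whose exchange matrix at $t$ is $-B_t$ (as noted just before the statement of the theorem; this rests on the elementary identity $b_{ik}[b_{kj}]_+ + [-b_{ik}]_+b_{kj} = [b_{ik}]_+b_{kj} + b_{ik}[-b_{kj}]_+$), an easy induction on $\bbT_n$ then gives: if $\underline{\mathbf{\Sigma}}=\{\underline{\Sigma}_t=(\underline{\bfx}_t,B_t)\mid t\in\bbT_n\}$ and $\underline{\mathbf{\Sigma}}'=\{\underline{\Sigma}'_t=(\underline{\bfx}'_t,-B_t)\mid t\in\bbT_n\}$ are cluster patterns without coefficients realized over one common ambient field with $\underline{\bfx}_{t_0}=\underline{\bfx}'_{t_0}$ at some vertex $t_0$, then $\underline{\bfx}_t=\underline{\bfx}'_t$ for every $t\in\bbT_n$: at a mutation in direction $k$ the entries of index $\neq k$ are untouched, while the entry of index $k$ is the same element of the ambient field by the displayed relation (applied with $b_{jk}$ for $\underline{\mathbf{\Sigma}}$ and with $-b_{jk}$ for $\underline{\mathbf{\Sigma}}'$). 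Hence $\underline{\mathbf{\Sigma}}$ and $\underline{\mathbf{\Sigma}}'$ literally have the same $x$-variables, so $\underline{\bfx}_{t_1}=\sigma\underline{\bfx}_{t_2}\iff\underline{\bfx}'_{t_1}=\sigma\underline{\bfx}'_{t_2}$ for all $t_1,t_2\in\bbT_n$ and $\sigma\in S_n$.

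To transfer this to $\mathbf{\Sigma}$ and $\mathbf{\Sigma}'$, I would invoke Theorem~\ref{thm:x1}: the periodicity of $x$-variables depends only on the underlying $B$-pattern, so the periodicity of $\bfx_t$ in $\mathbf{\Sigma}$ equals that of $\underline{\bfx}_t$ in $\underline{\mathbf{\Sigma}}$, and the periodicity of $\bfx'_t$ in $\mathbf{\Sigma}'$ equals that of $\underline{\bfx}'_t$ in $\underline{\mathbf{\Sigma}}'$. Combined with the previous step, $\bfx_{t_1}=\sigma\bfx_{t_2}\iff\bfx'_{t_1}=\sigma\bfx'_{t_2}$. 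Finally Theorem~\ref{thm:seed1} identifies, for each of $\mathbf{\Sigma}$ and $\mathbf{\Sigma}'$, the periodicity of seeds with the periodicity of $x$-variables, and chaining the equivalences yields $\Sigma_{t_1}=\sigma\Sigma_{t_2}\iff\Sigma'_{t_1}=\sigma\Sigma'_{t_2}$, which is the assertion.

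The step requiring the most care is simply verifying the sign-balanced form of the exchange relation and the elementary $[\,\cdot\,]_+$-identity confirming that $-B_t$ is genuinely the exchange matrix of $-\mathbf{B}$ at $t$; beyond that, everything is an immediate appeal to Theorems~\ref{thm:x1} and~\ref{thm:seed1}, and I do not anticipate a substantive obstacle. The contrast with Theorem~\ref{thm:dual4} is instructive: transposition duality acts nontrivially on $x$-variables and had to be routed through the tropical duality $C_{t_1}^T=G'_{t_2}$, whereas chiral duality acts trivially on the $x$-variables of a coefficient-free cluster pattern.
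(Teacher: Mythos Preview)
Your proof is correct and takes a genuinely different route from the paper.  The paper offers two arguments: the first routes through $Y$-seeds, using the involution $(\bfy_t,B_t)\leftrightarrow(\bfy_t^{-1},-B_t)$ on $Y$-patterns with universal coefficients together with Theorem~\ref{thm:xyseed1}; the second routes through the $F$-matrix identity $C'_t=C_t+F_tB_t$ of \cite{Fujiwara18} and then appeals to Theorems~\ref{thm:basic1} and~\ref{thm:seed1}.  You instead exploit the elementary fact that the coefficient-free exchange relation $x_kx'_k=\prod_j x_j^{[b_{jk}]_+}+\prod_j x_j^{[-b_{jk}]_+}$ is literally invariant under $B\mapsto-B$, so the $x$-variables of $\underline{\mathbf{\Sigma}}$ and $\underline{\mathbf{\Sigma}}'$ coincide, and then you pull the conclusion up to arbitrary coefficients via Theorems~\ref{thm:x1} and~\ref{thm:seed1}.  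Your approach is arguably the most economical of the three: it avoids both Theorem~\ref{thm:basic2} (hence Theorem~\ref{thm:xyseed1}) and any external input such as the $F$-matrix formula, relying only on Theorems~\ref{thm:x1} and~\ref{thm:seed1}, which in turn rest on Theorem~\ref{thm:basic1}.  The paper's $Y$-pattern argument has the virtue of making the chiral symmetry visible on the coefficient side rather than the cluster-variable side, which is conceptually nice, but your observation that chiral duality is \emph{invisible} to coefficient-free $x$-variables is equally clean and yields a shorter proof.
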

\begin{proof}
One can  prove the claim  by 
Theorem  \ref{thm:xyseed1} and
the following involution between $Y$-patterns  for $\mathbf{B}$
and $-\mathbf{B}$ with universal $y$-variables
\cite[Proof of Proposition 5.3]{Fomin07}:
\begin{align}
(\bfy_{t},B_{t}) \leftrightarrow (\bfy_{t}^{-1},-B_{t}) .
\end{align}
Alternatively, one can also prove it using
the following equality \cite[Theorem 2.8]{Fujiwara18} for
 the pair of $FGC$-patterns $\mathbf{\Gamma}(\mathbf{B},t_0)
=\{ \Gamma_t=(\mathbf{F}_{t}(\bfu), G_t, C_t, B_t)
\mid t\in \mathbb{T}_n \}$
and 
 $\mathbf{\Gamma}(-\mathbf{B},{t}_0)=
\{ \Gamma'_t=(\mathbf{F'}_{t}(\bfu), G'_t, C'_t, -B_t)
\mid t\in \mathbb{T}_n \}$ with any initial point $t_0$:
\begin{align}
\label{eq:CC1}
C'_t = C_t +F_t B_t,
\quad
F'_{i;t}(\bfu)
=F_{i;t}(\bfu),
\end{align}
where $F_t$ (the {\em $F$-matrix at $t$}) is the degree matrix of the $F$-polynomials $\mathbf{F}_{t}(\bfu)$ defined in
\cite{Fujiwara18}. 
(This is {\em not}  a consequence of Theorem \ref{thm:sign}.)
By \eqref{eq:CC1}, \eqref{eq:dual2} and  \eqref{eq:gf2}, we obtain
\begin{align}
C_{t_1} = \sigma C_{t_2}
\quad
\Longleftrightarrow
\quad
C'_{t_1} = \sigma C'_{t_2}.
\end{align}
Then, the claim follows from 
Theorems \ref{thm:basic1} and \ref{thm:seed1}.
\end{proof}

Finally we consider the Langlands duality.

\begin{thm}
\label{thm:dual6}
Let
${{\mathbf{\Sigma}}}=\{ {\Sigma}_t
=({\bfx}_t,
{\bfy}_t,B_t)
\mid t\in \bbT_n 
\}$
and
${{\mathbf{\Sigma}}}'=\{ {\Sigma}'_t
=({\bfx}'_t,
{\bfy}'_t,-B^T_t)
\mid t\in \bbT_n 
\}$
be  cluster patterns with coefficients
in  any semifields $\mathbb{P}$ and $\mathbb{P}'$, respectively.
Then, the seeds in ${\mathbf{\Sigma}}$ and the seeds in
${\mathbf{\Sigma}}'$ share the same periodicity.
\end{thm}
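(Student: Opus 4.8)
The plan is to derive the Langlands duality synchronicity by composing the two dualities we have already established, namely the transposition duality (Theorem \ref{thm:dual4}) and the chiral duality (Theorem \ref{thm:dual5}). The key observation is that $-\mathbf{B}^T = -(\mathbf{B}^T) = (-\mathbf{B})^T$, so the passage from a $B$-pattern $\mathbf{B}$ to $-\mathbf{B}^T$ factors through an intermediate $B$-pattern: either $\mathbf{B} \to \mathbf{B}^T \to -\mathbf{B}^T$, applying transposition then chiral, or $\mathbf{B} \to -\mathbf{B} \to -\mathbf{B}^T$, applying chiral then transposition. Both decompositions work, and one should note that the common skew-symmetrizer bookkeeping is consistent: $\mathbf{B}$ has skew-symmetrizer $D$, $\mathbf{B}^T$ has $D' = \tilde{d}D^{-1}$, and $-\mathbf{B}^T$ again has $D'$, as recorded just before Theorem \ref{thm:dual4}; moreover compatibility of a permutation $\sigma$ with $D$ is equivalent to compatibility with $D'$, so the class of permitted permutations $\sigma$ is the same throughout.

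First I would introduce an auxiliary cluster pattern $\mathbf{\Sigma}'' = \{\Sigma''_t = (\mathbf{x}''_t, \mathbf{y}''_t, -B_t) \mid t \in \mathbb{T}_n\}$ with coefficients in some semifield $\mathbb{P}''$ (for instance the trivial semifield, or any convenient choice, since by Theorem \ref{thm:seed1} the periodicity of seeds is independent of this choice). By Theorem \ref{thm:dual5} applied to the $B$-pattern $\mathbf{B}$ and its chiral dual $-\mathbf{B}$, the seeds in $\mathbf{\Sigma}$ and the seeds in $\mathbf{\Sigma}''$ share the same periodicity: for any $t_1, t_2 \in \mathbb{T}_n$ and any permutation $\sigma$, we have $\Sigma_{t_1} = \sigma \Sigma_{t_2}$ if and only if $\Sigma''_{t_1} = \sigma \Sigma''_{t_2}$. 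Next, observing that $-B^T_t = (-B_t)^T$, so that $\mathbf{\Sigma}'$ is a cluster pattern whose $B$-pattern is the transposition dual of the $B$-pattern $-\mathbf{B}$ of $\mathbf{\Sigma}''$, I would apply Theorem \ref{thm:dual4} to the pair $(-\mathbf{B}, (-\mathbf{B})^T)$: the seeds in $\mathbf{\Sigma}''$ and the seeds in $\mathbf{\Sigma}'$ share the same periodicity. Chaining the two equivalences gives that the seeds in $\mathbf{\Sigma}$ and the seeds in $\mathbf{\Sigma}'$ share the same periodicity, which is exactly the assertion of Theorem \ref{thm:dual6}.

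I do not expect a genuine obstacle here; the only point requiring a little care is the independence-of-coefficients step. Theorems \ref{thm:dual4} and \ref{thm:dual5} are stated for cluster patterns with arbitrary coefficients, so in principle the auxiliary pattern $\mathbf{\Sigma}''$ can be taken with any coefficients whatsoever, and Theorem \ref{thm:seed1} guarantees that the seed-periodicity is the same regardless. Thus there is no compatibility issue between the coefficient semifields $\mathbb{P}$, $\mathbb{P}''$, and $\mathbb{P}'$: the chain of equivalences is purely at the level of $B$-patterns and permutations. Alternatively, and perhaps more transparently, one could give a direct proof paralleling the proof of Theorem \ref{thm:dual4}, using the duality $C^T_{t_1} = G'_{t_2}$ for the pair of $FGC$-patterns $\mathbf{\Gamma}(\mathbf{B}, t_2)$ and $\mathbf{\Gamma}(-\mathbf{B}^T, t_1)$ — but this duality would itself need to be assembled from \eqref{eq:dual4} and \eqref{eq:CC1}, so the compositional argument above is the cleaner route and is the one I would present.
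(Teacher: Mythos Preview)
Your proposal is correct and takes essentially the same approach as the paper: the paper's proof is simply ``This is proved by combining Theorems \ref{thm:dual4} and \ref{thm:dual5},'' and your argument spells out precisely this composition via an auxiliary pattern with $B$-pattern $-\mathbf{B}$. The paper also records the alternative direct route you mention, using the duality $G_t^T C'_t = I$ for the pair $\mathbf{\Gamma}(\mathbf{B},t_0)$ and $\mathbf{\Gamma}(-\mathbf{B}^T,t_0)$, which is equivalent to \eqref{eq:dual1} via $C'_t = D C_t D^{-1}$.
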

\begin{proof}
This is proved by combining 
Theorems \ref{thm:dual4} and \ref{thm:dual5}.
Alternatively, one can also prove it using
the following yet another duality \cite[Eq.~(1.11)]{Nakanishi11a} for
 the pair of $FGC$-patterns $\mathbf{\Gamma}(\mathbf{B},t_0)
=\{ \Gamma_t=(\mathbf{F}_{t}(\bfu), G_t, C_t, B_t)
\mid t\in \mathbb{T}_n \}$
and 
 $\mathbf{\Gamma}(-\mathbf{B}^T,{t}_0)=
\{ \Gamma'_t=(\mathbf{F'}_{t}(\bfu), G'_t, C'_t, -B^T_t)
\mid t\in \mathbb{T}_n \}$ with any initial point $t_0$:
\begin{align}
\label{eq:GC1}
G_t^T C'_t=I.
\end{align}
(This is also a consequence of Theorem \ref{thm:sign}.
Also, this  is actually equivalent to \eqref{eq:dual1}
due to the relation $C'_t = D C_t D^{-1}$.)
By \eqref{eq:GC1},
we obtain
\begin{align}
G_{t_1} = \sigma G_{t_2}
\quad
\Longleftrightarrow
\quad
C'_{t_1} = \sigma C'_{t_2}.
\end{align}
Then, the claim follows from 
Theorems \ref{thm:basic1} and \ref{thm:seed1}.
\end{proof}

\subsection{Synchronicity for conjugate pair}
\label{subsec:conjugate1}
Let $R=\mathrm{diag}(r_1,\dots,r_n)$ be
a diagonal matrix with positive integer diagonal entries
$r_1,\dots,r_n$.
Let us consider a pair of $B$-patterns of the form
\begin{align}
R\mathbf{B}&=\{ RB_t \mid t\in \mathbb{T}_n\},
\\
\mathbf{B}R&=\{ B_t R \mid t\in \mathbb{T}_n\},
\end{align}
where $RB_t$ and $B_tR$ are (skew-symmetrizable) integer matrices, but $B_t$ is not necessarily an integer matrix.
They are related by the conjugation
\begin{align}
\label{eq:conj1}
B_t R = R^{-1} (RB_t) R.
\end{align}
It is easy to see from \eqref{eq:conj1} and  \eqref{eq:bmut1} that
if one is a $B$-pattern, then so is the other.
We call such $B$-patterns a {\em conjugate pair}.
Such a pair appears in the context of generalized cluster algebras
\cite{Nakanishi14a,Nakanishi15}.
See Definition \ref{defn:companion1}.

\begin{lem} 
\label{lem:ss1}
Let $R\mathbf{B}$ and $\mathbf{B}R$ be a conjugate pair.
If $D$ is a common skew-symmetrizer of $R\mathbf{B}$,
then $DR^2$ is a common skew-symmetrizer of $\mathbf{B}R$.
\end{lem}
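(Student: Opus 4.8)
The plan is to exploit the conjugation relation \eqref{eq:conj1} together with the elementary fact that conjugating a skew-symmetrizable matrix transforms its skew-symmetrizers in a predictable way. First I would record the trivial preliminaries: since $D$ and $R$ are diagonal they commute, $D^T=D$, $R^T=R$, and $DR^2=\mathrm{diag}(d_1r_1^2,\dots,d_nr_n^2)$ has positive integer diagonal entries; moreover $\mathbf{B}R$ consists of integer matrices by the very definition of a conjugate pair, so it suffices to exhibit a common skew-symmetrizer.

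The key step is the following general observation: if $S$ is a left skew-symmetrizer of a matrix $M$ (i.e. $SM$ is skew-symmetric) and $P$ is any invertible (rational) matrix, then $P^{T}SP$ is a left skew-symmetrizer of $P^{-1}MP$. Indeed $(P^{T}SP)(P^{-1}MP)=P^{T}(SM)P$, and $\bigl(P^{T}(SM)P\bigr)^{T}=P^{T}(SM)^{T}P=-P^{T}(SM)P$, so $P^{T}(SM)P$ is skew-symmetric. Applying this with $M=RB_t$, $S=D$ and $P=R$, and using \eqref{eq:conj1} in the form $B_tR=R^{-1}(RB_t)R$, I conclude that $R^{T}DR$ skew-symmetrizes $B_tR$; and $R^{T}DR=RDR=DR^{2}$ because $R$ and $D$ are diagonal. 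As this holds for every $t\in\mathbb{T}_n$, $DR^{2}$ is a common skew-symmetrizer of $\mathbf{B}R$.

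Alternatively the statement admits a direct one-line verification: from the hypothesis that $D(RB_t)$ is skew-symmetric one gets $B_t^{T}RD=-DRB_t$; multiplying this on the left and on the right by $R$ and repeatedly using $DR=RD$ yields $RB_t^{T}R^{2}D=-DR^{2}B_tR$, which is exactly the assertion that $(DR^{2})(B_tR)$ is skew-symmetric, since $(DR^{2}B_tR)^{T}=RB_t^{T}R^{2}D$. There is no real obstacle here; the only thing demanding a little care is bookkeeping the order of the matrix products and invoking commutativity of diagonal matrices at the right places, while integrality of $DR^{2}$ and of $\mathbf{B}R$ is already guaranteed by the setup.
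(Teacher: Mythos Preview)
Your proof is correct and essentially matches the paper's own argument: the paper's proof is precisely your ``alternative one-line verification,'' multiplying the skew-symmetry relation $(DRB_t)^{T}=-DRB_t$ on both sides by $R$ and using that diagonal matrices commute. Your first formulation via the general observation about conjugating skew-symmetrizers is just a clean repackaging of the same computation.
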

\begin{proof}
Suppose that $(DRB_t)^{T}=-DRB_t$. Then, $R(DRB_t)^{T}R=-DR^2B_t R$.
That is, $(DR^2B_t R)^{T}=-DR^2B_t R$.
\end{proof}

\begin{prop}
\label{prop:conj2}
 Let $R\mathbf{B}$ and $\mathbf{B}R$ be a conjugate pair.
Let 
$\mathbf{\Gamma}(R\mathbf{B},t_0)=
\{ \Gamma_t=(\mathbf{F}_{t}(\bfu), G_t, C_t, RB_t)
\mid t\in \mathbb{T}_n \}$
and 
$\mathbf{\Gamma}(\mathbf{B}R,t_0)=
\{ \Gamma'_t=(\mathbf{F}'_{t}(\bfu), G'_t, C'_t, \allowbreak B_tR)
\mid t\in \mathbb{T}_n \}$
be their $FGC$-patterns with any initial point $t_0$.
\par
(1).
The following equalities hold:
\begin{align}
\label{eq:conjg1}
G'_t &= R^{-1} G_t R,\\
\label{eq:conjc1}
C'_t &= R^{-1} C_t R.
\end{align}
\par
(2).
If $ G_{t_1}=\sigma G_{t_2}$ or $ G'_{t_1}=\sigma G'_{t_2}$
 for some $t_1,t_2\in \mathbb{T}_n$
and  a permutation $\sigma\in S_n$,
then $\sigma$ is compatible with $D$ and $R$.
\end{prop}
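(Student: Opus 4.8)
The plan is to prove Part (1) by induction on the distance from $t_0$ in $\mathbb{T}_n$, using the explicit mutation formulas \eqref{eq:Fmut1}--\eqref{eq:M1} for $FGC$-patterns, and then to deduce Part (2) from Part (1) together with the duality and determinant relations of Theorem \ref{thm:dual1}. First I would record the base case: at $t_0$ we have $G_{t_0}=C_{t_0}=I$ and likewise for the primed pattern, so $G'_{t_0}=R^{-1}IR=I$ and $C'_{t_0}=R^{-1}IR=I$, and the $F$-polynomials agree (both are the constant $1$). This last observation is worth isolating as a separate auxiliary claim, namely that $\mathbf{F}'_t(\bfu)=\mathbf{F}_t(\bfu)$ for all $t$, which should also fall out of the same induction; I expect it to follow because the exponents $b_{jk}^t$ appearing in $M_{k;t}$ are unchanged under $B_t\mapsto RB_t$ versus $B_tR$ only in the $C$-matrix slot, and the conjugation by $R$ will be exactly what is needed to absorb the discrepancy.

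The core of Part (1) is the inductive step across an edge labeled $k$. Assuming \eqref{eq:conjg1} and \eqref{eq:conjc1} at $t$, I would substitute $C'_t=R^{-1}C_tR$ and $B_tR$ (the exchange matrix of the primed pattern) into the mutation rules \eqref{eq:gmut1} and \eqref{eq:cmut1}, and check that the result is exactly $R^{-1}$ times the corresponding mutation for the unprimed pattern times $R$. For the $C$-matrix mutation \eqref{eq:cmut1} this is a clean computation: conjugation by the diagonal matrix $R$ commutes with taking $[\,\cdot\,]_+$ entrywise only after one tracks the indices carefully, but since the formula for $c_{ij}^{t'}$ with $j\neq k$ is $c_{ij}^t + c_{ik}^t[b_{kj}^t]_+ + [-c_{ik}^t]_+ b_{kj}^t$ and the primed exchange matrix entry is $(B_tR)_{kj}=b_{kj}^tr_j$, while $(C'_t)_{ik}=r_i^{-1}c_{ik}^t r_k$, one has to verify the scalars $r_i,r_j,r_k$ recombine correctly; the key point is that $R$ acts on rows by $r_i^{-1}$ and on columns by $r_j$, and $[-c_{ik}^t]_+ b_{kj}^t$ picks up $r_i^{-1}\cdot r_k\cdot$(something)$\cdot r_j$ in a way matching $C'_t$. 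The $G$-matrix mutation \eqref{eq:gmut1} is the more delicate one because it also involves the fixed initial matrix $B=B_{t_0}$ (replaced by $RB$ in the primed pattern) together with $C_t$; here I would use the already-established $C'_t=R^{-1}C_tR$ and the fact that the term $\sum_\ell b_{i\ell}[-c_{\ell k}^t]_+$ becomes $\sum_\ell (RB)_{i\ell}[-(C'_t)_{\ell k}]_+ = \sum_\ell r_i b_{i\ell}[-r_\ell^{-1}c_{\ell k}^t r_k]_+$, and check the $r$'s telescope to reproduce $R^{-1}G_t^{\text{mut}}R$. This $G$-matrix step is the main obstacle: it is the only place where two different structures (the $t$-dependent $C_t$ and the fixed $B_{t_0}$) interact, and the bookkeeping of which diagonal factor lands on which index is where an error would hide.

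For Part (2), suppose $G_{t_1}=\sigma G_{t_2}$ for some permutation $\sigma$. Since $R\mathbf{B}$ is a genuine $B$-pattern with common skew-symmetrizer $D$, Proposition \ref{prop:compat1}(2) immediately gives that $\sigma$ is compatible with $D$. To see that $\sigma$ is compatible with $R$: by \eqref{eq:conjg1} the hypothesis $G_{t_1}=\sigma G_{t_2}$ is equivalent, after conjugating by $R$, to $G'_{t_1}=R^{-1}P_\sigma^T\!\!\;? \cdots$ — more precisely I would write $G_{t_1}=G_{t_2}P_\sigma$ (using \eqref{eq:p2}) so that $RG'_{t_1}R^{-1}=RG'_{t_2}R^{-1}P_\sigma$, i.e. $G'_{t_1}=G'_{t_2}(R^{-1}P_\sigma R)$. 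Now $R^{-1}P_\sigma R$ is an integer matrix (indeed a permutation matrix times positive rationals) precisely when $r_i/r_{\sigma(i)}\in\mathbb{Z}$ for all $i$; but $\mathbf{B}R$ is also a genuine $B$-pattern, with common skew-symmetrizer $DR^2$ by Lemma \ref{lem:ss1}, and $G'_{t_1}=\sigma' G'_{t_2}$ would force $\sigma$-type compatibility there, so running the same determinant argument as in Proposition \ref{prop:compat1} on the primed pattern yields $r_i = r_{\sigma(i)}$. The case $G'_{t_1}=\sigma G'_{t_2}$ is symmetric, using \eqref{eq:conjg1} in the other direction. Since compatibility with both $D$ and $R$ is exactly the assertion, this completes the plan; the only subtlety is to make sure that when I invoke the Proposition \ref{prop:compat1}-style argument on $\mathbf{B}R$ I am entitled to, which I am since a conjugate pair consists of two bona fide $B$-patterns.
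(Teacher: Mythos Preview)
Your overall strategy matches the paper's exactly: Part (1) by induction on the distance from $t_0$ using the mutation rules \eqref{eq:gmut1}--\eqref{eq:cmut1}, and Part (2) by conjugating the equation $G_{t_2}^{-1}G_{t_1}=P_\sigma$ by $R$ to land inside the primed pattern and then recycling the integrality argument from Proposition~\ref{prop:compat1}. A couple of points in your sketch need correction, though.

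First, drop the auxiliary claim $\mathbf{F}'_t(\bfu)=\mathbf{F}_t(\bfu)$: it is false in general. In $M_{k;t}$ the $F$-polynomial exponents are the exchange-matrix entries in the $k$th \emph{column}, which for $R\mathbf{B}$ are $r_jb_{jk}^t$ but for $\mathbf{B}R$ are $b_{jk}^t r_k$; these differ already after one mutation unless all $r_i$ coincide. Fortunately the proposition says nothing about $F$-polynomials, and neither the $C$- nor the $G$-mutation involves them, so the claim is irrelevant here.

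Second, in your $G$-mutation check you wrote the primed initial matrix as $(RB)_{i\ell}=r_ib_{i\ell}$, but the primed pattern is $\mathbf{B}R$, so the correct entry is $(BR)_{i\ell}=b_{i\ell}r_\ell$. With the wrong choice the $r$'s do \emph{not} telescope; with the right one the last term becomes $\sum_\ell b_{i\ell}r_\ell\cdot r_\ell^{-1}r_k[-c_{\ell k}^t]_+=r_k\sum_\ell b_{i\ell}[-c_{\ell k}^t]_+$, which is exactly $r_i^{-1}\cdot\bigl(r_i\sum_\ell b_{i\ell}[-c_{\ell k}^t]_+\bigr)\cdot r_k$ as required. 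You correctly flagged this step as the delicate one; just be sure to use $BR$, not $RB$, when you carry it out.

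For Part (2), your detour through $DR^2$ and a hypothetical ``$G'_{t_1}=\sigma'G'_{t_2}$'' is unnecessary. The clean line (which is what the paper does) is: from $G_{t_1}=G_{t_2}P_\sigma$ and \eqref{eq:conjg1} you get $(G'_{t_2})^{-1}G'_{t_1}=R^{-1}P_\sigma R$; both $G'$-matrices are integer with determinant $\pm 1$ by \eqref{eq:det1}, so $R^{-1}P_\sigma R$ is an integer matrix; its only nonzero entries are $r_{\sigma(i)}/r_i$, whence $r_i\le r_{\sigma(i)}$ for all $i$ and hence $r_{\sigma(i)}=r_i$. (You had the fraction upside down.) Compatibility with $D$ is, as you say, immediate from Proposition~\ref{prop:compat1} applied to $R\mathbf{B}$.
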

\begin{proof}
(1).
They are true at $t_0$.
Then, they are shown from \eqref{eq:gmut1}, \eqref{eq:cmut1},
and \eqref{eq:conj1}
by induction on $t$.
\par
(2).
Suppose that $G_{t_1}=\sigma G_{t_2}$.
Then, $\sigma$ is compatible with $D$ by Proposition \ref{prop:compat1}.
Meanwhile,
by \eqref{eq:conjg1},
$R^{-1}G_{t_1}R=G'_{t_1}$ and $R^{-1}G_{t_2}^{-1}R=(G'_{t_2})^{-1}$.
Therefore,
$R^{-1}P_{\sigma}R=R^{-1}G_{t_2}^{-1}G_{t_1}R=(G'_{t_2})^{-1}G'_{t_1}$,
where the righthand side is an integer matrix.
Then, by repeating the  argument in the proof of Proposition \ref{prop:compat1},
we conclude that $\sigma$ is compatible with $R$.
The other case $ G'_{t_1}=\sigma G'_{t_2}$ is similar.
\end{proof}

\begin{thm}
\label{thm:conj1}
Let $R\mathbf{B}$ and $\mathbf{B}R$ be a conjugate pair
with common skew-symmetrizers $D$ and $DR^2$, respectively.
Let
${{\mathbf{\Sigma}}}=\{ {\Sigma}_t
=({\bfx}_t,
{\bfy}_t,RB_t)
\mid t\in \bbT_n 
\}$
and
${{\mathbf{\Sigma}}}'=\{ {\Sigma}'_t
=({\bfx}'_t,
{\bfy}'_t,B_tR)
\mid t\in \bbT_n 
\}$
be  cluster patterns with coefficients
in  any semifields $\mathbb{P}$ and $\mathbb{P}'$, respectively.
\par
(1). 
If $ \Sigma_{t_1}=\sigma \Sigma_{t_2}$ or $ \Sigma'_{t_1}=\sigma \Sigma'_{t_2}$
 for some $t_1,t_2\in \mathbb{T}_n$
and  a permutation $\sigma\in S_n$,
then $\sigma$ is compatible with $D$ and $R$.
\par
(2).
The seeds in ${\mathbf{\Sigma}}$ and the seeds in
${\mathbf{\Sigma}}'$ share the same periodicity.
\end{thm}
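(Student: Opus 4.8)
The plan is to transfer everything to the level of $G$-matrices and then exploit the conjugation relation \eqref{eq:conjg1} together with the rigidity furnished by Proposition \ref{prop:conj2}(2). Throughout, let $G_t$ and $G'_t$ denote the $G$-matrices of the $FGC$-patterns $\mathbf{\Gamma}(R\mathbf{B},t_0)$ and $\mathbf{\Gamma}(\mathbf{B}R,t_0)$, and recall from \eqref{eq:p2} that $\sigma G_t = G_t P_\sigma$, where $P_\sigma$ is the permutation matrix of \eqref{eq:p1}.

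For (1), suppose $\Sigma_{t_1}=\sigma\Sigma_{t_2}$; in particular $\bfx_{t_1}=\sigma\bfx_{t_2}$, so by Theorem \ref{thm:basic1} applied to $\mathbf{\Sigma}$ and $\mathbf{\Gamma}(R\mathbf{B},t_0)$ we get $G_{t_1}=\sigma G_{t_2}$, and then Proposition \ref{prop:conj2}(2) shows that $\sigma$ is compatible with both $D$ and $R$. The case $\Sigma'_{t_1}=\sigma\Sigma'_{t_2}$ is symmetric: work with $\mathbf{\Gamma}(\mathbf{B}R,t_0)$ (whose exchange matrices have common skew-symmetrizer $DR^2$ by Lemma \ref{lem:ss1}) to obtain $G'_{t_1}=\sigma G'_{t_2}$, and again apply Proposition \ref{prop:conj2}(2).

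For (2), by Theorem \ref{thm:seed1} it suffices to establish the equivalence $\bfx_{t_1}=\sigma\bfx_{t_2}\Longleftrightarrow\bfx'_{t_1}=\sigma\bfx'_{t_2}$ for all $t_1,t_2$ and $\sigma\in S_n$. By Theorem \ref{thm:basic1} this is in turn equivalent to $G_{t_1}=\sigma G_{t_2}\Longleftrightarrow G'_{t_1}=\sigma G'_{t_2}$. Substituting $G'_t=R^{-1}G_tR$ from \eqref{eq:conjg1}, the condition $G'_{t_1}=G'_{t_2}P_\sigma$ is equivalent to $G_{t_1}=G_{t_2}\,R P_\sigma R^{-1}$, so the whole equivalence reduces to checking that $R P_\sigma R^{-1}=P_\sigma$ whenever one of the two periodicity conditions holds. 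This is exactly the compatibility of $\sigma$ with $R$ (the analogue of \eqref{eq:dp1}), and Proposition \ref{prop:conj2}(2) guarantees it in both directions: either $G_{t_1}=\sigma G_{t_2}$ or $G'_{t_1}=\sigma G'_{t_2}$ forces $\sigma$ to be compatible with $R$. Hence the two conditions $G_{t_1}=G_{t_2}P_\sigma$ and $G'_{t_1}=G'_{t_2}P_\sigma$ coincide, which yields the equivalence and thus (2).

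I do not expect a genuine obstacle; the computation is essentially a one-line conjugation argument. The one subtlety to be careful about — and the only place the argument could break — is that $P_\sigma$ need not commute with $R$ for an arbitrary $\sigma$, so one must secure the compatibility of $\sigma$ with $R$ \emph{before} passing the conjugation through $P_\sigma$. This is precisely what Proposition \ref{prop:conj2}(2) supplies, and it is the reason the statement is phrased only for those $\sigma$ that actually arise as periodicity permutations. An alternative route, using \eqref{eq:conjc1} and $C$-matrices in place of \eqref{eq:conjg1} and $G$-matrices, works identically.
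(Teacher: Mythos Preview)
Your proof is correct and follows essentially the same approach as the paper's own proof: both reduce to the equivalence of $G$-matrix periodicities via the conjugation relation \eqref{eq:conjg1}, secure the compatibility of $\sigma$ with $R$ through Proposition \ref{prop:conj2}(2), and invoke Theorems \ref{thm:basic1} and \ref{thm:seed1} to pass between seeds, $x$-variables, and $G$-matrices. Your writeup is more explicit about the conjugation computation and the logical role of the compatibility condition, but the argument is the same.
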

\begin{proof}
(1). 
This follows from Theorems 
\ref{thm:basic1} and
\ref{thm:seed1}, and  Proposition \ref{prop:conj2}.
\par
(2). 
Let 
$\mathbf{\Gamma}(R\mathbf{B},t_0)=
\{ \Gamma_t=(\mathbf{F}_{t}(\bfu), G_t, C_t, RB_t)
\mid t\in \mathbb{T}_n \}$
and 
$\mathbf{\Gamma}(\mathbf{B}R,t_0)=
\{ \Gamma'_t=(\mathbf{F}'_{t}(\bfu), G'_t, C'_t, B_tR)
\mid t\in \mathbb{T}_n \}$
be the $FGC$-patterns of $R\mathbf{B}$ and $\mathbf{B}R$
with any initial point $t_0$, respectively.
For a permutation $\sigma$ which is compatible with $R$,
$G_{t_1}=\sigma G_{t_2}$ if and only if $G'_{t_1}=\sigma G'_{t_2}$
thanks to \eqref{eq:conjg1}.
Then, the claim follows from 
Theorems \ref{thm:basic1}, \ref{thm:seed1},
and Claim (1).
\end{proof}

See the forthcoming Corollary
\ref{cor:gconj1} for a stronger version of Theorem \ref{thm:conj1} (1)
when $B$ is an integer matrix.

\section{Open problem}
\label{sec:open}

We have seen that various objects such as seeds, $Y$-seeds, $x$-variables, $y$-variables, $C$-matrices, $G$-matrices, etc,
with a common $B$-pattern $\mathbf{B}$
 share the same periodicity. In other words, 
 they share
 the same exchange graph.
 Also, a categorification of a cluster pattern share the same  exchange graph 
with the underlying cluster pattern by definition.
 In addition, in Part \ref{part:GCA} we show that cluster patterns of
generalized cluster algebras
  also share the  exchange graphs  with ordinary cluster algebras,
  up to the Laurent positivity conjecture (Theorem \ref{thm:gcompanion1}).
 These results suggest that the underlying exchange graph is the essence
 of what we vaguely call {\em cluster structure}.

In this aspect it may be more convenient to consider the  exchange graphs of labeled seeds,
rather than unlabeled seeds.
Let us introduce
the {\em labeled exchange graph} of  a cluster pattern $\mathbf{\Sigma}$,
which is the quotient graph of $\mathbb{T}_n$ modulo
the equivalence relation $t\sim t'$ defined by the condition 
$\Sigma_t=\Sigma_{t'}$,
endowed with arrows  $t_2 \xrightarrow{\sigma}
 t_1$ if $\Sigma_{t_1} = \sigma \Sigma_{t_2}$ for a permutation $\sigma\in S_n$.
Moreover, these labeled exchange graphs are better regarded as {\em groupoids} as proposed earlier in \cite[Section 2.1]{Fock07}, where objects are seeds, $Y$-seeds, etc., and 
morphisms are generated by their mutations and  permutations.
Then, the synchronicity  means that we have {\em different realizations (representations)  of a common groupoid}  with various objects.
It is natural to call this common underlying groupoid $\mathcal{G}(\mathbf{B})$ the {\em cluster groupoid} of $\mathbf{B}$. (It is called 
the {\em cluster modular groupoid}
in \cite{Fock03}.)

In analogy with reflection groups and Coxeter groups, the usual construction of 
 $\mathcal{G}(\mathbf{B})$ by
seeds and their mutations  
may correspond to an  explicit construction of  reflection groups
starting from the simple roots.
 On the other hand, like Coxeter groups,  cluster groupoids should be also  defined as abstract groupoids by generators and relations.
Unfortunately we do not yet know  much about
the fundamental relations of morphisms in  cluster groupoids except for  rank 2 case \cite{Fomin02,Fock03}
and  certain surface type \cite[Theorem 9.1, Remark 9.19]{Fomin08}.
Therefore, we believe that the following problem has fundamental importance
in cluster algebra theory:

\begin{problem}
Describe the fundamental relations of morphisms in a cluster groupoid
$\mathcal{G}(\mathbf{B})$
in terms of the exchange matrices in $\mathbf{B}$.
\end{problem}

\newpage

\part{Synchronicity in cluster patterns of  generalized cluster algebras}
\label{part:GCA}


{\em Generalized cluster algebras} (GCAs) were introduced by Chekhov and Shapiro
\cite{Chekhov11} as a generalization of (ordinary) cluster algebras (CAs).
 It turned out that many (possibly all) important properties of
CAs are naturally extended to GCAs \cite{Chekhov11, Nakanishi14a}.
In this part we show that 
all main results in Part \ref{part:CA} are naturally 
extended to cluster patterns of GCA as expected,
up to the Laurent positivity which is still a conjecture for GCA. Moreover,
we show that any  cluster pattern of GCA synchronizes with its ``companion cluster patterns" of CA.

\section{Cluster patterns and separation formulas of GCA}
\label{sec:cluster2}

\subsection{Cluster patterns of GCA}
We recall the definitions of seeds and cluster patterns of (normalized) GCA
\cite{Chekhov11}
following the formulation of \cite{Nakanishi14a, Nakanishi15}.

\begin{defn}[Seeds of GCA]
\label{defn:gseed1}
Let $\bbP$ be any semifield.
We fix a
diagonal matrix  $R=\mathrm{diag}(r_1,\dots,r_n)$ whose diagonal entries
$r_1,\dots,r_n$ are positive integers.
(Here we use the notation $r_i$ instead of $d_i$ 
and {\em vice versa} in \cite{Nakanishi14a, Nakanishi15}.)
We call it the {\em degree} of GCA.
A {\em (labeled) seed of degree $R$  with coefficients in $\bbP$\/}
is a quadruplet
$\Sigma=(\bfx,\bfy,\bfz,B)$,
where 
$(\bfx,\bfy,B)$
is a seed of CA in Definition \ref{defn:seed1},
and
\begin{align}
\bfz&=(z_{i,s})_{1\leq i \leq n, 1 \leq s \leq r_i-1}\\
&= (\bfz_1,\dots,\bfz_n),
\quad \bfz_i=(z_{i,s})_{1\leq s\leq r_i-1}
\end{align}
are collection of arbitrary elements in $\mathbb{P}$.
For $\bfz$, we associate
 an $n$-tuple $\bfZ(\eta)=(Z_1(\eta),\dots,Z_n(\eta))$
 of 
polynomials ({\em exchange polynomials}) in a single variable $\eta$
with coefficients in $\mathbb{P}$ such that
\begin{align}
Z_i(\eta)=\sum_{s=0}^{r_i} z_{i,s}\eta^s,
\quad (r_i=\deg Z_i(\eta))
\end{align}
with  $z_{i,0}=z_{i,r_i}=1$.
\end{defn}

When $r_i=1$, $\bfz_i$ is empty, and $Z_i(\eta)=1+\eta$.
In particular,
when $R=I$, $\bfz$ is empty, 
and seeds of GCA reduce to seeds of CA.
For $R\neq I$, the seed mutations in Definition 
\ref{defn:mutation1} are generalized as follows.

\begin{defn}[Seed mutations of GCA]
\label{defn:gmutation1}
For any seed $\Sigma=(\bfx,\bfy,\bfz,B)$ of degree $R$ 
and any $k\in \{1,\dots,n\}$,
we define a new seed
$\Sigma'=(\bfx',\bfy',\bfz',B')$
 of the same degree
by the following formulas:
\begingroup
\allowdisplaybreaks
\begin{align}
\label{eq:gxmut1}
x'_i
&=
\begin{cases}
\displaystyle
x_k^{-1}\left( \prod_{j=1}^n x_j^{[-b_{jk}]_+}
\right)^{r_k}
\frac{ Z_k(\hat{y}_k)}{ Z_k\vert_{\mathbb{P}}({y}_k)}
& i=k,
\\
x_i
&i\neq k,
\end{cases}
\\
\label{eq:gymut1}
y'_i
&=
\begin{cases}
\displaystyle
y_k^{-1}
& i=k,
\\
y_i 
\left(
y_k^{[b_{ki}]_+} 
\right)^{r_k}Z_k\vert_{\mathbb{P}}({y}_k)^{-b_{ki}}
&i\neq k,
\end{cases}
\\
\label{eq:gzmut1}
z'_{i,s}
&=
\begin{cases}
 z_{k,r_k-s}
& i = k,
\\
z_{i,s}
& i \neq k,
\end{cases}
\\
\label{eq:gbmut1}
b'_{ij}&=
\begin{cases}
-b_{ij}
&
\text{$i=k$ or $j=k$,}
\\
b_{ij}+
r_k\left(
b_{ik} [b_{kj}]_+
+
[-b_{ik}]_+b_{kj}
\right)
&
i,j\neq k,
\end{cases}
\end{align}
where $\hat{y}_k$ in \eqref{eq:gxmut1}
is defined by
 \begin{align}
\label{eq:gyhat1}
\hat{y}_i
:=y_i \prod_{j=1}^n x_j^{b_{ji}}
\in \mathcal{F}.
\end{align}
\endgroup
 The seed $\Sigma'$ is called the {\em mutation of $\Sigma$ at direction $k$},
and denoted by $\mu_k(\Sigma)$. 
Again, these mutations are involutive.
\end{defn}

It is easy to see that $RB'$ (resp. $B'R$) is obtained from $RB$ (resp. $BR$) by the ordinary matrix mutation \eqref{eq:bmut1}.
In other words, the (ordinary) $B$-patterns $R\mathbf{B}$ and 
$\mathbf{B}R$ form a conjugate pair in the sense of Section
\ref{subsec:conjugate1}.

In terms of exchange polynomials,
the mutation of $\bfz$ in \eqref{eq:gzmut1} is rephrased as
\begin{align}
\label{eq:gZmut1}
Z'_i(\eta)
&=
\begin{cases}
\displaystyle
\overline{Z}_k(\eta):=
\sum_{s=0}^{r_k} z_{k,r_k-s}\eta^s
& i = k,
\\
Z_i(\eta)
& i \neq k.
\end{cases}
\end{align}

Accordingly, a {\em cluster pattern $\mathbf{\Sigma}_R=\{ \Sigma_t
=(\bfx_t,\bfy_t,\bfz_t, B_t)
\mid t\in \bbT_n 
\}$ of degree $R$
with coefficients in a semifield $\bbP$ }
is
defined in the same way as the ordinary one
in Definition \ref{defn:clusterpattern1}
by replacing seeds of CA therein with seeds of degree $R$ of GCA.

\begin{ex}
(1).
A cluster pattern  ${\mathbf{\Sigma}}_R=\{ {\Sigma}_t
=({\bfx}_t,{\bfy}_t,{\bfz}_t,B_t)
\mid t\in \bbT_n 
\}$  of degree $R$ with coefficients in $\mathbb{P}
=\mathbb{Q}_{\mathrm{sf}}({\bfy}_{{t}_0},{\bfz}_{t_0})
$
 for some $t_0\in \mathbb{T}_n$
is called a 
{\em cluster pattern of degree $R$ with universal coefficients}.
\par

(2).
A cluster pattern  $\tilde{\mathbf{\Sigma}}_R[{t}_0]=\{ \tilde{\Sigma}_t
=(\tilde{\bfx}_t,\tilde{\bfy}_t,\tilde{\bfz}_t,B_t)
\mid t\in \bbT_n 
\}$  of degree $R$ with coefficients in $\mathbb{P}
=\mathrm{Trop}(\tilde{\bfy}_{{t}_0},\tilde{\bfz}_{t_0})
$
 for some $t_0\in \mathbb{T}_n$
is called a 
{\em cluster pattern of degree $R$ with principal coefficients at $t_0$}.

\par
(3).
A cluster pattern  $\check{\mathbf{\Sigma}}_R[{t}_0]=\{ \check{\Sigma}_t
=(\check{\bfx}_t,\check{\bfy}_t,\check{\bfz}_t,B_t)
\mid t\in \bbT_n 
\}$  of degree $R$ with coefficients in $\mathbb{P}
=\mathrm{Trop}(\check{\bfy}_{{t}_0})
$
 for some $t_0\in \mathbb{T}_n$
 such that $\check{z}_{i,s;t}=1$ for any $i,s$, and $t$
is called a 
{\em cluster pattern of degree $R$ with $Y$-principal coefficients at $t_0$}.
In this case one can set 
$ \check{{\Sigma}}_t
=(\check{\bfx}_t,\check{\bfy}_t,B_t)$.

\par
(4).
A cluster pattern  $\underline{\mathbf{\Sigma}}_R=\{ \underline{\Sigma}_t
=(\underline{\bfx}_t,\underline{\bfy}_t,\underline{\bfz}_t,B_t)
\mid t\in \bbT_n 
\}$  of degree $R$ with coefficients in the trivial semifield $\mathbf{1}$
is called a 
{\em cluster pattern of degree $R$ without  coefficients}.
In this case one can set 
$ \underline{\Sigma}_t
=(\underline{\bfx}_t,B_t)$.

\end{ex}

\subsection{$FGC$-patterns of GCA and separation formulas}

Next we introduce $FGC$-patterns of GCA following \cite{Nakanishi14a}.

Let  $\mathbf{\Sigma}_R=\{ \Sigma_t
=(\bfx_t,\bfy_t,\bfz_t,B_t)
\mid t\in \bbT_n 
\}$ be a cluster pattern of degree $R$
with coefficients in any semifield $\bbP$.
We arbitrary choose a distinguished
point (the {\em initial point\/}) $t_0$ in $\bbT_n$.
The seed $(\bfx_{t_0},\bfy_{t_0},\bfz_{t_0}, B_{t_0})$ at $t_0$ is called
the {\em initial seed} of $\mathbf{\Sigma}_R$.
We use the following simplified notation
\begin{align}
\bfz_{t_0}=\bfz=(z_{i,s})_{1\leq i \leq n, 1\leq s \leq r_i-1},
\end{align}
together with the notation \eqref{eq:initial2}.

Let us extract a family 
$\mathbf{B}_R=\{B_t)
\mid t\in \bbT_n 
\}$ from $\mathbf{\Sigma}_R$,
which we call a {\em $B$-pattern of degree $R$}.
For any  $B$-pattern 
$
\mathbf{B}_R=\{B_t)
\mid t\in \bbT_n 
\}$
of degree $R$
and any $t_0\in \mathbb{T}_n$,
 we introduce
a family of  quintuplet $\mathbf{\Gamma}(\mathbf{B}_R, t_0)
=
\{ \Gamma_t=(\mathbf{F}_{t}(\bfu,\bfv), G_t, C_t, \bfv_t,B_t)
\mid t\in \mathbb{T}_n \}$
uniquely determined 
from $\mathbf{B}_R$ and $t_0$
as follows.
For each $t\in \mathbb{T}_n$,
$\Gamma_t=(\mathbf{F}_{t}(\bfu,\bfv), G_t, C_t, \bfv_t,B_t)$
is a quintuplet (let us call it an {\em $FGC$-seed of degree $R$}),
where
\begin{itemize}
\item
$\mathbf{F}_{t}(\bfu,\bfv)=(F_{1;t}(\bfu,\bfv),\dots, F_{n;t}(\bfu,\bfv))$ is an $n$-tuple
of  rational functions (called  {\em $F$-poly\-nomials}) in
variables $\bfu=(u_1,\dots,u_n)$
and $\bfv=(v_{i,s})_{1\leq i \leq n, 1\leq s \leq r_i-1}$
with coefficients in $\bbQ$
having a subtraction-free expression,
i.e., $F_{i;t}(\bfu,\bfv)\in \bbQ_{\mathrm{sf}}(\bfu,\bfv)$,
\item
$G_t$ and $C_t$ are $n\times n$ integer matrices
(called a {\em $G$-matrix} and a {\em $C$-matrix}),
\item
and $\bfv_t=(v_{i,s;t})_{1\leq i \leq n, 1\leq s \leq r_i-1}$
is obtained from $\bfv$ by some permutations.
\end{itemize}
At the initial point $t_0$, they are given by
\begin{align}
\label{eq:ginitial1}
{F}_{i;t_0}(\bfu,\bfv)=1 \quad (i=1,\dots,n),
\quad
G_{t_0}=C_{t_0}=I,
\quad
\bfv_{t_0}=\bfv.
\end{align}
For $t,t'\in \mathbb{T}_n$
which  are connected by an edge labeled by $k$,
$\Gamma_t$ and $\Gamma_{t'}$ are related by the following {\em mutation
at $k$}:
\begingroup
\allowdisplaybreaks
\begin{align}
\label{eq:gFmut1}
 F_{i;t'}(\bfu,\bfv)&=
 \begin{cases}
\frac
 {
  \displaystyle
 M_{k;t}(\bfu,\bfv)
 }
{ \displaystyle
 F_{k;t}(\bfu,\bfv)
 }
   &
 i= k
 \\
 F_{i;t}(\bfu,\bfv)  &
 i \neq k,
 \end{cases}
 \\
 \label{eq:ggmut1}
 g_{ij}^{t'}&=
 \begin{cases}
 \displaystyle
 -g_{ik}^t
 + 
  r_k
  \left(
  \sum_{\ell=1}^n g_{i\ell}^t [-b_{\ell k}^t]_+
 -  \sum_{\ell=1}^nb_{i\ell}  [-c_{\ell k}^t]_+ 
 \right)
 &
 j= k
 \\
 g_{ij}^t  &
 j \neq k,
 \end{cases}
 \\
 \label{eq:gcmut1}
 c_{ij}^{t'}&=
 \begin{cases}
 -c_{ik}^t
 &
 j= k,
 \\
 c_{ij}^t + 
 r_k\left(
 c_{ik}^t [b_{kj}^t]_+
 + [-c_{ik}^t]_+  b_{kj}^t
 \right)
 &
 j \neq k,
 \end{cases}
 \\
 \label{eq:gvmut1}
v_{i,s;t'}
&=
\begin{cases}
 v_{k,r_k-s;t}
& i = k,
\\
v_{i,s;t}
& i \neq k,
\end{cases}
\end{align}
where
\begin{align}
\label{eq:gM1}
 M_{k;t}(\bfu,\bfv)
 &=
   \left(
      \prod_{j=1}^{n}
  u_j^{[-c_{jk}^t]_+}
  F_{j;t}(\bfu,\bfv)^{[-b_{jk}^t]_+}
  \right)^{r_k}
\sum_{s=0}^{r_k}
v_{k,s;t}
\left(
    \prod_{j=1}^{n}
    u_j^{c_{jk}^t}
  F_{j;t}(\bfu,\bfv)^{b_{jk}^t}
  \right)^s
  \end{align}
\endgroup
with $v_{k,0;t}=v_{k,r_k;t}=1$.
Note that $v$-variables mutate as $z$-variables in \eqref{eq:gzmut1}.
Again, these mutations are involutive.
We call $\mathbf{\Gamma}(\mathbf{B}_R,t_0)$ the {\em $FGC$-pattern
of $\mathbf{B}_R$ with the initial point $t_0$}.

Here is the Laurent property of GCA.

\begin{thm}[{\cite[Theorem 2.5]{Chekhov11},\cite[Proposition 3.3]{Nakanishi14a}}]
\label{thm:gFpoly1}
For any $i=1,\dots,n$ and $t\in \bbT_n$,
the function $F_{i;t}(\bfu,\bfv)$ is a polynomial in $\bfu$ and $\bfv$
with coefficients in $\bbZ$.
\end{thm}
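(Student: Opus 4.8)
The plan is to mimic the proof of Theorem \ref{thm:Fpoly1} for ordinary cluster algebras — which combines the Laurent phenomenon with the separation formula — and adapt it to degree $R$ as in \cite{Chekhov11,Nakanishi14a}. From the recursion \eqref{eq:gFmut1}--\eqref{eq:gM1} it is clear that each $F_{i;t}(\bfu,\bfv)$ lies in $\bbQ_{\mathrm{sf}}(\bfu,\bfv)$, so the real content of the statement is that it has \emph{no} denominators. This does not follow formally: already $u_1+u_1^{-1}\in\bbQ_{\mathrm{sf}}(u_1)$ is a Laurent polynomial but not a polynomial, so genuine structural input from the cluster pattern is needed. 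The input I would use is a realization of $F_{i;t}$ inside a cluster pattern of degree $R$ with principal coefficients.

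Concretely, first I would fix a cluster pattern $\tilde{\mathbf{\Sigma}}_R[t_0]$ of degree $R$ with principal coefficients at $t_0$, over $\bbP=\mathrm{Trop}(\tilde\bfy_{t_0},\tilde\bfz_{t_0})$, and prove the Laurent phenomenon for its $x$-variables in the sharp form that each $\tilde x_{i;t}$ belongs to $\bbZ[x_1^{\pm1},\dots,x_n^{\pm1};\tilde\bfy_{t_0},\tilde\bfz_{t_0}]$, i.e.\ is a Laurent polynomial in the initial $x$-variables whose coefficients are \emph{honest} polynomials (not Laurent polynomials) in the initial principal coefficients. The tool is the Fomin--Zelevinsky ``caterpillar'' argument for the Laurent phenomenon \cite{Fomin02,Fomin03a}; the only new feature is that, after clearing the monomial denominator $Z_k|_{\bbP}(y_k)$, the exchange relation read off from \eqref{eq:gxmut1} becomes
\begin{equation*}
x_k\,x_k'\,Z_k|_{\bbP}(y_k)=\sum_{s=0}^{r_k}z_{k,s}\,y_k^{\,s}\Bigl(\prod_{j=1}^n x_j^{[b_{jk}]_+}\Bigr)^{\!s}\Bigl(\prod_{j=1}^n x_j^{[-b_{jk}]_+}\Bigr)^{\!r_k-s},
\end{equation*}
so the binomial $1+\hat y_k$ of the ordinary theory is replaced by the degree-$r_k$ exchange polynomial. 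The inductive step of the caterpillar lemma then requires checking that the exchange polynomial at a vertex and its mutation $\overline Z_k$ at an adjacent vertex, related by \eqref{eq:gZmut1}, remain coprime in the appropriate Laurent polynomial ring after the substitutions forced by a third mutation direction. \textbf{This coprimality verification is the main obstacle}; it is the technical heart of \cite[Theorem~2.5]{Chekhov11}. The normalization $z_{k,0}=z_{k,r_k}=1$ from Definition \ref{defn:gseed1}, together with the fact that in the principal model each $\tilde y_{j;t}$ and $\tilde z_{i,s;t}$ is a Laurent monomial in the initial data (so that $Z_k|_{\bbP}(\tilde y_{k;t})$ is a monomial with non-positive exponents, whose inverse is therefore polynomial), is exactly what makes the argument go through and keeps the coefficients polynomial rather than Laurent in $\tilde\bfy_{t_0},\tilde\bfz_{t_0}$.

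Granting the first step, the second step is a routine induction on $t\in\bbT_n$ using the seed mutations \eqref{eq:gxmut1}--\eqref{eq:gbmut1} and the $FGC$-recursion \eqref{eq:gFmut1}--\eqref{eq:gM1}: one proves the generalized separation formula (as derived in \cite{Nakanishi14a}),
\begin{equation*}
\tilde x_{i;t}=\Bigl(\prod_{j=1}^n x_j^{g_{ji}^t}\Bigr)\frac{F_{i;t}(\hat{\tilde\bfy},\tilde\bfz_{t_0})}{F_{i;t}|_{\bbP}(\tilde\bfy_{t_0},\tilde\bfz_{t_0})},\qquad \hat{\tilde y}_j=\tilde y_{j;t_0}\prod_{k=1}^n x_k^{b_{kj}},
\end{equation*}
in which $F_{i;t}|_{\bbP}$ is evaluated in the tropical semifield $\bbP$ and hence is a single Laurent monomial in $\tilde\bfy_{t_0},\tilde\bfz_{t_0}$. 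Specializing $x_1=\dots=x_n=1$ gives $\tilde x_{i;t}|_{x_\bullet=1}=F_{i;t}(\tilde\bfy_{t_0},\tilde\bfz_{t_0})/F_{i;t}|_{\bbP}(\tilde\bfy_{t_0},\tilde\bfz_{t_0})$; the left-hand side is a polynomial in $\tilde\bfy_{t_0},\tilde\bfz_{t_0}$ with integer coefficients by the first step and the denominator on the right is a monomial, so $F_{i;t}(\tilde\bfy_{t_0},\tilde\bfz_{t_0})$ — and therefore $F_{i;t}(\bfu,\bfv)$, after renaming the algebraically independent variables — is a polynomial in $\bfu$ and $\bfv$ with integer coefficients. (Alternatively, invoking sign-coherence for generalized cluster algebras from \cite{Nakanishi14a} one sees directly that $F_{i;t}|_{\bbP}=1$, so the denominator drops out entirely.)
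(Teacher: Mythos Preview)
Your approach is correct and matches the route taken in the cited references \cite{Chekhov11,Nakanishi14a}; the paper itself gives no proof here, only the citations, so there is nothing further to compare against.

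One small point worth tightening: in your final step you write that since $\tilde x_{i;t}|_{x_\bullet=1}$ is a polynomial and the denominator $F_{i;t}|_{\bbP}(\tilde\bfy_{t_0},\tilde\bfz_{t_0})$ is a monomial, the product $F_{i;t}(\tilde\bfy_{t_0},\tilde\bfz_{t_0})$ is a polynomial. As stated this only yields a \emph{Laurent} polynomial, since the tropical monomial could in principle have negative exponents. Your parenthetical alternative is really the clean fix: the identity $F_{i;t}|_{\mathrm{Trop}(\tilde\bfy_{t_0},\tilde\bfz_{t_0})}(\tilde\bfy_{t_0},\tilde\bfz_{t_0})=1$ (Proposition~\ref{prop:gFtrop1} in the paper) follows by a direct induction on the recursion \eqref{eq:gFmut1}, \eqref{eq:gM1} using only that $v_{k,0;t}=v_{k,r_k;t}=1$ and the componentwise-min rule in $\mathrm{Trop}$, with no appeal to polynomiality or sign-coherence. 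With that in hand the denominator is literally $1$, and $F_{i;t}(\bfu,\bfv)=\tilde x_{i;t}|_{x_\bullet=1}$ is a polynomial by your sharp Laurent phenomenon. So promote the parenthetical to the main argument.
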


We also note an analogue of
a well known formula for CA (Cf. \cite[Proposition 5.2]{Fomin07}).

\begin{prop}
\label{prop:gFtrop1}
The following formula holds for any $i$ and $t\in \mathbb{T}_n$:
\begin{align}
\label{eq:gFtrop1}
F_{i,t}\vert_{\mathrm{Trop(\bfy,\bfz)}}(\bfy,\bfz)=1.
\end{align}
\end{prop}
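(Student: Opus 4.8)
The plan is to prove \eqref{eq:gFtrop1} by induction on the distance from $t_0$ in $\mathbb{T}_n$, exactly parallel to the proof of the analogous statement for ordinary cluster algebras (\cite[Proposition 5.2]{Fomin07}), using the explicit $F$-polynomial mutation rule \eqref{eq:gFmut1}--\eqref{eq:gM1}. The base case $t=t_0$ is immediate from \eqref{eq:ginitial1}, since $F_{i;t_0}(\bfu,\bfv)=1$ evaluates to $1$ in any semifield. For the inductive step, suppose $t$ and $t'$ are joined by an edge labeled $k$ and that $F_{j;t}\vert_{\mathrm{Trop}(\bfy,\bfz)}(\bfy,\bfz)=1$ for all $j$. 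For $i\neq k$ nothing changes, so it suffices to check $F_{k;t'}$.

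The key computation is to evaluate $M_{k;t}(\bfu,\bfv)$ in $\mathrm{Trop}(\bfy,\bfz)$ at $(\bfy,\bfz)$. Writing $M_{k;t}(\bfu,\bfv)$ as in \eqref{eq:gM1}, the prefactor $\bigl(\prod_{j}u_j^{[-c_{jk}^t]_+}F_{j;t}(\bfu,\bfv)^{[-b_{jk}^t]_+}\bigr)^{r_k}$ evaluates, by the induction hypothesis on the $F_{j;t}$, to $\bigl(\prod_j \tilde y_j^{[-c_{jk}^t]_+}\bigr)^{r_k}$ under the relevant tropical evaluation (here I write the tropical images of $\bfy$ simply as products of the generators). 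The remaining factor $\sum_{s=0}^{r_k} v_{k,s;t}\bigl(\prod_j u_j^{c_{jk}^t}F_{j;t}(\bfu,\bfv)^{b_{jk}^t}\bigr)^s$ becomes, again by the induction hypothesis, the tropical sum $\bigoplus_{s=0}^{r_k}\bigl(\text{image of }z_{k,s}\bigr)\bigl(\prod_j (\text{image of }y_j)^{c_{jk}^t}\bigr)^s$; since $s=0$ contributes the term $1$ (as $z_{k,0}=1$ and the empty-exponent monomial is $1$) and tropical addition is $\min$ of exponent vectors, this tropical sum equals $1$ precisely because one of the summands is the constant monomial $1$, which is the tropical-minimal element among monomials with nonnegative exponents — and here I would invoke sign-coherence of the $c$-vector $(c_{jk}^t)_j$ (Theorem \ref{thm:sign}, which holds for GCA by the conjugate-pair relation with ordinary $B$-patterns noted after Definition \ref{defn:gmutation1} and Proposition \ref{prop:conj2}) to ensure the other summands have exponent vectors that are coordinatewise $\geq 0$, so the tropical sum collapses to $1$. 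Hence $M_{k;t}\vert_{\mathrm{Trop}(\bfy,\bfz)}(\bfy,\bfz)$ is a single monomial, and $F_{k;t'}\vert_{\mathrm{Trop}(\bfy,\bfz)}(\bfy,\bfz) = M_{k;t}\vert_{\mathrm{Trop}}(\bfy,\bfz)\,/\,F_{k;t}\vert_{\mathrm{Trop}}(\bfy,\bfz) = M_{k;t}\vert_{\mathrm{Trop}}(\bfy,\bfz)$, which is a monomial; but $F_{k;t'}$ is a polynomial with constant term $1$ (the GCA analogue of Theorem \ref{thm:const1}), so its tropical evaluation, being $\bigoplus$ over its monomials, is dominated by the constant-term monomial $1$, forcing the value to be $1$.

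The main obstacle is making precise, in the GCA setting, the two auxiliary facts that the ordinary argument leans on: (i) sign-coherence of $C$-matrices for GCA, so that the bracketed exponents in \eqref{eq:gM1} behave correctly, and (ii) that each $F$-polynomial $F_{i;t}(\bfu,\bfv)$ has constant term $1$. Both should be available: (i) follows from the conjugate-pair observation relating $\mathbf{B}_R$-data to ordinary $C$-matrices, and (ii) can be proved directly by induction from \eqref{eq:gFmut1}--\eqref{eq:gM1}, exactly as in the CA case, once one observes that the $s=0$ term of the sum in \eqref{eq:gM1} supplies the constant $1$ and no cancellation occurs among positive coefficients. I would state (ii) as a short lemma (or cite the corresponding statement in \cite{Nakanishi14a}) and then the tropical evaluation argument above goes through verbatim. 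Alternatively, one can avoid (ii) by arguing more directly: tropical evaluation is a semifield homomorphism, so it commutes with the mutation rule \eqref{eq:gFmut1}, and one checks inductively that the image of $F_{i;t}$ is the multiplicative identity $1$ of $\mathrm{Trop}(\bfy,\bfz)$ by tracking that the numerator $M_{k;t}$ evaluates to the same monomial as would cancel the (inductively trivial) denominator; the cleanest phrasing is to verify that under the tropical evaluation all the recursions \eqref{eq:gFmut1}--\eqref{eq:gM1} reduce to $1\mapsto 1$.
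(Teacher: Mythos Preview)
Your inductive strategy is the paper's, and the ``alternative'' you sketch at the end is in fact the whole proof: one simply checks that $M_{k;t}\vert_{\mathrm{Trop}(\bfy,\bfz)}(\bfy,\bfz)=1$, whence $F_{k;t'}\vert_{\mathrm{Trop}}=1/1=1$. The main body of your write-up, however, takes a detour that introduces two genuine problems.

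First, your computation of the tropical sum $\bigoplus_s z_{k,s;t}\bigl(\prod_j y_j^{c_{jk}^t}\bigr)^s$ only yields $1$ when the $c$-vector is nonnegative; when it is nonpositive the minimum is attained at $s=r_k$, not $s=0$, and the sum equals $\prod_j y_j^{r_k c_{jk}^t}\neq 1$, which is then cancelled by the prefactor. So your conclusion that ``$M_{k;t}\vert_{\mathrm{Trop}}$ is a single monomial'' is fine, but the reasoning as written is incomplete. Second, and more seriously, your fallback---``$F_{k;t'}$ is a polynomial with constant term $1$, so its tropical evaluation is $1$''---silently uses the polynomial expression of $F_{k;t'}$ as the subtraction-free expression in which the tropical evaluation is computed. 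That is legitimate only if all coefficients of the polynomial are nonnegative, i.e.\ Conjecture~\ref{conj:gpositivity}, which is open for GCA. The paper places Proposition~\ref{prop:gFtrop1} \emph{before} sign-coherence, the constant-term statement, and the positivity conjecture precisely because none of them are needed.

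The clean computation is elementary. Using \eqref{eq:gM1} with $F_{j;t}\vert_{\mathrm{Trop}}=1$ by induction and distributing the prefactor over the tropical sum,
\[
M_{k;t}\vert_{\mathrm{Trop}(\bfy,\bfz)}(\bfy,\bfz)
=\bigoplus_{s=0}^{r_k} z_{k,s;t}\prod_{j} y_j^{\,r_k[-c_{jk}^t]_+ + s\,c_{jk}^t}.
\]
For each $z$-generator the minimum exponent over $s$ is $0$ (witnessed by $s=0$ or $s=r_k$, where $z_{k,0;t}=z_{k,r_k;t}=1$). For each $y_j$, setting $a=c_{jk}^t$, one has
\[
\min_{0\le s\le r_k}\bigl(r_k[-a]_+ + s a\bigr)=0,
\]
since for $a\ge 0$ the minimum is at $s=0$ and for $a\le 0$ it is at $s=r_k$. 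Hence $M_{k;t}\vert_{\mathrm{Trop}}=1$, with no appeal to sign-coherence, constant terms, or positivity.
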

\begin{proof}
This can be easily deduced from
\eqref{eq:gFmut1} and \eqref{eq:gM1} by induction on $t\in \mathbb{T}_n$.
\end{proof}

There are two kinds of
analogues of Proposition \ref{prop:gtrop1},
both of which are  useful.
The first one is for cluster patterns with principal coefficients.

\begin{prop}[{\cite[Definition 3.7 \& Proposition 3.8]{Nakanishi14a}}]
\label{prop:gtrop1}
Let $\tilde{\mathbf{\Sigma}}_R[t_0]=\{ \tilde{\Sigma}_t
=(\tilde{\bfx}_t,\tilde{\bfy}_t,\tilde{\bfz}_t,B_t)
\mid t\in \bbT_n 
\}$
be a cluster pattern of degree $R$ with principal coefficients at  any point $t_0$,
and 
let
$\mathbf{\Gamma}(\mathbf{B}_R,t_0)
=
\{ \Gamma_t=(\mathbf{F}_{t}(\bfu,\bfv), G_t, C_t, \allowbreak
{\bfv}_t,B_t)
\mid t\in \mathbb{T}_n \}$
be the $FGC$-pattern  of $\mathbf{B}_R$ with
 the initial point $t_0$.
 Then, the following formula holds for any $t\in \mathbb{T}_n$:
\begin{align}
\label{eq:gpr1}
\tilde{y}_{i;t}&=\prod_{j=1}^n \tilde{y}_j ^{c^t_{ji}}.
\end{align}
\end{prop}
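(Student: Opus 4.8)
The plan is to prove Proposition~\ref{prop:gtrop1} in close parallel with the proof of its ordinary-cluster-algebra analogue Proposition~\ref{prop:trop1} (i.e.\ \cite[Remark 3.2]{Fomin07}), namely by induction on the distance from the initial point $t_0$ in $\mathbb{T}_n$. First I would observe that the statement \eqref{eq:gpr1} holds trivially at $t=t_0$, since there $C_{t_0}=I$ by \eqref{eq:ginitial1}, so the right-hand side is $\tilde y_{i;t_0}$, and the left-hand side is the initial coefficient $\tilde y_{i;t_0}$ as well. For the inductive step, suppose \eqref{eq:gpr1} holds at some $t$, and let $t'$ be adjacent to $t$ via an edge labeled $k$. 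I would then compute both sides of \eqref{eq:gpr1} at $t'$ and match them, using the coefficient mutation rule \eqref{eq:gymut1} on the left and the $C$-matrix mutation rule \eqref{eq:gcmut1} on the right.

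The key computational input on the tropical side is that, since the coefficients live in $\mathrm{Trop}(\tilde{\bfy}_{t_0})$ and the initial $\check z$'s—wait, here the $\tilde z$'s are also tropical variables, but the monomial identity only needs the $\tilde y$'s—the tropical evaluation $Z_k\vert_{\mathbb{P}}(\tilde y_k)$ appearing in \eqref{eq:gymut1} must be computed. By Proposition~\ref{prop:gFtrop1} (or directly: the exchange polynomial $Z_k(\eta)=\sum_{s=0}^{r_k} z_{k,s}\eta^s$ evaluated tropically at $\tilde y_{k;t}$), the tropical value $Z_k\vert_{\mathbb{P}}(\tilde y_{k;t})$ equals $\bigoplus_{s=0}^{r_k}\tilde z_{k,s;t}\,\tilde y_{k;t}^{s}$, which is the tropical sum of Laurent monomials; the point is that this is again a single Laurent monomial in the tropical variables, and because it appears raised to the power $-b_{ki}^t$ in \eqref{eq:gymut1}, one has to track exactly which monomial it is. The crucial sign-coherence-type fact is that on $\mathrm{Trop}$, the value $Z_k\vert_{\mathbb{P}}(\tilde y_{k;t})$ equals $\tilde y_{k;t}^{[r_k \cdot (\text{sign stuff})]}$ in a way that matches the $r_k$-rescaled mutation \eqref{eq:gcmut1}; concretely I would show that $Z_k\vert_{\mathrm{Trop}}(\tilde y_{k;t}) = \prod_j \tilde y_j^{\,r_k[c_{jk}^t]_+}$ or $\prod_j \tilde y_j^{\,r_k[-c_{jk}^t]_+}$ according to the common sign of the $k$-th $c$-column of $C_t$, which is exactly the combinatorics encoded in $M_{k;t}$ of \eqref{eq:gM1} tropicalized. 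Substituting this into \eqref{eq:gymut1} and comparing exponent vectors with $c_{ij}^{t'}$ from \eqref{eq:gcmut1} then yields \eqref{eq:gpr1} at $t'$.

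The main obstacle I anticipate is precisely the bookkeeping in the previous paragraph: one must establish that the tropical evaluation of the exchange polynomial $Z_k$ at the tropical $y$-variable $\tilde y_{k;t}$ is governed purely by the $k$-th column of $C_t$ (and its sign-coherence), and that the factor $r_k$ enters in the way dictated by \eqref{eq:gcmut1}. This requires, as a preliminary, the sign-coherence of $C$-matrices for GCA—i.e.\ the generalized-cluster-algebra analogue of Theorem~\ref{thm:sign}, which is available from \cite{Nakanishi14a}—so I would either cite that or, more economically, fold the argument into the induction: assume inductively that $C_t$ is column-sign-coherent, compute the tropical sum using that, and verify that $C_{t'}$ remains column-sign-coherent, exactly as in \cite[Proposition 5.6]{Fomin07}. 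Once the monomial identity for $Z_k\vert_{\mathrm{Trop}}$ is pinned down, the rest is a routine matching of exponents: the $j=k$ and $j\neq k$ cases of \eqref{eq:gymut1} correspond to the $j=k$ and $j\neq k$ cases of \eqref{eq:gcmut1}, and the induction closes. I would keep the write-up short by reducing everything to this one tropical-evaluation lemma and then citing \cite[Proposition 3.8]{Nakanishi14a} for the remaining verification.
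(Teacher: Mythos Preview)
Your inductive strategy---compare the mutation \eqref{eq:gymut1} of $\tilde{\bfy}$ with the mutation \eqref{eq:gcmut1} of $C$---is exactly the paper's one-line proof. The issue is the detour through sign-coherence in your treatment of $Z_k\vert_{\mathbb{P}}(\tilde y_{k;t})$: it is unnecessary, and in the paper's logical order the GCA sign-coherence (Theorem~\ref{thm:gsign}) is established \emph{after} this proposition.

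The tropical sum is computed componentwise in the generators of $\mathrm{Trop}(\tilde{\bfy}_{t_0},\tilde{\bfz}_{t_0})$, so no hypothesis on a common sign of the $k$th column of $C_t$ is used. With the inductive hypothesis $\tilde y_{k;t}=\prod_j \tilde y_j^{\,c_{jk}^t}$, and noting that $\tilde z_{k,0;t}=\tilde z_{k,r_k;t}=1$ while each intermediate $\tilde z_{k,s;t}$ is a single generator (hence with nonnegative exponent vector), one gets directly
\[
Z_k\big\vert_{\mathbb{P}}(\tilde y_{k;t})
=\bigoplus_{s=0}^{r_k}\tilde z_{k,s;t}\,\tilde y_{k;t}^{\,s}
=\prod_{j=1}^n \tilde y_j^{\,\min_{0\le s\le r_k} s\,c_{jk}^t}
=\prod_{j=1}^n \tilde y_j^{\,-r_k[-c_{jk}^t]_+},
\]
the $\tilde z$-exponents all being forced to $0$ by the $s=0$ term. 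Substituting into \eqref{eq:gymut1} then matches \eqref{eq:gcmut1} exponent by exponent, for each $j$ separately, and the induction closes. So your plan is correct once you drop the sign-coherence step; the alternative formulas you propose for $Z_k\vert_{\mathrm{Trop}}$ ``according to the common sign of the $k$th column'' are neither needed nor quite right.
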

\begin{proof} This is proved by comparing the mutations
\eqref{eq:gymut1} and \eqref{eq:gcmut1}.
\end{proof}

The second one is for cluster patterns with $Y$-principal coefficients.
\begin{prop}
\label{prop:gtrop2}
Let $\check{\mathbf{\Sigma}}_R[t_0]=\{ \check{\Sigma}_t
=(\check{\bfx}_t,\check{\bfy}_t,B_t)
\mid t\in \bbT_n 
\}$
be a cluster pattern of degree $R$ with $Y$-principal coefficients at  any point $t_0$,
and 
let
$\mathbf{\Gamma}(\mathbf{B}_R,t_0)
=
\{ \Gamma_t=(\mathbf{F}_{t}(\bfu,\bfv), G_t, C_t, \allowbreak
{\bfv}_t,B_t)
\mid t\in \mathbb{T}_n \}$
be the $FGC$-pattern  of $\mathbf{B}_R$ with
 the initial point $t_0$.
 Then, the following formula holds for any $t\in \mathbb{T}_n$:
\begin{align}
\label{eq:gpr2}
\check{y}_{i;t}&=\prod_{j=1}^n \check{y}_j ^{c^t_{ji}}.
\end{align}
\end{prop}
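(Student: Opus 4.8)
The plan is to prove \eqref{eq:gpr2} by induction on the distance of $t$ from the initial point $t_0$ in $\mathbb{T}_n$, in the same spirit as the proof of Proposition \ref{prop:gtrop1}, the only new ingredient being the special shape of the exchange polynomials in the $Y$-principal case. For the base case, $C_{t_0}=I$ by \eqref{eq:ginitial1}, so $\prod_{j=1}^n\check{y}_j^{\,c^{t_0}_{ji}}=\check{y}_i=\check{y}_{i;t_0}$. For the inductive step, let $t$ and $t'$ be joined by an edge labeled $k$, assume \eqref{eq:gpr2} holds at $t$, and compare the $y$-mutation \eqref{eq:gymut1} with the $C$-matrix mutation \eqref{eq:gcmut1}, just as in Proposition \ref{prop:gtrop1}.

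The key computation is the evaluation of $Z_k\vert_{\mathbb{P}}(\check{y}_{k;t})$ in the tropical semifield $\mathrm{Trop}(\check{\bfy}_{t_0})$. Since $\check{z}_{k,s;t}=1$ for all $s$ by the definition of $Y$-principal coefficients, this evaluation equals the tropical sum $\bigoplus_{s=0}^{r_k}\check{y}_{k;t}^{\,s}$. Using the inductive hypothesis $\check{y}_{k;t}=\prod_j\check{y}_j^{\,c^t_{jk}}$ together with the elementary identity $\min_{0\le s\le r_k}(sc)=-r_k[-c]_+$, valid for every integer $c$ because $s$ ranges over $\{0,1,\dots,r_k\}$, one obtains
\begin{equation*}
Z_k\vert_{\mathbb{P}}(\check{y}_{k;t})=\prod_{j=1}^n \check{y}_j^{\,-r_k[-c^t_{jk}]_+}.
\end{equation*}
It is worth noting that, in contrast to several later arguments in Part \ref{part:GCA}, this step uses no sign-coherence.

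Substituting this into \eqref{eq:gymut1} finishes the induction. For $i=k$ one has $\check{y}_{k;t'}=\check{y}_{k;t}^{-1}=\prod_j\check{y}_j^{\,-c^t_{jk}}=\prod_j\check{y}_j^{\,c^{t'}_{jk}}$ since $c^{t'}_{jk}=-c^t_{jk}$. For $i\neq k$, collecting the exponents of $\check{y}_j$ coming from the three factors $\check{y}_{i;t}$, $\bigl(\check{y}_{k;t}^{[b^t_{ki}]_+}\bigr)^{r_k}$, and $Z_k\vert_{\mathbb{P}}(\check{y}_{k;t})^{-b^t_{ki}}$ in \eqref{eq:gymut1} gives $c^t_{ji}+r_k\bigl([b^t_{ki}]_+c^t_{jk}+b^t_{ki}[-c^t_{jk}]_+\bigr)$, which is precisely $c^{t'}_{ji}$ by \eqref{eq:gcmut1}. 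I expect no serious obstacle here; the only delicate points are the bookkeeping of the $[\,\cdot\,]_+$ brackets in the tropical evaluation and in matching the two mutation rules, and making sure the inductive hypothesis is invoked for $\check{y}_{k;t}$ before evaluating $Z_k$ there.
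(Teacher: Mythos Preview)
Your proof is correct and follows exactly the approach the paper indicates: the paper's proof consists of the single sentence ``Again, this is proved by comparing the mutations \eqref{eq:gymut1} and \eqref{eq:gcmut1},'' and you have simply spelled out that comparison. Your observation that the tropical evaluation of $Z_k$ requires no sign-coherence (because $\min_{0\le s\le r_k}(sc)=-r_k[-c]_+$ holds for each component separately) is a nice clarification.
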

\begin{proof} Again, this is proved by comparing the mutations
\eqref{eq:gymut1} and \eqref{eq:gcmut1}.
\end{proof}

The separation formulas for GCA are just in the same form as the ones for CA.

\begin{thm}[{Separation Formulas \cite[Theorems 3.22 \& 3.23]{Nakanishi14a}}]
\label{thm:gsep1}
For  a cluster pattern 
 $\mathbf{\Sigma}_R=\{ \Sigma_t
=(\bfx_t,\bfy_t,\bfz_t, B_t)
\mid t\in \bbT_n 
\}$
of degree $R$
with coefficients in any semifield $\bbP$,
the following formulas hold
for any $t\in \mathbb{T}_n$:
\begin{align}
\label{eq:gsep1}
x_{i;t}&=
\left(
\prod_{j=1}^n
x_j^{g_{ji}^t}
\right)
\frac{F_{i;t}(\hat{\bfy},\bfz)}{F_{i;t}\vert_{\bbP}(\bfy,\bfz)},
\quad
\hat{y}_i=y_i \prod_{j=1}^n x_j^{b_{ji}}
\in \mathcal{F},
\\
\label{eq:gsep2}
y_{i;t}&=
\left(
\prod_{j=1}^n
y_j^{c_{ji}^t}
\right)
\prod_{j=1}^n
F_{j;t}\vert_{\bbP}(\bfy,\bfz)^{b_{ji}^t}.
\end{align}
\end{thm}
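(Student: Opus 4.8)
The plan is to prove the two formulas \eqref{eq:gsep1} and \eqref{eq:gsep2} \emph{simultaneously} by induction on the distance $d(t_0,t)$ in $\mathbb{T}_n$, in exactly the spirit of the direct inductive proof of the ordinary separation formulas, Theorem \ref{thm:sep1} (compare the Remark following that theorem; for GCA this is what is carried out in \cite[Theorems 3.22 \& 3.23]{Nakanishi14a}). The base case $t=t_0$ is immediate from the initial data \eqref{eq:ginitial1}, under which both right-hand sides collapse to $x_i$ and $y_i$. Along with \eqref{eq:gsep1} and \eqref{eq:gsep2} it is convenient to carry a third, $\mathcal{F}$-valued ``hatted'' companion of \eqref{eq:gsep2}:
\begin{align}
\label{eq:plan-hat}
\hat{y}_{i;t}=\left(\prod_{j=1}^n\hat{y}_j^{\,c_{ji}^t}\right)\prod_{j=1}^nF_{j;t}(\hat{\bfy},\bfz)^{b_{ji}^t}.
\end{align}

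First I would record the GCA analogue of the fact that $\hat{y}$-variables mutate like $y$-variables (the analogue of \eqref{eq:yhatmut0}): substituting \eqref{eq:gxmut1} into \eqref{eq:gyhat1} and using \eqref{eq:gbmut1}, one checks by a direct computation that under a mutation at $k$ one has $\hat{y}'_k=\hat{y}_k^{-1}$ and $\hat{y}'_i=\hat{y}_i\bigl(\hat{y}_k^{[b_{ki}]_+}\bigr)^{r_k}Z_k(\hat{y}_k)^{-b_{ki}}$ for $i\neq k$, with the \emph{same} exchange polynomial $Z_k(\eta)=\sum_s z_{k,s}\eta^s$ (coefficients in $\mathbb{P}$, variable specialized in $\mathcal{F}$) that governs \eqref{eq:gzmut1}. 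This is the only genuinely new computation; the rest is bookkeeping. In the inductive step $t\to t'$ at direction $k$, the content is at $i=k$ for \eqref{eq:gsep1} and at all $i$ for \eqref{eq:gsep2} and \eqref{eq:plan-hat}: for $i\neq k$ in \eqref{eq:gsep1} nothing is to prove since $x_{i;t}$, the $i$-th columns of $G_t$ and $C_t$, and $F_{i;t}$ are all unchanged, while for $i=k$ in \eqref{eq:gsep2} and \eqref{eq:plan-hat} one uses $c_{jk}^{t'}=-c_{jk}^t$, $b_{jk}^{t'}=-b_{jk}^t$ together with the inductive hypotheses for $y_{k;t}$ and $\hat{y}_{k;t}$.

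The heart of the step is the evaluation of $M_{k;t}$ from \eqref{eq:gM1}. Setting $(\bfu,\bfv)=(\hat{\bfy},\bfz)$, using the inductive hypothesis \eqref{eq:plan-hat} to recognize $\prod_j\hat{y}_j^{c_{jk}^t}\prod_jF_{j;t}(\hat{\bfy},\bfz)^{b_{jk}^t}=\hat{y}_{k;t}$, and noting the elementary fact that the coefficients $v_{k,s;t}$ produced by \eqref{eq:gvmut1} equal the $z_{k,s;t}$ produced by \eqref{eq:gzmut1} (same permutation recursion from the common initial data $\bfv=\bfz$), one gets $M_{k;t}(\hat{\bfy},\bfz)=\bigl(\prod_j\hat{y}_j^{[-c_{jk}^t]_+}F_{j;t}(\hat{\bfy},\bfz)^{[-b_{jk}^t]_+}\bigr)^{r_k}Z_{k;t}(\hat{y}_{k;t})$, and the verbatim identity in $\mathbb{P}$ with $\hat{\bfy}$ replaced by $\bfy$, $F_{j;t}(\hat{\bfy},\bfz)$ by $F_{j;t}\vert_{\bbP}(\bfy,\bfz)$, and $\hat{y}_{k;t}$ by $y_{k;t}$. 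Dividing by $F_{k;t}$ as in \eqref{eq:gFmut1} turns these into closed formulas for $F_{k;t'}(\hat{\bfy},\bfz)$ and $F_{k;t'}\vert_{\bbP}(\bfy,\bfz)$. Feeding the inductive hypothesis \eqref{eq:gsep1} (for every $j$) into the $i=k$ case of \eqref{eq:gxmut1}, the $F_{j;t}$-factors with $j\neq k$ cancel against those hidden in $Z_{k;t}(\hat{y}_{k;t})$, the two copies of $F_{k;t}$ cancel, the $\mathbb{P}$-valued factors $\prod_j y_j^{\pm r_k[-c_{jk}^t]_+}$ cancel, and expanding $\hat{y}_j=y_j\prod_\ell x_\ell^{b_{\ell j}}$ in the surviving $\bigl(\prod_j\hat{y}_j^{[-c_{jk}^t]_+}\bigr)^{-r_k}$ contributes the extra $x_\ell$-exponent $-r_k\sum_j b_{\ell j}[-c_{jk}^t]_+$ which, added to the exponent $-g_{\ell k}^t+r_k\sum_j g_{\ell j}^t[-b_{jk}^t]_+$ from the monomial prefactor of \eqref{eq:gxmut1}, equals $g_{\ell k}^{t'}$ by \eqref{eq:ggmut1}. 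This proves \eqref{eq:gsep1} at $t'$; the identical bookkeeping, run in $\mathbb{P}$ via \eqref{eq:gymut1} and in $\mathcal{F}$ via the $\hat{y}$-mutation recorded above, together with \eqref{eq:gcmut1} and \eqref{eq:gbmut1}, yields \eqref{eq:gsep2} and \eqref{eq:plan-hat} at $t'$.

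The main obstacle will not be conceptual: no appeal to sign-coherence or Laurent positivity (Theorem \ref{thm:gFpoly1} is not even needed) is required, only the manifestly subtraction-free form of \eqref{eq:gM1}, which guarantees $F_{i;t}\in\bbQ_{\mathrm{sf}}(\bfu,\bfv)$ and hence that $F_{i;t}\vert_{\bbP}$ is well defined in every semifield and the divisions above are legitimate in both $\mathcal{F}$ and $\mathbb{P}$. The difficulty is organizational: one must keep the three inductive statements \eqref{eq:gsep1}, \eqref{eq:gsep2}, \eqref{eq:plan-hat} perfectly parallel, track the permutations of the exchange polynomials $Z_k$ and of the $\bfv$-slots of the $F$-polynomials under \eqref{eq:gzmut1}/\eqref{eq:gvmut1}, and check that every cancellation valid in $\mathbb{P}$ for the denominators holds word for word in $\mathcal{F}$ for the numerators. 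The one place that genuinely uses the GCA structure, and which I would verify most carefully, is the identity $M_{k;t}(\hat{\bfy},\bfz)=(\cdots)^{r_k}Z_{k;t}(\hat{y}_{k;t})$, i.e. that the $r_k$-th power prefactor and the degree-$r_k$ sum in \eqref{eq:gM1} reassemble into a single evaluation of the mutated exchange polynomial.
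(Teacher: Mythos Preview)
Your proposal is correct and follows precisely the approach the paper has in mind: the paper does not supply its own proof of Theorem~\ref{thm:gsep1} but simply cites \cite[Theorems 3.22 \& 3.23]{Nakanishi14a}, and the Remark following Theorem~\ref{thm:sep1} explicitly indicates that the direct induction on $t$ from the recursions \eqref{eq:ginitial1}--\eqref{eq:gM1} is the intended route. Your organization---carrying the three parallel statements \eqref{eq:gsep1}, \eqref{eq:gsep2}, and the hatted companion \eqref{eq:plan-hat}, together with the observation that $v_{k,s;t}\vert_{\bfv=\bfz}=z_{k,s;t}$ so that the sum in \eqref{eq:gM1} reassembles into $Z_{k;t}(\hat{y}_{k;t})$---is exactly what is needed, and the exponent bookkeeping against \eqref{eq:ggmut1} and \eqref{eq:gcmut1} is accurate.
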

\subsection{Companion patterns}

For any $FGC$-pattern of GCA,
we associate a pair of ``companion" $FGC$-patterns of CA
based on the results in
 \cite{Nakanishi14a,Nakanishi15}.
 
 As remarked after Definition \ref{defn:gmutation1},
 for any 
 $B$-pattern
$\mathbf{B}_R=\{B_t)
\mid t\in \bbT_n 
\}$
of degree $R$,
we can associate a  pair of $B$-patterns (of CA),
 \begin{align}
R\mathbf{B}&=\{ RB_t \mid t\in \mathbb{T}_n\},
\\
\mathbf{B}R&=\{ B_t R \mid t\in \mathbb{T}_n\},
\end{align}
which form a conjugate pair in the sense of Section
\ref{subsec:conjugate1}.
(Here, $B_t$ is an integer matrix.)

\begin{defn}[Companion patterns]
\label{defn:companion1}
Let
$\mathbf{\Gamma}(\mathbf{B}_R,t_0)
=
\{ \Gamma_t=(\mathbf{F}_{t}(\bfu,\bfv),\allowbreak G_t, C_t, \bfv_t,B_t)
\mid t\in \mathbb{T}_n \}$
be the $FGC$-pattern  of $\mathbf{B}_R$ with
 any initial point $t_0$.
The  $FGC$-patterns (of CA),
\begin{align}
{}^L\mathbf{\Gamma}=\mathbf{\Gamma}(R\mathbf{B},t_0)
&=
\{ {}^L\Gamma_t=({}^L\mathbf{F}_{t}(\bfu), {}^LG_t, {}^LC_t, RB_t)
\mid t\in \mathbb{T}_n \}
\\
{}^R\mathbf{\Gamma}=\mathbf{\Gamma}(\mathbf{B}R,t_0)
&=
\{ {}^R\Gamma_t=({}^R\mathbf{F}_{t}(\bfu), {}^RG_t, {}^RC_t, B_tR)
\mid t\in \mathbb{T}_n \},
\end{align}
are called the {\em left- and right-companions of
$\mathbf{\Gamma}(\mathbf{B}_R,t_0)$}, respectively.
\end{defn}

They are companions in the following sense.

\begin{thm}
\label{thm:companion1}
The following equalities hold.
\par
(1) \cite[Propositions 3.9 \& 3.16]{Nakanishi14a}.
\begin{alignat}{3}
\label{eq:gc1}
G_t& =R^{-1}( {}^LG_t)R,
&\quad C_t&=  {}^LC_t,
\\
\label{eq:gg1}
G_t& = {}^RG_t,
&\quad C_t&=  R({}^RC_t)R^{-1}.
\end{alignat}
\par
(2) \cite[Propositions 4.3 \& 4.6]{Nakanishi15}.
\begin{align}
\label{eq:gff2}
F_{i;t}(\bfu,\bfv^{\mathrm{bin}})& = {}^LF_{i;t}(\bfu)^{r_i},
\quad
v^{\mathrm{bin}}_{i,s}=\binom{r_i}{s},
\\
\label{eq:gff1}
F_{i;t}(\bfu,\bf{0})& = {}^RF_{i;t}(u_1^{r_1},\dots,u_n^{r_n}).
\end{align}
\end{thm}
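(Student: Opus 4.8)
The plan is to prove both parts of Theorem \ref{thm:companion1} together by induction on $t\in\mathbb{T}_n$, working outward from the initial point $t_0$. At $t_0$ everything is trivial: by \eqref{eq:ginitial1} and \eqref{eq:initial1} we have $G_{t_0}=C_{t_0}=I={}^LG_{t_0}={}^LC_{t_0}={}^RG_{t_0}={}^RC_{t_0}$, so \eqref{eq:gc1} and \eqref{eq:gg1} hold, and since all $F$-polynomials equal $1$ we get $F_{i;t_0}(\bfu,\bfv^{\mathrm{bin}})=1=1^{r_i}={}^LF_{i;t_0}(\bfu)^{r_i}$ and $F_{i;t_0}(\bfu,\mathbf{0})=1={}^RF_{i;t_0}(u_1^{r_1},\dots,u_n^{r_n})$. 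The inductive step compares the GCA mutation rules \eqref{eq:gFmut1}--\eqref{eq:gM1} with the CA mutation rules \eqref{eq:Fmut1}--\eqref{eq:M1} applied to the conjugate pair $R\mathbf{B}$ and $\mathbf{B}R$, and the entire argument rests on the elementary consequences of $r_i>0$ that $[r_i a]_+=r_i[a]_+$ and $r_\ell^{-1}[-r_\ell a]_+=[-a]_+$, together with the entrywise identities $(RB_t)_{\ell k}=r_\ell b_{\ell k}^t$ and $(B_tR)_{\ell k}=b_{\ell k}^t r_k$.

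For Part (1), assume $G_t=R^{-1}({}^LG_t)R$, $C_t={}^LC_t$, $G_t={}^RG_t$, $C_t=R({}^RC_t)R^{-1}$, and substitute these, together with the conjugation identities above, into the CA formulas \eqref{eq:gmut1} and \eqref{eq:cmut1} written for ${}^L\Gamma_{t'}$ and ${}^R\Gamma_{t'}$. For the $C$-matrices this is immediate: in the CA rule for ${}^Lc_{ij}^{t'}$ one replaces $b_{kj}^t$ by $r_kb_{kj}^t$ and pulls the positive scalar $r_k$ out of $[\cdot]_+$, producing exactly the factor $r_k$ in \eqref{eq:gcmut1}; for the right companion the scalars $r_j$ are absorbed by the conjugation by $R$. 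For the $G$-matrices one must in addition track the term $\sum_\ell b_{i\ell}[-c_{\ell k}^t]_+$ containing the fixed initial matrix $B=B_{t_0}$: in the left companion this becomes $\sum_\ell (RB)_{i\ell}[-{}^Lc_{\ell k}^t]_+=r_i\sum_\ell b_{i\ell}[-c_{\ell k}^t]_+$, and combined with the analogous rescaling of the $g$-term the common factor $r_k$ of \eqref{eq:ggmut1} emerges after dividing out the conjugation by $R$; the right companion is handled the same way.

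For Part (2), the first observation is that the two specializations are stable under mutation: $v^{\mathrm{bin}}_{k,s}=\binom{r_k}{s}=\binom{r_k}{r_k-s}$, and setting $v_{k,s}=0$ for $1\le s\le r_k-1$ is likewise invariant under the reversal $s\mapsto r_k-s$ of \eqref{eq:gvmut1}, so the specialized $F$-mutation \eqref{eq:gFmut1} uses constant coefficients at every $t$. Then, using the inductive hypotheses $F_{j;t}(\bfu,\bfv^{\mathrm{bin}})={}^LF_{j;t}(\bfu)^{r_j}$ and Part (1), rewrite $M_{k;t}(\bfu,\bfv^{\mathrm{bin}})$ from \eqref{eq:gM1}: the sum $\sum_{s=0}^{r_k}\binom{r_k}{s}X^s$ collapses to $(1+X)^{r_k}$ with $X=\prod_j u_j^{{}^Lc_{jk}^t}{}^LF_{j;t}(\bfu)^{(RB_t)_{jk}}$, and the prefactor, already raised to the $r_k$, splits off the matching $r_k$-th power, so that $M_{k;t}(\bfu,\bfv^{\mathrm{bin}})=({}^LM_{k;t}(\bfu))^{r_k}$ where ${}^LM_{k;t}$ is the CA $M$-polynomial \eqref{eq:M1} for $R\mathbf{B}$; dividing by $F_{k;t}(\bfu,\bfv^{\mathrm{bin}})={}^LF_{k;t}(\bfu)^{r_k}$ propagates \eqref{eq:gff2} to $t'$. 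The zero-substitution is parallel: $\sum_s v_{k,s;t}X^s$ reduces to $1+X^{r_k}$, and from $C_t=R({}^RC_t)R^{-1}$ one gets $r_kc_{jk}^t=r_j\,{}^Rc_{jk}^t$, hence every exponent $u_j^{r_k(\cdot)}$ reorganizes as $(u_j^{r_j})^{(\cdot)}$ and $M_{k;t}(\bfu,\mathbf{0})={}^RM_{k;t}(u_1^{r_1},\dots,u_n^{r_n})$, which gives \eqref{eq:gff1} at $t'$ after division. All divisions are legitimate because the relevant $F$- and $M$-polynomials are genuine polynomials by Theorems \ref{thm:gFpoly1} and \ref{thm:Fpoly1}.

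The main obstacle I anticipate is not conceptual but the exponent bookkeeping in the $G$-matrix case of Part (1) and in the $M$-polynomial computation of Part (2): one must keep straight which degree $r$ is attached to which index ($r_k$ coming from the mutation direction versus $r_j$ or $r_\ell$ coming from conjugation by $R$), and, in Part (2), verify that the prefactor of \eqref{eq:gM1} — which in the CA case \eqref{eq:M1} is a single product, not a power — reassembles into exactly the $r_k$-th power of the CA prefactor once the identities of Part (1) are plugged in. Organizing the induction so that Part (1) at $t$ is available while proving Part (2) at $t$ is the only real structural point; the rest is routine verification.
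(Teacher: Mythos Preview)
Your proposal is correct and follows exactly the approach the paper indicates: the paper's proof is the single sentence ``They are obtained by comparing the mutations \eqref{eq:gFmut1}--\eqref{eq:gM1} for GCA and the mutations \eqref{eq:Fmut1}--\eqref{eq:M1} for CA,'' and you have supplied the details of that comparison by induction on $t$. The bookkeeping you flag (tracking which $r$ attaches to which index, and that the prefactor of \eqref{eq:gM1} collapses to the $r_k$-th power of the CA prefactor after Part (1) is used) is indeed the only substantive computation, and your outline handles it correctly.
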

\begin{proof}
They are obtained by comparing the mutations
\eqref{eq:gFmut1}--\eqref{eq:gM1} for GCA
and 
the mutations
\eqref{eq:Fmut1}--\eqref{eq:M1} for CA.
\end{proof}

\section{Further fundamental results in GCA}

\subsection{Sign-coherence}
Let us present analogous results related to the sign-coherence
in Section \ref{subsec:sign-coherence}.
Let
$\mathbf{\Gamma}(\mathbf{B}_R,t_0)
=
\{ \Gamma_t=(\mathbf{F}_{t}(\bfu,\bfv), \allowbreak
G_t, C_t, \bfv_t,B_t)
\mid t\in \mathbb{T}_n \}$
be the $FGC$-pattern  of $\mathbf{B}_R$ with
 any initial point $t_0$.

The sign-coherence of $C$-matrices of GCA immediately follows from
 \eqref{eq:gc1} and Theorem \ref{thm:sign}.

\begin{thm}[{Sign-coherence, \cite[Theorem 3.20]{Nakanishi14a}}]
\label{thm:gsign}
Each $C$-matrix $C_t$ is column-sign coherent.
Namely, each column vector ($c$-vector) of $C_t$   is nonzero vector, and 
its components are either all nonnegative or all nonpositive.
\end{thm}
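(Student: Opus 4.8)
The plan is to reduce the claim to the already-established sign-coherence for ordinary cluster algebras (Theorem~\ref{thm:sign}) by passing to the companion $FGC$-patterns of Definition~\ref{defn:companion1}. First I would fix the $FGC$-pattern $\mathbf{\Gamma}(\mathbf{B}_R,t_0)$ with an arbitrary initial point $t_0$ and form its left-companion ${}^L\mathbf{\Gamma}=\mathbf{\Gamma}(R\mathbf{B},t_0)$. Since each $RB_t$ is an integer matrix, $R\mathbf{B}$ is a genuine $B$-pattern of an ordinary cluster algebra, so ${}^L\mathbf{\Gamma}$ is an ordinary $FGC$-pattern and Theorem~\ref{thm:sign} applies to it verbatim: every $c$-vector of each ${}^LC_t$ is a nonzero vector whose components are either all nonnegative or all nonpositive.

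The second step is to transport this to $C_t$. By the first equality in \eqref{eq:gc1} of Theorem~\ref{thm:companion1}, we have $C_t = {}^LC_t$ for all $t\in\mathbb{T}_n$; hence the column-sign coherence of ${}^LC_t$ is literally that of $C_t$, which is the assertion.

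There is essentially no obstacle left at this point: the substance has been absorbed into the companion identity \eqref{eq:gc1} (proved in \cite{Nakanishi14a} by matching the GCA mutations \eqref{eq:gFmut1}--\eqref{eq:gM1} with the CA mutations \eqref{eq:Fmut1}--\eqref{eq:M1}) and into the deep Theorem~\ref{thm:sign}. One could equally well run the argument through the right-companion using the second equality in \eqref{eq:gg1}, namely $C_t = R({}^RC_t)R^{-1}$: conjugation by the positive diagonal matrix $R$ rescales the $(i,j)$ entry by $r_i/r_j>0$, so it preserves the sign pattern of each column, and column-sign coherence of ${}^RC_t$ (again Theorem~\ref{thm:sign}) transfers to $C_t$. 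I would prefer the left-companion route, since it yields equality of the matrices outright rather than merely of their sign patterns.
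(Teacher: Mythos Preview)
Your proof is correct and matches the paper's own argument essentially verbatim: the paper states that the result ``immediately follows from \eqref{eq:gc1} and Theorem \ref{thm:sign},'' which is exactly your reduction via the left-companion identity $C_t={}^LC_t$. Your alternative route through the right-companion is a valid variant but not needed.
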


Other related results in Section \ref{subsec:sign-coherence}
also hold via
Theorem \ref{thm:companion1} and/or \ref{thm:gsign}.

 \begin{thm}
\cite[Theorem 3.19]{Nakanishi14a}
 \label{thm:gconst1}
 The constant term of each $F$-polynomial $F_{i;t}(\bfu,\bfv)$ is 1.
 Moreover, 
 $F_{i;t}(\mathbf{0},\bfv)=1$.
 \end{thm}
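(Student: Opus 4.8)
The plan is to deduce Theorem \ref{thm:gconst1} from the companion structure established in Theorem \ref{thm:companion1} together with the already-known results for ordinary cluster algebras, namely Theorems \ref{thm:const1} and \ref{thm:positivity}. There are two assertions to prove: first, that the constant term of $F_{i;t}(\bfu,\bfv)$ equals $1$; second, the stronger statement $F_{i;t}(\mathbf{0},\bfv)=1$ (which says that setting all $u_j=0$ while keeping the $\bfv$-variables free yields the constant polynomial $1$, i.e.\ every monomial of $F_{i;t}$ with nonzero coefficient that does not involve the $\bfv$-variables is already forced to involve some $u_j$, except for the constant term $1$).

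First I would prove the stronger claim $F_{i;t}(\mathbf{0},\bfv)=1$. By \eqref{eq:gff1}, $F_{i;t}(\bfu,\mathbf{0})={}^RF_{i;t}(u_1^{r_1},\dots,u_n^{r_n})$, so the specialization $\bfv=\mathbf{0}$ relates the GCA $F$-polynomial to a companion CA $F$-polynomial; but here I actually want the opposite specialization $\bfu=\mathbf{0}$, so I should instead argue directly from the mutation rule \eqref{eq:gM1}. I would set $\bfu=\mathbf{0}$ in \eqref{eq:gM1} and run an induction on $t\in\mathbb{T}_n$: at $t_0$ all $F$-polynomials are $1$ by \eqref{eq:ginitial1}; assuming $F_{j;t}(\mathbf{0},\bfv)=1$ for all $j$, the product $\prod_j u_j^{[-c_{jk}^t]_+}F_{j;t}^{[-b_{jk}^t]_+}$ becomes $0$ unless all the exponents $[-c_{jk}^t]_+$ vanish, and by the sign-coherence Theorem \ref{thm:gsign} the $c$-vector has one fixed sign, so either this product is $1$ and the companion product $\prod_j u_j^{c_{jk}^t}F_{j;t}^{b_{jk}^t}$ also specializes cleanly, or one of the two products vanishes. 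Tracking the two sign cases carefully gives $M_{k;t}(\mathbf{0},\bfv)=1$, hence $F_{k;t'}(\mathbf{0},\bfv)=M_{k;t}(\mathbf{0},\bfv)/F_{k;t}(\mathbf{0},\bfv)=1$, completing the induction. This is exactly the GCA analogue of the standard CA argument that the constant term of $F$ is $1$; in fact the CA case is recovered by the specialization $R=I$.

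Alternatively, and perhaps more cleanly, I would invoke \eqref{eq:gff2}: since $F_{i;t}(\bfu,\bfv^{\mathrm{bin}})={}^LF_{i;t}(\bfu)^{r_i}$ and, by Theorem \ref{thm:const1} applied to the left-companion, the constant term of ${}^LF_{i;t}(\bfu)$ is $1$, we get that the constant term of $F_{i;t}(\bfu,\bfv^{\mathrm{bin}})$ equals $1$. But $F_{i;t}(\bfu,\mathbf{0})$ and $F_{i;t}(\bfu,\bfv^{\mathrm{bin}})$ both arise from the same polynomial $F_{i;t}(\bfu,\bfv)$ by specialization of $\bfv$; by Theorem \ref{thm:positivity} (Laurent positivity, which holds for CA and transfers to GCA via \eqref{eq:gff2}) all coefficients of $F_{i;t}(\bfu,\bfv)$ are nonnegative, so no cancellation occurs under either specialization of $\bfv$ into positive values. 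Hence the sum of the coefficients of all $\bfv$-free monomials in $F_{i;t}(\bfu,\bfv)$ equals the corresponding quantity for $F_{i;t}(\bfu,\mathbf{0})$, and since substituting $\bfv=\bfv^{\mathrm{bin}}$ (all positive) into those $\bfv$-free monomials leaves them unchanged, the $\bfv$-free part of $F_{i;t}(\bfu,\bfv)$ agrees with $F_{i;t}(\bfu,\mathbf{0})$; combined with \eqref{eq:gff1} and Theorem \ref{thm:const1} for the right-companion, this pins down the constant term and more. The constant term statement of the theorem is then immediate: set $\bfu=\mathbf{0}$ in $F_{i;t}(\mathbf{0},\bfv)=1$.

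The main obstacle I anticipate is bookkeeping the sign cases in the direct inductive argument from \eqref{eq:gM1}: one must carefully separate the case where the $c$-vector $(c_{jk}^t)_j$ is nonnegative from the case where it is nonpositive, using Theorem \ref{thm:gsign}, and verify that in each case exactly the right product survives the $\bfu=\mathbf{0}$ specialization so that $M_{k;t}(\mathbf{0},\bfv)=1$ rather than $0$ or something larger. If one instead routes through the companion identities \eqref{eq:gff1}--\eqref{eq:gff2}, the obstacle shifts to justifying the no-cancellation step, which is where Laurent positivity (Theorem \ref{thm:positivity}, transferred to GCA) is genuinely needed; but since Theorem \ref{thm:positivity} is freely available as an input in this paper, this route is quite short. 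I would present the direct inductive proof as the primary argument since it parallels the CA proof of Theorem \ref{thm:const1} and does not require Laurent positivity, and remark that \eqref{eq:gff1}--\eqref{eq:gff2} give an alternative derivation.
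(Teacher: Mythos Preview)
Your primary argument---the direct induction on $t$ via the mutation formula \eqref{eq:gM1}, using sign-coherence (Theorem \ref{thm:gsign}) to ensure that exactly one term of the sum survives the specialization $\bfu=\mathbf{0}$ (namely the $s=0$ term when the $c$-vector is nonnegative, or the $s=r_k$ term when it is nonpositive, each equal to $1$ by the convention $v_{k,0;t}=v_{k,r_k;t}=1$)---is correct and is precisely the proof the paper gives.

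Your alternative route, however, has a genuine gap. You assert that Laurent positivity ``holds for CA and transfers to GCA via \eqref{eq:gff2}'', but this is not so: the identity $F_{i;t}(\bfu,\bfv^{\mathrm{bin}})={}^LF_{i;t}(\bfu)^{r_i}$ only controls the specialization of $F_{i;t}$ at the single point $\bfv=\bfv^{\mathrm{bin}}$, and says nothing about the signs of the individual coefficients of $F_{i;t}(\bfu,\bfv)$ as a polynomial in $\bfu$ and $\bfv$. In the paper, Laurent positivity for GCA is stated as Conjecture \ref{conj:gpositivity} and is explicitly left open; your no-cancellation step requires positivity of all coefficients of the full polynomial, so this alternative does not go through. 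Since you correctly flag the direct inductive proof as the primary argument and as independent of Laurent positivity, the proposal as a whole is fine---just drop or rephrase the alternative.
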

\begin{proof}
Observe that if all $F_{i;t}(\bfu,\bfv)$ ($i=1,\dots,n$) satisfy 
this property for some $t$, then $M_{k;t}(\bfu,\bfv)$ in \eqref{eq:gM1}
also satisfy the same property thanks to
 the sign-coherence in Theorem \ref{thm:gsign}.
Then, the claim is proved by  induction on $t\in \mathbb{T}_n$
using \eqref{eq:gFmut1}.
\end{proof}

Let $D$ be a  common skew-symmetrizer of $R\mathbf{B}$.
\begin{thm}[{\cite[Proposition 3.21]{Nakanishi14a}}]
\label{thm:gdual1}
The following equalities hold for any $t\in \mathbb{T}_n$:
\begin{align}
\label{eq:gdual1}
\text{(Duality)}
\quad
D^{-1} R^{-1}G_t^T RD C_t &= I,
\\
\label{eq:gbc1}
DRB_t  &= C_t^T (DRB) C_t,
\\
\label{eq:gdet1}
|\det G_t|& =|\det C_t|=1.
\end{align}
\end{thm}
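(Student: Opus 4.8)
The plan is to deduce Theorem~\ref{thm:gdual1} from the corresponding statement for ordinary cluster algebras (Theorem~\ref{thm:dual1}) applied to the companion $FGC$-patterns, using the explicit comparison formulas in Theorem~\ref{thm:companion1}(1). The point is that the $G$- and $C$-matrices of a $B$-pattern of degree $R$ are, up to the fixed conjugation by $R$, exactly the $G$- and $C$-matrices of one of its companions; so each of the three identities is just a conjugated copy of an identity we already know.

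First I would fix the left-companion $FGC$-pattern ${}^L\mathbf{\Gamma}=\mathbf{\Gamma}(R\mathbf{B},t_0)$, so that by \eqref{eq:gc1} we have $G_t = R^{-1}({}^LG_t)R$ and $C_t = {}^LC_t$ for all $t$. Since $D$ is a common skew-symmetrizer of $R\mathbf{B}$, Theorem~\ref{thm:dual1} gives $D^{-1}({}^LG_t)^T D\, {}^LC_t = I$, $D(RB_t) = ({}^LC_t)^T D(RB)\,{}^LC_t$, and $|\det {}^LG_t| = |\det {}^LC_t| = 1$. Substituting $G_t^T = R({}^LG_t)^T R^{-1}$ and $C_t = {}^LC_t$ into the first identity yields
\begin{align}
D^{-1}R^{-1}G_t^T R D\, C_t
= D^{-1}R^{-1}\bigl(R({}^LG_t)^T R^{-1}\bigr)RD\,{}^LC_t
= D^{-1}({}^LG_t)^T D\,{}^LC_t = I,
\end{align}
which is \eqref{eq:gdual1}. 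For \eqref{eq:gbc1}, multiplying the companion identity $D(RB_t) = C_t^T D(RB) C_t$ on the left by $R$ is unnecessary --- it is already literally \eqref{eq:gbc1} since $DR B_t = D(RB_t)$ and $C_t = {}^LC_t$ --- so \eqref{eq:gbc1} is immediate. Finally \eqref{eq:gdet1} follows because conjugation by the invertible matrix $R$ preserves determinants: $\det G_t = \det {}^LG_t$ and $\det C_t = \det {}^LC_t$.

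The only genuinely delicate point is a bookkeeping one: making sure the skew-symmetrizer appearing in each companion statement is the one named in Theorem~\ref{thm:gdual1}. The statement fixes $D$ to be a common skew-symmetrizer of $R\mathbf{B}$, which is precisely what Theorem~\ref{thm:dual1} requires when applied to ${}^L\mathbf{\Gamma}=\mathbf{\Gamma}(R\mathbf{B},t_0)$, so no adjustment is needed; one should just note explicitly that this is why the left-companion (rather than the right-companion) is the convenient choice, and that an entirely parallel derivation via the right-companion using $\mathbf{B}R$ and Lemma~\ref{lem:ss1} (with skew-symmetrizer $DR^2$) would give the same conclusion after conjugating by $R$ the other way. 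I would therefore write the proof in one short paragraph: invoke \eqref{eq:gc1}, apply Theorem~\ref{thm:dual1} to the left-companion, and conjugate. I expect no real obstacle; the main care is simply getting the direction of the conjugation by $R$ right in \eqref{eq:gdual1} and recording that $\det R \neq 0$ is all that is used for \eqref{eq:gdet1}.
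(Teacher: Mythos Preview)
Your proposal is correct and takes essentially the same approach as the paper, which simply says the result is obtained from Theorems~\ref{thm:dual1} and~\ref{thm:companion1}. You have merely spelled out the conjugation-by-$R$ bookkeeping that the paper leaves implicit.
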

\begin{proof}
This is obtained from Theorems \ref{thm:dual1} and 
\ref{thm:companion1}.
\end{proof}

Define $\sigma G_t$, $\sigma C_t$, $\sigma B_t$ as before
in \eqref{eq:s2}, \eqref{eq:s3}, \eqref{eq:s6},
respectively.

\begin{prop}
\label{prop:gcompat1}
The following properties hold.
\par
(1). If $C_{t_1} = \sigma C_{t_2}$ occurs for some $t_1,t_2\in \mathbb{T}_n$
and a permutation $\sigma\in S_n$,
then $\sigma$ is compatible with $D$ and $R$.
\par
(2). If $G_{t_1} = \sigma G_{t_2}$ occurs for some $t_1,t_2\in \mathbb{T}_n$
and a permutation $\sigma\in S_n$,
then $\sigma$ is compatible with $D$ and $R$.
\end{prop}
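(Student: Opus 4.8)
The plan is to reduce both statements to the already–established facts about the companion $FGC$-patterns of CA. Recall from the remark after Definition~\ref{defn:gmutation1} (and the discussion in Section~\ref{subsec:conjugate1}) that $R\mathbf{B}$ and $\mathbf{B}R$ form a conjugate pair, with $D$ a common skew-symmetrizer of $R\mathbf{B}$ and $DR^2$ one of $\mathbf{B}R$, and that by Theorem~\ref{thm:companion1}(1) the $G$- and $C$-matrices of $\mathbf{\Gamma}(\mathbf{B}_R,t_0)$ are tied to those of its left- and right-companions ${}^L\mathbf{\Gamma}=\mathbf{\Gamma}(R\mathbf{B},t_0)$ and ${}^R\mathbf{\Gamma}=\mathbf{\Gamma}(\mathbf{B}R,t_0)$ by $C_t={}^LC_t$ and $G_t={}^RG_t$ (together with $G_t=R^{-1}({}^LG_t)R$ and $C_t=R({}^RC_t)R^{-1}$). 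Since ${}^L\mathbf{\Gamma}$ and ${}^R\mathbf{\Gamma}$ are genuine CA $FGC$-patterns, Corollary~\ref{cor:sigma1} and Proposition~\ref{prop:conj2} are available for them.

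For part (1), I would argue as follows. Assume $C_{t_1}=\sigma C_{t_2}$. Because $C_t={}^LC_t$ as matrices, this is literally the statement ${}^LC_{t_1}=\sigma\,{}^LC_{t_2}$ for the CA pattern ${}^L\mathbf{\Gamma}=\mathbf{\Gamma}(R\mathbf{B},t_0)$. Applying Corollary~\ref{cor:sigma1} to that pattern gives ${}^LG_{t_1}=\sigma\,{}^LG_{t_2}$. As $R\mathbf{B}$ and $\mathbf{B}R$ form a conjugate pair, Proposition~\ref{prop:conj2}(2) then forces $\sigma$ to be compatible with both $D$ and $R$, which is the claim.

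For part (2), assume $G_{t_1}=\sigma G_{t_2}$. Using the identity $G_t={}^RG_t$ from \eqref{eq:gg1}, this reads ${}^RG_{t_1}=\sigma\,{}^RG_{t_2}$, i.e.\ it is exactly the hypothesis ``$G'_{t_1}=\sigma G'_{t_2}$'' of Proposition~\ref{prop:conj2}(2) for the companion $\mathbf{\Gamma}(\mathbf{B}R,t_0)$; hence again $\sigma$ is compatible with $D$ and $R$. (Alternatively one could pass through $C$ via $G_t={}^RG_t$, Corollary~\ref{cor:sigma1} for $\mathbf{B}R$, and part~(1); but the direct route is shorter.)

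I expect no genuine obstacle here; the only care required is bookkeeping—matching which companion ($\mathbf{\Gamma}(R\mathbf{B},\cdot)$ versus $\mathbf{\Gamma}(\mathbf{B}R,\cdot)$) plays the role of $G$ versus $G'$ in Proposition~\ref{prop:conj2}, and checking that the skew-symmetrizer $D$ appearing there (namely one for $R\mathbf{B}$) is the same $D$ fixed before Theorem~\ref{thm:gdual1}. For completeness one may also record the self-contained variant: by the duality \eqref{eq:gdual1} one gets $(DR)P_\sigma(DR)^{-1}=G_{t_2}^T(G_{t_1}^T)^{-1}$, an integer matrix of determinant $\pm 1$ by \eqref{eq:gdet1}, so the argument of Proposition~\ref{prop:compat1} shows $\sigma$ is compatible with $DR$; combining this with the $D$-compatibility extracted from the left-companion via Proposition~\ref{prop:compat1}(1) yields $R$-compatibility as well. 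I would keep the companion argument as the main proof and mention this variant only in passing.
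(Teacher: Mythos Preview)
Your proof is correct. The main route you take differs from the paper's: you reduce entirely to the conjugate-pair result Proposition~\ref{prop:conj2}(2) by passing through the companion identities \eqref{eq:gc1}--\eqref{eq:gg1} (and, in part~(1), through Corollary~\ref{cor:sigma1} to convert the $C$-condition into a $G$-condition for ${}^L\mathbf{\Gamma}$), so that both $D$- and $R$-compatibility come out in one stroke. The paper instead splits the work: it first extracts $D$-compatibility from ${}^LC_t=C_t$ together with Proposition~\ref{prop:compat1}, and then re-runs the Proposition~\ref{prop:compat1} argument directly on the GCA duality \eqref{eq:gdual1} to obtain $RD$-compatibility, deducing $R$-compatibility from the two. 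Your ``self-contained variant'' in the last paragraph is essentially the paper's proof. Your main line has the advantage of reusing Proposition~\ref{prop:conj2} wholesale; the paper's line avoids the detour through Corollary~\ref{cor:sigma1} and keeps the argument local to the GCA duality.
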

\begin{proof}
(1). The compatibility with $D$ is
 a consequence of Proposition \ref{prop:compat1}
and \eqref{eq:gc1}.
Also, 
by applying the  argument in the proof of
Proposition \ref{prop:compat1} to \eqref{eq:gdual1} intead of \eqref{eq:dual1},
the compatibility with $RD$ is shown.
Therefore,  $\sigma$ is also compatible with $R$.
The proof of (2) is similar.
\end{proof}

\begin{cor} 
\label{cor:gsigma1}
The following statements hold for $t_1,t_2\in \mathbb{T}_n$
and  a  permutation $\sigma\in S_n$,
where $P_{\sigma}$ is the permutation matrix in \eqref{eq:p1}.
\begin{gather}
\label{eq:gdual3}
C_{t_1} = P_{\sigma}  \quad \Longleftrightarrow\quad  G_{t_1} =P_{\sigma}
\quad \Longrightarrow
\quad
 B_{t_1} = \sigma B,
\\
\label{eq:gdual2}
C_{t_1} = \sigma C_{t_2}\quad \Longleftrightarrow\quad  G_{t_1} = \sigma G_{t_2}
 \quad\Longrightarrow  \quad B_{t_1} = \sigma B_{t_2}.
 \end{gather}
\end{cor}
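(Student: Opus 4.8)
The plan is to imitate, almost verbatim, the proof of Corollary \ref{cor:sigma1}, replacing each ingredient from the ordinary (CA) setting by its GCA counterpart already established above. The ingredients I would use are: (i) the matrix forms of the $\sigma$-action, $\sigma C_t = C_t P_\sigma$, $\sigma G_t = G_t P_\sigma$, and $\sigma B_t = P_\sigma^T B_t P_\sigma$, which hold exactly as in \eqref{eq:p2} since the $\sigma$-action on $G$-, $C$-, and $B$-matrices is defined by the same formulas \eqref{eq:s2}, \eqref{eq:s3}, \eqref{eq:s6}; (ii) the duality \eqref{eq:gdual1} and the identity \eqref{eq:gbc1} of Theorem \ref{thm:gdual1}; and (iii) Proposition \ref{prop:gcompat1}, which guarantees that as soon as one of the hypothesized equalities between $C$-matrices or $G$-matrices holds, the permutation $\sigma$ is compatible with both $D$ and $R$, and hence $P_\sigma$ commutes with $D$, with $R$, and therefore with $DR=RD$.

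First I would treat the equivalence $C_{t_1} = \sigma C_{t_2} \Longleftrightarrow G_{t_1} = \sigma G_{t_2}$ in \eqref{eq:gdual2}. Since $D$ and $R$ are diagonal (hence commuting and symmetric), \eqref{eq:gdual1} rewrites as $C_t = (DR)^{-1}(G_t^{-1})^T (DR)$, where $G_t$ is invertible by \eqref{eq:gdet1}. Assuming $C_{t_1} = C_{t_2}P_\sigma$, Proposition \ref{prop:gcompat1}(1) gives compatibility of $\sigma$ with $D$ and $R$, so $P_\sigma$ commutes with $DR$; substituting and cancelling $DR$ on both sides turns the hypothesis into $(G_{t_1}^{-1})^T = (G_{t_2}^{-1})^T P_\sigma$, i.e. $G_{t_1} = G_{t_2}P_\sigma = \sigma G_{t_2}$. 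The reverse implication is the same computation run backwards, starting instead from Proposition \ref{prop:gcompat1}(2). The special equivalence $C_{t_1} = P_\sigma \Longleftrightarrow G_{t_1} = P_\sigma$ in \eqref{eq:gdual3} is the instance $t_2 = t_0$ of this, since $C_{t_0} = G_{t_0} = I$ and $\sigma I = P_\sigma$.

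Next, for the implications to $B$-matrices I would feed $C_{t_1} = C_{t_2}P_\sigma$ into \eqref{eq:gbc1}:
\[
DRB_{t_1} = C_{t_1}^T (DRB) C_{t_1} = P_\sigma^T \bigl(C_{t_2}^T (DRB) C_{t_2}\bigr) P_\sigma = P_\sigma^T (DRB_{t_2}) P_\sigma.
\]
Using that $DR$ commutes with $P_\sigma$ (hence with $P_\sigma^T = P_\sigma^{-1}$ as well) and the identity $\sigma B_t = P_\sigma^T B_t P_\sigma$, the right-hand side equals $DR(\sigma B_{t_2})$; cancelling the invertible matrix $DR$ yields $B_{t_1} = \sigma B_{t_2}$, which is \eqref{eq:gdual2}. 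Specializing $t_2 = t_0$ and using $B_{t_0} = B$ gives $B_{t_1} = \sigma B$ in \eqref{eq:gdual3}.

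I do not anticipate a genuine obstacle: the argument is a purely formal transport of the proof of Corollary \ref{cor:sigma1}. The one point requiring a little care is that the GCA duality \eqref{eq:gdual1} carries an extra conjugating factor $R$, so the relevant ``symmetrizer'' is $DR$ rather than $D$; this is exactly why compatibility of $\sigma$ with $R$ (and not merely with $D$) is needed, and that is precisely what Proposition \ref{prop:gcompat1} supplies. Once this is noted, every manipulation is the same bookkeeping with permutation and diagonal matrices as in the CA case.
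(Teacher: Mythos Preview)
Your proposal is correct and follows essentially the same approach as the paper: the paper's proof simply states that the result follows from Theorem~\ref{thm:gdual1} and Proposition~\ref{prop:gcompat1}, and your argument spells out exactly those manipulations, with the key observation that compatibility of $\sigma$ with $R$ (provided by Proposition~\ref{prop:gcompat1}) is needed so that $P_\sigma$ commutes with the extra $R$-factor appearing in the GCA duality \eqref{eq:gdual1}.
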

\begin{proof}
This follows from Theorem \ref{thm:gdual1} and Proposition \ref{prop:gcompat1}.
\end{proof}

\subsection{Laurent positivity of GCA}

Next let us consider  analogous results related to the Laurent positivity
in Section \ref{subsec:Laurent}.
Again, let
$\mathbf{\Gamma}(\mathbf{B}_R,t_0)
=
\{ \Gamma_t=(\mathbf{F}_{t}(\bfu,\bfv), \allowbreak
G_t, C_t, \bfv_t,B_t)
\mid t\in \mathbb{T}_n \}$
be the $FGC$-pattern  of $\mathbf{B}_R$ with
 any initial point $t_0$.

Unfortunately, the Laurent positivity of GCA itself is still an open problem.

\begin{conj}[{Laurent positivity}]
\label{conj:gpositivity}
Each $F$-polynomial $F_{i;t}(\bfu,\bfv)$ has
only positive coefficients.
\end{conj}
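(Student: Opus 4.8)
The plan is to follow as closely as possible the two successful routes to Laurent positivity in the ordinary case, reducing the remaining gap to the companion patterns of Theorem \ref{thm:companion1}. First I would record what comes for free. By \eqref{eq:gff1} the specialization $F_{i;t}(\bfu,\mathbf{0})={}^RF_{i;t}(u_1^{r_1},\dots,u_n^{r_n})$ has positive coefficients, because Theorem \ref{thm:positivity} applies to the right-companion ${}^R\mathbf{\Gamma}$; likewise, by \eqref{eq:gff2}, the weighted evaluation $F_{i;t}(\bfu,\bfv^{\mathrm{bin}})={}^LF_{i;t}(\bfu)^{r_i}$ has positive coefficients. Combined with Theorems \ref{thm:gFpoly1} and \ref{thm:gconst1}, this determines the ``pure-$\bfu$'' part of $F_{i;t}(\bfu,\bfv)$ and pins down some further evaluations, but it says nothing about the mixed monomials $\bfu^{a}\bfv^{b}$ with $b\neq 0$, so a genuinely new input is still needed --- which is exactly why Conjecture \ref{conj:gpositivity} is open.

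The most direct attempt is induction on $t\in\bbT_n$ using the $F$-polynomial mutation \eqref{eq:gFmut1}--\eqref{eq:gM1}. Assuming every $F_{j;t}(\bfu,\bfv)$ has positive coefficients, a routine sign-coherence bookkeeping (Theorem \ref{thm:gsign}) shows that the numerator $M_{k;t}(\bfu,\bfv)$ is itself a polynomial with positive coefficients, and by Theorem \ref{thm:gFpoly1} the quotient $M_{k;t}(\bfu,\bfv)/F_{k;t}(\bfu,\bfv)$ is again a polynomial. The obstacle, which I expect to be the main one, is that exact divisibility of a polynomial with positive coefficients by another such polynomial need not yield positive coefficients in the quotient, so one must supply a structural reason why this particular division is positivity-preserving. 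In the ordinary case such a reason is imported from outside (categorification, or the scattering-diagram machinery of \cite{Gross14}), and I do not see how to avoid an analogous external input here.

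Accordingly I would pursue a scattering-diagram proof in the style of \cite{Gross14}: attach to $\mathbf{B}_R$ a \emph{generalized cluster scattering diagram} whose initial walls carry wall functions built from the exchange polynomials $Z_i(\eta)$ (in place of the binomials $1+\eta$ of the ordinary case), verify its consistency, and identify the $F$-polynomials with theta functions, equivalently with broken-line generating functions, on it. Positivity of all coefficients of $F_{i;t}(\bfu,\bfv)$ would then follow from the positivity of the broken-line structure constants. The genuinely hard part is foundational: establishing consistency of this generalized scattering diagram and showing that its theta functions reproduce the GCA cluster variables through the separation formula \eqref{eq:gsep1}; once that dictionary is in place the positivity is automatic, just as in \cite{Gross14}. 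As a fallback one can harvest Conjecture \ref{conj:gpositivity} in the cases already reachable by combinatorial or homological models --- (orbifold) surface type via generalized snake-graph and $T$-path expansions extending \cite{Musiker09} and \cite{Chekhov11}, and the acyclic and skew-symmetric cases via quiver-Grassmannian categorifications extending \cite{Kimura12}, \cite{Lee15}, \cite{Davison16} --- which together would establish it for a broad class of $\mathbf{B}_R$, leaving only the fully general statement to the scattering-diagram route.
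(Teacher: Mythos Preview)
The statement you are attempting to prove is labeled a \emph{Conjecture} in the paper, and the paper does not prove it. Immediately before stating it, the author writes: ``Unfortunately, the Laurent positivity of GCA itself is still an open problem,'' and afterwards: ``To obtain the synchronicity results, we need to assume the validity of the conjecture in some part.'' There is therefore no proof in the paper to compare your proposal against; Conjecture~\ref{conj:gpositivity} is used as a standing hypothesis in the results that follow (Theorems~\ref{thm:gbasic1}, \ref{thm:gx1}, \ref{thm:gxy1}, \ref{thm:gseed1}, \ref{thm:gxyseed1}, \ref{thm:gcompanion1}, \ref{thm:gdual4}--\ref{thm:gdual6}), not established.

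Your write-up is candid about this --- you say explicitly that ``a genuinely new input is still needed --- which is exactly why Conjecture~\ref{conj:gpositivity} is open'' and that the scattering-diagram route has a ``genuinely hard'' foundational part you have not carried out. So what you have submitted is not a proof but a research outline: the companion specializations \eqref{eq:gff1}--\eqref{eq:gff2} give positivity only at $\bfv=\mathbf{0}$ and $\bfv=\bfv^{\mathrm{bin}}$, the inductive division argument via \eqref{eq:gFmut1}--\eqref{eq:gM1} fails for exactly the reason you state, and the proposed generalized-scattering-diagram construction is a plausible program but not a proof. In short, there is no gap to name beyond the one you already name yourself: the conjecture remains open, and nothing in your proposal closes it.
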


We note that, though limited, all examples of $F$-polynomials in \cite{Nakanishi14a,Nakanishi15} have this property.
To obtain the synchronicity results, we need to assume the validity of the conjecture
in some part. 
We will make it clear in each proposition,
whenever the conjecture is assumed.

Here is an analogous result to Lemma \ref{lem:Fcri1},
which we will use later.

Let $F_{i;t}(\bfu,\mathbf{1})$ denote the polynomial in $\bfu$
obtained from $F_{i;t}(\bfu,\bfv)$ by setting all $v_{i,s}$ to be $1$.

\begin{lem}
\label{lem:gFcri1}
Assume that Conjecture \ref{conj:gpositivity} is true.
Let $\underline{\mathbf{\Sigma}}_R=\{ \underline{\Sigma}_t
=(\underline{\bfx}_t,B_t)
\mid t\in \bbT_n 
\}$
be a cluster pattern of degree $R$ without coefficients,
and let $\underline{\hat{y}}_{i;t}$ be the one \eqref{eq:gyhat1} for $\underline{\mathbf{\Sigma}}$, namely,
\begin{align}
\label{eq:gyhat3}
\underline{\hat{y}}_{i;t}=\prod_{j=1}^n\underline{x}_{j:t}^{b^t_{ji}}.
\end{align}
Then, under the specialization $\underline{x}_{i:t_1}=1$ ($i=1,\dots,n$) at any point $t_1$,
we have the following inequality for any  $t,t'\in \mathbb{T}_n$ and $i=1,\dots,n$:
\begin{align}
F_{i;t}(\underline{\hat{\mathbf{y}}}_{t'},\mathbf{1})\vert_{\underline{x}_{1:t_1}=\dots
=\underline{x}_{n:t_1}=1}\geq 1.
\end{align}
Moreover, the equality holds if and only if 
$F_{i;t}({\mathbf{u},\bfv})=1$.
\end{lem}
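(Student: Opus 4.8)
The plan is to mimic the proof of Lemma~\ref{lem:Fcri1} for ordinary cluster algebras, using the GCA versions of the ingredients that appear there. First I would observe, exactly as in the CA case, that under the specialization $\underline{x}_{i:t_1}=1$ ($i=1,\dots,n$) the mutation formula \eqref{eq:gxmut1} for a cluster pattern of degree $R$ without coefficients yields $\underline{x}_{i;t}>0$ for every $t\in\mathbb{T}_n$ and every $i$: indeed, when $\mathbb{P}=\mathbf{1}$ the factor $Z_k\vert_{\mathbb{P}}(y_k)=1$, and $Z_k(\hat{y}_k)=\sum_{s=0}^{r_k} z_{k,s}\,\hat{y}_k^{\,s}$ with $z_{k,s}=1$ (since there are no coefficients, the $z$-variables are all $1$), so $Z_k(\hat y_k)$ is a sum of positive terms once the $\hat y$'s are positive; an induction on the distance from $t_1$ in $\mathbb{T}_n$ then shows positivity of all $\underline{x}_{j:t}$, and hence via \eqref{eq:gyhat3} positivity of all $\underline{\hat y}_{i;t}$.

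Next, once all the $\underline{\hat y}_{i;t'}$ are positive reals, I would evaluate the polynomial $F_{i;t}(\bfu,\bfv)$ at $\bfu=\underline{\hat\bfy}_{t'}$ and $\bfv=\mathbf{1}$. By Theorem~\ref{thm:gconst1} the constant term of $F_{i;t}(\bfu,\bfv)$ (as a polynomial in $\bfu$ and $\bfv$) is $1$; in particular, setting $\bfv=\mathbf{1}$ gives a polynomial $F_{i;t}(\bfu,\mathbf{1})$ in $\bfu$ whose constant term is $1$. Assuming Conjecture~\ref{conj:gpositivity}, all coefficients of $F_{i;t}(\bfu,\bfv)$ are positive, so all coefficients of $F_{i;t}(\bfu,\mathbf{1})$ are positive as well. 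Evaluating at positive reals therefore gives a value which is $\geq 1$, namely the constant term $1$ plus a sum of nonnegative contributions from the remaining monomials; this proves the asserted inequality $F_{i;t}(\underline{\hat\bfy}_{t'},\mathbf{1})\vert_{\underline{x}_{1:t_1}=\dots=\underline{x}_{n:t_1}=1}\geq 1$.

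For the equality case, the value equals $1$ exactly when every non-constant monomial of $F_{i;t}(\bfu,\mathbf{1})$ contributes $0$ at the point $\bfu=\underline{\hat\bfy}_{t'}$. Since the $\underline{\hat y}_{i;t'}$ are strictly positive and all coefficients are positive, no cancellation or vanishing can occur unless $F_{i;t}(\bfu,\mathbf{1})$ has no non-constant monomials, i.e.\ $F_{i;t}(\bfu,\mathbf{1})=1$ identically. Finally I would argue that $F_{i;t}(\bfu,\mathbf{1})=1$ forces $F_{i;t}(\bfu,\bfv)=1$: since all coefficients of $F_{i;t}(\bfu,\bfv)$ are positive (again by Conjecture~\ref{conj:gpositivity}) and $\bfv=\mathbf{1}$ makes every $v_{i,s}$-monomial equal to $1$ without cancellation, the polynomial $F_{i;t}(\bfu,\mathbf{1})$ is obtained from $F_{i;t}(\bfu,\bfv)$ by merely collapsing the $\bfv$-dependence with strictly positive coefficients; hence it can equal $1$ only if $F_{i;t}(\bfu,\bfv)$ had no monomials other than the constant $1$ to begin with. (Alternatively, $F_{i;t}(\bfu,\mathbf{1})=1$ with positive coefficients already forces the $\bfu$-degree to be $0$, and then $F_{i;t}(\mathbf{0},\bfv)=1$ by Theorem~\ref{thm:gconst1} pins down the $\bfv$-part.) The only real subtlety, and the step I expect to require the most care, is this last reduction from $F_{i;t}(\bfu,\mathbf{1})=1$ to $F_{i;t}(\bfu,\bfv)=1$, which is exactly where the positivity hypothesis (Conjecture~\ref{conj:gpositivity}) is essential and cannot be replaced by Theorem~\ref{thm:gconst1} alone; everything else is a routine transcription of the CA argument.
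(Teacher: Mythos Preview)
Your proof is correct and follows essentially the same approach as the paper's own proof, which simply notes that under Conjecture~\ref{conj:gpositivity} the polynomial $F_{i;t}(\bfu,\mathbf{1})$ has positive coefficients and constant term~$1$ by Theorem~\ref{thm:gconst1}, and then defers to the proof of Lemma~\ref{lem:Fcri1}. One minor imprecision worth fixing: the fact that the constant term of $F_{i;t}(\bfu,\mathbf{1})$ equals~$1$ does not follow ``in particular'' from the constant term of $F_{i;t}(\bfu,\bfv)$ being~$1$ (e.g.\ $1+v_{1,1}$ has constant term~$1$ but specializes to~$2$), but rather from the stronger statement $F_{i;t}(\mathbf{0},\bfv)=1$ in Theorem~\ref{thm:gconst1}, which you correctly invoke in your alternative argument for the equality case.
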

\begin{proof}
Under the assumption of Conjecture \ref{conj:gpositivity},
the polynomial  $F_{i;t}(\bfu,\mathbf{1})$
in $\bfu$ has only positive coefficients.
Furthermore, by  Theorem \ref{thm:gconst1},
the constant term of $F_{i;t}(\bfu,\mathbf{1})$ is 1.
Then, the rest of the the proof is the same as the proof of Lemma \ref{lem:Fcri1}.
\end{proof}

An analogue 
of  Theorem \ref{thm:C-F},
which is essential in our study,
can be obtained
by the same argument as before
using Lemma \ref{lem:gFcri1},
 where 
Conjecture \ref{conj:gpositivity} is assumed. 
However, fortunately one can also deduce it
through Theorem \ref{thm:C-F}
without assuming Conjecture \ref{conj:gpositivity}.
Here, we present the latter proof.

We use the following property of $F$-polynomials.

\begin{prop}[{Cf. \cite[Proposition 5.3, Conjecture 5.5]{Fomin07}}]
\label{prop:gF1}
(1).
Each $F$-polynomial $F_{i;t}(\bfu,\bfv)$ has a unique monomial of maximal degree
in $\bfu$.
Furthermore, the coefficient of this monomial is 1.
\par
(2).
An $F$-polynomial $F_{i;t}(\bfu,\bfv)$ is 1
if $F_{i;t}(\bfu,\mathbf{0})$ is 1,
or equivalently,
its right companion ${}^RF_{i;t}(\bfu)$ is $1$.
\end{prop}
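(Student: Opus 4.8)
\textbf{Proof proposal for Proposition \ref{prop:gF1}.}

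The plan is to deduce both claims from the companion relations in Theorem \ref{thm:companion1}, in particular \eqref{eq:gff1}, together with the corresponding known facts for ordinary cluster algebras. For claim (1), the key observation is that the specialization $F_{i;t}(\bfu,\bfv)\mapsto F_{i;t}(\bfu,\mathbf{0})$ kills every exchange‐polynomial coefficient except the constant and top ones, and by \eqref{eq:gff1} this equals ${}^RF_{i;t}(u_1^{r_1},\dots,u_n^{r_n})$. For the ordinary $F$-polynomial ${}^RF_{i;t}(\bfu)$, the analogue of \cite[Proposition 5.3]{Fomin07} — namely that it has a unique monomial of maximal degree in $\bfu$ with coefficient $1$ — is exactly the statement proved (as a consequence of sign-coherence, Theorem \ref{thm:sign}) for cluster algebras; I would cite this and then argue that the substitution $u_j\mapsto u_j^{r_j}$ preserves both the uniqueness of the top monomial and its coefficient. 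It then remains to lift this from $F_{i;t}(\bfu,\mathbf{0})$ back to $F_{i;t}(\bfu,\bfv)$: I would use \eqref{eq:gM1} and induction on $t\in\mathbb{T}_n$, checking that the mutation \eqref{eq:gFmut1} creates a unique maximal-degree monomial in $\bfu$ with coefficient $1$ provided all $F_{j;t}$ have this property, where the term $s=r_k$ in the sum in \eqref{eq:gM1} (the one with $z$-coefficient $1$) is what contributes the top $\bfu$-degree; sign-coherence of $C_t$ (Theorem \ref{thm:gsign}) is what guarantees that the two products in \eqref{eq:gM1} do not interfere destructively. The base case at $t_0$ is trivial since $F_{i;t_0}=1$.

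For claim (2), first note the equivalence ``$F_{i;t}(\bfu,\mathbf{0})=1 \Leftrightarrow {}^RF_{i;t}(\bfu)=1$'' is immediate from \eqref{eq:gff1} (substituting $u_j\mapsto u_j^{r_j}$ is invertible as a statement about which polynomial one has). So it suffices to show: if $F_{i;t}(\bfu,\mathbf{0})=1$ then $F_{i;t}(\bfu,\bfv)=1$. By claim (1), $F_{i;t}(\bfu,\bfv)$ has a unique maximal-degree monomial in $\bfu$ with coefficient $1$; call its $\bfu$-degree $d$. I claim this monomial survives the specialization $\bfv\mapsto\mathbf{0}$ — indeed, every monomial of $F_{i;t}$ of top $\bfu$-degree contributes (the degree in $\bfu$ is unaffected by setting $v$'s to $0$, and some $\bfv$-monomial coefficient of that top-$\bfu$-degree part must be the pure constant, again by tracking the $s=r_k$ term through the induction, since $v_{k,r_k}=1$). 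Hence if $F_{i;t}(\bfu,\mathbf{0})=1$ then $d=0$, i.e.\ $F_{i;t}(\bfu,\bfv)$ has no $\bfu$-dependence at all; combined with Theorem \ref{thm:gconst1} (which gives $F_{i;t}(\mathbf{0},\bfv)=1$, so the $\bfu$-independent part is $1$) we conclude $F_{i;t}(\bfu,\bfv)=1$.

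The main obstacle I expect is the bookkeeping in claim (1): verifying that under the mutation \eqref{eq:gFmut1}–\eqref{eq:gM1} the maximal $\bfu$-degree monomial remains unique with coefficient $1$. One has to be careful that dividing by $F_{k;t}(\bfu,\bfv)$ in \eqref{eq:gFmut1} is a genuine polynomial division (guaranteed by Theorem \ref{thm:gFpoly1}) and that the unique top monomial of the numerator $M_{k;t}$ is exactly $F_{k;t}$'s unique top monomial times the top monomial of $(\prod_j u_j^{[-c_{jk}^t]_+}F_{j;t}^{[-b_{jk}^t]_+})^{r_k}$ shifted by the $s=r_k$ term, so that the quotient again has a unique top monomial with coefficient $1$; this is the genuinely combinatorial step, and it is the GCA analogue of the cluster-algebra argument in \cite{Fomin07}, so I would either reproduce that argument verbatim with $r_k$-th powers inserted, or — cleaner — simply invoke \eqref{eq:gff1} plus the already-established CA case, which is the route I would actually take to keep the proof short.
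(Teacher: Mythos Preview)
Your core strategy matches the paper's: for (1), induction on $t$ via the mutation formula \eqref{eq:gFmut1}--\eqref{eq:gM1}, using sign-coherence (Theorem~\ref{thm:gsign}) to ensure the two factors in \eqref{eq:gM1} don't interfere; for (2), deduce it from (1) and \eqref{eq:gff1}. The paper in fact says nothing more than ``same proof as Theorem~\ref{thm:gconst1}'' for (1) and ``immediate from (1) and \eqref{eq:gff1}'' for (2), so your fleshed-out version of (2) (the top $\bfu$-monomial survives $\bfv\mapsto\mathbf{0}$, forcing $\bfu$-degree $0$, then Theorem~\ref{thm:gconst1} finishes) is exactly what is intended.

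One caution: the ``cleaner'' shortcut you propose at the end for (1) --- invoking \eqref{eq:gff1} plus the CA case --- does \emph{not} prove (1). The relation \eqref{eq:gff1} only constrains $F_{i;t}(\bfu,\mathbf{0})$; a priori the top $\bfu$-degree part of $F_{i;t}(\bfu,\bfv)$ could carry a nontrivial $\bfv$-coefficient that vanishes under $\bfv\mapsto\mathbf{0}$, so knowing the top monomial of the specialization tells you nothing about the top monomial of the full polynomial. You genuinely need the direct induction (your ``middle'' argument), and indeed that is the paper's proof. The companion detour at the start of your treatment of (1) is therefore unnecessary scaffolding --- just run the induction and be done.
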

\begin{proof}
(1).
This is proved in the same way as Theorem \ref{thm:gconst1}.
\par
(2). This is an immediate consequence of (1) and \eqref{eq:gff1}.
\end{proof}

Here is an analogue of Theorem \ref{thm:C-F}
with a proof {\em which does not rely on Conjecture \ref{conj:gpositivity}}.

\begin{thm}[Detropicalization]
\label{thm:gC-F}
The following statements hold for $t_1,t_2\in \mathbb{T}_n$
and  a  permutation $\sigma\in S_n$,
where $P_{\sigma}$ is the permutation matrix in \eqref{eq:p1}:
\begin{align}
\label{eq:ggf1}
G_{t_1} = P_{\sigma} \quad &\Longrightarrow\quad 
F_{i;t_1}(\bfu,\bfv)=1,
\\
\label{eq:ggf2}
G_{t_1} = \sigma G_{t_2}\quad &\Longrightarrow\quad 
{F}_{i;t_1}(\bfu,\bfv)
= {F}_{\sigma^{-1}(i);t_2}(\bfu,\bfv)\vert_{v_{j,s}\leftarrow
v_{\sigma(j),s;t_3}}.
\end{align}
Here, if  $t_2$ is connected to the initial point $t_0$ 
in $\mathbb{T}_n$
as
\begin{align}
\label{eq:gp2}
t _0
\overunder{k_1}{} 
\cdots
\overunder{k_p}{} 
t_2,
\end{align}
then $t_3\in \mathbb{T}_n$ is the one such that
\begin{align}
\label{eq:gp3}
t_3
\overunder{\sigma(k_1)}{} 
\cdots
\overunder{\sigma(k_p)}{} 
t_1.
\end{align}
Also, $v_{j,s}\leftarrow
v_{\sigma(j),s;t_3}$ means the replacement of $v_{j,s}$
by  $v_{\sigma(j),s;t_3}$ for all $j$ in a polynomial in $\bfv$.

\end{thm}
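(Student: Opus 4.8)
The plan is to deduce Theorem \ref{thm:gC-F} from its ordinary counterpart, Theorem \ref{thm:C-F}, by passing to the right-companion $FGC$-pattern and using the non-positivity-dependent part of Proposition \ref{prop:gF1}. The key observation is that, by \eqref{eq:gg1}, $G_t = {}^RG_t$ for all $t$, so a $G$-matrix coincidence for the GCA pattern is \emph{literally the same} as the corresponding $G$-matrix coincidence for the right companion ${}^R\mathbf{\Gamma}$. This lets us feed the hypothesis straight into Theorem \ref{thm:C-F}.

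First I would prove \eqref{eq:ggf1}. Assume $G_{t_1} = P_\sigma$. Then ${}^RG_{t_1} = P_\sigma$, so Theorem \ref{thm:C-F} (applied to the $B$-pattern $\mathbf{B}R$, which is an honest integer $B$-pattern of CA) gives ${}^RF_{i;t_1}(\bfu) = 1$ for all $i$. Now invoke Proposition \ref{prop:gF1}(2): an $F$-polynomial $F_{i;t_1}(\bfu,\bfv)$ of the GCA pattern is $1$ as soon as its right companion is $1$. Hence $F_{i;t_1}(\bfu,\bfv) = 1$, which is exactly \eqref{eq:ggf1}. Note this argument uses only Theorem \ref{thm:gconst1} and \eqref{eq:gff1} (through Proposition \ref{prop:gF1}), never Conjecture \ref{conj:gpositivity}.

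Next, for \eqref{eq:ggf2}, I would mimic the proof of \eqref{eq:gf2} in Theorem \ref{thm:C-F}. Suppose $G_{t_1} = \sigma G_{t_2}$. By Corollary \ref{cor:gsigma1} we get $C_{t_1} = \sigma C_{t_2}$ and $B_{t_1} = \sigma B_{t_2}$, and $\sigma$ is compatible with $D$ and $R$ by Proposition \ref{prop:gcompat1}. Choose the path \eqref{eq:gp2} from $t_0$ to $t_2$ and let $t_3$ be as in \eqref{eq:gp3}. Since the $FGC$-mutation \eqref{eq:ggmut1}--\eqref{eq:gvmut1} is compatible with the $S_n$-action (exactly as \eqref{eq:comp2} in the CA case, keeping $B = B_{t_0}$ fixed in the $g$-mutation), the commutative-diagram argument shows $(G_{t_3}, C_{t_3}, \bfv_{t_3}, B_{t_3}) = (P_\sigma, P_\sigma, \sigma\bfv, \sigma B)$; the $\bfv$-component is tracked because $v$-variables mutate by permutations only (via \eqref{eq:gvmut1}), so after the permuted path they are carried to $\sigma\bfv$, i.e.\ $v_{j,s;t_3} = v_{\sigma^{-1}(j),s}$, equivalently $v_{\sigma(j),s;t_3} = v_{j,s}$. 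Applying \eqref{eq:ggf1} at $t_3$ gives $F_{i;t_3}(\bfu,\bfv) = 1$, hence $\Gamma_{t_3} = \sigma\Gamma_{t_0}$ after the obvious matching of the $\bfv$-labels, and then the second commutative diagram (applying the same permuted mutation sequence to $\Gamma_{t_0}$ and $\Gamma_{t_3}$) yields $\Gamma_{t_1} = \sigma\Gamma_{t_2}$ in the sense that ${F}_{i;t_1}(\bfu,\bfv) = {F}_{\sigma^{-1}(i);t_2}(\bfu,\bfv)\vert_{v_{j,s}\leftarrow v_{\sigma(j),s;t_3}}$, which is \eqref{eq:ggf2}.

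The main obstacle, and the only place requiring genuine care, is bookkeeping the $v$-variable relabelings. In the CA case the $F$-polynomials live in a single fixed variable set $\bfu$, so a $G$-coincidence gives a clean equality $\mathbf{F}_{t_1} = \sigma\mathbf{F}_{t_2}$; here each seed carries its own permuted copy $\bfv_t$ of $\bfv$, and the permutation $\sigma$ acting through the path \eqref{eq:gp3} twists these labels, which is why the statement \eqref{eq:ggf2} has the substitution $v_{j,s}\leftarrow v_{\sigma(j),s;t_3}$ rather than a bare equality. I would verify carefully that the $\bfv$-twist produced by the path from $t_0$ to $t_3$ matches the one appearing when one mutates $\Gamma_{t_0}$ along $k_1,\dots,k_p$ versus $\Gamma_{t_3}$ along $\sigma(k_1),\dots,\sigma(k_p)$ — this is a routine but slightly fiddly consistency check, and it is the reason we present this companion-based proof rather than a direct repetition of the Lemma \ref{lem:gFcri1} argument. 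Everything else is a transcription of the CA proof, with the crucial point that the appeal to Theorem \ref{thm:C-F} is made for the \emph{right} companion (where $G_t = {}^RG_t$), so no positivity hypothesis for GCA is invoked.
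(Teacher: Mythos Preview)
Your proof of \eqref{eq:ggf1} is correct and identical to the paper's: pass to the right companion via $G_t = {}^RG_t$, apply Theorem~\ref{thm:C-F}, then use Proposition~\ref{prop:gF1}(2).

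For \eqref{eq:ggf2} your overall strategy also matches the paper's, but the intermediate claim $\bfv_{t_3}=\sigma\bfv$ (equivalently $v_{\sigma(j),s;t_3}=v_{j,s}$) is not justified by the commutative-diagram argument and, if used, would be circular. The diagram for $(G,C,B)$ works because the right vertical identification $G_{t_1}=\sigma G_{t_2}$, $C_{t_1}=\sigma C_{t_2}$, $B_{t_1}=\sigma B_{t_2}$ is given; inverting the bottom row of mutations then forces $(G_{t_3},C_{t_3},B_{t_3})=(P_\sigma,P_\sigma,\sigma B)$. To force $\bfv_{t_3}=\sigma\bfv$ by the same inversion you would need $\bfv_{t_1}=\sigma\bfv_{t_2}$ on the right, and that is \emph{not} part of the hypothesis. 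In fact $v_{j,s}=v_{\sigma(j),s;t_3}$ is exactly Proposition~\ref{prop:gcomp2}(1), which the paper proves only \emph{after} Theorem~\ref{thm:gC-F}, via the separation formulas and Proposition~\ref{prop:twist2}; invoking it here is circular. Note also that if your claim were available, the substitution in \eqref{eq:ggf2} would be the identity and you would be proving the stronger statement $F_{i;t_1}=F_{\sigma^{-1}(i);t_2}$ outright, which is precisely the content of the later Proposition~\ref{prop:gcomp2}(3).

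The paper avoids this by not asserting anything about $\bfv_{t_3}$. Having established $(G_{t_3},C_{t_3},B_{t_3})=(P_\sigma,P_\sigma,\sigma B)$ and $F_{i;t_3}=1$, one compares the mutation \eqref{eq:gFmut1}--\eqref{eq:gM1} along $k_1,\dots,k_p$ from $\Gamma_{t_0}$ with that along $\sigma(k_1),\dots,\sigma(k_p)$ from $\Gamma_{t_3}$, by induction on the number of steps. The two starting seeds agree in $(F,G,C,B)$ and differ only in the $\bfv$-slot ($\bfv$ versus $\bfv_{t_3}$); since the $v$'s enter $M_{k;t}$ only as coefficients, this discrepancy propagates through the induction exactly as the substitution $v_{j,s}\leftarrow v_{\sigma(j),s;t_3}$, which yields \eqref{eq:ggf2}. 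Your final paragraph correctly identifies this bookkeeping as the delicate point; the fix is to drop the assertion $\bfv_{t_3}=\sigma\bfv$ and carry the $\bfv_{t_3}$-versus-$\sigma\bfv$ discrepancy through the induction explicitly.
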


\begin{proof}
Proof of \eqref{eq:ggf1}.
Let $^{R}\mathbf{\Gamma}$ be the right companion of 
$\mathbf{\Gamma}(\mathbf{B}_R,t_0)$ in Definition
\ref{defn:companion1}.
Then,  by \eqref{eq:gg1}, we have $^{R}G_{t_1}=G_{t_1}=P_{\sigma}$.
Then,  by Theorem \ref{thm:C-F}, we have 
${}^RF_{i;t_1}(\bfu)=1$.
Therefore, by Proposition \ref{prop:gF1} (2),
we obtain ${F}_{i;t_1}(\bfu,\bfv)=1$.
 \par
Proof of \eqref{eq:ggf2}.
The proof is similar to the one for Theorem  \ref{thm:C-F}.
Let $t_3$ be the one in \eqref{eq:gp3}.
Repeating the same argument as before, we have
\begin{align}
(G_{t_3},C_{t_3}, B_{t_3})=
(P_{\sigma}, P_{\sigma},  \sigma B).
\end{align}
Thus, by \eqref{eq:ggf1}, we have 
$F_{i,t_3}(\bfu,\bfv)=1$ for any $i=1,\dots,n$.
Then, by comparing 
the mutations of $F$-polynomials \eqref{eq:gFmut1}
for  the sequences \eqref{eq:gp2} and \eqref{eq:gp3},
one can prove
 the claim \eqref{eq:ggf2} by induction on mutations.
 See \eqref{eq:gFnext1} and \eqref{eq:gFnext2} for the formulas
 in the first step.
\end{proof}

\subsection{Strong compatibility}
The second result \eqref{eq:ggf2} in Theorem \ref{thm:gC-F}
is not simply a permutation of $F$-polynomials.
unlike the corresponding one \eqref{eq:gf2}   in the CA case,
Let us study its implication  closely.
 {\em All results in this subsection do not rely on Conjecture \ref{conj:gpositivity}}.

Let $\sigma \bfx$ and $\sigma \bfy$ be as before
in \eqref{eq:s4} and \eqref{eq:s5}.

Let us start with an   ``obvious" result of Corollary \ref{cor:gsigma1}
and Theorem \ref{thm:gC-F}.

\begin{prop}
\label{prop:twist1}
Let
${{\mathbf{\Sigma}}}_R=\{ {\Sigma}_t
=({\bfx}_t,
{\bfy}_t,\bfz_t,B_t)
\mid t\in \bbT_n 
\}$
be a cluster pattern of degree $R$ with  coefficients in 
any  semifield $\mathbb{P}$,
and let
$\mathbf{\Gamma}(\mathbf{B}_R,t_0)=
\{ \Gamma_t=(\mathbf{F}_{t}(\bfu,\bfv), G_t, C_t, \bfv_t,B_t)
\mid t\in \mathbb{T}_n \}$
be the $FGC$-pattern of $\mathbf{B}_R$ with any initial point $t_0$.
Let $\bfx$, $\bfy$, $\bfz$ be the initial variables at $t_0$.
Then, 
for $t_1,t_2\in \mathbb{T}_n$
and  a  permutation $\sigma\in S_n$,
the following hold.
\par
(1). If $G_{t_1} = P_{\sigma}$, then we have
\begin{align}
\label{eq:gxxyy1}
\bfx_{t_1}=\sigma \bfx,
\quad
\bfy_{t_1}=\sigma \bfy.
\end{align}
\par
(2). 
Let $x_{i;t}(\bfx,\bfy,\bfz)$ and $y_{i;t}(\bfx,\bfy,\bfz)$ be $x_{i;t}$ and $y_{i;t}$
as functions of 
 $\bfx$, $\bfy$, $\bfz$ through the separation formulas
\eqref{eq:gsep1} and \eqref{eq:gsep2}, respectively.
 If $G_{t_1} = \sigma G_{t_2}$, then we have
\begin{align}
\label{eq:gxtwist1}
x_{i;t_1}(\bfx,\bfy,\bfz)&=x_{\sigma^{-1}(i);t_2}(\bfx,\bfy,\bfz)\vert_{z_{j,s}\leftarrow
z_{\sigma(j),s;t_3}},
\\
\label{eq:gytwist1}
y_{i;t_1}(\bfx,\bfy,\bfz)&=y_{\sigma^{-1}(i);t_2}(\bfx,\bfy,\bfz)\vert_{z_{j,s}\leftarrow
z_{\sigma(j),s;t_3}},
\end{align}
where $t_3$ is the one in
\eqref{eq:gp3}.
\end{prop}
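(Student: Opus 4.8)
The plan is to read both parts directly off the separation formulas \eqref{eq:gsep1} and \eqref{eq:gsep2}, feeding in the detropicalization statement Theorem \ref{thm:gC-F} and the duality Corollary \ref{cor:gsigma1}; the one new point that must be handled with care is how the $\bfv$-twist in \eqref{eq:ggf2} translates into the $\bfz$-twist in \eqref{eq:gxtwist1}--\eqref{eq:gytwist1}. Before starting I would record the \emph{substitution principle}: since the $\bfv$-variables mutate exactly as the $\bfz$-variables (compare \eqref{eq:gvmut1} with \eqref{eq:gzmut1}) and $v_{i,s;t_0}=v_{i,s}$, $z_{i,s;t_0}=z_{i,s}$, the homomorphism $v_{i,s}\mapsto z_{i,s}$ carries $v_{i,s;t}$ to $z_{i,s;t}$ for every $t\in\bbT_n$. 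This is what makes the two twists compatible.

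For part (1) I would argue directly: assume $G_{t_1}=P_{\sigma}$. By Corollary \ref{cor:gsigma1} we also have $C_{t_1}=P_{\sigma}$, and by \eqref{eq:ggf1} in Theorem \ref{thm:gC-F} every $F$-polynomial $F_{i;t_1}(\bfu,\bfv)$ equals $1$. Then in \eqref{eq:gsep1} and \eqref{eq:gsep2} the $F$-polynomial factors disappear, and since $g^{t_1}_{ji}=c^{t_1}_{ji}=\delta_{j,\sigma^{-1}(i)}$, we are left with $x_{i;t_1}=x_{\sigma^{-1}(i)}$ and $y_{i;t_1}=y_{\sigma^{-1}(i)}$, which is precisely \eqref{eq:gxxyy1}.

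For part (2), assume $G_{t_1}=\sigma G_{t_2}$. By Corollary \ref{cor:gsigma1} we also get $C_{t_1}=\sigma C_{t_2}$ and $B_{t_1}=\sigma B_{t_2}$, i.e.\ $g^{t_1}_{ji}=g^{t_2}_{j\sigma^{-1}(i)}$, $c^{t_1}_{ji}=c^{t_2}_{j\sigma^{-1}(i)}$, $b^{t_1}_{ji}=b^{t_2}_{\sigma^{-1}(j)\sigma^{-1}(i)}$; and by \eqref{eq:ggf2}, $F_{i;t_1}(\bfu,\bfv)=F_{\sigma^{-1}(i);t_2}(\bfu,\bfv)\vert_{v_{j,s}\leftarrow v_{\sigma(j),s;t_3}}$ with $t_3$ as in \eqref{eq:gp3}. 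Writing out \eqref{eq:gsep1} for $x_{i;t_1}(\bfx,\bfy,\bfz)$: the monomial $\prod_{j} x_j^{g^{t_1}_{ji}}$ is $\bfz$-free and equals $\prod_{j} x_j^{g^{t_2}_{j\sigma^{-1}(i)}}$; in the two $F$-factors one substitutes $u_j\mapsto\hat{y}_j$ (also $\bfz$-free) and $v_{j,s}\mapsto z_{j,s}$, and by the substitution principle $v_{\sigma(j),s;t_3}$ becomes $z_{\sigma(j),s;t_3}$, so $F_{i;t_1}(\hat{\bfy},\bfz)=F_{\sigma^{-1}(i);t_2}(\hat{\bfy},\bfz)\vert_{z_{j,s}\leftarrow z_{\sigma(j),s;t_3}}$, and likewise for $F_{i;t_1}\vert_{\bbP}(\bfy,\bfz)$. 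Since the replacement $z_{j,s}\leftarrow z_{\sigma(j),s;t_3}$ affects nothing outside the $F$-factors, reassembling \eqref{eq:gsep1} gives exactly \eqref{eq:gxtwist1}. The identity \eqref{eq:gytwist1} comes out the same way from \eqref{eq:gsep2}, the only extra step being the reindexing $j\mapsto\sigma(j)$ in $\prod_{j} F_{j;t_1}\vert_{\bbP}(\bfy,\bfz)^{b^{t_1}_{ji}}$ so that it matches $\prod_{j}F_{\sigma^{-1}(j);t_2}\vert_{\bbP}(\bfy,\bfz)^{b^{t_2}_{\sigma^{-1}(j)\sigma^{-1}(i)}}$.

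The serious work is entirely bookkeeping: one must check that the twist $z_{j,s}\leftarrow z_{\sigma(j),s;t_3}$ threads consistently through both the evaluation in the ambient field $\mathcal{F}$ and the $\bbP$-evaluation in the denominators of \eqref{eq:gsep1}--\eqref{eq:gsep2}, and that the index permutation acting on the $\bfv$-variables in \eqref{eq:ggf2} is exactly the one acting on the $\bfz$-variables of $\mathbf{\Sigma}_R$; the substitution principle above is what reconciles them. Note that, since it only invokes Theorem \ref{thm:gC-F} and the separation formulas, this argument does not rely on Conjecture \ref{conj:gpositivity}, as announced.
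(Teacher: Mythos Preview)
Your argument is correct and is exactly the approach the paper takes: the paper's own proof is the one-line remark that the claims ``are straightforward consequences of the separation formulas in Theorem \ref{thm:gsep1} with Corollary \ref{cor:gsigma1} and Theorem \ref{thm:gC-F},'' and you have simply unpacked that sentence, including the bookkeeping that reconciles the $\bfv$-twist in \eqref{eq:ggf2} with the $\bfz$-twist in \eqref{eq:gxtwist1}--\eqref{eq:gytwist1}. Your explicit ``substitution principle'' (that $v_{i,s}\mapsto z_{i,s}$ carries $v_{i,s;t}\mapsto z_{i,s;t}$ because \eqref{eq:gvmut1} and \eqref{eq:gzmut1} coincide) is the right justification for that step, and your reindexing in the product $\prod_j F_{j;t_1}\vert_{\bbP}(\bfy,\bfz)^{b^{t_1}_{ji}}$ is also correct.
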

\begin{proof}
They are straightforward consequences of the separation formulas
in Theorem \ref{thm:gsep1}
with
Corollary \ref{cor:gsigma1}
and Theorem \ref{thm:gC-F}.
\end{proof}

What is less  obvious is that
Proposition \ref{prop:twist1} further implies the following  result.

\begin{prop}
\label{prop:twist2}
In the same situation as Proposition \ref{prop:twist1},
 if $G_{t_1} = \sigma G_{t_2}$, then we have
\begin{align}
\label{eq:gxxyy2}
\bfx_{t_1}=\sigma \bfx_{t_2},
\quad
\bfy_{t_1}=\sigma \bfy_{t_2}.
\end{align}
\par
\end{prop}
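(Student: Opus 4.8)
The plan is to deduce \eqref{eq:gxxyy2} from the twisted identities \eqref{eq:gxtwist1}--\eqref{eq:gytwist1} of Proposition \ref{prop:twist1} by showing that, when $G_{t_1}=\sigma G_{t_2}$, the substitution $z_{j,s}\leftarrow z_{\sigma(j),s;t_3}$ appearing there is in fact the identity on the relevant variables. The key observation will be that the hypothesis $G_{t_1}=\sigma G_{t_2}$ forces a periodicity of the $z$-data (equivalently the $v$-data) along the two mutation sequences \eqref{eq:gp2} and \eqref{eq:gp3}, so that $z_{\sigma(j),s;t_3}=z_{j,s;t_2}$ for all $j,s$; once this is in hand, the right-hand sides of \eqref{eq:gxtwist1} and \eqref{eq:gytwist1} become exactly $x_{\sigma^{-1}(i);t_2}(\bfx,\bfy,\bfz)$ and $y_{\sigma^{-1}(i);t_2}(\bfx,\bfy,\bfz)$, which is precisely $\bfx_{t_1}=\sigma\bfx_{t_2}$ and $\bfy_{t_1}=\sigma\bfy_{t_2}$.

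First I would recall, as in the proof of \eqref{eq:ggf2} in Theorem \ref{thm:gC-F}, that if $t_2$ is joined to $t_0$ by mutations $k_1,\dots,k_p$ and $t_3$ is defined by \eqref{eq:gp3}, then the commutative diagram obtained from \eqref{eq:comp2} (in the GCA setting, with the $\bfv$-component mutating as in \eqref{eq:gvmut1}) gives $\Gamma_{t_3}=\sigma\Gamma_{t_0}$, in particular $\bfv_{t_3}=\sigma\bfv_{t_0}=\sigma\bfv$. Then, using the diagram
\begin{align}
\begin{matrix}
\Gamma_{t_{0}}&
\mathop{\rightarrow}^{k_1}
&
\cdots
&
\mathop{\rightarrow}^{k_p}
&
\Gamma_{t_2}
\\
\sigma \downarrow\quad\
&&&&
\sigma \downarrow\quad\
\\
\Gamma_{t_3}
&
\mathop{\rightarrow}^{\sigma(k_1)}
&
\cdots
&
\mathop{\rightarrow}^{\sigma(k_p)}
&
\Gamma_{t_1},
\end{matrix}
\end{align}
which commutes by \eqref{eq:comp2}, I would read off $\bfv_{t_1}=\sigma\bfv_{t_2}$, i.e.\ $v_{\sigma(j),s;t_3}$ and $v_{j,s;t_2}$ coincide after the action of $\sigma$ is unwound. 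Tracking the definition \eqref{eq:s1}--\eqref{eq:s3} of the $\sigma$-action on $v$-components and comparing with how $v$ mutates in \eqref{eq:gvmut1} (which is literally the $z$-mutation of \eqref{eq:gzmut1}), one sees that the replacement $v_{j,s}\leftarrow v_{\sigma(j),s;t_3}$ in \eqref{eq:ggf2} sends $F_{\sigma^{-1}(i);t_2}(\bfu,\bfv)$ to $F_{\sigma^{-1}(i);t_2}(\bfu,\bfv_{t_2})$; since $\bfv_{t_2}$ is itself a permutation of $\bfv$ by the structure of the $FGC$-pattern, and the same permutation governs $\bfz_{t_2}$ versus $\bfz$, the analogous $z$-substitution in \eqref{eq:gxtwist1}--\eqref{eq:gytwist1} is exactly the one turning $x_{\sigma^{-1}(i);t_2}(\bfx,\bfy,\bfz)$ into $x_{\sigma^{-1}(i);t_2}$ evaluated at the seed actually sitting at $t_2$, namely $x_{\sigma^{-1}(i);t_2}$ itself.

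The main obstacle, I expect, is bookkeeping: one must verify carefully that the permutation relating $\bfz_{t_2}$ to $\bfz$, the permutation relating $\bfv_{t_2}$ to $\bfv$, and the permutation $\sigma$ together with the index-shift $j\mapsto\sigma(j)$ in the substitution $z_{j,s}\leftarrow z_{\sigma(j),s;t_3}$ all fit together consistently, so that the net effect of that substitution on $x_{\sigma^{-1}(i);t_2}(\bfx,\bfy,\bfz)$ is genuinely to produce $x_{i;t_1}$ rather than some other twist. The cleanest way to handle this is probably to argue directly: the separation formula \eqref{eq:gsep1} expresses $x_{i;t_1}$ in terms of $\bfx,\bfy$, the matrix $G_{t_1}$, and $F_{i;t_1}(\hat\bfy,\bfz)$; substituting $G_{t_1}=\sigma G_{t_2}$ and \eqref{eq:ggf2} for $F_{i;t_1}$, and then recognizing $\bfv_{t_3}=\sigma\bfv$ so that the substituted variables $v_{\sigma(j),s;t_3}$ are just $v_{j,s}=z_{j,s}$ in the appropriate indexing, one matches term-by-term with the separation formula for $x_{\sigma^{-1}(i);t_2}$; the same computation with \eqref{eq:gsep2} handles $\bfy$. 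I would present it in this direct form to avoid invoking Proposition \ref{prop:twist1} as a black box, but the content is identical.
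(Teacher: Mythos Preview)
Your argument has a genuine gap at the point where you claim the commutative diagram ``gives $\Gamma_{t_3}=\sigma\Gamma_{t_0}$, in particular $\bfv_{t_3}=\sigma\bfv$.'' The diagram is only a compatibility statement: it says that \emph{if} $\bfv_{t_3}=\sigma\bfv$ then $\bfv_{t_1}=\sigma\bfv_{t_2}$. In the CA proof of \eqref{eq:gf2}, the left column $(G_{t_3},C_{t_3},B_{t_3})=(P_\sigma,P_\sigma,\sigma B)$ was deduced by running the diagram \emph{backwards} from the known right column $G_{t_1}=\sigma G_{t_2}$, etc. For the $\bfv$-component you cannot do this, because neither $\bfv_{t_1}=\sigma\bfv_{t_2}$ nor $\bfv_{t_3}=\sigma\bfv$ is known at this stage: the value $\bfv_{t_3}$ is determined by the parities of the mutations along the path from $t_0$ to $t_3$ in $\bbT_n$ (via \eqref{eq:gvmut1}), and there is no a priori relation between that path and $\sigma$. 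In fact, the statement $v_{i,s}=v_{\sigma(i),s;t_3}$ is exactly Proposition~\ref{prop:gcomp2}\,(1), whose proof in the paper \emph{uses} Proposition~\ref{prop:twist2}. So your route, if completed, would be circular.

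The paper sidesteps this by a different mechanism. It applies \eqref{eq:gytwist1} not to ${\mathbf{\Sigma}}_R$ but to an auxiliary cluster pattern $\check{\mathbf{\Sigma}}_R[t_2]$ with $Y$-principal coefficients at $t_2$, where all $z$-variables are identically~$1$; the troublesome substitution $z_{j,s}\leftarrow z_{\sigma(j),s;t_3}$ is then vacuous and one gets $\check{\bfy}_{t_1}=\sigma\check{\bfy}_{t_2}$ for free. Via Proposition~\ref{prop:gtrop2} this yields $C'_{t_1}=P_\sigma$ for the $FGC$-pattern based at $t_2$, hence $G'_{t_1}=P_\sigma$ by \eqref{eq:gdual3}, and then part~(1) of Proposition~\ref{prop:twist1} (with initial point $t_2$) gives \eqref{eq:gxxyy2} directly. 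The key idea you are missing is this \emph{change of base point} from $t_0$ to $t_2$ combined with the trivialization of $z$-variables, which converts the twisted identity into an untwisted one without ever needing to control $\bfv_{t_3}$.
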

\begin{proof}
Let us apply
 \eqref{eq:gytwist1}
 for 
a cluster pattern  $\check{\mathbf{\Sigma}}_R[{t}_2]=\{ \check{\Sigma}_t
=(\check{\bfx}_t,\check{\bfy}_t,B_t)
\mid t\in \bbT_n 
\}$  of degree $R$ with $Y$-principal coefficients at $t_2$.
Since all $z$-variables are trivial,
we obtain
$\check{\bfy}_{t_1}=\sigma \check{\bfy}_{t_2}$
by \eqref{eq:gytwist1}.
Now, 
let 
$\mathbf{\Gamma}'(\mathbf{B}_R,t_2)=
\{ \Gamma'_t=(\mathbf{F}'_{t}(\bfu,\bfv), G'_t, C'_t, \allowbreak \bfv'_t,B_t)
\mid t\in \mathbb{T}_n \}$
be the $FGC$-pattern of the above $\mathbf{B}_R$ with the initial point $t_2$.
Then, by Proposition \ref{prop:gtrop2}, we have $C'_{t_1}=P_{\sigma}$.
Therefore, $G'_{t_1}=P_{\sigma}$ by \eqref{eq:gdual3}.
Then, by  applying \eqref{eq:gxxyy1} for $\Sigma_R$,
we obtain $\bfx_{t_1}=\sigma \bfx_{t_2}$ and 
$\bfy_{t_1}=\sigma \bfy_{t_2}$.
\end{proof}

The apparent discrepancy  between  Propositions \ref{prop:twist1}
and   \ref{prop:twist2} 
leads to a remarkable condition for $\sigma$ therein,
which is stronger  than the compatibility with $R$.

\begin{defn} For a permutation $\sigma\in S_n$ and a diagonal matrix
$R=\mathrm{diag}(r_1,\dots,r_n)$ with positive integer diagonal entries,
we say that $\sigma$ is {\em strongly compatible with $R$} if
\begin{align}
\sigma(i)=i\
\text{for any $i$ with $r_i\geq 2$.}
\end{align}
\end{defn}

\begin{prop}
\label{prop:gcomp2}
Let
$\mathbf{\Gamma}(\mathbf{B}_R,t_0)=
\{ \Gamma_t=(\mathbf{F}_{t}(\bfu,\bfv), G_t, C_t, \bfv_t,B_t)
\mid t\in \mathbb{T}_n \}$
be the $FGC$-pattern of $\mathbf{B}_R$ with any initial point $t_0$.
Suppose that
 $\sigma\in S_n$ and $t_1,t_2\in \mathbb{T}_n$ satisfy
the condition
 \begin{align}
 G_{t_1}=\sigma G_{t_2}.
 \end{align}
 Let $t_3\in \mathbb{T}_n$ be the one in
 Theorem \ref{thm:gC-F}.
 Then, the following properties hold.
 \par
 (1). The following equality holds for any $i$ with $r_i\geq2$ and  $s=1,\dots,r_i-1$:
\begin{align}
\label{eq:gvsigma1}
v_{i,s}=v_{\sigma(i),s;t_3}.
\end{align}
\par
(2). $\sigma$ is strongly compatible with $R$.
\par
(3).  The following equality holds for any $i$:
\begin{align}
\label{eq:ggf3}
{F}_{i;t_1}(\bfu,\bfv)
= {F}_{\sigma^{-1}(i);t_2}(\bfu,\bfv).
\end{align}
\par
(4). Suppose that $t_0$ and $t_3$ are connected in $ \mathbb{T}_n$ as
\begin{align}
\label{eq:gp22}
t _0
\overunder{\ell_1}{} 
\cdots
\overunder{\ell_m}{} 
t_3.
\end{align}
Then, for each $i$ with $r_i\geq3$, the number of the appearance of $i$
in the sequence $(\ell_1,\dots, \ell_m)$ is even.
\par
(5). Suppose that $t_1$ and $t_2$ are connected in $ \mathbb{T}_n$ as
\begin{align}
\label{eq:gp32}
t _1
\overunder{\ell'_1}{} 
\cdots
\overunder{\ell'_{s}}{} 
t_2.
\end{align}
Then, for each $i$ with $r_i\geq 3$, the number of the appearance of $i$
in the sequence $(\ell'_1,\dots, \ell'_s)$ is even.
\par
(6). The following equality holds:
\begin{align}
\label{eq:gvv1}
\bfv_{t_1}=\bfv_{t_2}.
\end{align}
\end{prop}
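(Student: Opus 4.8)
The plan is to reduce everything to part~(1), which is the only substantial point; parts~(2)--(6) will then follow by short combinatorics. For~(1) I would pass to the cluster pattern of degree $R$ with universal coefficients, so that the initial coefficients $\bfy,\bfz$ become algebraically independent formal variables, $\bbP=\bbQ_{\mathrm{sf}}(\bfy,\bfz)$, and every relation between cluster variables or between $y$-variables becomes a genuine identity of rational functions in $\bfx,\bfy,\bfz$; under the separation formulas of Theorem~\ref{thm:gsep1} the $v$-variables of $\mathbf{\Gamma}(\mathbf{B}_R,t_0)$ play the role of the $z$-variables. Let $\phi$ be the field automorphism fixing every $x_i$ and $y_i$ and sending $z_{j,s}\mapsto z_{\sigma(j),s;t_3}$ (by \eqref{eq:gzmut1} this is a permutation of the $z$-variables). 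Then \eqref{eq:ggf2} of Theorem~\ref{thm:gC-F} together with \eqref{eq:gxtwist1}--\eqref{eq:gytwist1} of Proposition~\ref{prop:twist1} read, in this language, as $x_{i;t_1}=\phi(x_{\sigma^{-1}(i);t_2})$ and $y_{i;t_1}=\phi(y_{\sigma^{-1}(i);t_2})$, and~(1) will follow once we prove that $\phi$ fixes $z_{i,s}$ for all $i$ with $r_i\ge 2$.

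First I would \emph{globalize the hypothesis}. From $G_{t_1}=\sigma G_{t_2}$ and Corollary~\ref{cor:gsigma1} we also get $C_{t_1}=\sigma C_{t_2}$ and $B_{t_1}=\sigma B_{t_2}$; since the mutations of the $G$-, $C$- and $B$-data do not involve the $F$- or $v$-data and are $\sigma$-equivariant (as in \eqref{eq:comp2}, with the fixed initial $B$ kept in \eqref{eq:ggmut1}), transporting the direction word $(k_1,\dots,k_p)$ of the path $t_0\to t_2$ through $\sigma$ shows that for \emph{every} $t'\in\bbT_n$ the \emph{$\sigma$-twin} $\tau(t')$, reached from $t_1$ by the $\sigma$-relabelling of the path $t_2\to t'$, satisfies $G_{\tau(t')}=\sigma G_{t'}$ \emph{with the same auxiliary vertex $t_3$}. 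Applying Proposition~\ref{prop:twist2} to each pair $(\tau(t'),t')$ gives $\bfx_{\tau(t')}=\sigma\bfx_{t'}$ and $\bfy_{\tau(t')}=\sigma\bfy_{t'}$, while Proposition~\ref{prop:twist1}(2) gives the same equalities after applying $\phi$ to the right-hand sides; comparing, \emph{every} cluster variable $x_{i;t'}$ and every $y$-variable $y_{i;t'}$ of the universal pattern is fixed by $\phi$.

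Now fix $i$ with $r_i\ge 2$. If some matrix in $\mathbf{B}$ has nonzero $i$-th column, choose $t\in\bbT_n$ with the $i$-th column of $B_t$ nonzero, so that $\hat y_{i;t}=y_{i;t}\prod_j x_{j;t}^{b_{ji}^{t}}$ is transcendental over $\bbP$, and apply $\phi$ to the exchange relation coming from \eqref{eq:gxmut1} in direction $i$ at $t$, namely $x_{i;t}\,x_{i;t'}=(\prod_j x_{j;t}^{[-b_{ji}^{t}]_+})^{r_i}Z_{i;t}(\hat y_{i;t})/Z_{i;t}\vert_{\bbP}(y_{i;t})$, where $t'$ is the neighbour of $t$ in direction $i$: by the previous step the monomial factor, $y_{i;t}$, $\hat y_{i;t}$, $x_{i;t}$ and $x_{i;t'}$ are $\phi$-fixed, hence so is $Z_{i;t}(\hat y_{i;t})/Z_{i;t}\vert_{\bbP}(y_{i;t})$; equating coefficients of the powers of the transcendental $\hat y_{i;t}$ in the resulting polynomial identity, and using $z_{i,0;t}=1$, forces $\phi(z_{i,s;t})=z_{i,s;t}$ for $1\le s\le r_i-1$, which by \eqref{eq:gzmut1} unwinds to \eqref{eq:gvsigma1}. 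The only remaining case is that the $i$-th column---hence, by skew-symmetrizability, also the $i$-th row---of every matrix in $\mathbf{B}$ vanishes; then the $i$-th column of every $G_t$ equals $\pm e_i$, so $G_{t_1}=\sigma G_{t_2}$ forces $\sigma(i)=i$ (invertibility of $G_{t_2}$, Theorem~\ref{thm:gdual1}) and forces the number of $i$'s along $t_0\to t_3$ to be even, which is again \eqref{eq:gvsigma1}. This proves~(1).

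The remainder is bookkeeping. Part~(2): for $r_i\ge 2$, \eqref{eq:gvsigma1} with $s=1$ equates $v_{i,1}$ with a variable whose first index is $\sigma(i)$, so $\sigma(i)=i$. Part~(3): since the $v$-variables are exactly the $v_{j,s}$ with $r_j\ge 2$, part~(1) makes the substitution in \eqref{eq:ggf2} the identity, giving \eqref{eq:ggf3}. Part~(4): for $r_i\ge3$, parts~(1) and~(2) give $v_{i,s}=v_{i,s;t_3}$, which by \eqref{eq:gzmut1} is $v_{i,s}$ or $v_{i,r_i-s}$ according to the parity of the number of $i$'s along $t_0\to t_3$; since $v_{i,1}\ne v_{i,r_i-1}$, that parity is even. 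Part~(5): for $r_i\ge3$, apply the elementary fact that a closed walk in the tree $\bbT_n$ traverses each edge---hence meets each direction label---an even number of times, to the closed walk $t_0\to t_2\to t_1\to t_3\to t_0$; since $\sigma(i)=i$ makes $i$ occur equally often in $(k_1,\dots,k_p)$ and in $(\sigma(k_1),\dots,\sigma(k_p))$, and the count along $t_0\to t_3$ is even by~(4), the count along $t_1\to t_2$ is even. Part~(6): by~(1)--(2), $\bfv_{t_3}=\bfv_{t_0}$, and since $\sigma^{-1}(i)=i$ for $r_i\ge 2$ the $v$-mutations along $t_0\to t_2$ (word $k_\bullet$) and along $t_3\to t_1$ (word $\sigma(k_\bullet)$) act on each block $\bfv_i$ through the same positions, whence $\bfv_{t_1}=\bfv_{t_2}$. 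The genuine difficulty is all in~(1): one must globalize the hypothesis without moving $t_3$, pass to the universal pattern to upgrade Propositions~\ref{prop:twist1} and~\ref{prop:twist2} to functional identities, and then squeeze the triviality of $\phi$ on the $z$-variables out of one exchange relation, with the degenerate decoupled directions set aside and handled from the $g$-vectors.
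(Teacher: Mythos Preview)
Your proof is correct, and for part~(1) it follows a genuinely different route from the paper's. The paper argues locally: it mutates once from $t_0$ and once from $t_3$ in directions $k$ and $\sigma(k)$, writes the two resulting $F$-polynomials explicitly as $\sum_s v_{k,s}u_k^s$ and $\sum_s v_{\sigma(k),s;t_3}u_k^s$, and then uses the separation formula for a cluster pattern with \emph{principal} coefficients at $t_0$ (together with Proposition~\ref{prop:twist2}) to force these two polynomials to agree, reading off \eqref{eq:gvsigma1} coefficient by coefficient. Your argument is more global and structural: you encode the $v$-twist as a field automorphism $\phi$ of the universal-coefficient pattern, propagate the hypothesis to every vertex (the key observation being that the auxiliary $t_3$ does not move), deduce that $\phi$ fixes every cluster and $y$-variable, and finally pin down $\phi$ on the $z_{i,\bullet}$-block via one exchange relation, with the decoupled directions handled separately from the $g$-vectors. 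The paper's approach is shorter and avoids both the case split and the cross-multiplication step hidden in your ``equating coefficients'' (where the $s=0$ coefficient first gives $Z_{i;t}\vert_{\bbP}(y_{i;t})=\phi(Z_{i;t}\vert_{\bbP}(y_{i;t}))$, after which the other coefficients follow); your approach, on the other hand, makes transparent why the twist must be trivial, by exhibiting it as an automorphism fixing the entire cluster pattern.

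For parts~(2)--(6) your arguments are essentially the paper's, with only cosmetic differences: your closed-walk parity argument for~(5) is the same as the paper's concatenation argument, and your~(6) via matching $v$-mutations along $t_0\to t_2$ and $t_3\to t_1$ is equivalent to the paper's use of~(5) for $r_i\ge 3$ together with the trivial $r_i=2$ case.
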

\begin{proof}
(1). Choose any $k=1,\dots,n$, and define $t'_0,t'_3\in \mathbb{T}_n$ as
\begin{align}
\label{eq:gp4}
t _0&
\overunder{k}{} 
t'_0,
\\
\label{eq:gp5}
t_3&
\overunder{\sigma(k)}{} 
t'_3.
\end{align}
Recall that $G_{t_0}=C_{t_0}=I$, $F_{i;t_0}(\bfu,\bfv)=1$,
and, as we see in the proof of \eqref{eq:ggf2},
$G_{t_3}=C_{t_3}=P_{\sigma}$, and $F_{i;t_3}(\bfu,\bfv)=1$. 
Therefore, by the mutation \eqref{eq:gFmut1},
we obtain
\begin{align}
\label{eq:gFnext1}
F_{k;t'_0}(\bfu,\bfv)&=
\sum_{s=0}^{r_k} v_{k,s}u_k^s,
\\
\label{eq:gFnext2}
F_{\sigma(k);t'_3}(\bfu,\bfv)&=
\sum_{s=0}^{r_k} v_{\sigma(k),s;t_3}u_k^s
.
\end{align}
(This confirms \eqref{eq:ggf2} in the simplest case.)
We also note that,
by \eqref{eq:comp2}, we have
\begin{align}
\label{eq:gGG1}
G_{t'_3}=\sigma G_{t'_0}.
\end{align}
Let  $\tilde{\mathbf{\Sigma}}_R[{t}_0]=\{ \tilde{\Sigma}_t
=(\tilde{\bfx}_t,\tilde{\bfy}_t,\tilde{\bfz}_t,B_t)
\mid t\in \bbT_n 
\}$  be a cluster pattern of degree $R$
with principal coefficients at $t_0$.
Then, by the separation formula \eqref{eq:gsep1},
Proposition \ref{prop:gFtrop1},
and \eqref{eq:gGG1},
we have
\begin{align}
\tilde{x}_{k;t'_0}&=
\left(
\prod_{j=1}^n
\tilde{x}_j^{g_{jk}^{t'_0}}
\right)
{F_{k;t'_0}(\hat{\tilde\bfy},\tilde\bfz)},
\quad
\hat{\tilde y}_i=\tilde{y}_i \prod_{j=1}^n \tilde{x}_j^{b_{ji}},
\\
\tilde{x}_{\sigma(k);t'_3}&=
\left(
\prod_{j=1}^n
\tilde{x}_j^{g_{j\sigma(k)}^{t'_3}}
\right)
{F_{\sigma(k);t'_3}(\hat{\tilde\bfy},\tilde\bfz)}
=
\left(
\prod_{j=1}^n
\tilde{x}_j^{g_{jk}^{t'_0}}
\right)
{F_{\sigma(k);t'_3}(\hat{\tilde\bfy},\tilde\bfz)}
.
\end{align}
Meanwhile,
by \eqref{eq:gGG1} and Theorem \ref{thm:gbasic1},
we have
$
\tilde{x}_{k;t'_0}=\tilde{x}_{\sigma(k);t'_3}.
$
Therefore, we obtain the identity
\begin{align}
{F_{k;t'_0}(\hat{\tilde\bfy},\tilde\bfz)}=
{F_{\sigma(k);t'_3}(\hat{\tilde\bfy},\tilde\bfz)}.
\end{align}
Setting $\tilde{x}_{i}=1$ ($i=1,\dots,n$) therein, we have the identity
\begin{align}
{F_{k;t'_0}({\tilde\bfy},\tilde\bfz)}=
{F_{\sigma(k);t'_3}({\tilde\bfy},\tilde\bfz)}.
\end{align}
Since the principal coefficients
$\tilde\bfy$ and $\tilde\bfz$ at $t_0$ are independent variables,
this is equivalent to the identity
\begin{align}
{F_{k;t'_0}({\bfu},\bfv)}=
{F_{\sigma(k);t'_3}({\bfu},\bfv)}.
\end{align}
Then, comparing 
\eqref{eq:gFnext1} and \eqref{eq:gFnext2},
we conclude that \eqref{eq:gvsigma1} holds.
\par
(2). Under the mutation rule \eqref{eq:gvmut1},
we have $v_{i,s;t}=v_{i,s}$ or $v_{i,r_i-s}$ for any $t$.
Therefore, \eqref{eq:gvsigma1} occurs only when
$\sigma$ is strongly compatible with $R$.
\par
(3).
This is a consequence of
 \eqref{eq:ggf2} and \eqref{eq:gvsigma1}.
\par
(4). 
Let $i$ be the one with $r_i\geq 3$.
During the sequence of mutations
\eqref{eq:gp22}, 
for fixed $i$ and $s$,
the variables $v_{i,s;t}$ alternate between $v_{i,s}$ and $v_{i;r_i-s}$
whenever $\mu_i$ occurs.
Therefore,  \eqref{eq:gvsigma1} implies that
$\mu_i$ occurs in even times.
\par
(5).
As a path between $t_1$ and $t_2$ in $\mathbb{T}_n$,
it is enough to consider the concatenation
of \eqref{eq:gp2}, \eqref{eq:gp3}, and \eqref{eq:gp22}.
For $i$ with $r_i\geq 2$, the numbers of the appearance of $\mu_i$
in \eqref{eq:gp2} and \eqref{eq:gp3} coincide thanks 
to the strong compatibility $\sigma(i)=i$ proved in (2).
Then, combining this fact with  (4), we obtain the claim.
\par
(6). We show $v_{i,s;t_1}=v_{i,s;t_2}$ for $i$ with $r_i \geq 2$.
For $i$ with $r_i\geq 3$, this is a corollary of  (5).
For $i$ with $r_i = 2$, this is always true.
\end{proof}

Let us formally define the action of a permutation $\sigma\in S_n$
on $z$-variables for a seed  $(\bfx,\bfy,\bfz,B)$ of degree $R$
in the same way as other $x$- and $y$-variables
as
\begin{align}
 \sigma \bfz=\bfz', 
 \quad
 z'_{i,s}=z_{\sigma^{-1}(i),s}.
 \end{align}
 If $\sigma$ is strongly compatible with $R$,
 then $ \sigma \bfz=\bfz$.
 Namely, the action is trivial.
 
We have the following supplementary result to Proposition
 \ref{prop:twist2}.
 \begin{prop}
\label{prop:twist3}
In the same situation as Proposition \ref{prop:twist1},
 if $G_{t_1} = \sigma G_{t_2}$, then we have
\begin{align}
\label{eq:gzz2}
\bfz_{t_1}=\bfz_{t_2}=\sigma \bfz_{t_2}.
\end{align}
\par
\end{prop}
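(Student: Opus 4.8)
The plan is to derive both equalities directly from Proposition \ref{prop:gcomp2}. For the second equality, $\bfz_{t_2}=\sigma\bfz_{t_2}$, I would invoke Proposition \ref{prop:gcomp2}(2): the hypothesis $G_{t_1}=\sigma G_{t_2}$ forces $\sigma$ to be strongly compatible with $R$, i.e. $\sigma(i)=i$ for every $i$ with $r_i\geq 2$. Since the components $z_{i,s}$ of $\bfz$ are indexed precisely by those $i$ with $r_i\geq 2$ together with $1\leq s\leq r_i-1$, the permutation $\sigma$ fixes every index that labels a $z$-component, so the $\sigma$-action on $z$-variables is trivial, exactly as noted just before the statement; hence $\sigma\bfz_{t_2}=\bfz_{t_2}$.

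For the first equality, $\bfz_{t_1}=\bfz_{t_2}$, the key observation is that the $z$-variables of the cluster pattern and the $v$-variables of the $FGC$-pattern evolve under mutation by literally the same rule: comparing \eqref{eq:gzmut1} with \eqref{eq:gvmut1}, both replace the $k$-th block $(z_{k,s})_{s}$, respectively $(v_{k,s})_{s}$, by its reversal $s\mapsto r_k-s$ when $\mu_k$ is applied and leave all other blocks unchanged. Since $z_{i,s;t_0}=z_{i,s}$ and $v_{i,s;t_0}=v_{i,s}$ at the common initial point $t_0$, a straightforward induction along the unique path from $t_0$ in $\mathbb{T}_n$ shows that for every $t\in\mathbb{T}_n$ the substitution $v_{i,s}\mapsto z_{i,s}$ carries $v_{i,s;t}$ to $z_{i,s;t}$; concretely, $v_{i,s;t}$ is one of the two variables $v_{i,s}$, $v_{i,r_i-s}$ (according to the parity of the number of occurrences of the label $i$ along that path), and $z_{i,s;t}$ is its image under this substitution.

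I would then apply Proposition \ref{prop:gcomp2}(6), which gives $\bfv_{t_1}=\bfv_{t_2}$, that is, $v_{i,s;t_1}=v_{i,s;t_2}$ for all $i,s$; applying the substitution $v_{i,s}\mapsto z_{i,s}$ to both sides yields $z_{i,s;t_1}=z_{i,s;t_2}$ for all $i,s$, i.e. $\bfz_{t_1}=\bfz_{t_2}$. There is essentially no obstacle here: the real content is already packaged into Proposition \ref{prop:gcomp2}, and the proof amounts to transporting the identity $\bfv_{t_1}=\bfv_{t_2}$ from the $FGC$-pattern to the cluster pattern through the shared reversal dynamics. The only point to state carefully is that this transport is the application of a well-defined substitution to equal inputs, so it requires nothing about whether the initial elements $z_{i,s}\in\mathbb{P}$ are distinct or nonzero.
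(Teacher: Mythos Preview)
Your proof is correct and follows essentially the same approach as the paper: both derive $\bfz_{t_1}=\bfz_{t_2}$ from Proposition~\ref{prop:gcomp2}(6) via the identical mutation dynamics of $v$- and $z$-variables, and both derive $\bfz_{t_2}=\sigma\bfz_{t_2}$ from the strong compatibility in Proposition~\ref{prop:gcomp2}(2). Your explanation of the transport from $\bfv_{t_1}=\bfv_{t_2}$ to $\bfz_{t_1}=\bfz_{t_2}$ via substitution is slightly more explicit than the paper's, which simply notes that the $\bfv_t$ are tuples of independent formal variables.
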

\begin{proof}
Recall that  $v$-variables and  $z$-variables
mutate by the same rule \eqref{eq:gzmut1} and  \eqref{eq:gvmut1}.
Also, for each $t$, $\bfv_t$ is a tuple of independent formal variables.
Therefore,  by \eqref{eq:gvv1},
we have $\bfz_{t_1}=\bfz_{t_2}$.
Furthermore, since $\sigma$ is strongly compatible with $R$ by 
Proposition \ref{prop:gcomp2} (2),
we have $\bfz_{t_2}=\sigma \bfz_{t_2}$.
\end{proof}

\section{Synchronicity phenomenon in GCA}
\label{sec:synchronicity2}

\subsection{$xy/GC$-synchronicity}
We are ready to give  the $xy/GC$ synchronicity for GCA,
 which are analogous to the one in Section 
\ref{subsec:xyGC}.

\begin{thm}[$xy/GC$ synchronicity]
\label{thm:gbasic1}
Assume that Conjecture \ref{conj:gpositivity} is true.
Let
${{\mathbf{\Sigma}}}_R=\{ {\Sigma}_t
=({\bfx}_t,
{\bfy}_t,\bfz_t,B_t)
\mid t\in \bbT_n 
\}$
be a cluster pattern of degree $R$ with  coefficients in 
any  semifield $\mathbb{P}$,
and let
$\mathbf{\Gamma}(\mathbf{B}_R,t_0)=
\{ \Gamma_t=(\mathbf{F}_{t}(\bfu,\bfv), G_t, C_t, \bfv_t,\allowbreak B_t)
\mid t\in \mathbb{T}_n \}$
be the $FGC$-pattern of $\mathbf{B}_R$ with any initial point $t_0$.
Then, 
for $t_1,t_2\in \mathbb{T}_n$
and  a  permutation $\sigma\in S_n$,
 the following three conditions are equivalent:
\par
(a). $G_{t_1}=\sigma G_{t_2}$.
\par
(b). $C_{t_1}=\sigma C_{t_2}$.
\par
(c). $\bfx_{t_1}=\sigma \bfx_{t_2}$.
\par
\noindent
Moreover, one of Conditions (a)--(c) implies the following conditions:
\par
(d). $\bfy_{t_1}=\sigma \bfy_{t_2}$.
\par
(e). $\bfz_{t_1}=\bfz_{t_2}=\sigma \bfz_{t_2}$.
\end{thm}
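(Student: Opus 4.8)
The plan is to transcribe the proof of Theorem~\ref{thm:basic1} to the GCA setting, carrying the $\bfz$/$\bfv$-slots along, and to observe that most of the work has already been absorbed into Propositions~\ref{prop:twist2} and \ref{prop:twist3}. Concretely, (a)~$\Leftrightarrow$~(b) is exactly \eqref{eq:gdual2} in Corollary~\ref{cor:gsigma1}. Assuming (a), i.e.\ $G_{t_1}=\sigma G_{t_2}$, Proposition~\ref{prop:twist2} gives $\bfx_{t_1}=\sigma\bfx_{t_2}$ and $\bfy_{t_1}=\sigma\bfy_{t_2}$ (conditions (c) and (d)), while Proposition~\ref{prop:twist3} gives $\bfz_{t_1}=\bfz_{t_2}=\sigma\bfz_{t_2}$ (condition (e)); the genuine content here, namely that the $v$-twist in \eqref{eq:ggf2} is harmless because $\sigma$ is forced to be strongly compatible with $R$, has already been dealt with in Proposition~\ref{prop:gcomp2}, and none of this uses Conjecture~\ref{conj:gpositivity}. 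So the whole theorem reduces to the single implication (c)~$\Longrightarrow$~(b).

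For (c)~$\Longrightarrow$~(b) I would argue exactly as in the CA case. Assume $\bfx_{t_1}=\sigma\bfx_{t_2}$. First pass to the trivial semifield: sending every $y$- and $z$-variable of $\mathbf{\Sigma}_R$ to $1$ turns each exchange polynomial $Z_k(\eta)$ into $1+\eta+\dots+\eta^{r_k}$, and composing this trivialization with the map $x_{i;t}\mapsto\underline{x}_{i;t}$ from the subfield of $\mathcal{F}$ generated by the $\bfx_t$'s into the ambient field of a cluster pattern of degree $R$ without coefficients $\underline{\mathbf{\Sigma}}_R$ is well defined, since after trivialization the $x$-exchange relations become those of $\underline{\mathbf{\Sigma}}_R$; hence $\underline{\bfx}_{t_1}=\sigma\underline{\bfx}_{t_2}$. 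Next take the $FGC$-pattern $\mathbf{\Gamma}(\mathbf{B}_R,t_2)=\{\Gamma'_t=(\mathbf{F}'_t(\bfu,\bfv),G'_t,C'_t,\bfv'_t,B_t)\mid t\in\mathbb{T}_n\}$ with initial point $t_2$ and apply the separation formula \eqref{eq:gsep1} to $\underline{x}_{i;t_1}$ with that initial point. Because all coefficients of $\underline{\mathbf{\Sigma}}_R$ are trivial, the denominator $F'_{i;t_1}\vert_{\mathbf{1}}$ equals $1$ and the $\bfv$-argument is specialized to $\mathbf{1}$, so
\[
\underline{x}_{\sigma^{-1}(i);t_2}=\underline{x}_{i;t_1}=\Bigl(\prod_{j=1}^n\underline{x}_{j;t_2}^{g^{\prime t_1}_{ji}}\Bigr)F'_{i;t_1}(\underline{\hat{\bfy}}_{t_2},\mathbf{1}),\qquad \underline{\hat{y}}_{j;t_2}=\prod_{\ell=1}^n\underline{x}_{\ell;t_2}^{b^{t_2}_{\ell j}}.
\]
Specializing $\underline{x}_{j;t_2}=1$ for all $j$ forces $F'_{i;t_1}(\underline{\hat{\bfy}}_{t_2},\mathbf{1})$ to equal $1$ there, so Lemma~\ref{lem:gFcri1} yields $F'_{i;t_1}(\bfu,\bfv)=1$ for every $i$. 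Feeding this back, the displayed identity leaves $\underline{x}_{\sigma^{-1}(i);t_2}=\prod_j\underline{x}_{j;t_2}^{g^{\prime t_1}_{ji}}$, whence $G'_{t_1}=P_\sigma$, and then $C'_{t_1}=P_\sigma$ by \eqref{eq:gdual3}.

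Finally I would transfer $C'_{t_1}=P_\sigma$ (matrices of $\mathbf{\Gamma}(\mathbf{B}_R,t_2)$) into $C_{t_1}=\sigma C_{t_2}$ (matrices of the $FGC$-pattern $\mathbf{\Gamma}(\mathbf{B}_R,t_0)$ of the statement). Take a cluster pattern of degree $R$ with principal coefficients at $t_0$, say $\tilde{\mathbf{\Sigma}}_R[t_0]$, and apply the separation formula \eqref{eq:gsep2} to $\tilde{y}_{i;t_1}$ with initial point $t_2$; using $C'_{t_1}=P_\sigma$ and $F'_{i;t_1}(\bfu,\bfv)=1$ this collapses to $\tilde{\bfy}_{t_1}=\sigma\tilde{\bfy}_{t_2}$, and Proposition~\ref{prop:gtrop1} (formula \eqref{eq:gpr1}) then reads this off as $C_{t_1}=\sigma C_{t_2}$, which is (b), closing the cycle (a)~$\Leftrightarrow$~(b)~$\Leftrightarrow$~(c).

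The only genuinely non-formal ingredient in the whole argument is the vanishing step $F'_{i;t_1}(\bfu,\bfv)=1$, and that is precisely where Lemma~\ref{lem:gFcri1} --- hence the Laurent positivity Conjecture~\ref{conj:gpositivity} --- is invoked; everything else is the proof of Theorem~\ref{thm:basic1} with the $\bfz$/$\bfv$-slots carried along and evaluated trivially wherever they occur, so I do not anticipate any real difficulty beyond keeping that bookkeeping straight and making sure the $FGC$-patterns with initial points $t_0$ and $t_2$ are not conflated. (If one preferred, the same conclusion $F'_{i;t_1}=1$ could instead be reached unconditionally via the right companion $^{R}\mathbf{\Gamma}$ and Theorem~\ref{thm:gC-F}, as in the proof of \eqref{eq:ggf1}, but the Lemma~\ref{lem:gFcri1} route is the most direct transcription of the CA argument.)
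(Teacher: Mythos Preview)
Your proposal is correct and follows essentially the same approach as the paper: (a)~$\Leftrightarrow$~(b) by \eqref{eq:gdual2}, (a)~$\Rightarrow$~(c),(d),(e) by Propositions~\ref{prop:twist2} and~\ref{prop:twist3}, and (c)~$\Rightarrow$~(b) by transcribing the proof of Theorem~\ref{thm:basic1} with Lemma~\ref{lem:gFcri1} in place of Lemma~\ref{lem:Fcri1}. Your write-up is simply more detailed than the paper's, which merely says ``proved in the same manner as Theorem~\ref{thm:basic1}''; your parenthetical remark about the alternative route through the right companion is a correct side observation.
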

\begin{proof}
((a) $\Longleftrightarrow$ (b)).
This was already stated in
\eqref{eq:gdual2}.
\par
 ((a) $\Longrightarrow$ (c), (d), (e)).
  This was proved in Propositions
  \ref{prop:twist2} and   \ref{prop:twist3}.
\par
((c) $\Longrightarrow$ (b)).
This is proved  in the  same manner as Theorem \ref{thm:basic1}
by using a cluster pattern of degree $R$ without coefficients,
where we replace
 Lemma
\ref{lem:Fcri1}
with
 Lemma
\ref{lem:gFcri1}.
(This is only where we need Conjecture \ref{conj:gpositivity}.)

\end{proof}

Let us introduce a condition for the implication
(d) $\Longrightarrow$ (a), (b).

\begin{defn}
\label{defn:gcover1}
Let 
${{\mathbf{\Sigma}}}_R=\{ {\Sigma}_t
=({\bfx}_t,
{\bfy},\bfz_t,B_t)
\mid t\in \bbT_n 
\}$
be a cluster pattern of degree $R$ with  coefficients in 
any  semifield $\mathbb{P}$,
and
$\check{\mathbf{\Sigma}}_R[t'_0]=\{ \check{\Sigma}_t
=(\check{\bfx}_t,\check{\bfy}_t,B_t)
\mid t\in \bbT_n 
\}$
 be a cluster pattern of degree $R$ with $Y$-principal
coefficients at  $t'_0$,
such that they share a common $B$-pattern $\mathbf{B}_R$
of degree $R$.
Let $\langle \bfy_{t'_0}, \bfz_{t'_0}\rangle$ be the subsemifield 
in  $\mathbb{P}$
generated by $ \bfy_{t'_0}$ and $\bfz_{t'_0}$.
Then, we say that {\em $\mathbf{\Sigma}_R$ covers 
$\check{\mathbf{\Sigma}}_R[t'_0]$} if there is a semifield homomorphism
such that
\begin{align}
\label{eq:gsemihom4}
\begin{matrix}
\varphi:&\langle \bfy_{t'_0}, \bfz_{t'_0}\rangle&\rightarrow 
&\mathrm{Trop}(\check{\bfy}_{t'_0})\\
&y_{i;t'_0}
&
\mapsto
&
\check{y}_{i;t'_0}
\\
&{z}_{i,s;t'_0}
&
\mapsto
&
\quad
1
\quad .
\end{matrix}
\end{align}
\end{defn}

Now we present the second half of the statement of the $xy/GC$ synchronicity,
which does not rely on Conjecture \ref{conj:gpositivity}.

\begin{thm}[$xy/GC$ synchronicity]
\label{thm:gbasic2}
Let
${{\mathbf{\Sigma}}}_R=\{ {\Sigma}_t
=({\bfx}_t,
{\bfy}_t,
\bfz_t,
\allowbreak
B_t)
\mid t\in \bbT_n 
\}$
be a cluster pattern  of degree $R$ with  coefficients in 
 any  semifield $\mathbb{P}$
which covers a cluster pattern
of degree $R$
with $Y$-principal coefficients at some $t'_0$.
Then, Condition (d) implies Conditions (a)--(c) in Theorem
\ref{thm:gbasic1}.
\end{thm}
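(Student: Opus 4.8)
The plan is to follow the proof of Theorem~\ref{thm:basic2}, but, since Theorem~\ref{thm:gbasic1} is only available under Conjecture~\ref{conj:gpositivity}, to unpack that citation and route every step through the conjecture-free ingredients, namely \eqref{eq:gdual2}, Propositions~\ref{prop:gtrop1} and \ref{prop:gtrop2}, and Propositions~\ref{prop:twist2} and \ref{prop:twist3}. I would first push Condition~(d) down to the covering $Y$-principal pattern. Fix a cluster pattern $\check{\mathbf{\Sigma}}_R[t'_0]$ of degree $R$ with $Y$-principal coefficients at $t'_0$ and $B$-pattern $\mathbf{B}_R$, and let $\varphi\colon\langle\bfy_{t'_0},\bfz_{t'_0}\rangle\to\mathrm{Trop}(\check{\bfy}_{t'_0})$ be the homomorphism of Definition~\ref{defn:gcover1}. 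Because $\varphi$ commutes with $\cdot$ and $\oplus$, and because $z$-variables mutate only by the index swaps \eqref{eq:gzmut1} so that $\varphi(z_{i,s;t})=1=\check{z}_{i,s;t}$ at every $t$, an induction along $\mathbb{T}_n$ starting from $t'_0$ gives $\varphi(y_{i;t})=\check{y}_{i;t}$ for all $t$ and $i$; applying $\varphi$ to Condition~(d) then yields $\check{\bfy}_{t_1}=\sigma\check{\bfy}_{t_2}$.

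Next I would convert this to a statement about matrices and relocate the base point. Let $\mathbf{\Gamma}(\mathbf{B}_R,t'_0)=\{\Gamma'_t=(\mathbf{F}'_t(\bfu,\bfv),G'_t,C'_t,B_t)\mid t\in\mathbb{T}_n\}$ be the $FGC$-pattern with initial point $t'_0$. By Proposition~\ref{prop:gtrop2}, $\check{y}_{i;t}$ is the Laurent monomial in $\check{\bfy}_{t'_0}$ whose exponent vector is the $i$-th column of $C'_t$; since $\check{y}_{1;t'_0},\dots,\check{y}_{n;t'_0}$ are free generators of $\mathrm{Trop}(\check{\bfy}_{t'_0})$, the relation $\check{\bfy}_{t_1}=\sigma\check{\bfy}_{t_2}$ forces $C'_{t_1}=\sigma C'_{t_2}$, hence $G'_{t_1}=\sigma G'_{t_2}$ by \eqref{eq:gdual2}. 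Now introduce an auxiliary cluster pattern $\tilde{\mathbf{\Sigma}}_R[t_0]$ of degree $R$ with principal coefficients at $t_0$ and $B$-pattern $\mathbf{B}_R$; regarding it as a cluster pattern with initial point $t'_0$, Propositions~\ref{prop:twist2} and \ref{prop:twist3} applied to it and to $\mathbf{\Gamma}(\mathbf{B}_R,t'_0)$ turn $G'_{t_1}=\sigma G'_{t_2}$ into $\tilde{\bfy}_{t_1}=\sigma\tilde{\bfy}_{t_2}$. Finally, since $\tilde{\mathbf{\Sigma}}_R[t_0]$ has principal coefficients at $t_0$, Proposition~\ref{prop:gtrop1} with the $FGC$-pattern $\mathbf{\Gamma}(\mathbf{B}_R,t_0)$ expresses $\tilde{y}_{i;t}$ as the monomial in the free generators $\tilde{y}_{1;t_0},\dots,\tilde{y}_{n;t_0}$ read off from the $i$-th column of $C_t$, so $\tilde{\bfy}_{t_1}=\sigma\tilde{\bfy}_{t_2}$ upgrades to $C_{t_1}=\sigma C_{t_2}$, which is Condition~(b). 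Condition~(a) follows from \eqref{eq:gdual2}, and Condition~(c), i.e. $\bfx_{t_1}=\sigma\bfx_{t_2}$ for the given $\mathbf{\Sigma}_R$, follows by one more application of Proposition~\ref{prop:twist2}, now to $\mathbf{\Sigma}_R$ and $\mathbf{\Gamma}(\mathbf{B}_R,t_0)$.

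The main obstacle is the bookkeeping around the two a priori unrelated base points: the covering homomorphism lives over $t'_0$, while Conditions (a)--(c) are phrased through the $FGC$-pattern at $t_0$, so the argument must pass through the auxiliary principal-coefficient pattern $\tilde{\mathbf{\Sigma}}_R[t_0]$ to transport $\sigma$-periodicity from the $t'_0$-rooted $G$-matrices to the $t_0$-rooted ones, and at each step one must verify that only the conjecture-free statements (not Theorem~\ref{thm:gbasic1} as a black box) are used — the key point being that the implications (b)$\Leftrightarrow$(a)$\Rightarrow$(c),(d) of Theorem~\ref{thm:gbasic1} are precisely \eqref{eq:gdual2} together with Propositions~\ref{prop:twist2} and \ref{prop:twist3} and so do not invoke Conjecture~\ref{conj:gpositivity}, whereas (c)$\Rightarrow$(b) is the only place the conjecture enters. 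A minor but necessary technical check is that $\mathrm{Trop}(\check{\bfy}_{t'_0})$ and $\mathrm{Trop}(\tilde{\bfy}_{t_0},\tilde{\bfz}_{t_0})$ are free enough that equality of Laurent monomials in the relevant $\check{y}$'s (resp.\ $\tilde{y}$'s) forces equality of exponent vectors; this is exactly where the passage from $y$-variables back to $C$-matrices is made.
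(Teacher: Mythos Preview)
Your proof is correct and follows essentially the same approach as the paper, whose entire proof reads ``The proof is the same as Theorem~\ref{thm:basic2}.'' You have carefully unpacked what this means in the GCA setting: the only place Theorem~\ref{thm:gbasic1} is invoked in the analogue of the proof of Theorem~\ref{thm:basic2} is for the implication (b)$\Rightarrow$(d), and you correctly observe that this direction is supplied by \eqref{eq:gdual2} and Proposition~\ref{prop:twist2}, neither of which depends on Conjecture~\ref{conj:gpositivity}. Your substitution of the $Y$-principal pattern and Proposition~\ref{prop:gtrop2} for the principal pattern and Proposition~\ref{prop:trop1} is exactly the right adaptation dictated by Definition~\ref{defn:gcover1}, and your closing remarks about freeness of the tropical generators and about which implications of Theorem~\ref{thm:gbasic1} are conjecture-free make explicit what the paper leaves to the reader.
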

\begin{proof}
The proof is the same as Theorem \ref{thm:basic2}.
\end{proof}

\begin{cor}
\label{cor:gcomd1}
Let $D$ be a common skew-symmetrizer of $\mathbf{B}$.
\par
(1). 
Assume that Conjecture \ref{conj:gpositivity} is true.
Let 
${{\mathbf{\Sigma}}}_R=\{ {\Sigma}_t
=({\bfx}_t,
{\bfy}_t,
\bfz_t,
B_t)
\mid t\in \bbT_n 
\}$
be a cluster pattern of degree $R$  with  coefficients in 
any  semifield $\mathbb{P}$.
If $\bfx_{t_1}=\sigma \bfx_{t_2}$ occurs for some $t_1,t_2\in \mathbb{T}_n$
 and a permutation $\sigma\in S_n$,
 then $\sigma$ is compatible with $D$ and strongly compatible with $R$.
 \par
 (2).
 Let
${{\mathbf{\Sigma}}}_R
=\{ {\Sigma}_t
=({\bfx}_t,
{\bfy}_t,
\bfz_t,
\allowbreak
B_t)
\mid t\in \bbT_n 
\}$
be a cluster pattern of $R$ with  coefficients in 
 any  semifield $\mathbb{P}$
which covers a cluster pattern of $R$
with $Y$-principal coefficients at some $t'_0$.
If $\bfy_{t_1}=\sigma \bfy_{t_2}$ occurs for some $t_1,t_2\in \mathbb{T}_n$
 and a permutation $\sigma\in S_n$,
 then $\sigma$ is compatible with $D$ and strongly compatible with $R$.
\end{cor}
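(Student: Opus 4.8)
The plan is to reduce both statements to a single identity of $G$-matrices and then quote the structural results already established for $FGC$-patterns of GCA. Fix the $FGC$-pattern $\mathbf{\Gamma}(\mathbf{B}_R,t_0)$ attached to the common $B$-pattern of degree $R$, with any initial point $t_0$. For part (1), assuming Conjecture \ref{conj:gpositivity}, I would first invoke the equivalence $\text{(c)}\Leftrightarrow\text{(a)}$ of Theorem \ref{thm:gbasic1}: the hypothesis $\bfx_{t_1}=\sigma\bfx_{t_2}$ gives $G_{t_1}=\sigma G_{t_2}$. Once this holds, Proposition \ref{prop:gcompat1}(2) shows that $\sigma$ is compatible with $D$, and Proposition \ref{prop:gcomp2}(2) shows that $\sigma$ is strongly compatible with $R$; together these are exactly the asserted conclusions, so nothing further is needed.

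For part (2) the route is the same, except that the covering hypothesis lets us reach $G_{t_1}=\sigma G_{t_2}$ without invoking Laurent positivity. From $\bfy_{t_1}=\sigma\bfy_{t_2}$, Theorem \ref{thm:gbasic2} produces Conditions (a)--(c) of Theorem \ref{thm:gbasic1}, in particular $G_{t_1}=\sigma G_{t_2}$; I would remark that the implication $\text{(d)}\Rightarrow\text{(b)}$ used inside that theorem, together with $\text{(b)}\Leftrightarrow\text{(a)}$ via \eqref{eq:gdual2}, rests only on the duality relations and on the covering semifield homomorphism, not on Conjecture \ref{conj:gpositivity}, so part (2) is genuinely unconditional. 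Then Propositions \ref{prop:gcompat1}(2) and \ref{prop:gcomp2}(2) finish the argument just as in part (1).

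Since every ingredient is already in place, there is no real obstacle here; the corollary is essentially bookkeeping built on Theorems \ref{thm:gbasic1}--\ref{thm:gbasic2} and Propositions \ref{prop:gcompat1}, \ref{prop:gcomp2}. The only points requiring care are to cite Conjecture \ref{conj:gpositivity} only where it is genuinely needed --- part (1), through the $\text{(c)}\Rightarrow\text{(b)}$ step of Theorem \ref{thm:gbasic1} --- and to observe that ``strongly compatible with $R$'' already entails ``compatible with $R$'', because a permutation fixing every index $i$ with $r_i\geq 2$ permutes the set $\{i : r_i\geq 2\}$ trivially and hence also permutes its complement, so that $r_{\sigma(i)}=r_i$ for all $i$.
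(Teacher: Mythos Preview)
Your proposal is correct and follows essentially the same approach as the paper, which simply cites Propositions \ref{prop:gcompat1}, \ref{prop:gcomp2} and Theorems \ref{thm:gbasic1}, \ref{thm:gbasic2} without further comment. Your expanded version makes the logical flow explicit and correctly isolates where Conjecture \ref{conj:gpositivity} enters.
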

\begin{proof}
This follows from Propositions \ref{prop:gcompat1},
\ref{prop:gcomp2}
and Theorems \ref{thm:gbasic1}, \ref{thm:gbasic2}.
\end{proof}

\begin{rem}
\label{ref:gcover1}
In parallel to Definition
\ref{defn:gcover1},
for 
${{\mathbf{\Sigma}}}_R$ therein
and
a cluster pattern 
$\tilde{\mathbf{\Sigma}}_R[t'_0]=\{ \tilde{\Sigma}_t
=(\tilde{\bfx}_t,\tilde{\bfy}_t,\tilde{\bfz}_t,B_t)
\mid t\in \bbT_n 
\}$
 of degree $R$ with principal
coefficients at  $t'_0$,
we say that {\em $\mathbf{\Sigma}_R$ covers 
$\tilde{\mathbf{\Sigma}}_R[t'_0]$} if there is a semifield homomorphism
such that
\begin{align}
\label{eq:gsemihom5}
\begin{matrix}
\varphi:&\langle \bfy_{t'_0}, \bfz_{t'_0}\rangle&\rightarrow 
&\mathrm{Trop}(\tilde{\bfy}_{t'_0},\tilde{\bfz}_{t'_0})\\
&y_{i;t'_0}
&
\mapsto
&
\tilde{y}_{i;t'_0}
\\
&
{z}_{i,s;t'_0}
&
\mapsto
&
\quad
\tilde{z}_{i,s;t'_0}
\quad .
\end{matrix}
\end{align}
In this case, one can prove Theorem \ref{thm:gbasic1} without 
assuming Conjecture \ref{conj:gpositivity}
by using a cluster pattern of degree $R$ with principal coefficients
and the reduction \eqref{eq:gff1}.
\end{rem}

\subsection{More synchronicity results}
\label{subsec:gconsequences}

For a seed of $\Sigma
=(\bfx,\bfy,\bfz, B)$ of degree $R$ and a permutation $\sigma\in S_n$,
we define the action of $\sigma$ on $\Sigma$
by
\begin{align}
\sigma \Sigma &=(\sigma\bfx,\sigma\bfy,\sigma\bfz, \sigma B).
\end{align}

Let  $\mathbf{\Sigma}_R=\{ \Sigma_t
=(\bfx_t,\bfy_t,\bfz_t,B_t)
\mid t\in \bbT_n 
\}$ be a cluster pattern of degree $R$
with coefficients in any semifield $\bbP$.
Suppose that $t_1,t_2 \in\mathbb{T}_n$
are connected in $\mathbb{T}_n$ as
\begin{align}
\label{eq:gseq1}
t _1
\overunder{k_1}{} 
\cdots
\overunder{k_p}{} 
t_2,
\end{align}
so that
\begin{align}
\label{eq:gseq2}
\Sigma_{t_2}=\mu_{k_p} \cdots \mu_{k_1}(\Sigma_{t_1}).
\end{align}

After observing Theorems \ref{thm:gbasic1} and \ref{thm:gbasic2},
the following definition is natural.

\begin{defn}[$\sigma$-periodicity]
We call a sequence of  mutations \eqref{eq:gseq2} of seeds 
of degree $R$
 a 
{\em $\sigma$-period\/} if
\begin{align}
\Sigma_{t_1} = \sigma\Sigma_{t_2}.
\end{align}
\end{defn}

Then, all results in 
Section 
\ref{subsec:consequences}
are naturally generalized as we present below.
The proofs are the same as before, so that we omit them.
Note that the assumption of Conjecture \ref{conj:gpositivity}
is necessary only for the statements involving $x$-variables.
See also Remark \ref{ref:gcover1} for the condition when
this assumption can be avoided.

\begin{thm}
\label{thm:gx1}
Assume that Conjecture \ref{conj:gpositivity} is true.
Let
${{\mathbf{\Sigma}}}_R=\{ {\Sigma}_t
=({\bfx}_t,
{\bfy}_t,
\bfz_t,
B_t)
\mid t\in \bbT_n 
\}$
be a cluster pattern of degree $R$ with  coefficients in 
any  semifield $\mathbb{P}$.
Then, the periodicity of $x$-variables $\bfx_t$  depends only on 
the $B$-pattern $\mathbf{B}_R$ of degree $R$  therein.
\end{thm}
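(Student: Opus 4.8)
The plan is to reduce Theorem~\ref{thm:gx1} to the already-established $xy/GC$ synchronicity for GCA (Theorem~\ref{thm:gbasic1}) together with the fact that the $FGC$-pattern $\mathbf{\Gamma}(\mathbf{B}_R,t_0)$ is \emph{intrinsic} to $\mathbf{B}_R$: by construction \eqref{eq:ginitial1}--\eqref{eq:gM1}, the $G$-matrices $G_t$ depend only on the $B$-pattern of degree $R$ and the chosen initial point $t_0$, not on the semifield $\mathbb{P}$ or on the initial coefficient tuples $\bfy,\bfz$. So the natural strategy is: fix $\mathbf{B}_R$, fix an arbitrary initial point $t_0$, form $\mathbf{\Gamma}(\mathbf{B}_R,t_0)$ once and for all, and then show that for \emph{any} cluster pattern $\mathbf{\Sigma}_R$ with that $B$-pattern, the set of $\sigma$-periods of its $x$-variables coincides with the set of pairs $(t_1,t_2,\sigma)$ for which $G_{t_1}=\sigma G_{t_2}$ --- an object manifestly depending only on $\mathbf{B}_R$.

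First I would invoke the equivalence $(\mathrm{a})\Longleftrightarrow(\mathrm{c})$ of Theorem~\ref{thm:gbasic1}: under Conjecture~\ref{conj:gpositivity}, for fixed $t_1,t_2\in\mathbb{T}_n$ and $\sigma\in S_n$ one has $\bfx_{t_1}=\sigma\bfx_{t_2}$ if and only if $G_{t_1}=\sigma G_{t_2}$. Concretely: given two cluster patterns $\mathbf{\Sigma}_R$ and $\mathbf{\Sigma}'_R$ sharing the common $B$-pattern $\mathbf{B}_R$ (possibly with coefficients in different semifields $\mathbb{P},\mathbb{P}'$), both give rise --- via the initial point $t_0$ --- to the \emph{same} $FGC$-pattern $\mathbf{\Gamma}(\mathbf{B}_R,t_0)$, hence the same family $\{G_t\}_{t\in\mathbb{T}_n}$. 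Then the chain
\begin{align}
\bfx_{t_1}=\sigma\bfx_{t_2}
\quad\Longleftrightarrow\quad
G_{t_1}=\sigma G_{t_2}
\quad\Longleftrightarrow\quad
\bfx'_{t_1}=\sigma\bfx'_{t_2}
\end{align}
shows that the $x$-variables in the two patterns are $\sigma$-periodic under exactly the same sequences of mutations. Since $\sigma$-periodicity of $x$-variables is what we mean by ``periodicity of $x$-variables'', this is precisely the assertion that it depends only on $\mathbf{B}_R$. For the bookkeeping one should note that by Theorem~\ref{thm:gbasic1} the equivalence $(\mathrm{a})\Longleftrightarrow(\mathrm{c})$ holds with the \emph{same} $\sigma$ on both sides, so no subtlety about the permutation being different in the two patterns arises.

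The only genuine care needed --- and the step I expect to be the (mild) obstacle --- is the dependence on the choice of initial point $t_0$ used to build $\mathbf{\Gamma}(\mathbf{B}_R,t_0)$: a priori the family $\{G_t\}$ and hence the relation ``$G_{t_1}=\sigma G_{t_2}$'' could change if $t_0$ is moved. This is already handled by Theorem~\ref{thm:gbasic1} itself, which is stated for an \emph{arbitrary} initial point $t_0$, so the equivalence $(\mathrm{a})\Longleftrightarrow(\mathrm{c})$ is available for every choice; consequently the relation ``the $x$-variables are $\sigma$-periodic'' --- which does not mention $t_0$ --- forces the relation ``$G_{t_1}=\sigma G_{t_2}$ in $\mathbf{\Gamma}(\mathbf{B}_R,t_0)$'' to be the same for all $t_0$. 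Thus the clean way to write the proof is the one-line argument already used for the CA analogue (Theorem~\ref{thm:x1}): by Theorem~\ref{thm:gbasic1}, the periodicity of $x$-variables for any cluster pattern of degree $R$ with $B$-pattern $\mathbf{B}_R$ coincides with the periodicity of the $G$-matrices (equivalently the $C$-matrices) of $\mathbf{\Gamma}(\mathbf{B}_R,t_0)$, and the latter is determined by $\mathbf{B}_R$ alone. I would also remark, parallel to Section~\ref{subsec:gconsequences}, that Conjecture~\ref{conj:gpositivity} enters only through Theorem~\ref{thm:gbasic1}'s statements involving $x$-variables, and can be dispensed with in the ``covering'' situation of Remark~\ref{ref:gcover1} via the reduction \eqref{eq:gff1} to the right companion $FGC$-pattern of CA.
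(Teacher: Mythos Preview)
Your proposal is correct and follows essentially the same route as the paper: the paper omits the proof of Theorem~\ref{thm:gx1}, stating that it is identical to that of the CA analogue Theorem~\ref{thm:x1}, which is exactly the one-line reduction via Theorem~\ref{thm:gbasic1} to the periodicity of $G$- (or $C$-) matrices of $\mathbf{\Gamma}(\mathbf{B}_R,t_0)$, an object depending only on $\mathbf{B}_R$. Your additional remarks on the irrelevance of the choice of $t_0$ and on when Conjecture~\ref{conj:gpositivity} can be dropped are accurate side comments already anticipated in the paper.
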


\begin{thm}
\label{thm:gy1}
Let
${{\mathbf{\Sigma}}}_R=\{ {\Sigma}_t
=({\bfx}_t,
{\bfy}_t,
\bfz_t,
B_t)
\mid t\in \bbT_n 
\}$
be a cluster pattern of degree $R$ with  coefficients in 
any  semifield $\mathbb{P}$
such that
 $\mathbf{\Sigma}_R$ covers 
a cluster pattern of degree $R$ with $Y$-principal coefficients at 
some $t'_0$.
Then, the periodicity of $y$-variables $\bfy_t$  depends only on 
the $B$-pattern $\mathbf{B}_R$ of degree $R$ therein.
\end{thm}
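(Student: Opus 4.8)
\textbf{Plan for proving Theorem \ref{thm:gy1}.}
The statement asserts that the $\sigma$-periodicity of the $y$-variables of a cluster pattern $\mathbf{\Sigma}_R$ of degree $R$ — under the hypothesis that $\mathbf{\Sigma}_R$ covers a cluster pattern of degree $R$ with $Y$-principal coefficients at some $t'_0$ — is governed entirely by the $B$-pattern $\mathbf{B}_R$. The natural route is exactly the one used for the ordinary case in Theorem \ref{thm:y1}: reduce the periodicity of $y$-variables to the periodicity of $C$-matrices, which is manifestly an invariant of $\mathbf{B}_R$ alone (the $C$-matrices are defined by the mutation rule \eqref{eq:gcmut1} from $\mathbf{B}_R$ and $t_0$ with no reference to the chosen semifield $\mathbb{P}$ or to $\bfx_t,\bfy_t,\bfz_t$).

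The key steps, in order, are as follows. First, fix an $FGC$-pattern $\mathbf{\Gamma}(\mathbf{B}_R,t_0)$ with any initial point $t_0$; its $C$-matrices depend only on $\mathbf{B}_R$. Second, invoke Theorem \ref{thm:gbasic2}: since $\mathbf{\Sigma}_R$ covers a cluster pattern of degree $R$ with $Y$-principal coefficients at some $t'_0$, Condition (d), namely $\bfy_{t_1}=\sigma\bfy_{t_2}$, implies Condition (b), namely $C_{t_1}=\sigma C_{t_2}$, for the $FGC$-pattern at $t_0$. Third, for the converse direction — that $C_{t_1}=\sigma C_{t_2}$ forces $\bfy_{t_1}=\sigma\bfy_{t_2}$ — apply the implication (b)$\Longrightarrow$(d) of Theorem \ref{thm:gbasic1} (recall that this half of Theorem \ref{thm:gbasic1}, being contained in (a)$\Longrightarrow$(d), was established via Propositions \ref{prop:twist2} and \ref{prop:twist3} and does not invoke Conjecture \ref{conj:gpositivity}). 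Combining these two implications, the $\sigma$-periodicity of the $y$-variables of $\mathbf{\Sigma}_R$ coincides, for any $t_1,t_2$ and $\sigma$, with the $\sigma$-periodicity of the $C$-matrices $C_t$ of $\mathbf{\Gamma}(\mathbf{B}_R,t_0)$. Since the latter is intrinsic to $\mathbf{B}_R$, the theorem follows.

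One subtlety deserves care: both directions above are stated for the $C$-matrices of a \emph{single} $FGC$-pattern at a \emph{fixed} initial point $t_0$, so no ambiguity arises; but if one wishes to phrase "periodicity of $C$-matrices" independently of the initial point, one should note that $C_{t_1}=\sigma C_{t_2}$ at one initial point is equivalent to the same relation at any other, which again follows from Theorem \ref{thm:gbasic1} applied through the $x$-variables of some fixed auxiliary pattern — or, more directly, by transporting via the universal $y$-variable pattern as in the proof of Theorem \ref{thm:gbasic2}. The only real obstacle is bookkeeping: one must make sure that the pattern witnessing the covering hypothesis and the pattern at $t_0$ are handled consistently, precisely as in the proof of Theorem \ref{thm:basic2}, where an intermediate principal-coefficient pattern at $t_0$ is introduced to bridge the two initial points. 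With that in place the argument is a direct transcription of the ordinary case, and (as remarked in the excerpt preceding Theorem \ref{thm:gx1}) Conjecture \ref{conj:gpositivity} is not needed for any statement concerning only $y$-variables.
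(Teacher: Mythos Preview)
Your proposal is correct and follows essentially the same approach as the paper, which omits the proof with the remark that it proceeds just as in the ordinary case (Theorem \ref{thm:y1}): reduce the $\sigma$-periodicity of $\bfy_t$ to that of the $C$-matrices via the implications (d)$\Longrightarrow$(b) from Theorem \ref{thm:gbasic2} and (b)$\Longrightarrow$(d) from Theorem \ref{thm:gbasic1}. You are also right to observe that the latter implication (passing through (a)$\Longrightarrow$(d), i.e., Proposition \ref{prop:twist2}) does not require Conjecture \ref{conj:gpositivity}, which is precisely why Theorem \ref{thm:gy1} is stated without that hypothesis.
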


\begin{thm}
\label{thm:gxy1}
Assume that Conjecture \ref{conj:gpositivity} is true.
The  $x$-variables in Theorem \ref{thm:gx1}
and the  $y$-variables in Theorem \ref{thm:gy1}
with a common $B$-pattern $\mathbf{B}_R$ of degree $R$
share the same periodicity.
\end{thm}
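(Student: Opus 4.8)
The plan is to reduce Theorem \ref{thm:gxy1} to Theorems \ref{thm:gx1} and \ref{thm:gy1}, exactly as Theorem \ref{thm:xy1} was reduced to Theorems \ref{thm:x1} and \ref{thm:y1} in the CA case. The point of both of those theorems is that the relevant periodicity, whether of $x$-variables or of $y$-variables, is governed by the periodicity of the common $FGC$-pattern $\mathbf{\Gamma}(\mathbf{B}_R,t_0)$ attached to the shared $B$-pattern of degree $R$. So the statement to prove is: a sequence of mutations \eqref{eq:gseq2} is a $\sigma$-period for the $x$-variables of the cluster pattern in Theorem \ref{thm:gx1} if and only if it is a $\sigma$-period for the $y$-variables of the cluster pattern in Theorem \ref{thm:gy1}, provided the two share the $B$-pattern $\mathbf{B}_R$.

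First I would fix the common $B$-pattern $\mathbf{B}_R$ of degree $R$, pick any initial point $t_0$, and form the $FGC$-pattern $\mathbf{\Gamma}(\mathbf{B}_R,t_0)=\{\Gamma_t=(\mathbf{F}_t(\bfu,\bfv),G_t,C_t,\bfv_t,B_t)\mid t\in\mathbb{T}_n\}$. By Theorem \ref{thm:gbasic1} (which uses Conjecture \ref{conj:gpositivity}, hence the hypothesis in the statement), for the cluster pattern $\mathbf{\Sigma}_R$ of Theorem \ref{thm:gx1} the condition $\bfx_{t_1}=\sigma\bfx_{t_2}$ is equivalent to $G_{t_1}=\sigma G_{t_2}$, equivalently $C_{t_1}=\sigma C_{t_2}$. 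On the other side, let $\mathbf{\Sigma}'_R$ be the cluster pattern of Theorem \ref{thm:gy1}, which covers a cluster pattern of degree $R$ with $Y$-principal coefficients at some $t'_0$. For that pattern, Theorems \ref{thm:gbasic1} and \ref{thm:gbasic2} together give that $\bfy'_{t_1}=\sigma\bfy'_{t_2}$ is equivalent to $C_{t_1}=\sigma C_{t_2}$ (using the $FGC$-pattern with initial point $t'_0$, the relation \eqref{eq:gpr2} identifying tropical $y$-variables with $C$-matrices, and the already-established equivalence of the periodicity of $C$-matrices at different initial points, which follows from Corollary \ref{cor:gsigma1} and Theorem \ref{thm:gbasic1}). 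Since the $C$-matrices in question are the same family $\{C_t\}$ determined from the common $\mathbf{B}_R$, the two equivalences chain together to give $\bfx_{t_1}=\sigma\bfx_{t_2}\Longleftrightarrow\bfy'_{t_1}=\sigma\bfy'_{t_2}$, which is the assertion.

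The honest statement of the argument is therefore: ``Combining Theorems \ref{thm:gx1} and \ref{thm:gy1} (and their proofs), both periodicities coincide with the periodicity of the $C$-matrices of $\mathbf{B}_R$, hence with each other." I would spell this out in one short paragraph and note that the dependence on Conjecture \ref{conj:gpositivity} enters only through the $x$-variable side (Theorem \ref{thm:gx1}), via Lemma \ref{lem:gFcri1}; the $y$-variable side (Theorem \ref{thm:gy1}) is unconditional.

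The only genuine subtlety — the part I'd treat with care rather than wave away — is that in the GCA setting the action of $\sigma$ on a seed also permutes the $\bfz$-variables, and a priori $\sigma$-periodicity of $x$- or $y$-variables might impose constraints beyond compatibility with $D$. But Proposition \ref{prop:gcomp2} shows that whenever $G_{t_1}=\sigma G_{t_2}$ the permutation $\sigma$ is strongly compatible with $R$, so $\sigma\bfz=\bfz$ and the $\bfz$-component of the seed action is trivial; moreover Proposition \ref{prop:gcomp2}(6) and Proposition \ref{prop:twist3} give $\bfz_{t_1}=\bfz_{t_2}$ automatically. Thus no extra condition is hidden in the $\bfz$-variables, and the clean reduction above goes through verbatim. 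I expect this bookkeeping about strong compatibility to be the one place where a careful sentence is needed; everything else is a direct citation of the preceding theorems.
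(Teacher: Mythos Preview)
Your proposal is correct and takes essentially the same approach as the paper: the paper explicitly omits the proof of Theorem \ref{thm:gxy1}, stating that the proofs in Section \ref{subsec:gconsequences} ``are the same as before,'' and the corresponding Theorem \ref{thm:xy1} is itself obtained by ``combining Theorems \ref{thm:x1} and \ref{thm:y1} (and their proofs).'' Your extra paragraph on strong compatibility and the $\bfz$-variables is a welcome elaboration but not strictly needed for the statement as phrased, since the theorem concerns only $x$- and $y$-variables, not full seeds.
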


\begin{thm}
\label{thm:gseed1}
Assume that Conjecture \ref{conj:gpositivity} is true.
Let
${{\mathbf{\Sigma}}}_R=\{ {\Sigma}_t
=({\bfx}_t,
{\bfy}_t,
\bfz_t,
B_t)
\mid t\in \bbT_n 
\}$
be a cluster pattern of degree $R$ with  coefficients in 
any  semifield $\mathbb{P}$.
Then,   the   $x$-variables $\bfx_t$
and the  seeds $\Sigma_t$
share the same periodicity.
Furthermore, it depends only on 
the $B$-pattern $\mathbf{B}_R$  of degree $R$ therein.
\end{thm}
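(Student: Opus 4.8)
\textbf{Proof proposal for Theorem \ref{thm:gseed1}.}
The plan is to mimic the proof of Theorem \ref{thm:seed1}, reducing everything to the already-established $xy/GC$ synchronicity for GCA (Theorem \ref{thm:gbasic1}) and the duality relation \eqref{eq:gdual2}. Concretely, it suffices to prove the implication
\begin{align}
\label{eq:gseedaux1}
\bfx_{t_1}=\sigma \bfx_{t_2}
\quad\Longrightarrow\quad
\bfy_{t_1}=\sigma\bfy_{t_2},\ \bfz_{t_1}=\sigma\bfz_{t_2},\ B_{t_1}=\sigma B_{t_2},
\end{align}
since the reverse implication (periodicity of the full seed forces periodicity of its $x$-part) is trivial, and since $\Sigma_{t_1}=\sigma\Sigma_{t_2}$ unwinds by definition into exactly the four equalities $\bfx_{t_1}=\sigma\bfx_{t_2}$, $\bfy_{t_1}=\sigma\bfy_{t_2}$, $\bfz_{t_1}=\sigma\bfz_{t_2}$, $B_{t_1}=\sigma B_{t_2}$. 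The last statement of the theorem, that the common periodicity depends only on $\mathbf{B}_R$, will then follow because the periodicity of $\bfx_t$ equals the periodicity of the $C$-matrices (the equivalence of (b) and (c) in Theorem \ref{thm:gbasic1}), and the $FGC$-pattern $\mathbf{\Gamma}(\mathbf{B}_R,t_0)$ is uniquely determined by $\mathbf{B}_R$ and $t_0$.

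To prove \eqref{eq:gseedaux1}, first fix the $FGC$-pattern $\mathbf{\Gamma}(\mathbf{B}_R,t_0)$ at some initial point $t_0$. Assuming Conjecture \ref{conj:gpositivity}, the equivalence of Conditions (a)--(c) in Theorem \ref{thm:gbasic1} gives $C_{t_1}=\sigma C_{t_2}$ and $G_{t_1}=\sigma G_{t_2}$ from $\bfx_{t_1}=\sigma\bfx_{t_2}$. Then \eqref{eq:gdual2} (equivalently Corollary \ref{cor:gsigma1}) yields $B_{t_1}=\sigma B_{t_2}$. Condition (d) of Theorem \ref{thm:gbasic1}, which is implied by (a)--(c), gives $\bfy_{t_1}=\sigma\bfy_{t_2}$; and Condition (e), namely $\bfz_{t_1}=\bfz_{t_2}=\sigma\bfz_{t_2}$ (coming from Proposition \ref{prop:twist3}), gives the $\bfz$-equality. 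This assembles into $\Sigma_{t_1}=\sigma\Sigma_{t_2}$, completing the proof of the first assertion. For the final assertion, the periodicity of seeds equals the periodicity of $x$-variables by what we just proved, and the latter equals the periodicity of $C_t$ (or $G_t$) by Theorem \ref{thm:gbasic1}; since $\mathbf{\Gamma}(\mathbf{B}_R,t_0)$ is a function of $\mathbf{B}_R$ alone (for any fixed choice of $t_0$, and independence of $t_0$ follows from the involutivity of mutations), the periodicity depends only on $\mathbf{B}_R$.

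The only genuine input beyond bookkeeping is the reliance on Conjecture \ref{conj:gpositivity}: it enters precisely through the implication (c) $\Longrightarrow$ (b) in Theorem \ref{thm:gbasic1}, whose proof invokes Lemma \ref{lem:gFcri1}. Everything else --- the duality relations, the detropicalization Theorem \ref{thm:gC-F}, the strong compatibility of $\sigma$ with $R$, and the triviality of the $\bfz$-action under such $\sigma$ --- has already been secured unconditionally in the preceding sections. So there is no real obstacle: the proof is a short assembly, and we simply record (as the statement already does) that the $x$-variable assertions are contingent on the Laurent positivity conjecture for GCA. I would therefore write the proof as: \emph{It is enough to show $\bfx_{t_1}=\sigma\bfx_{t_2}\Rightarrow \Sigma_{t_1}=\sigma\Sigma_{t_2}$. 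This follows from Theorem \ref{thm:gbasic1} together with \eqref{eq:gdual2}. The second statement follows since the periodicity of $\bfx_t$ coincides with that of the $C$-matrices, which are determined by $\mathbf{B}_R$.}
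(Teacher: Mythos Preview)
Your proof is correct and follows exactly the approach the paper intends: the paper omits the proof, stating that it is ``the same as before,'' i.e., the same as Theorem~\ref{thm:seed1}, which reduces to the $xy/GC$ synchronicity theorem and the duality relation~\eqref{eq:dual2}. You have faithfully carried this out in the GCA setting, correctly invoking Theorem~\ref{thm:gbasic1} and~\eqref{eq:gdual2}, and properly handling the additional $\bfz$-component via Condition~(e).
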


\begin{thm}
\label{thm:gyseed1}
Let
${{\mathbf{\Sigma}}}_R=\{ {\Sigma}_t
=({\bfx}_t,
{\bfy}_t,
\bfz_t,
B_t)
\mid t\in \bbT_n 
\}$
be a cluster pattern  of degree $R$ with  coefficients in 
any  semifield $\mathbb{P}$
such that $\mathbf{\Sigma}_R$ covers 
a cluster pattern  of degree $R$ with $Y$-principal coefficients at 
some $t'_0$.
Then, the $y$-variables $\bfy_t$ 
and  the $Y$-seeds $(\bfy_t,B_t)$ 
share the same periodicity.
Furthermore, it depends only on 
the $B$-pattern $\mathbf{B}_R$ of degree $R$ therein.
\end{thm}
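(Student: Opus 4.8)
The plan is to mimic verbatim the proof of Theorem \ref{thm:yseed1} from the ordinary case. Since a $Y$-seed at $t$ is by definition the pair $(\bfy_t, B_t)$, the implication ``$(\bfy_{t_1},B_{t_1})=\sigma(\bfy_{t_2},B_{t_2})\Rightarrow \bfy_{t_1}=\sigma\bfy_{t_2}$'' is trivial, so the entire content of the first assertion reduces to the claim
\[
\bfy_{t_1}=\sigma\bfy_{t_2}\quad\Longrightarrow\quad B_{t_1}=\sigma B_{t_2}.
\]

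First I would invoke Theorem \ref{thm:gbasic2}: the covering hypothesis---that $\mathbf{\Sigma}_R$ covers a cluster pattern of degree $R$ with $Y$-principal coefficients at some $t'_0$---is exactly what lets us conclude that Condition (d), $\bfy_{t_1}=\sigma\bfy_{t_2}$, forces Conditions (a)--(c) of Theorem \ref{thm:gbasic1}; in particular $C_{t_1}=\sigma C_{t_2}$. It is worth stressing that this step does \emph{not} use Conjecture \ref{conj:gpositivity} (only the $x$-variable direction of Theorem \ref{thm:gbasic1} does), which is why the present theorem carries no such hypothesis. Then \eqref{eq:gdual2} in Corollary \ref{cor:gsigma1} immediately upgrades $C_{t_1}=\sigma C_{t_2}$ to $B_{t_1}=\sigma B_{t_2}$, and together with $\bfy_{t_1}=\sigma\bfy_{t_2}$ this yields $\sigma$-periodicity of the $Y$-seeds. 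Hence the $y$-variables and the $Y$-seeds have exactly the same $\sigma$-periods. For the final clause I would appeal to Theorem \ref{thm:gy1}, which already says that the periodicity of $\bfy_t$ depends only on $\mathbf{B}_R$; combined with the equivalence just established, the periodicity of $(\bfy_t, B_t)$ depends only on $\mathbf{B}_R$ as well.

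There is no genuine obstacle here: all the weight is carried by Theorem \ref{thm:gbasic2}---whose own proof (through Theorem \ref{thm:basic2} and, behind the scenes, the strong-compatibility analysis of Proposition \ref{prop:gcomp2}) is the nontrivial input---together with the duality \eqref{eq:gdual2}. The one bookkeeping point I would make explicit is that the permutation $\sigma$ occurring here is automatically compatible with $D$ and strongly compatible with $R$ by Corollary \ref{cor:gcomd1}, so the admissible permutations on the $y$-variable side and on the $Y$-seed side coincide and there is no mismatch; and that the trivial reverse implication together with the displayed claim give the asserted \emph{equality} of periods, not merely one inclusion. Accordingly I would simply state ``It is enough to show the claim above; this is a consequence of Theorem \ref{thm:gbasic2} and \eqref{eq:gdual2},'' in parallel with the CA proof.
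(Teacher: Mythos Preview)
Your proposal is correct and matches the paper's approach exactly: the paper omits the proof of Theorem \ref{thm:gyseed1}, stating that it is the same as that of Theorem \ref{thm:yseed1}, which in turn reduces to the displayed implication and invokes Theorem \ref{thm:basic2} together with \eqref{eq:dual2}; your substitution of the GCA analogues Theorem \ref{thm:gbasic2} and \eqref{eq:gdual2} is precisely what is intended. Your additional remarks---that Conjecture \ref{conj:gpositivity} is not needed here and that $\sigma$ is automatically (strongly) compatible---are correct and helpful elaborations.
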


\begin{thm}
\label{thm:gxyseed1}
Assume that Conjecture \ref{conj:gpositivity} is true.
Let
${{\mathbf{\Sigma}}}=\{ {\Sigma}_t
=({\bfx}_t,
{\bfy}_t,
\bfz_t,
B_t)
\mid t\in \bbT_n 
\}$
be a cluster pattern of degree $R$ with  coefficients in 
 any semifield $\mathbb{P}$
such that $\mathbf{\Sigma}$ covers 
a cluster pattern of degree $R$ with $Y$-principal coefficients at 
some $t'_0$.
Then, the seeds $\Sigma_t$ and the $Y$-seeds
$(\bfy_t,B_t)$ share the same  periodicity.
Furthermore, it depends only on
the $B$-pattern $\mathbf{B}_R$ of degree $R$ therein.
\end{thm}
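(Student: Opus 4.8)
The plan is to derive Theorem \ref{thm:gxyseed1} with essentially no new work, purely by chaining the three synchronicity statements that immediately precede it — exactly as Theorem \ref{thm:xyseed1} was obtained from Theorems \ref{thm:xy1}, \ref{thm:seed1}, and \ref{thm:yseed1} in the ordinary cluster algebra setting. The standing hypotheses of Theorem \ref{thm:gxyseed1} — that $\mathbf{\Sigma}_R$ covers a cluster pattern of degree $R$ with $Y$-principal coefficients at some $t'_0$, and that Conjecture \ref{conj:gpositivity} holds — are precisely what is required to invoke those three statements, so the argument is a matter of lining up equivalences.

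First I would fix $t_1,t_2\in\mathbb{T}_n$, a permutation $\sigma\in S_n$, and a sequence of mutations connecting them, and record the chain of equivalences: $\Sigma_{t_1}=\sigma\Sigma_{t_2}$ iff $\bfx_{t_1}=\sigma\bfx_{t_2}$ by Theorem \ref{thm:gseed1}; this iff $\bfy_{t_1}=\sigma\bfy_{t_2}$ by Theorem \ref{thm:gxy1} (where both the covering hypothesis and Conjecture \ref{conj:gpositivity} are used); and this iff the $Y$-seed $(\bfy_t,B_t)$ is $\sigma$-periodic, i.e. $\bfy_{t_1}=\sigma\bfy_{t_2}$ together with $B_{t_1}=\sigma B_{t_2}$, by Theorem \ref{thm:gyseed1} (which needs only the covering hypothesis). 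Concatenating the three equivalences gives that the seeds $\Sigma_t$ and the $Y$-seeds $(\bfy_t,B_t)$ are $\sigma$-periodic under exactly the same sequences of mutations, for every $\sigma$, which is the first assertion. For the ``furthermore'' part, the same two theorems identify each of these common periodicities with the periodicity of the $C$-matrices of the $FGC$-pattern $\mathbf{\Gamma}(\mathbf{B}_R,t_0)$ at any initial point $t_0$, and the latter is manufactured from $\mathbf{B}_R$ alone; hence the periodicity depends only on the $B$-pattern $\mathbf{B}_R$ of degree $R$.

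I do not expect a genuine obstacle, since the content is entirely absorbed by Theorems \ref{thm:gseed1}, \ref{thm:gxy1}, and \ref{thm:gyseed1}. The only point that deserves a moment of care is the bookkeeping of hypotheses: Conjecture \ref{conj:gpositivity} must be cited exactly where an $x$-variable statement is used (through Theorems \ref{thm:gseed1} and \ref{thm:gxy1}) and is not needed for the purely $y$-side step through Theorem \ref{thm:gyseed1}, while, symmetrically, the covering-by-$Y$-principal-coefficients hypothesis is needed for the $y$-side and for the $xy$-bridge but not for Theorem \ref{thm:gseed1}. Both hypotheses being explicitly assumed in the statement, the chain of equivalences closes without gaps; and, as noted in Remark \ref{ref:gcover1}, the appeal to Conjecture \ref{conj:gpositivity} can be dropped if one instead assumes covering of a cluster pattern with (full) principal coefficients.
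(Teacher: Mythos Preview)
Your proposal is correct and follows exactly the approach the paper intends: the paper omits the proof entirely, stating that ``the proofs are the same as before,'' and the corresponding ordinary-CA result (Theorem \ref{thm:xyseed1}) is obtained precisely by combining Theorems \ref{thm:xy1}, \ref{thm:seed1}, and \ref{thm:yseed1}, just as you combine Theorems \ref{thm:gxy1}, \ref{thm:gseed1}, and \ref{thm:gyseed1}. Your bookkeeping of which hypothesis is needed at which step is also accurate.
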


\subsection{Synchronicity with companion patterns}
Another main result in Part \ref{part:GCA} is the following one.
See also Theorem \ref{thm:conj1}.

\begin{thm}
\label{thm:gcompanion1}
Assume that Conjecture \ref{conj:gpositivity} is true.
Let
${{\mathbf{\Sigma}}}_R=\{ {\Sigma}_t
=({\bfx}_t,
{\bfy}_t,
\bfz_t,
B_t)
\mid t\in \bbT_n 
\}$
be a cluster pattern of degree $R$ with  coefficients in 
any  semifield $\mathbb{P}$.
Let 
${}^L{{\mathbf{\Sigma}}}=\{ {}^L{\Sigma}_t
=({}^L{\bfx}_t,
{}^L{\bfy}_t,
RB_t)
\mid t\in \bbT_n 
\}$
and
${}^R{{\mathbf{\Sigma}}}=\{ {}^R{\Sigma}_t
=({}^R{\bfx}_t,
{}^R{\bfy}_t,
B_tR)
\mid t\in \bbT_n 
\}$
be cluster patterns with coefficients in any semifields
${}^L\mathbb{P}$ and ${}^R\mathbb{P}$,
respectively.
Then, the  periodicity of seeds $ {\Sigma}_t$
coincides with the common periodicity of 
seeds $ {}^L{\Sigma}_t$ and  ${}^R{\Sigma}_t$.
\end{thm}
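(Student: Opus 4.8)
The plan is to reduce the statement to the $xy/GC$-synchronicity results already established for both GCA and CA, using the companion relations between $G$-matrices in Theorem \ref{thm:companion1}. By Theorem \ref{thm:gseed1}, the periodicity of the seeds $\Sigma_t$ of the GCA coincides with the periodicity of its $G$-matrices $G_t$ at any fixed initial point $t_0$; similarly, by Theorem \ref{thm:seed1}, the periodicity of the seeds ${}^L\Sigma_t$ (resp. ${}^R\Sigma_t$) coincides with that of the $G$-matrices ${}^LG_t$ (resp. ${}^RG_t$) of the left-companion ${}^L\mathbf{\Gamma}=\mathbf{\Gamma}(R\mathbf{B},t_0)$ (resp. the right-companion ${}^R\mathbf{\Gamma}=\mathbf{\Gamma}(\mathbf{B}R,t_0)$). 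So the whole claim comes down to comparing the periodicities of $G_t$, ${}^LG_t$, and ${}^RG_t$ under permutations, which is exactly what the companion formulas \eqref{eq:gc1} and \eqref{eq:gg1} control.

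First I would record that a permutation $\sigma\in S_n$ can only appear in any such periodicity if it is compatible with $D$ and strongly compatible with $R$: for the GCA side this is Corollary \ref{cor:gcomd1}(1) (using Conjecture \ref{conj:gpositivity}), and for the companion CA patterns the compatibility with $D$ follows from Corollary \ref{cor:comd1} together with Proposition \ref{prop:conj2}(2), which forces compatibility with $R$ as well, hence strong compatibility is automatic in the relevant cases — but actually the cleaner route is simply to work only with those $\sigma$ that satisfy $\sigma(i)=i$ whenever $r_i\geq 2$, since these are the only ones that can occur. For such a $\sigma$, the permutation matrix $P_\sigma$ commutes with $R$: indeed $R^{-1}P_\sigma R = P_\sigma$ because $\sigma$ fixes every index $i$ with $r_i\neq 1$ and permutes freely among the indices where $r_i=1$. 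Consequently the conjugations in \eqref{eq:gc1} and \eqref{eq:gg1} become trivial on permutation-type identities:
\begin{align}
\label{eq:plan1}
G_{t_1}=\sigma G_{t_2}
\ \Longleftrightarrow\
{}^LG_{t_1}=R G_{t_1} R^{-1} = R\,\sigma\, G_{t_2}\,R^{-1}=\sigma\,{}^LG_{t_2},
\end{align}
and likewise $G_{t_1}=\sigma G_{t_2}\Leftrightarrow {}^RG_{t_1}=\sigma\, {}^RG_{t_2}$ directly from $G_t={}^RG_t$. Here I used $R\sigma G_{t_2}R^{-1}=R P_\sigma G_{t_2} P_\sigma^{-1} R^{-1}$ with $\sigma G_t = G_t P_\sigma$ from \eqref{eq:p2}, and the fact that $P_\sigma$ and $R$ commute; I should be a little careful whether the conjugation acts on the left or right of $G_t$, but in either case it absorbs into an identity since both $R$ and $P_\sigma$ are diagonal/permutation matrices that commute.

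Putting this together: fix a sequence of mutations connecting $t_1$ to $t_2$ and a permutation $\sigma$. Then $\Sigma_{t_1}=\sigma\Sigma_{t_2}$ in the GCA iff $G_{t_1}=\sigma G_{t_2}$ (Theorem \ref{thm:gseed1} and its equivalence (a)$\Leftrightarrow$(c) in Theorem \ref{thm:gbasic1}) iff ${}^LG_{t_1}=\sigma\,{}^LG_{t_2}$ (by \eqref{eq:plan1}) iff ${}^L\Sigma_{t_1}=\sigma\,{}^L\Sigma_{t_2}$ (Theorem \ref{thm:seed1} applied to ${}^L\mathbf{\Sigma}$ with $B$-pattern $R\mathbf{B}$), and symmetrically for the right-companion via $G_t={}^RG_t$. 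Since by Theorem \ref{thm:gseed1} and Theorem \ref{thm:seed1} each of the three periodicities depends only on the respective $B$-pattern and is captured by the $G$-matrices at the common initial point $t_0$, this chain of equivalences holds for every pair $(t_1,t_2)$ and every $\sigma$, which is precisely the assertion that the three seed-periodicities coincide. The one point needing care — and the only place where Conjecture \ref{conj:gpositivity} enters — is the equivalence (a)$\Leftrightarrow$(c) on the GCA side, i.e. that the $x$-variable periodicity of $\mathbf{\Sigma}_R$ really is detected by $G$-matrices; this was already handled in Theorem \ref{thm:gbasic1}, so no new obstacle arises here. The main (mild) subtlety to get right in the write-up is the bookkeeping of left- versus right-multiplication by $P_\sigma$ in the conjugation identities \eqref{eq:gc1}--\eqref{eq:gg1}, ensuring that strong compatibility of $\sigma$ with $R$ is exactly what makes those conjugations disappear.
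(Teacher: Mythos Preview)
Your proposal is correct and follows essentially the same route as the paper: the paper's proof is the one-liner ``This is a corollary of Theorem~\ref{thm:companion1} together with Theorems~\ref{thm:basic1}, \ref{thm:seed1}, \ref{thm:gbasic1}, and \ref{thm:gseed1},'' and you have unpacked exactly this, reducing seed-periodicity on each side to $G$-matrix periodicity and then matching the $G$-matrices via the companion identities \eqref{eq:gc1}--\eqref{eq:gg1}.

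One small clean-up: you invoke \emph{strong} compatibility of $\sigma$ with $R$, but for the step you actually need---that $P_\sigma$ commutes with $R$---ordinary compatibility $r_{\sigma(i)}=r_i$ (i.e., \eqref{eq:dp1} with $R$ in place of $D$) already suffices, and this is precisely what Proposition~\ref{prop:gcompat1} (GCA side) and Proposition~\ref{prop:conj2}(2) (CA side) supply. Your detour through strong compatibility is harmless since it implies the weaker condition, but it slightly obscures the logic; the write-up would be cleaner if you simply cite compatibility with $R$ and note that this is exactly the condition $RP_\sigma=P_\sigma R$ making the conjugation in \eqref{eq:gc1} disappear.
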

\begin{proof}
This is a corollary of
Theorem \ref{thm:companion1}
together with
Theorems \ref{thm:basic1}, \ref{thm:seed1},
\ref{thm:gbasic1}, and \ref{thm:gseed1}.
\end{proof}

In other words, the seeds of a GCA
and the seeds of
its companion CA's
share the same exchange graph.

As a corollary, we have a strong version of
Theorem \ref{thm:conj1} (1) when $B$ is an integer matrix.
(We do not now if the statement still holds for a non-integer matrix $B$.)

\begin{cor}
\label{cor:gconj1}
Let $R\mathbf{B}$ and $\mathbf{B}R$ be a conjugate pair
such that $B$ is an integer (skew-symmetrizable) matrix.
Let
${{\mathbf{\Sigma}}}=\{ {\Sigma}_t
=({\bfx}_t,
{\bfy}_t,RB_t)
\mid t\in \bbT_n 
\}$
and
${{\mathbf{\Sigma}}}'=\{ {\Sigma}'_t
=({\bfx}'_t,
{\bfy}'_t,B_tR)
\mid t\in \bbT_n 
\}$
be  cluster patterns with coefficients
in  any semifields $\mathbb{P}$ and $\mathbb{P}'$, respectively.
If $ \Sigma_{t_1}=\sigma \Sigma_{t_2}$ or $ \Sigma'_{t_1}=\sigma \Sigma'_{t_2}$
 for some $t_1,t_2\in \mathbb{T}_n$
and  a permutation $\sigma\in S_n$,
then $\sigma$ is strongly compatible with $R$.
\end{cor}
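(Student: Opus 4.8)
The plan is to descend from the hypothesis on seeds of a companion CA to a coincidence of $G$-matrices of the underlying GCA, and then quote Proposition \ref{prop:gcomp2}\,(2), which is precisely the statement that such a coincidence up to $\sigma$ forces $\sigma$ to be strongly compatible with $R$. Since $B$ is an integer (skew-symmetrizable) matrix, $\mathbf{B}_R=\{B_t\mid t\in\mathbb{T}_n\}$ is a $B$-pattern of degree $R$ whose left- and right-companion $B$-patterns are exactly $R\mathbf{B}$ and $\mathbf{B}R$. Fix an initial point $t_0$, let $\mathbf{\Gamma}(\mathbf{B}_R,t_0)=\{\Gamma_t=(\mathbf{F}_t(\bfu,\bfv),G_t,C_t,\bfv_t,B_t)\mid t\in\mathbb{T}_n\}$ be the corresponding $FGC$-pattern, and let ${}^L\mathbf{\Gamma}=\mathbf{\Gamma}(R\mathbf{B},t_0)$ and ${}^R\mathbf{\Gamma}=\mathbf{\Gamma}(\mathbf{B}R,t_0)$ be its companions, with $G$- and $C$-matrices ${}^LG_t,{}^LC_t$ and ${}^RG_t,{}^RC_t$.

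As a preliminary step I would apply Theorem \ref{thm:conj1}\,(1) to $\mathbf{\Sigma}$ and $\mathbf{\Sigma}'$ to conclude that $\sigma$ is compatible with $R$, i.e.\ $P_\sigma R=RP_\sigma$; this commutation is what will let the conjugation by $R$ appearing in the right-companion identities of Theorem \ref{thm:companion1} be handled cleanly. Now suppose first that $\Sigma_{t_1}=\sigma\Sigma_{t_2}$. Then in particular $\bfx_{t_1}=\sigma\bfx_{t_2}$ in the CA $\mathbf{\Sigma}$ (whose $B$-pattern is $R\mathbf{B}$), so Theorem \ref{thm:basic1} gives ${}^LC_{t_1}=\sigma\,{}^LC_{t_2}$; by the companion identity $C_t={}^LC_t$ in \eqref{eq:gc1} this reads $C_{t_1}=\sigma C_{t_2}$, hence $G_{t_1}=\sigma G_{t_2}$ by Corollary \ref{cor:gsigma1}, and Proposition \ref{prop:gcomp2}\,(2) then yields that $\sigma$ is strongly compatible with $R$. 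If instead $\Sigma'_{t_1}=\sigma\Sigma'_{t_2}$, then $\bfx'_{t_1}=\sigma\bfx'_{t_2}$ in the CA $\mathbf{\Sigma}'$ (whose $B$-pattern is $\mathbf{B}R$), so Theorem \ref{thm:basic1} gives ${}^RC_{t_1}={}^RC_{t_2}P_\sigma$; using $C_t=R({}^RC_t)R^{-1}$ from \eqref{eq:gg1} together with the commutation $P_\sigma R^{-1}=R^{-1}P_\sigma$ from the preliminary step, this rearranges to $C_{t_1}=C_{t_2}P_\sigma=\sigma C_{t_2}$, and one concludes exactly as in the first case.

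The only genuinely delicate point is the right-companion case: the companion relation there carries a conjugation by $R$, so without first knowing that $\sigma$ commutes with $R$ one cannot transport ${}^RC_{t_1}=\sigma\,{}^RC_{t_2}$ to the GCA $C$-matrices. This is exactly why I would invoke Theorem \ref{thm:conj1}\,(1) at the outset to supply the ordinary compatibility of $\sigma$ with $R$, after which the upgrade to strong compatibility is automatic from Proposition \ref{prop:gcomp2}. I note finally that, in contrast to Theorem \ref{thm:gcompanion1}, none of these steps uses the Laurent positivity conjecture for GCA, which is consistent with the statement being asserted unconditionally.
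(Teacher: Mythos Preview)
Your argument is correct and is essentially the intended proof: the paper states the result only as ``a corollary'' without details, and your route---pass from CA seed periodicity to CA $C$/$G$-matrix periodicity via Theorem~\ref{thm:basic1}, then to GCA $G$-matrix periodicity via the companion identities in Theorem~\ref{thm:companion1}, and finally invoke Proposition~\ref{prop:gcomp2}\,(2)---is exactly what the text implicitly has in mind. Your observation that the argument is unconditional (in contrast to Theorem~\ref{thm:gcompanion1}) is correct and worth making, since only the CA direction (c)\,$\Rightarrow$\,(a)/(b) of Theorem~\ref{thm:basic1} is used, not the GCA analogue in Theorem~\ref{thm:gbasic1}.

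One small simplification: in the right-companion case you can bypass the preliminary invocation of Theorem~\ref{thm:conj1}\,(1) altogether. Since Theorem~\ref{thm:basic1} gives equivalently ${}^RG_{t_1}=\sigma\,{}^RG_{t_2}$, and \eqref{eq:gg1} reads $G_t={}^RG_t$ (no conjugation by $R$ on the $G$-side), you immediately get $G_{t_1}=\sigma G_{t_2}$ and can apply Proposition~\ref{prop:gcomp2}\,(2) directly. Your detour through the $C$-matrices and the commutation $P_\sigma R=RP_\sigma$ is valid, but unnecessary.
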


\subsection{Synchronicity under dualities}
For completeness, we also give  analogous results  to
the ones in Section \ref{subsec:dualities}.

\begin{thm}
\label{thm:gdual4}
Assume that Conjecture \ref{conj:gpositivity} is true.
Let
${{\mathbf{\Sigma}}}_R=\{ {\Sigma}_t
=({\bfx}_t,
{\bfy}_t,
\bfz_t,
B_t)
\mid t\in \bbT_n 
\}$
and
${{\mathbf{\Sigma}}}'_R=\{ {\Sigma}'_t
=({\bfx}'_t,
{\bfy}'_t,
{\bfy}'_t,
B^T_t)
\mid t\in \bbT_n 
\}$
be  cluster patterns of degree $R$ with coefficients
in any semifields $\mathbb{P}$ and $\mathbb{P}'$, respectively.
Then, the seeds in ${\mathbf{\Sigma}}_R$ and the seeds in
${\mathbf{\Sigma}}'_R$ share the same  periodicity.
\end{thm}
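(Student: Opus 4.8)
\textbf{Proof proposal for Theorem \ref{thm:gdual4} (transposition duality for GCA).}

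The plan is to reduce the statement to the ordinary-cluster-algebra case, Theorem \ref{thm:dual4}, via the companion patterns of Section on companion patterns. First I would invoke Theorem \ref{thm:gseed1} (which requires Conjecture \ref{conj:gpositivity}) to reduce the equivalence of periodicities of seeds of $\mathbf{\Sigma}_R$ and $\mathbf{\Sigma}'_R$ to the equivalence of periodicities of the $x$-variables alone, exactly as Theorem \ref{thm:dual4} used Theorem \ref{thm:seed1}. Thus it suffices to prove
\begin{align}
\label{eq:gxperiod2}
\bfx_{t_1}=\sigma \bfx_{t_2}
\quad
\Longleftrightarrow
\quad
\bfx'_{t_1}=\sigma \bfx'_{t_2},
\end{align}
for a permutation $\sigma$ that (by Corollary \ref{cor:gcomd1}) is compatible with the relevant skew-symmetrizers and strongly compatible with $R$.

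Next I would translate \eqref{eq:gxperiod2} into a statement about $G$-matrices using the $xy/GC$ synchronicity (Theorem \ref{thm:gbasic1}): $\bfx_{t_1}=\sigma\bfx_{t_2}$ iff $G_{t_1}=\sigma G_{t_2}$ for the $FGC$-pattern $\mathbf{\Gamma}(\mathbf{B}_R,t_0)$, and similarly for the transposed pattern. The key identity to bring in is the relation between the $G$-matrices of $\mathbf{\Gamma}(\mathbf{B}_R,t_0)$ and those of $\mathbf{\Gamma}(\mathbf{B}^T_R,t_1)$. By Theorem \ref{thm:companion1}, $G_t = R^{-1}({}^LG_t)R$ and $C_t = {}^RC_t$ for the companions; and the $B$-pattern of $\mathbf{\Sigma}'_R$ is $\mathbf{B}^T_R$, whose companions are $R\mathbf{B}^T = (\mathbf{B}R)^T$ and $\mathbf{B}^T R = (R\mathbf{B})^T$. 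So the transposition duality for GCA matrices follows by combining Theorem \ref{thm:companion1} with the CA-level transposition duality $C_{t_1}^T = G'_{t_2}$ (equation \eqref{eq:dual4} in the proof of Theorem \ref{thm:dual4}) applied to the pair $R\mathbf{B}$ and $(R\mathbf{B})^T = \mathbf{B}^TR$. Concretely: assuming $\bfx_{t_1}=\sigma\bfx_{t_2}$ gives $G_{t_1}=\sigma G_{t_2}$, hence $C_{t_1}=\sigma C_{t_2}$; passing to right companions one gets ${}^RC_{t_1}=\sigma\,{}^RC_{t_2}$ (using $C_t = R({}^RC_t)R^{-1}$ together with strong compatibility of $\sigma$ with $R$, so that conjugation by $R$ commutes with $P_\sigma$); then the CA transposition duality between the $FGC$-patterns of $\mathbf{B}R$ and $(\mathbf{B}R)^T = R\mathbf{B}^T$ converts this $c$-vector relation into a $G$-matrix relation for $\mathbf{\Gamma}(\mathbf{B}^T_R,\cdot)$; finally Theorem \ref{thm:gbasic1} applied to $\mathbf{\Sigma}'_R$ yields $\bfx'_{t_1}=\sigma\bfx'_{t_2}$. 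The reverse implication follows by symmetry, since the transposition duality is an involution on $B$-patterns.

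The main obstacle I expect is bookkeeping the interplay of the two "sides" ($R\mathbf{B}$ versus $\mathbf{B}R$) with transposition: transposing a conjugate pair swaps left and right companions, so one must be careful about which companion's CA-duality is being invoked and check that the permutation matrices $P_\sigma$ genuinely commute past the diagonal conjugations $M\mapsto R^{\pm1}MR^{\mp1}$ — this is exactly where strong compatibility of $\sigma$ with $R$ (guaranteed by Proposition \ref{prop:gcomp2} / Corollary \ref{cor:gcomd1}) is indispensable, and it is the only subtle point beyond routine matrix manipulation. A cleaner alternative, which I would mention, is to bypass the explicit matrix identities entirely: apply Theorem \ref{thm:gcompanion1} to identify the periodicity of seeds of $\mathbf{\Sigma}_R$ with that of its companion CA's, do the same for $\mathbf{\Sigma}'_R$ (whose companions are the transposes of those of $\mathbf{\Sigma}_R$ in swapped order), and then invoke Theorem \ref{thm:dual4} at the CA level to match the two. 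Either route uses Conjecture \ref{conj:gpositivity} only through the $x$-variable statements, consistent with the hypothesis of the theorem.
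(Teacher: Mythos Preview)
Your proposal is correct, and in fact the ``cleaner alternative'' you describe at the end is precisely the paper's proof: it simply observes that $RB_t$ and $B_t^T R=(RB_t)^T$ are transposition duals, and then invokes Theorem \ref{thm:gcompanion1} (to pass from GCA seeds to their CA companion seeds) together with Theorem \ref{thm:dual4} (transposition duality at the CA level). Your first, more explicit route via $G$- and $C$-matrices and the identity \eqref{eq:dual4} is essentially an unpacking of what Theorems \ref{thm:gcompanion1} and \ref{thm:dual4} do internally, and the bookkeeping you flag about strong compatibility of $\sigma$ with $R$ is exactly right; but the paper avoids all of that by staying at the level of the black-box synchronicity theorems.
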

\begin{proof}
$RB_t$ and $B^T_tR$ are related by the transposition duality
in Section \ref{subsec:dualities}.
Then, the claim follows from Theorems \ref{thm:gcompanion1}
and \ref{thm:dual4}.
\end{proof}

\begin{thm}
\label{thm:gdual5}
Assume that Conjecture \ref{conj:gpositivity} is true.
Let
${{\mathbf{\Sigma}}}_R=\{ {\Sigma}_t
=({\bfx}_t,
{\bfy}_t,
\bfz_t,
B_t)
\mid t\in \bbT_n 
\}$
and
${{\mathbf{\Sigma}}}'_R=\{ {\Sigma}'_t
=({\bfx}'_t,
{\bfy}'_t,
{\bfy}'_t,
-B_t)
\mid t\in \bbT_n 
\}$
be  cluster patterns of degree $R$ with coefficients
in any semifields $\mathbb{P}$ and $\mathbb{P}'$, respectively.
Then, the seeds in ${\mathbf{\Sigma}}_R$ and the seeds in
${\mathbf{\Sigma}}'_R$ share the same  periodicity.
\end{thm}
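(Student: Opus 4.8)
The plan is to reduce this ``chiral duality'' statement for GCA to the already-established chiral duality for ordinary cluster algebras (Theorem \ref{thm:dual5}) by passing to companion patterns, exactly as the transposition-duality case (Theorem \ref{thm:gdual4}) was handled. First I would observe that the $B$-patterns $R\mathbf{B}$ (the left-companion $B$-pattern of $\mathbf{\Sigma}_R$) and $-R\mathbf{B} = R(-\mathbf{B})$ (the left-companion $B$-pattern of $\mathbf{\Sigma}'_R$, whose degree-$R$ $B$-pattern is $-\mathbf{B}$) are related precisely by the chiral duality $\mathbf{B}' = -\mathbf{B}'$ of Section \ref{subsec:dualities}: indeed $-(RB_t) = R(-B_t)$, so negating the integer $B$-matrix in the GCA commutes with left-multiplication by $R$. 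Thus $R\mathbf{B}$ and $R(-\mathbf{B})$ form a chiral-dual pair of ordinary $B$-patterns.

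Next I would invoke Theorem \ref{thm:gcompanion1}: for $\mathbf{\Sigma}_R$ with degree-$R$ exchange pattern $\mathbf{B}_R$, the periodicity of its seeds coincides with the common periodicity of the seeds of its left- and right-companion cluster patterns (of CA), which have $B$-patterns $R\mathbf{B}$ and $\mathbf{B}R$ respectively. Applying this to $\mathbf{\Sigma}_R$ and separately to $\mathbf{\Sigma}'_R$ (whose degree-$R$ $B$-pattern is $-\mathbf{B}$, so its left-companion $B$-pattern is $R(-\mathbf{B}) = -R\mathbf{B}$), I reduce the claim to: the seeds of an ordinary cluster pattern with $B$-pattern $R\mathbf{B}$ and the seeds of an ordinary cluster pattern with $B$-pattern $-(R\mathbf{B})$ share the same periodicity. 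But that is exactly Theorem \ref{thm:dual5} applied with the ordinary skew-symmetrizable integer matrix $RB$ (at any initial point) in the role of $B$. Since Theorem \ref{thm:gcompanion1} is invoked, I should note at the outset that Conjecture \ref{conj:gpositivity} is assumed, as in the theorem statement; no further use of positivity is needed beyond what Theorem \ref{thm:gcompanion1} already requires.

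I do not anticipate a serious obstacle here: the whole argument is a formal two-step reduction, first from GCA seeds to their companion CA seeds via Theorem \ref{thm:gcompanion1}, then from the companion CA seeds to the already-proved Theorem \ref{thm:dual5}. The only point requiring a moment of care is the bookkeeping of which $B$-pattern is the companion of which: one must check that the left-companion of $\mathbf{\Sigma}'_R$ has $B$-pattern $-(R\mathbf{B})$ and not, say, $(-R)\mathbf{B}$ with $-R$ interpreted as a degree — but since $-B_t$ with $B_t$ an integer matrix is again an integer matrix and $R$ is unchanged, the companion of $\mathbf{\Sigma}'_R$ is genuinely $R(-\mathbf{B}) = -(R\mathbf{B})$, so the chiral-dual pairing in the CA world is exactly the one needed for Theorem \ref{thm:dual5}. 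This mirrors the proof of Theorem \ref{thm:gdual4} verbatim, with ``transposition duality'' replaced by ``chiral duality'' throughout.

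\begin{proof}
$RB_t$ and $-RB_t = R(-B_t)$ are related by the chiral duality in Section \ref{subsec:dualities}, so the left-companion $B$-patterns $R\mathbf{B}$ of ${{\mathbf{\Sigma}}}_R$ and $R(-\mathbf{B}) = -(R\mathbf{B})$ of ${{\mathbf{\Sigma}}}'_R$ form a chiral-dual pair of ordinary $B$-patterns. By Theorem \ref{thm:gcompanion1}, the periodicity of the seeds in ${{\mathbf{\Sigma}}}_R$ coincides with that of the seeds in a cluster pattern with $B$-pattern $R\mathbf{B}$, and likewise the periodicity of the seeds in ${{\mathbf{\Sigma}}}'_R$ coincides with that of the seeds in a cluster pattern with $B$-pattern $-(R\mathbf{B})$. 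The claim now follows from Theorem \ref{thm:dual5} applied with the skew-symmetrizable integer matrix $RB$ in the role of $B$.
\end{proof}
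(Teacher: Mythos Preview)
Your proof is correct and follows essentially the same approach as the paper's own proof: observe that the left-companion $B$-patterns $R\mathbf{B}$ and $-R\mathbf{B}$ are related by chiral duality, then combine Theorem \ref{thm:gcompanion1} with Theorem \ref{thm:dual5}. Your write-up spells out the bookkeeping a bit more explicitly than the paper's two-line proof, but the logical content is identical.
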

\begin{proof}
$RB_t$ and $-RB$ are related by the chiral duality
in Section \ref{subsec:dualities}.
Then, the claim follows from Theorems \ref{thm:gcompanion1}
and \ref{thm:dual5}.
\end{proof}

\begin{thm}
\label{thm:gdual6}
Assume that Conjecture \ref{conj:gpositivity} is true.
Let
${{\mathbf{\Sigma}}}_R=\{ {\Sigma}_t
=({\bfx}_t,
{\bfy}_t,
\bfz_t,
B_t)
\mid t\in \bbT_n 
\}$
and
${{\mathbf{\Sigma}}}'_R=\{ {\Sigma}'_t
=({\bfx}'_t,
{\bfy}'_t,
{\bfy}'_t,
-B^T_t)
\mid t\in \bbT_n 
\}$
be  cluster patterns of degree $R$ with coefficients
in any semifields $\mathbb{P}$ and $\mathbb{P}'$, respectively.
Then, the seeds in ${\mathbf{\Sigma}}_R$ and the seeds in
${\mathbf{\Sigma}}'_R$ share the same  periodicity.
\end{thm}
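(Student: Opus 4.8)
The plan is to imitate the proof of the CA version, Theorem \ref{thm:dual6}: at the level of degree-$R$ $B$-patterns the Langlands duality $\mathbf{B}_R \mapsto -\mathbf{B}^T_R$ factors as the transposition duality followed by the chiral duality, so the conclusion should follow by chaining Theorems \ref{thm:gdual4} and \ref{thm:gdual5}. Concretely, first I would fix an auxiliary cluster pattern $\mathbf{\Sigma}''_R$ of degree $R$ whose $B$-pattern is $\{B^T_t \mid t\in \mathbb{T}_n\}$, with coefficients in some semifield $\mathbb{P}''$; Theorem \ref{thm:gdual4} gives that the seeds in $\mathbf{\Sigma}_R$ and the seeds in $\mathbf{\Sigma}''_R$ share the same periodicity. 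Then I would apply Theorem \ref{thm:gdual5} with $\{B^T_t\}$ playing the role of the reference $B$-pattern, so that $\{-B^T_t\}$ is its chiral dual: the seeds in $\mathbf{\Sigma}''_R$ and the seeds in $\mathbf{\Sigma}'_R$ share the same periodicity. Since ``being a $\sigma$-period'' is a property of the sequence of mutations acting on the relevant objects (the seeds), not of the ambient pattern, these two coincidences compose to give the desired one for $\mathbf{\Sigma}_R$ and $\mathbf{\Sigma}'_R$. Conjecture \ref{conj:gpositivity} is assumed, exactly as it is needed in Theorems \ref{thm:gdual4} and \ref{thm:gdual5}.

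Alternatively, one can argue directly through companion patterns, as in the second proof of Theorem \ref{thm:dual5}. By Theorem \ref{thm:companion1}, the left companion of $\mathbf{\Sigma}_R$ is a cluster pattern of CA with $B$-pattern $R\mathbf{B} = \{RB_t\}$, while the right companion of $\mathbf{\Sigma}'_R$ is a cluster pattern of CA with $B$-pattern $\{(-B^T_t)R\} = \{-B^T_t R\}$; since $R$ is diagonal, $-B^T_t R = -(RB_t)^T$, so the latter is precisely the Langlands dual $-(R\mathbf{B})^T$ of $R\mathbf{B}$. Theorem \ref{thm:dual6} then identifies the periodicities of seeds of these two companion cluster patterns, and Theorem \ref{thm:gcompanion1} (which requires Conjecture \ref{conj:gpositivity}) transfers these identifications back, equating the periodicity of seeds of $\mathbf{\Sigma}_R$ with that of its left companion and the periodicity of seeds of $\mathbf{\Sigma}'_R$ with that of its right companion. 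Combining the three identifications yields the claim.

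I do not anticipate a genuine obstacle here: the only care needed is the bookkeeping of left versus right companions together with the elementary identity $-(RB_t)^T = (-B^T_t)R$, and the observation that the degree $R$ is untouched by the passage $B_t \mapsto -B^T_t$, so that $\mathbf{\Sigma}'_R$ is again a bona fide degree-$R$ cluster pattern. Everything else is a verbatim reuse of the arguments already given for Theorems \ref{thm:gdual4} and \ref{thm:gdual5}.
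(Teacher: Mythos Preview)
Your proposal is correct. Your second approach---passing to companion CA patterns via Theorem \ref{thm:gcompanion1}, using the identity $-(RB_t)^T = (-B^T_t)R$, and then invoking Theorem \ref{thm:dual6}---is exactly the paper's proof. Your first approach (chaining Theorems \ref{thm:gdual4} and \ref{thm:gdual5}) is a valid alternative that mirrors the first proof of the CA analogue Theorem \ref{thm:dual6}, though the paper does not spell it out again in the GCA setting.
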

\begin{proof}
$RB_t$ and $-B^T_tR$ are related by the  Langlands duality
in Section \ref{subsec:dualities}.
Then, the claim follows from Theorems \ref{thm:gcompanion1}
and \ref{thm:dual6}.
\end{proof}

\bibliography{../../biblist/biblist.bib}

\newcommand{\etalchar}[1]{$^{#1}$}
\providecommand{\bysame}{\leavevmode\hbox to3em{\hrulefill}\thinspace}
\providecommand{\MR}{\relax\ifhmode\unskip\space\fi MR }
\providecommand{\MRhref}[2]{%
  \href{http://www.ams.org/mathscinet-getitem?mr=#1}{#2}
}
\providecommand{\href}[2]{#2}
\begin{thebibliography}{CIKLFP13}

\bibitem[BMRT07]{Buan05c}
A.~B. Buan, R.~J. Marsh, I.~Reiten, and G.~Todorov, \emph{Clusters and seeds in
  acyclic cluster algebras}, Proc. Amer. Math. Soc. \textbf{135} (2007),
  3049--3060; arXiv:math/0510359 [math.RT].

\bibitem[CHL18]{Cao17}
P.~Cao, M.~Huang, and F.~Li, \emph{A conjecture on {$C$}-matrices of cluster
  algebras}, Nagoya Math. J. \textbf{Published online} (2018), ;
  arXiv:1702.01221 [math.RA].

\bibitem[CIKLFP13]{Cerulli12}
G.~Cerulli~Irelli, B.~Keller, D.~Labardini-Fragoso, and P.~Plamondon,
  \emph{Liniear independence of cluster monomials for skew-symmetric cluster
  algebras}, Compos. Math. \textbf{149} (2013), 1753--1764; arXiv:1203.1307
  [math.RT].

\bibitem[CL18]{Cao18}
P.~Cao and F.~Li, \emph{The enough {$g$}-pairs property and denominator vectors
  of cluster algebras}, 2018, arXiv:1803.05281 [math.RT].

\bibitem[CS14]{Chekhov11}
L.~Chekhov and M.~Shapiro, \emph{Teichm\"uller spaces of {R}iemann surfaces
  with orbifold points of arbitrary order and cluster variables}, Int. Math.
  Res. Notices \textbf{2014} (2014), 2746--2772; arXiv:1111.3963 [math--ph].

\bibitem[Dav18]{Davison16}
B.~Davison, \emph{Positivity for quantum cluster algebras}, Ann. of Math.
  \textbf{187} (2018), 157--219; arXiv:1601.07918 [math.RT].

\bibitem[DWZ10]{Derksen10}
H.~Derksen, J.~Weyman, and A.~Zelevinsky, \emph{Quivers with potentials and
  their representations {II}: {A}pplications to cluster algebras}, J. Amer.
  Math. Soc. \textbf{23} (2010), 749--790; arXiv:0904.0676 [math.RA].

\bibitem[FG09a]{Fock03}
V.~V. Fock and A.~B. Goncharov, \emph{Cluster ensembles, quantization and the
  dilogarithm}, Annales Sci. de l'\'Ecole Norm. Sup. \textbf{42} (2009),
  865--930; arXiv:math/0311245 [math.AG].

\bibitem[FG09b]{Fock07}
\bysame, \emph{The quantum dilogarithm and representations of quantum cluster
  varieties}, Invent. Math. \textbf{172} (2009), 223--286; arXiv:math/0702397
  [math.QA].

\bibitem[FG19]{Fujiwara18}
S.~Fujiwara and Y.~Gyoda, \emph{Duality between final-seed and initail-seed
  mutations in cluster algebras}, SIGMA \textbf{15} (2019), 040, 24 pages;
  arXiv:1808.02156.

\bibitem[FST08]{Fomin08}
S.~Fomin, M.~Shapiro, and D.~Thurston, \emph{Cluster algebras and triangulated
  surfaces. {P}art {I}: {C}luster complexes}, Acta Math. \textbf{201} (2008),
  83--146; arXiv:math/0608367 [math.RA].

\bibitem[FZ02]{Fomin02}
S.~Fomin and A.~Zelevinsky, \emph{Cluster algebras {I}. {F}oundations}, J.
  Amer. Math. Soc. \textbf{15} (2002), 497--529 (electronic);
  arXiv:math/0104151 [math.RT].

\bibitem[FZ03a]{Fomin03a}
\bysame, \emph{Cluster algebras {II}. {F}inite type classification}, Invent.
  Math. \textbf{154} (2003), 63--121; arXiv:math/0208229 [math.RA].

\bibitem[FZ03b]{Fomin03b}
\bysame, \emph{Y-systems and generalized associahedra}, Ann. of Math.
  \textbf{158} (2003), 977--1018; arXiv:hep--th/0111053.

\bibitem[FZ07]{Fomin07}
\bysame, \emph{Cluster algebras {IV}. {C}oefficients}, Compositio Mathematica
  \textbf{143} (2007), 112--164; arXiv:math/0602259 [math.RT].

\bibitem[GHKK18]{Gross14}
M.~Gross, P.~Hacking, S.~Keel, and M.~Kontsevich, \emph{Canonical bases for
  cluster algebras}, J. Amer. Math. Soc. \textbf{31} (2018), 497--608;
  arXiv:1411.1394 [math.AG].

\bibitem[GSV08]{Gekhtman07}
M.~Gekhtman, M.~Shapiro, and A~Vainshtein, \emph{On the properties of the
  exchange graph of a cluster algebra}, Math. Res. Lett. (2008), 321--330;
  arXiv:math/0703151.

\bibitem[IIK{\etalchar{+}}10]{Inoue10c}
R.~Inoue, O.~Iyama, A.~Kuniba, T.~Nakanishi, and J.~Suzuki, \emph{Periodicities
  of {T} and {Y}-systems}, Nagoya Math. J. \textbf{197} (2010), 59--174;
  arXiv:0812.0667 [math.QA].

\bibitem[IIK{\etalchar{+}}13]{Inoue10a}
R.~Inoue, O.~Iyama, B.~Keller, A.~Kuniba, and T.~Nakanishi, \emph{Periodicities
  of {T} and {Y}-systems, dilogarithm identities, and cluster algebras {I}:
  {T}ype {$B_r$}}, Publ. RIMS \textbf{49} (2013), 1--42; arXiv:1001.1880
  [math.QA].

\bibitem[IN14]{Iwaki14a}
K.~Iwaki and T.~Nakanishi, \emph{Exact {WKB} analysis and cluster algebras}, J.
  Phys. A: Math. Theor. \textbf{47} (2014), 474009; arXiv:1401.7094 [math.CA].

\bibitem[KQ14]{Kimura12}
Y.~Kimura and Y.~Qiu, \emph{Graded quiver varieties, quantum cluster algebrs
  and dual canonical bases}, Adv. in Math. \textbf{262} (2014), 261--312;
  arXiv:1205.2066.

\bibitem[LS15]{Lee15}
K.~Lee and R.~Schiffler, \emph{Positivity for cluster algebras}, Ann. of Math.
  \textbf{182} (2015), 72--125; arXiv:1306.2416.

\bibitem[MSW11]{Musiker09}
G.~Musiker, R.~Schiffler, and L.~Williams, \emph{Positivity for cluster
  algebras from surfaces}, Adv. in Math. \textbf{227} (2011), 2241--2308;
  arXiv:0906.0748 [math.CO].

\bibitem[Nag13]{Nagao10}
K.~Nagao, \emph{Donaldson-{T}homas theory and cluster algebras}, Duke Math. J.
  \textbf{7} (2013), 1313--1367; arXiv:1002.4884 [math.AG].

\bibitem[Nak15]{Nakanishi14a}
T.~Nakanishi, \emph{Structure of seeds in generalized cluster algebras},
  Pacific J. Math. \textbf{277} (2015), 201--218; arXiv:1409.5967 [math.RA].

\bibitem[NR16]{Nakanishi15}
T.~Nakanishi and D.~Rupel, \emph{Companion cluster algebras to a generalized
  cluster algebra}, Travaux Math{\'e}matiques (2016), 129--149;
  arXiv:1504.06758 [math.RA].

\bibitem[NZ12]{Nakanishi11a}
T.~Nakanishi and A.~Zelevinsky, \emph{On tropical dualities in cluster
  algebras}, Contemp. Math. \textbf{565} (2012), 217--226, arXiv:1101.3736
  [math.RA].

\bibitem[Pla11]{Plamondon10b}
P.~Plamondon, \emph{Cluster algebras via cluster categories with
  infinite-dimensional morphism spaces}, Compos. Math. \textbf{147} (2011),
  1921--1954; arXiv:1004.0830 [math.RT].

\end{thebibliography}
\end{document}